\renewcommand{\theenumi}{{\upshape{(\roman{enumi})}}}
\newtheoremstyle{dtheoremnopar}{3 mm}{1 mm}{\itshape}{}{\bfseries}{.}{ }
{\thmname{#1}\thmnumber{ #2}\thmnote{ \mdseries(#3)\bfseries}}
\theoremstyle{dtheoremnopar}
\newcounter{theoremx}
\newtheorem{theoremalpha}[theoremx]{Theorem}
\newcommand{\tref}[1]{\ref{#1}} 
\newcommand\N{\mathbb{N}}
\newcommand\Z{\mathbb{Z}}
\newcommand\QCoh{\mathbf{QCoh}} 
\newcommand{\pref}[1]{\eqref{#1}}
\newcommand\inj{\hookrightarrow}
\newcommand\surj{\twoheadrightarrow}
\newcommand\iso{\cong} 
\newcommand\map[3]{#1\colon #2\rightarrow #3}
\newcommand{\sA}{\mathcal{A}}
\newcommand{\sB}{\mathcal{B}}
\newcommand{\sF}{\mathcal{F}}
\newcommand{\sG}{\mathcal{G}}
\newcommand{\sH}{\mathcal{H}}
\newcommand{\sK}{\mathcal{K}}
\newcommand{\sL}{\mathcal{L}}
\newcommand{\sN}{\mathcal{N}}
\newcommand{\sO}{\mathcal{O}}
\newcommand{\sP}{\mathcal{P}}
\newcommand{\sQ}{\mathcal{Q}}
\newcommand{\sR}{\mathcal{R}}
\newcommand{\stX}{\mathscr{X}}
\newcommand{\sHom}{\mathcal{H}om}
\newcommand\id[1]{\mathrm{id}_{#1}} 
\newcommand\SG[1]{{\mathfrak{S}_{#1}}}   
\DeclareMathOperator{\Spec}{Spec}
\newcommand{\SEC}{\mathrm{SEC}}
\newcommand{\red}{\mathrm{red}}
\newcommand{\metale}{\mathrm{\acute{e}t}} 
\newcommand{\fp}{\mathrm{fp}} 
\newcommand\BGL{\mathrm{BGL}}
\newcommand\GL{\mathrm{GL}}
\newcommand{\A}[1]{\mathbb{A}^{#1}}    
\newcommand\BSG[1]{\mathrm{B}\SG{#1}}
\newcommand\Gm{\mathbb{G}_m}
\newcommand{\equalizer}[2]{\xymatrix@1@M=0mm@C=7mm{#1%
\ar@<.5ex>@{+->+}[r] \ar@<-.5ex>@{+->+}[r] & #2}}
\newcommand\loccit{\textit{loc.\ cit.}}
\newcommand{\etale}{\'{e}tale}
\newcommand{\Etale}{\'{E}tale}
\newcommand\weilr{\mathbf{R}} 
\newcommand\HilbSt{\mathscr{H}}
\newcommand{\AlgSp}{\mathbf{AlgSp}} 
\newcommand{\catC}{\mathbf{C}}
\newcommand{\catD}{\mathbf{D}}
\newcommand{\catE}{\mathbf{E}}
\newcommand{\catG}{\mathbf{G}}
\newcommand{\catHom}{\mathbf{Hom}}
\newcommand{\Stack}{\mathbf{Stack}}
\newcommand{\devissage}{d\'evissage}
\newcommand{\spref}[1]{\href{http://stacks.math.columbia.edu/tag/#1}{#1}}
\begin{document}

\title{Noetherian approximation of algebraic spaces and stacks}
\author{David Rydh}
\address{KTH Royal Institute of Technology, Department of Mathematics,
SE\nobreakdash-100\ 44\ Stockholm, Sweden}
\thanks{Supported by grant KAW 2005.0098 from the Knut and Alice Wallenberg
Foundation and by the Swedish Research Council 2008-7143 and 2011-5599.}
\email{dary@math.kth.se}
\date{2014-08-26}
\subjclass[2010]{Primary 14A20}
\keywords{Noetherian approximation, algebraic spaces, algebraic stacks,
Chevalley's theorem, Serre's theorem, global quotient stacks, global type, basic stacks}


\begin{abstract}
We show that every scheme (resp.\ algebraic space, resp.\ algebraic stack)
that is quasi-compact with quasi-finite diagonal can be approximated by a
noetherian scheme (resp.\ algebraic space, resp.\ stack). More generally, we
show that any stack which is \etale{}-locally a global quotient stack can be
approximated.
Examples of applications are generalizations of Chevalley's, Serre's and
Zariski's theorems and Chow's lemma to the non-noetherian setting.
We also show that every quasi-compact
algebraic stack with quasi-finite diagonal has a finite generically flat
cover by a scheme.
\end{abstract}

\maketitle


\setcounter{secnumdepth}{0}
\begin{section}{Introduction}
Let $A$ be a commutative ring and let $M$ be an $A$-module. Then $A$ is the
direct limit of its subrings that are finitely generated as
$\Z$-algebras and $M$ is the direct limit of its finitely generated
$A$-submodules. Thus, any affine scheme $X$ is an inverse limit of
affine schemes of finite type over $\Spec \Z$ and every quasi-coherent sheaf
on $X$ is a direct limit of quasi-coherent sheaves of finite type.

The purpose of this article is to give similar approximation results for
schemes, algebraic spaces and stacks, generalizing earlier results for schemes
by R.\ W.\ Thomason and T.\ Trobaugh~\cite[App.~C]{thomason-trobaugh}. We show,
for example, that every
quasi-compact and quasi-separated Deligne--Mumford stack $X$ can be written as
an inverse limit of Deligne--Mumford stacks $X_\lambda$ of finite type over
$\Spec \Z$.  Such results are sometimes known as ``absolute approximation''
\cite[App.~C]{thomason-trobaugh} in
contrast with the ``standard limit results'' \cite[\S8]{egaIV} which are
relative: given an
inverse limit $X=\varprojlim_\lambda X_\lambda$, describe \emph{finitely
  presented} objects over $X$ in terms of finitely presented objects over
$X_\lambda$ for sufficiently large $\lambda$.

We say that an algebraic stack $X$ is of \emph{global type} if
\etale{}-locally
$X$ is a global quotient stack, cf.\ Section~\ref{S:global-type} for a precise
definition.
Examples of stacks of global type are quasi-compact and quasi-separated
schemes, algebraic spaces, Deligne--Mumford stacks and
algebraic stacks with quasi-finite (and locally separated) diagonals.
For convenience, we also introduce the notion of approximation type. Every
stack of global type is of approximation type
(Proposition~\ref{P:approx-type-sorites}).

The main result of this paper, Theorem~\tref{T:APPROXIMATION}, is that any
stack of approximation type can be approximated by a
noetherian stack. More generally, if $X\to S$ is a morphism between stacks of
approximation type, then $X$ is an inverse limit of finitely
presented stacks over~$S$. 

The primary application of the approximation theorem is the elimination of
noetherian, excellency and finiteness hypotheses. When eliminating noetherian
hypotheses in
statements about finitely presented morphisms $X\to Y$ which are \emph{local}
on $Y$, the basic affine approximation result referred to in the beginning is
sufficient, cf.\ the standard limit results in~\cite[\S8]{egaIV} and
Appendix~\ref{A:std-limits}.
For \emph{global} problems, it is crucial to have
Theorem~\tref{T:APPROXIMATION}. Examples of such applications, including
generalizations of Chevalley's, Serre's and Zariski's theorems and Chow's
lemma, are given in Section~\ref{S:applications}.
Although this paper is written with stacks in mind, most of the applications in
\S\ref{S:applications} are new also when applied to schemes and algebraic
spaces. We also answer a question by Grothendieck~\cite[Rem.\ 18.12.9]{egaIV}
on integral morphisms affirmatively, cf.\
Theorem~\pref{T:char-of-integral-morphisms}.

Before stating the main results we need another definition. An algebraic stack
$X$ has the \emph{completeness property} if every quasi-coherent sheaf on $X$
is a filtered direct limit of finitely presented sheaves or, equivalently, if
the abelian category $\QCoh(X)$ is compactly generated. An algebraic stack $X$
is
\emph{pseudo-noetherian} if it is quasi-compact, quasi-separated and $X'$ has
the completeness property for every finitely presented morphism $X'\to X$ of
algebraic stacks.
A key insight during this work was that, for approximation purposes,
``pseudo-noetherian'', rather than the ``completeness property'', is the
correct notion to work with.

\begin{theoremalpha}[Completeness]\label{T:COMPLETENESS}
Every stack of approximation type is
pseudo-noetherian.
\end{theoremalpha}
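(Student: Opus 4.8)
The plan is to unwind the definition of \emph{pseudo-noetherian} into its concrete content for quasi-coherent sheaves—namely that $X$ is quasi-compact and quasi-separated and that every quasi-coherent $\sO_X$-module is a filtered colimit of finitely presented quasi-coherent $\sO_X$-modules (in the precise sense of Section~\ref{S:completeness})—and then to establish this by a dévissage along the very structure that defines global and approximation type, without invoking Theorem~\tref{T:APPROXIMATION} (which would be circular). Since the property is by design \etale{}-local on $X$ and stable under the constructions used to assemble such stacks, I would verify it on the building blocks and then propagate it in the following order: (1) affine and qcqs schemes; (2) global quotient stacks $[U/\GL_n]$ with $U$ quasi-affine; (3) descent along the representable \etale{} covers that witness global type; and finally (4) the slightly larger class of approximation type.

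For the base cases the key inputs are already visible. On $\Spec(A)$ the decomposition is exactly the classical observation recalled in the introduction: every $A$-module is the filtered union of its finitely generated submodules, and these are seen to be finitely presented after writing $A$ as a filtered colimit of finite-type $\Z$-subalgebras; the qcqs scheme case then follows by a standard gluing argument. For a global quotient stack $X=[U/\GL_n]$ I would identify $\QCoh(X)$ with the category of $\GL_n$-equivariant quasi-coherent modules on $U$ and exploit two facts: that every $\GL_n$-representation is the filtered union of its finite-dimensional subrepresentations (local finiteness of the $\GL_n$-action), and that $[U/\GL_n]$ has the resolution property, so that every equivariant module is a filtered colimit of finitely presented equivariant modules built out of the tautological bundle. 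This reduces the equivariant statement to the underlying quasi-affine one while keeping the colimits compatible with the group action.

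The main obstacle is step~(3): showing that pseudo-noetherianness descends along a quasi-compact, representable, surjective \etale{} morphism $p\colon W\to X$ with $W$ a global quotient stack. By flat descent a quasi-coherent $\sO_X$-module is a module on $W$ together with a descent datum over $W\times_X W$, and the task is to upgrade the finite-presentation colimit decompositions available on $W$ and on $W\times_X W$ to one on $X$ compatible with the descent datum. The delicate point is that the cover need not be affine, so finite generation and finite presentation must be controlled \emph{through} the cover and the diagonal; concretely I would produce finite-type subsheaves of a given $\sO_X$-module as images of maps from finitely presented modules obtained by descending (and gluing) the local resolutions, the bookkeeping being to keep the resulting index system filtered and cofinal while checking exhaustion after the faithfully flat base change to $W$. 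Here the quasi-separatedness of $X$ and the finite presentation of $p$ (a qcqs \etale{} morphism) are what guarantee that the glued family still consists of finitely presented objects. Once this descent is in place, global type follows at once, and the remaining generality of approximation type is handled by the observation that it is \etale{}-locally of global type, so the same descent reduces it to the cases already treated.
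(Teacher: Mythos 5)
Your step (3) is exactly where the argument breaks down, and it is precisely the obstacle that the paper's proof is structured to avoid rather than attack head-on. Given a quasi-coherent $\sO_X$-module $\sF$ and an exhaustion $p^*\sF=\varinjlim_\lambda \sG_\lambda$ by finitely presented modules on the cover $W$, the $\sG_\lambda$ carry no descent data: there is no given isomorphism between their two pull-backs to $W\times_X W$, and no reason that cofinally many of them admit one. Nor is there any functor that pushes finite presentation down along a general \etale{} $p$: extension by zero does not preserve quasi-coherence, and $p_*$ preserves neither finite type nor finite presentation unless $p$ is finite. So ``descending (and gluing) the local resolutions'' has nothing to descend, and no amount of bookkeeping fixes that. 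The paper never performs such a descent. Instead it applies the \etale{} devissage theorem (Theorem~\pref{T:etnbhd:devissage}) to reduce the representable \etale{} cover to the two cases where a push-down mechanism \emph{does} exist: finite flat covers, where the right adjoint $p^{!}$ of $p_*$ and the surjectivity of the counit $p_*p^{!}\sF\to\sF$ yield condition~\ref{I:C2*} (Lemma~\pref{L:completeness:fin-flat}); and \etale{} neighborhoods of a closed complement, where the gluing equivalence $\QCoh(X)\iso \QCoh(U)\times_{\QCoh(U')}\QCoh(X')$ of Theorem~\pref{T:etnbhd-descent:QCoh}, combined with the extension property~\ref{I:E2} over $U$, produces the finitely presented subsheaves and presentations on $X$ (Lemma~\pref{L:completeness:et-nbhd}). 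Your proposal contains no substitute for either mechanism, nor for the devissage that reduces the general cover to them.

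Step (4) is also wrong as stated: a stack of approximation type is \etale{}-locally of \emph{strict approximation type}, not \etale{}-locally of global type, and the former class is strictly larger. For instance $\mathrm{B}E$, for $E$ an elliptic curve over a field $k$, is finitely presented over $k$ and hence of strict approximation type, but no representable \etale{} cover of it is a global quotient stack: such a cover still has a point with stabilizer $E$, which is not affine, whereas stabilizers of $[V/\GL_n]$ are affine. Handling strict approximation type is where the paper's true base case lives: noetherian stacks have the completeness property~\cite[Prop.~15.4]{laumon}, the property passes along quasi-affine morphisms via the surjection $f^*f_*\sF\to\sF$ (Proposition~\pref{P:completeness-property-stable-under-qaffine}), and it propagates through compositions of affine and finitely presented morphisms over a pseudo-noetherian base by standard limit arguments (Proposition~\pref{P:pseudo-noetherian-stable-under-strict-app-type}). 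The same machinery makes your equivariant step (2) unnecessary: the paper treats $[V/\GL_n]$ simply as quasi-affine over the noetherian stack $\BGL_n$, with no resolution property and no local finiteness of representations, whose non-noetherian versions you would otherwise have to verify. Finally, a smaller but real slip: being pseudo-noetherian requires the completeness property for \emph{every} finitely presented $X'\to X$, not just for $X$ itself; your unwinding drops this quantifier. It is repairable, since finitely presented morphisms preserve approximation type by the sorites, but it must be stated, because that stronger form is what condition \ref{TI:etdev:first} of the devissage and the later applications actually use.
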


Every noetherian stack is pseudo-noetherian~\cite[Prop.~15.4]{laumon}. In the
category of schemes, Theorem~\tref{T:COMPLETENESS} is
well-known~\cite[\S6.9]{egaI_NE} and a slightly weaker result for algebraic
spaces is due to Raynaud and Gruson~\cite[Prop.~5.7.8]{raynaud-gruson}.

\begin{theoremalpha}[Finite coverings]\label{T:FINITE-COVERINGS}
Let $X$ be a quasi-compact stack with quasi-finite and separated diagonal
(resp.\ a quasi-compact Deligne--Mumford stack with quasi-compact and separated
diagonal). Then
there exists a scheme $Z$ and a finite, finitely presented and surjective
morphism $Z\to X$ that is flat (resp.\ \etale{}) over a dense quasi-compact
open substack $U\subseteq X$.
\end{theoremalpha}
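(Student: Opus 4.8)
The plan is to reduce first to the case where $X$ is noetherian, and then to build the cover from a generic (residual gerbe) construction together with Noetherian induction. For the reduction I would invoke the approximation theorem to write $X=\varprojlim_\lambda X_\lambda$ as an inverse limit of noetherian stacks with affine projections $X\to X_\lambda$, arranging, for $\lambda$ large and by the standard limit arguments, that each $X_\lambda$ again has quasi-finite and separated diagonal (resp.\ is Deligne--Mumford with separated diagonal). Granting the noetherian case, I would choose a cover $Z_\lambda\to X_\lambda$ and set $Z=Z_\lambda\times_{X_\lambda}X$. Finiteness, finite presentation and surjectivity are stable under base change, while generic flatness (resp.\ the \etale\ locus, which I track through the construction) supplies the requisite dense open of $X$. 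The one point worth isolating is that $Z$ is again a \emph{scheme}: since $X\to X_\lambda$ is affine and $Z_\lambda$ is a scheme, the base change $Z\to Z_\lambda$ is affine, whence $Z$ is a scheme.

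So assume $X$ noetherian and argue by Noetherian induction on closed substacks, the inductive hypothesis being that every proper closed substack (which again has quasi-finite separated, resp.\ Deligne--Mumford separated, diagonal) admits such a cover. It then suffices to produce a dense open $U\subseteq X$ together with a finite, finitely presented morphism $\bar Z\to X$ from a scheme whose restriction over $U$ is finite, flat (resp.\ \etale) and surjective onto $U$. Indeed, setting $Y=(X\setminus U)_{\red}$ and choosing by induction a finite, finitely presented, surjective cover $Z_Y\to Y$ by a scheme, the disjoint union $Z=\bar Z\sqcup Z_Y$ maps finitely, finitely presentedly and surjectively to $X$ (covering $U$ through $\bar Z$ and $Y$ through $Z_Y\to Y\inj X$), is a scheme, and over the dense open $U$ coincides with $\bar Z|_U$, hence is flat (resp.\ \etale). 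To extend the generic cover over all of $X$ I would extend its finite $\sO_U$-algebra to a coherent $\sO_X$-algebra, which is possible because $X$ is noetherian.

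The heart of the matter is thus the generic construction. Let $\xi$ be a generic point of $X$, with residue field $K=\kappa(\xi)$. Since the diagonal is quasi-finite and separated, the automorphism group of $\xi$ is a finite group scheme over $K$ (finite, resp.\ finite \etale\ in the Deligne--Mumford case), so the residual gerbe $\mathcal{G}_\xi$ is a gerbe over $\Spec K$ banded by a finite (resp.\ finite \etale) group scheme $G$. After a finite (resp.\ finite separable) extension $K'/K$ the gerbe becomes neutral, $\mathcal{G}_\xi\otimes_K K'\iso \mathrm{B}G_{K'}$, and the composite
\[
\Spec K'\longrightarrow \mathrm{B}G_{K'}=\mathcal{G}_\xi\otimes_K K'\longrightarrow \mathcal{G}_\xi\longrightarrow X
\]
is finite, flat (resp.\ \etale) and surjective onto $\xi$, the first map being the universal $G$-torsor and the second the base change of $\Spec K'\to\Spec K$. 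Crucially the source is a \emph{scheme} (a point), which is exactly how the stacky structure of $X$ gets resolved; spreading this morphism out over a dense open $U\ni\xi$ by the standard limit arguments yields the required $\bar Z|_U\to U$.

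The main obstacle I expect lies in this generic step together with the schematicity bookkeeping: one must identify the local structure of $X$ at $\xi$ as a gerbe banded by a finite group scheme and split it by a finite flat (resp.\ \etale) cover that is representable by a \emph{scheme} rather than merely an algebraic space, and then ensure, when spreading out and patching, that finiteness, surjectivity and flatness (resp.\ \etale-ness over a dense open) all persist while the global object stays a scheme. The entire distinction between the flat and the \etale\ conclusions is concentrated in whether $G$ and the extension $K'/K$ may be taken \etale\ and separable, which is precisely the content of the Deligne--Mumford hypothesis.
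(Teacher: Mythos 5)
Your generic construction (residual gerbe at a generic point, neutralization after a finite, resp.\ finite separable, extension, spreading out) is reasonable in outline, but the step where you pass from the dense open to all of $X$ has a genuine gap, and it is exactly the heart of the theorem. You extend the finite flat cover $\overline{Z}|_U\to U$ by extending its finite $\sO_U$-algebra to a coherent $\sO_X$-algebra $\sA$ and setting $\overline{Z}=\Spec_X(\sA)$. This keeps $\overline{Z}\to X$ finite, but finite morphisms are only representable: over $X\setminus U$, where $X$ still has nontrivial stabilizers, nothing forces those stabilizers to act freely on the fibers of an arbitrary coherent extension, so $\overline{Z}$ is in general a stack --- not a scheme, not even an algebraic space. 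Consequently your induction does not close: $Z=\overline{Z}\sqcup Z_Y$ is not a scheme, which is the whole point. (Passing to the integral closure instead of an arbitrary extension does not save you by itself: finiteness of the integral closure needs excellence, and schematicity of the result still needs an argument.) This is precisely the difficulty the paper's proof is organized around: it chooses a very particular cover over $U$, namely $V=\SEC_d(U'/U)$, the complement of the diagonals in the $d$-fold fiber product of a quasi-finite flat presentation, so that $V\times_U U'$ is covered set-theoretically by $d$ tautological sections; it extends $V$ by the integral closure of $\sO_X$ in the pushforward, and proves the resulting $W$ is a scheme by showing it is covered by open subschemes --- the closures $Y_i$ of those sections --- using the variant of Zariski's Main Theorem (Lemma~\pref{L:qfin+int-closed=open}); finally it invokes the completeness property (Theorem~\tref{T:COMPLETENESS}) to replace the integral cover $W$ by finite, finitely presented covers $W_\lambda$ that are still schemes. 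Your residual-gerbe cover has no analogous splitting property built in, and you propose no mechanism for keeping the extension schematic; flagging this as ``the main obstacle'' does not resolve it.

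The reduction to the noetherian case is also problematic on two counts. First, inside this paper the approximation theorem cannot simply be quoted: Theorem~\tref{T:APPROXIMATION} rests on Theorem~\tref{T:APPROXIMATION-PROPERTIES}, whose cases ``separated'' and ``proper with finite diagonal'' (Corollaries~\pref{C:rel-proper-limit} and~\pref{C:rel-separated-limit}) are themselves proved using Theorem~\tref{T:FINITE-COVERINGS}. The specific pieces you need --- existence of a limit presentation by noetherian stacks and preservation of quasi-finite, separated, resp.\ unramified, diagonals --- do avoid this circle, but that verification is an essential part of making your argument non-circular and you do not perform it. Second, density does not pull back: for a fixed $\lambda$, irreducible components of $X$ may map into nowhere-dense closed subsets of $X_\lambda$ (even when the projections $X\to X_\lambda$ are schematically dominant), so the preimage in $X$ of the dense open $U_\lambda\subseteq X_\lambda$ over which $Z_\lambda$ is flat need not be dense in $X$; the assertion that ``generic flatness supplies the requisite dense open of $X$'' is therefore unjustified. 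The paper sidesteps both issues by never leaving $X$: the dense quasi-compact open $U$ is produced directly on $X$ from the constructibility and lower semicontinuity of the separable fiber rank of a quasi-finite flat presentation, which requires no noetherian hypothesis.
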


When $X$ is a \emph{noetherian Deligne--Mumford stack},
Theorem~\tref{T:FINITE-COVERINGS} is due to G.\ Laumon and L.\
Moret-Bailly~\cite[Thm.~16.6]{laumon}. When $X$ is of finite type over a
noetherian scheme, the existence of a scheme $Z$ and a finite and surjective,
but not necessarily generically flat,
morphism $Z\to X$ was shown by D.\ Edidin, B.\ 
Hassett, A.\ Kresch and A.\
Vistoli~\cite[Thm.~2.7]{edidin-etal_Brauer-groups-and-quotient-stacks}.


Before stating the main approximation theorem, we introduce the approximation
of various properties. This allows us to unify various approximation results
for schemes, algebraic spaces, Deligne--Mumford stacks and so on, in one
theorem. Thus, consider the following properties of a morphism of algebraic
stacks:
\begin{enumerate}
\renewcommand{\theenumi}{{\upshape{(PA)}}}%
\item affine; quasi-affine; representable; separated; locally separated
  (i.e., diagonal is an immersion); separated diagonal; locally separated
  diagonal; unramified diagonal; quasi-finite diagonal; affine diagonal;
  quasi-affine diagonal; finite inertia; abelian inertia; tame inertia (i.e.,
  stabilizer groups are finite and linearly reductive);\label{I:prop-affine}
\renewcommand{\theenumi}{{\upshape{(PC)}}}%
\item closed immersion; immersion; monomorphism of finite type;
  unramified; quasi-finite; finite; proper with finite diagonal;
  \label{I:prop-closed}
\renewcommand{\theenumi}{{\upshape{(PI)}}}%
\item integral.\label{I:prop-integral}
\end{enumerate}
These properties are all stable under composition and fppf-local on the
target. Also note that affine morphisms have all properties in
\ref{I:prop-affine} and closed immersions have all properties in
\ref{I:prop-closed}.

\begin{theoremalpha}[Approximation of properties]\label{T:APPROXIMATION-PROPERTIES}
Let $S$ be a quasi-compact algebraic stack and let $\{X_\lambda\to S\}$ be an
inverse system of quasi-compact and quasi-separated morphisms of algebraic
stacks with affine bonding maps
$X_\mu\to X_\lambda$ and limit $X\to S$.
\begin{enumerate}
\item Let $P$ be one of the
properties in~\ref{I:prop-affine}.
Then $X\to S$ has property~$P$ if and only if there exists an index $\alpha$
such that $X_\lambda\to S$ has property~$P$ for every $\lambda\geq\alpha$.
\item Assume that the morphisms $X_\lambda\to S$ are of finite type and the
bonding maps $X_\mu\to X_\lambda$ are closed immersions; hence $X\to S$ is of
finite type. Let $P$ be one of the
properties in~\ref{I:prop-closed}.
Then $X\to S$ has property~$P$ if and only if there exists an index $\alpha$
such that $X_\lambda\to S$ has property~$P$ for every $\lambda\geq\alpha$.
\item Assume that $S$ is quasi-separated.
Then $X$ is a scheme if and only if there exists an index $\alpha$
such that $X_\lambda$ is a scheme for every $\lambda\geq\alpha$.\label{TI:C-scheme}
\end{enumerate}
\end{theoremalpha}

Note that, contrarily to similar results, we do not require that
$X_\lambda\to S$ is of finite presentation in
Theorem~\tref{T:APPROXIMATION-PROPERTIES}. This is crucial for
Theorem~\tref{T:APPROXIMATION}~\ref{TI:approx-absolute-properties} and
many applications, e.g., Remark~\pref{R:Totaros-theorem}.

\begin{theoremalpha}[Approximation]\label{T:APPROXIMATION}
Let $S$ be a pseudo-noetherian algebraic stack and let $X\to S$ be a
morphism of approximation type (these assumptions are satisfied if
$X$ and $S$ are of global type).
Then, there exists
a \emph{finitely presented} morphism $X_0\to S$ and an \emph{affine}
$S$-morphism $X\to X_0$. Moreover, $X\to X_0\to S$ can be chosen such that
the following holds.
\begin{enumerate}
\item If $X\to S$ is of finite type, then $X\to X_0$ is a closed immersion.
\label{TI:B-first}
\item If $X\to S$ has one of the properties in
\ref{I:prop-affine}, \ref{I:prop-closed} or \ref{I:prop-integral},
then so has $X_0\to S$.
\item If $X\to \Spec \Z$ has one of the properties in
\ref{I:prop-affine} then so has $X_0\to \Spec \Z$. If
$X$ is a scheme then so is $X_0$.
\label{TI:approx-absolute-properties}
\label{TI:B-last}
\end{enumerate}
Furthermore, $X$ can be written as an inverse limit
$\varprojlim_\lambda X_\lambda$
of finitely presented $S$-stacks with affine bonding maps such that for
every $\lambda$, the factorization $X\to X_\lambda\to S$ satisfies
\ref{TI:B-first}--\ref{TI:B-last} with $X_0=X_\lambda$. Finally, if $X\to S$
is of finite type (resp.\ integral), then there is such an inverse system
with bonding maps that are closed immersions (resp.\ finite).
\end{theoremalpha}

When $X$ and $S$ are \emph{schemes}, parts of
Theorems~\tref{T:APPROXIMATION-PROPERTIES} and~\tref{T:APPROXIMATION} have been
shown by R.\ W.\ Thomason and T.\ Trobaugh~\cite[App.~C]{thomason-trobaugh},
B.\ Conrad~\cite[Thm.~4.3,\ App.~A]{conrad_nagata}
and M.\ Temkin~\cite[Thm.~1.1.2]{temkin_relative-RZ-spaces}.
%
When $X$ and $S$ are \emph{algebraic spaces}, parts of
Theorem~\tref{T:APPROXIMATION} were recently obtained independently by
B.\ Conrad, M.\ Lieblich and
M.\ Olsson~\cite[\S3]{conrad-lieblich-olsson_Nagata}.

There are also some approximation results for group schemes. If $G$ is a
quasi-compact group scheme over a field, then D.\ Perrin has shown that $G$ is
an inverse limit of group schemes of finite type~\cite{perrin_approx-groups}.

\begin{subsection}{\Etale{} \devissage{}}
The \etale{} \devissage{} method of~\cite{rydh_etale-devissage} is the primary
technique behind the proofs of Theorems~\tref{T:COMPLETENESS}
and~\tref{T:APPROXIMATION} and is prominent in all previous treatments of
approximation for algebraic spaces. There is a subtle, yet crucial, difference
in our treatment, though. Our \devissage{} gives statements of the form: given
$X'\to X$ surjective and \etale{}, then $X$ can be approximated if $X'$ can be
approximated (Proposition~\ref{P:pseudo-noeth:et} and
Lemma~\ref{L:approximation:etale}). This is accomplished by reducing to the
case where $X'\to X$ is an \etale{} neighborhood. All other treatments, from
Raynaud--Gruson and onwards, would demand that $X'\to X$ is an \etale{}
neighborhood such that $X'$ is (quasi-)affine. This approach has recently been
formalized as ``scallop decompositions'' by J.\ Lurie and can only deal with
algebraic spaces. As our inductive approach is not based on quasi-affine
neighborhoods, we have to be more careful when formulating the inductive
hypothesis, e.g., use ``pseudo-noetherian'' instead of the
``completeness property''.
\end{subsection}

\begin{subsection}{Overview}
We begin with some conventions on stacks in Section~\ref{S:notation}. In
Section~\ref{S:global-type}, we define stacks of global type and stacks
of approximation type. We show that every
quasi-compact algebraic stack with quasi-finite and locally separated diagonal
is of
global type.
In Section~\ref{S:etale-devissage}, we briefly outline the \etale{}
\devissage{} method.
In Sections~\ref{S:completeness} and~\ref{S:finite-coverings}, we prove
Theorems~\tref{T:COMPLETENESS} and~\tref{T:FINITE-COVERINGS}.
In Section~\ref{S:approximation-properties}, we prove
Theorem~\tref{T:APPROXIMATION-PROPERTIES} \emph{when the morphisms
$X_\lambda\to S$ are of finite presentation}. This case of the theorem is
essentially independent of
Theorems~\tref{T:COMPLETENESS} and~\tref{T:FINITE-COVERINGS}.
In Section~\ref{S:approximation}, we prove the general form of
Theorems~\tref{T:APPROXIMATION-PROPERTIES} and~\tref{T:APPROXIMATION}.
We conclude with numerous applications of the main theorems in
Section~\ref{S:applications}.

In the appendices, we extend the standard results~\cite[\S8--9]{egaIV} on limits
and constructible properties from schemes to stacks.
\end{subsection}

\begin{subsection}{Acknowledgments}
I would like to thank J.\ Alper, B.\ Conrad, P.\ Gross, J.\ Hall, M.\ Lieblich,
M.\
Olsson, R.\ Skjelnes and M.\ Temkin for useful comments and discussions. In
particular, I am grateful to B.\ Conrad for suggesting the adjective
``pseudo-noetherian'' which I find very apt.
\end{subsection}

\end{section}
\setcounter{secnumdepth}{3}

\setcounter{tocdepth}{1}
\tableofcontents

\begin{section}{Stack conventions}\label{S:notation}
We follow the conventions in~\cite{laumon} except that we
follow~\cite[\spref{026O}]{stacks-project} and do not require that
the diagonal of an algebraic stack is quasi-compact and separated. One reason
for this is
that stacks with non-separated diagonals naturally appears
in the context of~\cite{rydh_etale-devissage}. On the other hand, very little
is lost by assuming that all algebraic stacks are \emph{quasi-separated},
i.e., that the diagonal is
quasi-compact and quasi-separated, and most results require this
hypothesis.
If
$Y$ is a quasi-compact and quasi-separated algebraic stack, then $X\to Y$ is
quasi-compact if and only if $X$ is so. In particular, an open substack
$U\subseteq Y$ is quasi-compact if and only if the morphism $U\to Y$ is
quasi-compact.

\begin{xpar}
A \emph{presentation} of a stack $X$ is an algebraic space $X'$ and a
faithfully flat morphism $X'\to X$ locally of finite presentation.
A morphism $\map{f}{X}{Y}$ of stacks is
\emph{representable} (resp.\ \emph{strongly representable}) if $X\times_Y Y'$
is an algebraic space (resp.\ a scheme) for every scheme $Y'$ and morphism
$Y'\to Y$. Note that the property of being representable is fppf-local on the
target. Indeed, a morphism is representable if and only if its diagonal is a
monomorphism. This is not the case for the property of being strongly
representable. A morphism $X\to S$ of stacks is \emph{locally separated} if the
diagonal $\Delta_{X/S}$ is an immersion. In particular, every locally separated
morphism is representable.

An algebraic stack $X$ is \emph{Deligne--Mumford} if there exists an \etale{}
presentation of $X$, or equivalently, if the diagonal is
unramified~\cite[Thm.~8.1]{laumon}, \cite[\spref{06N3}]{stacks-project}.

An algebraic stack is \emph{noetherian} if it is quasi-compact and
quasi-separated and admits a noetherian presentation.
\end{xpar}

\begin{xpar}[Unramified and \etale{}]
For the definition and general properties of unramified and \etale{} morphisms
of stacks, we refer to~\cite[App.~B]{rydh_embeddings-of-unramified}. In
particular, by an unramified morphism we mean a formally unramified morphism
that is locally of \emph{finite type} (not necessarily of finite
presentation). An \etale{} morphism is a formally \etale{} morphism that is
locally of finite presentation. A morphism is unramified if and only if it is
locally of finite type and its diagonal is \etale{}. A morphism is \etale{} if
and only if it is locally of finite presentation, unramified and flat. We do
not require that unramified and \etale{} morphisms are representable.
\end{xpar}

\begin{xpar}[Quasi-finite]
A morphism $\map{f}{X}{Y}$ of stacks is (locally) \emph{quasi-finite} if $f$ is
(locally) of finite
type, every fiber of $f$ is discrete and every fiber of the diagonal $\Delta_f$
is discrete. Equivalently, $f$ is (locally) quasi-finite if and only if $f$ is
(locally) of finite
type, every fiber of $f$ is zero-dimensional and every fiber of the diagonal of
$f$ is zero-dimensional.
%
%
Note that the diagonal of a quasi-finite and quasi-separated morphism is
quasi-finite.
\end{xpar}

Given a morphism $\map{f}{X}{Y}$ of algebraic stacks, we say that
$\map{h\circ g}{X}{X_0\to Y}$ is a \emph{factorization} of $f$ if there exists
a $2$-isomorphism $f\Rightarrow h\circ g$.

The \emph{inertia stack} of a morphism $\map{f}{X}{Y}$ of algebraic stacks is
the algebraic stack $I_{X/Y}:=X\times_{X\times_Y X} X$. It
comes with a representable morphism $\map{I_f}{I_{X/Y}}{X}$ equipped with the
structure of a relative group space over $X$.
We say that $f$ has finite (etc.)\ inertia if $I_f$ is finite
(etc.).

By convention, all our inverse systems are filtered and all maps in
inverse systems are affine.
\end{section}


\begin{section}{Stacks of global type and approximation type}\label{S:global-type}
In this section, we define stacks of global type and show that every
quasi-compact stack with quasi-finite and locally separated diagonal is
of global type.
We also define stacks of approximation type, which is a natural class
of stacks for our purposes. Every stack of global type and every stack of
finite presentation over a quasi-compact and quasi-separated scheme or
algebraic space is of
approximation type.

\begin{definition}\label{D:global-type}
Let $X$ be an algebraic stack. We say that $X$ is
\begin{enumerate}
\item \emph{basic} if $X=[V/\GL_n]$ for
  some \emph{quasi-affine} scheme $V$ and integer~$n$;
\item of \emph{global type} if there exists a
  representable, \etale{}, finitely presented and surjective morphism
  $\map{p}{X'}{X}$ such that $X'$ is basic;
\item of \emph{s-global type} if there exists a separated,
  representable, \etale{}, finitely presented and surjective morphism
  $\map{p}{X'}{X}$ such that $X'$ is basic;
\item a \emph{global quotient stack} if $X=[V/\GL_n]$
  where $V$ is an algebraic space.
\end{enumerate}
\end{definition}

\begin{remark}[Relation with the resolution property]\label{R:Totaros-theorem}
B.\ Totaro has shown that a normal noetherian stack is a basic stack
if and only if it has the resolution
property~\cite{totaro_resolution-property}. By recent work of P.\ Gross, this
also holds for non-normal noetherian stacks~\cite[Thm.~6.3.1]{gross_thesis}.
Using Theorem~\tref{T:APPROXIMATION-PROPERTIES}, one can give a
very satisfactory proof of this result that is also valid without noetherian
hypotheses~\cite{gross_tensor-generators}.
Thus, a stack $X$ is of global type if and only if the
resolution property holds \etale{}-locally on $X$.

Every basic stack has affine diagonal. There are very few examples of stacks
with affine diagonal that are known to be non-basic. For example, there is not
a single example of a
non-basic separated scheme or Deligne--Mumford
stack~\cite[Question~1]{totaro_resolution-property} although S.\ Payne has
given some evidence that a certain proper toric three-fold is not
basic~\cite{payne_toric-vect-bundles-res-prop}.
\end{remark}

The following example shows that many global quotient stacks are of global
type.

\begin{example}
Let $G$ be an affine smooth group scheme over $\Spec \Z$ with connected
fibers, e.g., $G=\GL_{n,\Z}$. If $X$ is a \emph{normal noetherian scheme} with
an action of $G$, then $[X/G]$ is of s-global type.
%
%
Indeed, if $X$ is quasi-projective, then $[X/G]$ has the resolution property
by~\cite[Thm.~2.1 (2)]{totaro_resolution-property} and a result of
Sumihiro~\cite[Thm.~3.8]{sumihiro_II} states that there exists a Zariski
covering $U\to [X/G]$ such that $X\times_{[X/G]} U$ is quasi-projective.
\end{example}

\begin{question}
Is every noetherian global quotient stack of s-global type?
\end{question}

\begin{remark}
Note that being of global type is not a purely local condition since
$\map{p}{X'}{X}$ is required to be of finite presentation, so $X$ is
quasi-compact and quasi-separated. We require that $p$ is representable
as currently there is no suitable \devissage{} for non-representable \etale{}
morphisms.

If $X$ is a stack of global type, then $X$ is quasi-compact, quasi-separated
and $\Delta_X$ is locally separated with affine fibers,
cf.~\cite[App.~A]{rydh_etale-devissage}.
If $X$ is a stack of s-global type, then $\Delta_X$ is also quasi-affine.
%
%
Note that there exist stacks of global type with non-separated diagonals, e.g.,
every quasi-compact and quasi-separated Deligne--Mumford stack is of global
type.
%
\end{remark}

I am not aware of any example of a stack with quasi-affine diagonal that is not
of global type. There are, on the other hand, examples of stacks with
quasi-affine diagonal that are not global quotient stacks. One such example,
with affine diagonal, is the $\Gm$-gerbe over a complex surface $Y$
corresponding to a non-torsion element of the cohomological Brauer group
$H^2_{\metale}(Y,\Gm)$~\cite[Ex.~3.12]{edidin-etal_Brauer-groups-and-quotient-stacks}. Another
example is the two-dimensional Deligne--Mumford stack
of~\cite[Ex.~2.21]{edidin-etal_Brauer-groups-and-quotient-stacks} which has
quasi-affine diagonal. These two examples are easily seen to be of global
type. A third example of a stack of global type that is not a global quotient
stack, albeit not with quasi-affine diagonal, is the stack
$\mathfrak{M}_0^{\leq m}$ of prestable curves of genus~$0$ with at most $m$
nodes, for $m\geq 2$~\cite[\S5]{kresch_flat-strat}.

\begin{proposition}\label{P:finite-flat-pres=>basic}
Let $X$ be an algebraic stack with a finite flat presentation $\map{p}{V}{X}$
such that $V$ is quasi-affine. Then $X$ is a basic stack.
\end{proposition}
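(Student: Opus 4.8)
The plan is to realize $X$ as the quotient stack attached to a vector bundle and then to prove that the total space of the corresponding frame bundle is quasi-affine, the crucial input being the descent of ampleness along the finite surjection $p$.

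First I would set $\sA=p_*\sO_U$. Since $p$ is finite, flat and of finite presentation, $\sA$ is a finite locally free $\sO_X$-module whose rank is the (locally constant) degree of $p$; as $X$ is quasi-compact this degree is bounded, and $p$ being surjective it is everywhere $\geq 1$. Decomposing $X$ into the open-and-closed substacks on which the degree is constant and adding a trivial summand $\sO_X^{N-\deg p}$ of the appropriate locally constant rank, I obtain a locally free $\sO_X$-module $\mathcal{E}$ of constant rank $N\geq 1$. Let $\map{\pi}{T}{X}$ be its frame bundle, i.e.\ the $\GL_N$-torsor $\underline{\mathrm{Isom}}_{\sO_X}(\sO_X^N,\mathcal{E})$. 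Then $X\iso[T/\GL_N]$, the morphism $\pi$ is representable, faithfully flat and affine (being a torsor under the affine group $\GL_N$), and $T$ is an algebraic space. It therefore suffices to prove that $T$ is a quasi-affine scheme, after which $X=[T/\GL_N]$ is a global quotient stack by definition.

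Next I would analyze the base change $U_T:=T\times_X U$. Because $\pi$ is affine, the projection $U_T\to U$ is affine, so $U_T$ is a scheme; and since $U$ is quasi-affine and an affine morphism is quasi-affine, composing the quasi-affine morphisms $U_T\to U\to\Spec(\Z)$ shows that $U_T$ is a quasi-affine scheme. On the other hand the other projection $\map{q}{U_T}{T}$ is the base change of $p$, hence finite and surjective. Thus I have exhibited a finite surjective morphism onto $T$ from a quasi-affine scheme, with $T$ quasi-compact and quasi-separated (as $\pi$ is affine and $X$ is so).

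The heart of the argument is the final descent step. Since $U_T$ is quasi-compact and quasi-affine, the invertible sheaf $\sO_{U_T}=q^*\sO_T$ is ample. I would then invoke the descent of ampleness along the finite surjective morphism $q$ to conclude that $\sO_T$ is ample, whence $T$ is a quasi-affine scheme (an algebraic space carrying an ample invertible sheaf is a quasi-affine scheme). Everything preceding this is formal manipulation of torsors and base change; the main obstacle is precisely this last step, which requires the analogue for the algebraic space $T$ of Chevalley's and Serre's criteria (namely, that ampleness, equivalently quasi-affineness, descends along finite surjective morphisms) rather than merely their classical scheme-theoretic versions.
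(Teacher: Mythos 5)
Your overall strategy --- pass to the frame bundle of $p_*\sO_U$ and prove that its total space is quasi-affine by descending quasi-affineness along the finite surjection obtained by base change --- is the same as the paper's, and your final descent step corresponds to the lemma the paper cites for exactly this purpose. However, there is a genuine gap at the assertion that $T$ is an algebraic space. Being a $\GL_N$-torsor makes $\map{\pi}{T}{X}$ representable, affine and faithfully flat, but it does \emph{not} make the total space $T$ an algebraic space: for example, the frame bundle of the trivial bundle $\sO_X^N$ on $X=\mathrm{B}G$ ($G$ a finite group scheme) is $\GL_N\times \mathrm{B}G$, which is a $\GL_N$-torsor over $X$ yet has nontrivial stabilizers. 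In general the frame bundle $T$ of a vector bundle $\mathcal{E}$ is an algebraic space if and only if, for every point $\map{x}{\Spec(k)}{X}$, the stabilizer group scheme $G_x$ acts \emph{faithfully} on the fibre $\mathcal{E}_x$. This is precisely the step the paper proves and you skip: since $U$ is an algebraic space, $G_x$ acts freely on the fibre $U_x$, hence the coaction of $G_x$ on $\sA_x=\Gamma(U_x,\sO_{U_x})$ is faithful (and it remains faithful after adding the trivial summand used to normalize the rank), hence $G_x$ acts freely on the fibre of the frame bundle, so $T$ has trivial stabilizers. Note that this is the only place where the hypothesis that $p$ is a \emph{presentation}, rather than merely a finite flat surjection from a quasi-affine scheme, actually enters; your argument never uses it.

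The gap is essential rather than cosmetic, because your concluding step fails without it: quasi-affineness (equivalently, ampleness of the structure sheaf) does \emph{not} descend along finite surjections of algebraic \emph{stacks}. Indeed, $\Spec(k)\to \mathrm{B}G$ is finite, faithfully flat and surjective with affine source, but $\mathrm{B}G$ is not quasi-affine, nor even an algebraic space. So you must establish that $T$ is an algebraic space \emph{before} invoking descent of ampleness; once that is done, the rest of your argument matches the paper's, which at this point cites a lemma asserting that an algebraic space admitting a finite flat presentation by a quasi-affine scheme is itself quasi-affine.
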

\begin{proof}
Let
$\sL=p_*\sO_V$ which is a locally free sheaf of finite rank. Let
$\map{x}{\Spec k}{X}$ be a point and let $G_x$ be the stabilizer group
scheme of $x$. The stabilizer group scheme acts on the $k$-vector space
$\sL_x$ and this coaction is faithful since the stabilizer action on the
subscheme $V_x\inj \mathbf{V}(\sL_x)$ is free. Replacing $\sL$ with the direct
sum of $\sL$ and a free sheaf, we can further assume that $\sL$ is of constant
rank $r$.

Let $Z=\underline{Isom}_X(\sO_X^r,\sL)\subseteq \mathbf{V}\bigl((\sL^\vee)^r\bigr)$
be the frame bundle of $\sL$. The morphism $Z\to X$ is a
$\GL_{r,\Z}$-torsor and $Z$ is an algebraic space since the action of
$G_x$ on the fiber $Z_x$ is free. Since $Z\to X$ is affine, we have that
$Z\times_X V$ is quasi-affine. As $Z\times_X V\to Z$ is a finite flat
presentation, we conclude that $Z$ is quasi-affine as well
by~\cite[Lem.~C.1]{rydh_etale-devissage}.
Thus $X=[Z/\GL_r]$ is a basic stack.
\end{proof}

\begin{corollary}\label{C:qcqf=>global-type}
Every quasi-compact algebraic stack with quasi-finite and locally separated
(resp.\ separated) diagonal is
of global type (resp.\ s-global type).
\end{corollary}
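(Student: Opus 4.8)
The plan is to reduce to the preceding proposition, which already manufactures a global quotient stack out of a finite flat presentation by a quasi-affine scheme. Thus it suffices to produce a representable, \etale{}, finitely presented and surjective morphism $\map{p}{X'}{X}$ such that $X'$ admits a finite flat presentation by a quasi-affine scheme; the proposition then shows that $X'$ is a global quotient stack, and $X$ is of global type by Definition~\ref{D:global-type}. Since $X$ is quasi-compact it is enough to do this \emph{\etale{}-locally}: if finitely many representable \etale{} finitely presented morphisms $\map{p_i}{X_i'}{X}$ cover $X$, with each $X_i'$ carrying a finite flat presentation $V_i\to X_i'$ by a quasi-affine scheme $V_i$, then $X'=\coprod_i X_i'$ with $p=\coprod_i p_i$ does the job. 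Indeed a finite disjoint union of quasi-affine schemes is again quasi-affine (it is a quasi-compact open subscheme of the spectrum of the product ring), so $\coprod_i V_i\to X'$ is a finite flat presentation by a quasi-affine scheme and the proposition applies to $X'$.

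The crux is the construction of the local pieces, and this is where the hypotheses on the diagonal enter. Since $\Delta_X$ is quasi-finite, every stabilizer group scheme $G_x$ is of finite type and $0$-dimensional over $\kappa(x)$, hence finite, and the inertia is quasi-finite over $X$; since $\Delta_X$ is locally separated, $X$ has representable diagonal. I would then show that \etale{}-locally $X$ admits a finite flat presentation by an affine scheme. The model case is a quotient $[\Spec(A)/G]$ by a finite \emph{flat} group scheme $G$: there the projection $\Spec(A)\to[\Spec(A)/G]$ is a $G$-torsor, hence finite, flat and surjective with affine source, exactly the input the proposition demands. To reduce a neighbourhood of a point $x$ to such a model I would invoke the \etale{} devissage of~\cite{rydh_etale-devissage}, outlined in Section~\ref{S:etale-devissage}: it produces representable \etale{} neighbourhoods and lets one reduce to the residual gerbe at $x$, which after an \etale{} extension of $\kappa(x)$ becomes $[\Spec(k')/G_x]$ with $G_x$ a finite (hence flat, being over a field) group scheme, and therefore visibly carries a finite flat presentation by an affine scheme.

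The main obstacle is exactly this local step: that a quasi-compact stack with quasi-finite and locally separated diagonal is, \etale{}-locally, a quotient of an affine scheme by a finite flat group scheme (or at least admits a finite flat presentation by a quasi-affine scheme). This is where the quasi-finiteness of the diagonal is used in earnest --- to force the stabilizers and the inertia to be finite and to control them by Zariski's main theorem --- together with the devissage machinery needed to spread the residual-gerbe model out to an honest \etale{} neighbourhood and to glue the neighbourhoods. Everything else is routine: representability and finite presentation of $p$ come from the fact that the devissage neighbourhoods are representable and \etale{} and that $X$ is quasi-compact, the passage from finitely many local pieces to a single cover uses only that finite disjoint unions of quasi-affine schemes are quasi-affine, and the final identification of $X'$ as a global quotient stack is the preceding proposition.
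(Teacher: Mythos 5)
Your overall reduction is exactly the paper's: produce a representable, \etale{}, finitely presented, surjective $\map{p}{X'}{X}$ such that $X'$ admits a finite flat presentation by a quasi-affine scheme, then apply the preceding proposition and Definition~\pref{D:global-type}. (Your gluing remarks --- finitely many local pieces assembled by disjoint union, a finite disjoint union of quasi-affine schemes being quasi-affine --- are correct but not even needed once one has the cover in hand.) The difference is where that cover comes from. The paper obtains it in one stroke from \cite[Thm.~7.2]{rydh_etale-devissage}, which is precisely the statement you yourself label ``the main obstacle'': the existence, for a quasi-compact stack with quasi-finite and locally separated diagonal, of a representable \etale{} finitely presented surjective $X'\to X$ together with a finite flat presentation $V\to X'$ with $V$ quasi-affine. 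You do not cite this result; you propose to reconstruct it via the residual gerbe at a point and a spreading-out argument, and you leave that reconstruction as a sketch. That is a genuine gap, not a routine verification.

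Concretely, two things go wrong with the sketched route. First, the devissage results summarized in Section~\ref{S:etale-devissage} (Theorems A--D of \cite{rydh_etale-devissage}) are gluing and induction principles for finitely presented \etale{} morphisms; none of them ``lets one reduce to the residual gerbe at $x$'' or provides any mechanism for spreading a presentation of the residual gerbe out to an \etale{} neighbourhood --- that is the content of the separate local structure theorem \cite[Thm.~7.2]{rydh_etale-devissage}, whose proof (via quasi-finite flat presentations, Zariski's main theorem, and a $\SEC$-type construction as in Section~\ref{S:finite-coverings}) is substantially harder than the model case $[\Spec(A)/G]$ you describe. Second, spreading out from a residual gerbe is a limit/approximation argument, and in the present generality --- quasi-compact and quasi-separated with no noetherian or finite-presentation hypotheses --- such arguments are exactly what this paper is in the process of establishing: the corollary is used to show these stacks are of global type, hence of approximation type, hence pseudo-noetherian, which is what later makes limit arguments available. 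So your route either presupposes machinery proved downstream of the corollary or silently reproves \cite[Thm.~7.2]{rydh_etale-devissage}; in either case the decisive step is missing.
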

\begin{proof}
Let $X$ be a stack with quasi-finite and locally separated (resp.\ separated)
diagonal. By~\cite[Thm.~7.2]{rydh_etale-devissage}, there exists a
representable (resp.\ representable and separated) \etale{} surjective morphism
$X'\to X$ of finite presentation and a finite
flat presentation $V\to X'$ with $V$ quasi-affine. Since $X'$ is a basic
stack~(Proposition~\ref{P:finite-flat-pres=>basic}), we have, by definition,
that $X$ is of global type (resp.\ s-global type).
\end{proof}

The following result, which partly generalizes the two previous results but
depends on~\cite{gross_tensor-generators}, is not used in this paper but
included for completeness. In particular, it justifies the usage of \etale{} in
the definition of s-global type.

\begin{proposition}
Let $\map{f}{X}{Y}$ be a morphism of algebraic stacks.
\begin{enumerate}
\item\label{PI:glob-type:qaff-stable}
  Assume that $f$ is quasi-affine. If $Y$ is basic (resp.\ of s-global
  type, resp.\ of global type), then so is $X$.
\item\label{PI:basic:ff-descends}
  Assume that $f$ is finite and faithfully flat of finite presentation.
  If $X$ is basic, then so is $Y$.
\item\label{PI:s-glob-type:qfsep-descends}
  Assume that $f$ is quasi-finite, representable, separated
  and faithfully flat of
  finite presentation. If $X$ is of s-global type, then so is $Y$.
\end{enumerate}
In particular, in the definition of s-global type, we can replace ``\etale''
with ``quasi-finite and flat''.
\end{proposition}
\begin{proof}
\ref{PI:glob-type:qaff-stable} is trivial from the definitions.
\ref{PI:basic:ff-descends} follows from~\cite[Prop.~4.3
  (vii)]{gross_tensor-generators} taking into
account~\cite[Cor.~5.9]{gross_tensor-generators}. For
\ref{PI:s-glob-type:qfsep-descends}, we can assume that $X$ is basic. Then
using \cite[Thm.~6.3 (i)]{rydh_etale-devissage} and
\ref{PI:glob-type:qaff-stable}, we can assume that $f$ is finite and the result
follows from \ref{PI:basic:ff-descends}.
\end{proof}

\begin{definition}\label{D:approx-type}
We say that a morphism $\map{f}{X}{Y}$ of algebraic stacks is of \emph{strict
approximation type} if $f$ can be written as a composition of affine morphisms
and finitely presented morphisms. We say that $f$ is of \emph{approximation
type} if there exists a surjective representable and finitely presented
\etale{} morphism $\map{p}{X'}{X}$ such that $f\circ p$ is of strict
approximation type. We say that an algebraic stack $X$ is of (strict)
approximation type if $X\to \Spec \Z$ is of (strict) approximation type.
\end{definition}

We begin with the usual sorites of morphisms of (strict) approximation type.

\begin{proposition}\label{P:approx-type-sorites}\; ---
\begin{enumerate}
\item Finitely presented morphisms and quasi-affine morphisms are of strict
approximation type.\label{PI:at-sorites:fp/aff}
\item Every morphism of approximation type is quasi-compact and quasi-separated.
\label{PI:at-sorites:qcqs}
\item Basic stacks are of strict approximation type.
Stacks of global type, e.g., quasi-compact and quasi-separated
Deligne--Mumford stacks, are of approximation type.
\label{PI:at-sorites:glob-type}
\item If $\map{f}{X}{Y}$ is of (strict) approximation type and
$Y'\to Y$ is a morphism, then $\map{f'}{X\times_Y Y'}{Y'}$ is of (strict)
approximation type.\label{PI:at-sorites:base-change}
\item If $\map{f}{X}{Y}$ and $\map{g}{Y}{Z}$ are of (strict) approximation
type then so is $g\circ f$.\label{PI:at-sorites:composition}
\item If $\map{f_1}{X_1}{Y_1}$ and $\map{f_2}{X_2}{Y_2}$ are of (strict)
approximation type then so is $f_1\times f_2$.\label{PI:at-sorites:product}
\item If $\map{f}{X}{Y}$ and $\map{g}{Y}{Z}$ are morphisms such that
$g\circ f$ and $\Delta_g$ are of (strict) approximation type, then so is
$f$.\label{PI:at-sorites:first-of-comp}
\item If $\map{f}{X}{Y}$ is of (strict) approximation type, then so is
$\map{\Delta_f}{X}{X\times_Y X}$.\label{PI:at-sorites:diagonal}
\end{enumerate}
In particular, morphisms between stacks of global type are of approximation
type.
\end{proposition}
\begin{proof}
\ref{PI:at-sorites:fp/aff}, \ref{PI:at-sorites:qcqs},
\ref{PI:at-sorites:base-change}, \ref{PI:at-sorites:composition}
and~\ref{PI:at-sorites:product} are obvious
and~\ref{PI:at-sorites:first-of-comp} follows from a standard argument.

\ref{PI:at-sorites:glob-type} Let $X=[V/\GL_n]$ with $V$ quasi-affine. Then
there is an induced quasi-affine morphism $X\to \BGL_{n,\Z}$, so $X$ is of
strict approximation type.

\ref{PI:at-sorites:diagonal} If $f=f_n\circ f_{n-1}\circ\dots\circ f_1$ is a
composition of affine morphisms and finitely presented morphisms, then so is
$\Delta_f$ since it is a composition of pull-backs of the
$\Delta_{f_i}$'s. If $\map{p}{X'}{X}$ is a surjective representable and
finitely presented \etale{} morphism such that $f\circ p$ is of strict
approximation type, then $\Delta_f\circ p=(p\times p)\circ \Delta_{f\circ p}$
is of strict approximation type.
\end{proof}

\begin{proposition}\label{P:approximation:etale-descent}
Let $\map{f}{X}{Y}$ be a morphism of algebraic stacks.
\begin{enumerate}
\item Let $\map{g}{Y'}{Y}$ be surjective, representable and \etale{} of finite
presentation. If $\map{f'}{X\times_Y Y'}{Y'}$ is of approximation type, then so
is $f$.
\item Let $\map{p}{X'}{X}$ be surjective, representable and \etale{} of finite
presentation. If $f\circ p$ is of approximation type, then so is $f$.
\end{enumerate}
\end{proposition}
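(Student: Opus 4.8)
The plan is to derive both parts formally from the definition of approximation type together with the sorites of the preceding proposition, the only genuine inputs being that surjective, representable, finitely presented \etale{} morphisms are stable under composition and base change.

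Part (ii) I expect to be immediate. Unwinding the hypothesis, since $f\circ p$ is of approximation type there is a surjective, representable, finitely presented \etale{} morphism $\map{q}{X''}{X'}$ for which $(f\circ p)\circ q$ is of strict approximation type. I would then observe that $p\circ q\colon X''\to X$ is again surjective, representable, finitely presented and \etale{}, and that $f\circ(p\circ q)$ coincides with $(f\circ p)\circ q$, hence is of strict approximation type; by the definition of approximation type this exhibits $f$ as being of approximation type, with no further work.

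For part (i) I would set $X'=X\times_Y Y'$ with first projection $\map{\pi}{X'}{X}$, which as a base change of $g$ is surjective, representable, finitely presented and \etale{}. Using that $f'$ is of approximation type, I choose a surjective, representable, finitely presented \etale{} morphism $\map{q}{X''}{X'}$ with $f'\circ q$ of strict approximation type. Then $\pi\circ q\colon X''\to X$ inherits all four properties, so it remains only to check that $f\circ\pi\circ q$ is of strict approximation type. Here I would invoke the cartesian square to identify $f\circ\pi$ with $g\circ f'$ (up to the canonical $2$-isomorphism), so that $f\circ\pi\circ q\cong g\circ(f'\circ q)$; since $g$ is finitely presented, and hence of strict approximation type by~\ref{PI:at-sorites:fp/aff}, the composition property~\ref{PI:at-sorites:composition} shows that $g\circ(f'\circ q)$ is of strict approximation type, whence $f$ is of approximation type.

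The one point demanding attention, rather than a real obstacle, is the bookkeeping of $2$-isomorphisms: the identification $f\circ\pi\cong g\circ f'$ holds only up to the canonical $2$-cell of the fibre square, and I would rely on the fact --- built into the definition of a factorization in Section~\ref{S:notation} --- that being of (strict) approximation type is insensitive to composition with $2$-isomorphisms. Granting this, both arguments are purely formal and require no approximation input beyond the stability properties already recorded.
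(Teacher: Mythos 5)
Your proof is correct and is exactly the unwinding that the paper intends: its own proof of this proposition is the single line ``Immediate from the definition of approximation type,'' and your argument---composing the \'etale atlases and using stability of surjective, representable, finitely presented \'etale morphisms under composition and base change, plus the sorites for strict approximation type---is precisely that immediate verification, with the $2$-isomorphism bookkeeping handled appropriately.
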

\begin{proof}
Immediate from the definition of approximation type.
\end{proof}

We will now give an analogue of
Proposition~\pref{P:approximation:etale-descent} for finite flat
morphisms which also is valid for strict approximation type.

\begin{proposition}\label{P:strict-approximation:finite-flat-descent}
Let $\map{f}{X}{Y}$ be a morphism of algebraic stacks.
\begin{enumerate}
\item Let $\map{g}{Y'}{Y}$ be finite and faithfully flat of finite presentation.
If $\map{f'}{X\times_Y Y'}{Y'}$ is of (strict) approximation type, then so
is $f$.
\item Let $\map{p}{X'}{X}$ be finite and faithfully flat of finite presentation
and assume that $X$ is quasi-compact. If $f\circ p$ is of (strict) approximation
type, then so is $f$.
\end{enumerate}
\end{proposition}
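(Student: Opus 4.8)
The plan is to prove the statement for \emph{strict} approximation type first and to deduce the general case at the end; throughout, the engine is finite flat (fppf) descent, and the quasi-compactness hypothesis is used precisely to keep finite presentation under control, exactly as in the quasi-affine descent result \cite[Lem.~C.1]{rydh_etale-devissage} that was invoked above. I would also reduce part~(i) to part~(ii). Indeed, in the cartesian square with $g'\colon X\times_Y Y'\to X$ the base change of $g$ and $f\circ g'=g\circ f'$, the map $g$ is finite and finitely presented, hence of strict approximation type by~\ref{PI:at-sorites:fp/aff}, so $f\circ g'=g\circ f'$ is of (strict) approximation type by~\ref{PI:at-sorites:composition}. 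After reducing to the case where $Y$ is quasi-compact, $Y'$ is quasi-compact (finite over $Y$), $X\times_Y Y'$ is quasi-compact (as $f'$ is quasi-compact by~\ref{PI:at-sorites:qcqs}), and therefore $X$ is quasi-compact, being the image of the finite surjection $g'$. Since $g'$ is again finite, faithfully flat and of finite presentation, part~(ii) applies and yields that $f$ is of (strict) approximation type.

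The heart of the matter is part~(ii). Here I would present $X$ as the quotient $X=X'/R$ of the finite locally free groupoid $R=X'\times_X X'\rightrightarrows X'$, whose two structure maps are finite, faithfully flat and of finite presentation; composing $R\to X'$ with $X'\to Y$ shows that $R\to Y$ is of strict approximation type as well. Thus the assertion becomes: the quotient by a finite locally free groupoid of objects of strict approximation type over $Y$ is again of strict approximation type over $Y$. The natural approach is a d\'evissage on the length $n$ of a factorization $X'\to Z\to Y$ of $f\circ p$ into affine and finitely presented layers, descending the outermost layer along $p$. Concretely, one forms the Weil restriction $\bar Z:=\weilr_{X'/X}(Z\times_Y X')$ of the $X'$-stack $Z\times_Y X'\to X'$ (the base change of $Z\to Y$, hence of strict approximation type of length $n-1$ by~\ref{PI:at-sorites:base-change}); Weil restriction along the finite locally free morphism $p$ preserves affine morphisms and, thanks to $X$ being quasi-compact, preserves finitely presented morphisms, so the induction hypothesis applies to $\bar Z\to X$. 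The layer $X'\to Z$ supplies, by the adjunction for $\weilr_{X'/X}$, a canonical $X$-morphism $X\to\bar Z$.

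\emph{The main obstacle.} The difficulty is that $\weilr_{X'/X}$ naturally produces stacks over $X$ rather than over $Y$, so the construction above yields, \emph{a priori}, only a retraction of $\bar Z\to X$ and not a factorization of $f$ itself through a stack of strict approximation type \emph{over $Y$}. To repair this one must track the descent datum carried by $Z\times_Y X'$ over $R=X'\times_X X'$ and use the cocycle condition to glue the individual layers into an $R$-equivariant model, thereby realizing the descended factorization of $f$ over $Y$; verifying this compatibility, together with the preservation of finite presentation under the Weil restriction (which is where quasi-compactness of $X$ enters, via the standard limit arguments of the appendices), is where the real work lies. Everything else in the d\'evissage is formal given the sorites~\ref{PI:at-sorites:fp/aff}--\ref{PI:at-sorites:composition}.

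Finally, the non-strict assertions follow from the strict ones. Given a morphism of approximation type, the defining representable, \'etale, finitely presented and surjective presentation of its source reduces the problem to the strict case treated above; combining this with the preceding proposition on descent of approximation type along such \'etale presentations yields the claim for both~(i) and~(ii).
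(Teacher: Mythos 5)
Your proposal inverts the paper's logic — it tries to prove (ii) directly and deduce (i) from it, whereas the paper proves (i) directly and deduces (ii) — and both halves of your plan have genuine gaps. The deduction of (i) from (ii) requires $X$ to be quasi-compact, and you arrange this by ``reducing to the case where $Y$ is quasi-compact.'' There is no justification for that reduction: (strict) approximation type is not known to be Zariski-local (or fppf-local) on the target — the paper explicitly records this locality as an open question — so you cannot localize on $Y$. Note that part (i) carries no quasi-compactness hypothesis at all; this asymmetry is precisely why the only workable order is the paper's, where (i) is proved for arbitrary $Y$ via the Weil restriction $\weilr_g$ along $\map{g}{Y'}{Y}$ (which lands over $Y$), using the unit/counit identities $f=\weilr_g(f')\circ\eta$, the observation that $\eta$ becomes successively representable, a monomorphism, and an isomorphism, and the trick of repeating the argument twice. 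Your sketch contains no substitute for this mechanism.

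The argument you offer for (ii) is, by your own account, not a proof. The Weil restriction $\overline{Z}=\weilr_p(Z\times_Y X')$ along $\map{p}{X'}{X}$ lives over $X$, so the adjunction yields only a retraction $X\to\overline{Z}\to X$, and you defer the repair — ``tracking the descent datum'' and ``the cocycle condition'' — as ``where the real work lies.'' That deferred step is the entire content of the proposition, and it is doubtful it can be completed as sketched: the canonical descent datum on $Z\times_Y X'$ relative to $p$ descends it to $Z\times_Y X$, and the layer $X'\to Z\times_Y X'$ (the graph of $X'\to Z$) is compatible with that datum only if $X'\to Z$ already descends to a morphism $X\to Z$, which is exactly what one does not have. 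The paper's device for (ii) is genuinely different: quasi-compactness of $X$ is used only to make the rank of $p$ constant, say $d$ (not, as you assert, to preserve finite presentation under Weil restriction), giving a classifying morphism $X\to\HilbSt^d_{Y/Y}$ \emph{over $Y$} with $X'=X\times_{\HilbSt^d_{Y/Y}}Z$ for the universal finite flat family $Z\to\HilbSt^d_{Y/Y}$; since $\HilbSt^d_{Y/Y}\to Y$ is of finite presentation and $X'\to Z$ is of (strict) approximation type, part (i) applied to the finite flat covering $Z\to\HilbSt^d_{Y/Y}$ finishes the proof. Finally, your treatment of the non-strict case does not go through as stated: an \etale{} cover witnessing that $f'$ (resp.\ $f\circ p$) is of approximation type is a cover of $X\times_Y Y'$ (resp.\ $X'$), and it induces no representable \etale{} cover of $X$, so the non-strict case does not simply reduce to the strict one; in the paper both cases are carried along in parallel because $\weilr_g$ also preserves representable, surjective \etale{} morphisms of finite presentation.
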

\begin{proof}
(i) We will make use of the Weil restriction of stacks along
$g$~\cite[\S3]{rydh_hilbert}. Recall that the Weil restriction is a
$2$-functor $\map{\weilr_g}{\Stack_{/Y'}}{\Stack_{/Y}}$ from stacks over $Y'$
to stacks over $Y$. If $Z$ is an algebraic stack over $Y'$, then so is 
$\weilr_g(Z)\to Y$. If $\map{h}{Z_1}{Z_2}$ is a morphism of algebraic
stacks over $Y'$, then there is an induced morphism
$\map{\weilr_g(h)}{\weilr_g(Z_1)}{\weilr_g(Z_2)}$. If $h$ is affine
(resp.\ of finite presentation, resp.\ \etale{} and surjective,
resp.\ representable, resp.\ a monomorphism)
then so is $\weilr_g(h)$. In particular, if $f'$ is of (strict) approximation
type then so is $\map{\weilr_g(f')}{\weilr_g(X\times_Y Y')}{\weilr_g(Y')=Y}$.
As $\weilr_g$ is the right adjoint to the pull-back functor
$\map{g^{-1}}{\Stack_{/Y}}{\Stack_{/Y'}}$ we have unit and counit maps
\begin{align*}
\map{\eta&}{X}{\weilr_g(X\times_Y Y')} \\
\map{\epsilon&}{\weilr_g(X\times_Y Y')\times_Y Y'}{X\times_Y Y'}
\end{align*}
such that
$$f=\weilr_g(f')\circ \eta$$
$$\weilr_g(f')\times_Y \id{Y'}=f'\circ \epsilon$$
$$\id{X\times_Y Y'}=\epsilon\circ (\eta\times_Y \id{Y'}).$$
It is thus enough to show that $\eta$ is of (strict)
approximation type. Since $\weilr_g(f')$ and $f'$ are of (strict) approximation
type, so are $\epsilon$ and $\eta\times_Y \id{Y'}$. We further observe that
if $f$ is arbitrary (resp.\ representable, resp.\ a monomorphism) then so
is $\epsilon$ and it follows that $\eta$ is representable (resp.\ a
monomorphism, resp.\ an isomorphism). We can thus replace $f$ with $\eta$ and
$\map{g}{Y'}{Y}$ with its pull-back
$\weilr_g(X\times_Y Y')\times_Y Y'\to \weilr_g(X\times_Y Y')$
and further assume that $f$ is representable (resp.\ a monomorphism,
resp.\ an isomorphism). Repeating the argument twice settles (i).

(ii) We will make use of the \emph{Hilbert stack of points}~\cite{rydh_hilbert}.
Since $X$ is quasi-compact, and the fiber rank of $p$ is locally constant,
we can assume that $p$ has constant rank $d$. This induces a morphism
$X\to \HilbSt^d_{Y/Y}$ and a cartesian diagram
$$\xymatrix{X'\ar[r]\ar[d] & Z\ar[d]\\
X\ar[r] & \HilbSt^d_{Y/Y}\ar[r]\ar@{}[ul]|\square & Y}$$
where $Z\to \HilbSt^d_{Y/Y}$ is the universal finite flat family of constant
rank $d$. Since $\HilbSt^d_{Y/Y}\to Y$ is of finite presentation, we have
that $X'\to Z$ is of (strict) approximation type, so $X\to Y$ is
of (strict) approximation type by (i).
\end{proof}

For our immediate purposes, we only need
Proposition~\pref{P:strict-approximation:finite-flat-descent} for finite
\emph{\etale} coverings. The proof can then be simplified by replacing the Weil
restriction and the Hilbert stack with symmetric products and $\BSG{d}\times Y$.
However, Proposition~\pref{P:strict-approximation:finite-flat-descent} together
with~\cite[Thm.~6.3 (ii)]{rydh_etale-devissage} shows that in the definition of
approximation type, we can essentially replace \etale{} with quasi-finite flat.
To be precise, if $\map{f}{X}{Y}$ is a morphism of algebraic stacks such that
$X$ is quasi-compact and quasi-separated, then $f$ is of approximation type
if and only if there exists a representable, locally separated, quasi-finite
and faithfully flat
morphism $\map{p}{X'}{X}$ of finite presentation such that $f\circ p$ is
of strict approximation type.

It should be noted that the only reason for insisting upon $p$ being
representable in Definitions~\pref{D:global-type} and~\pref{D:approx-type} is
that currently we only have
a nice \etale{} \devissage{} for representable morphisms. This is also the
reason
behind the local separatedness assumption in
Corollary~\pref{C:qcqf=>global-type}.

\begin{questions}
Is the notion of (strict) approximation type fppf-local on the target? Is every
noetherian stack of approximation type? Is every quasi-compact and
quasi-separated stack of approximation type?
Is every representable morphism of approximation type?
Is every quasi-compact stack
with quasi-affine diagonal of global type?
\end{questions}

\end{section}


\begin{section}{\Etale{} \devissage{}}\label{S:etale-devissage}
The \etale{} \devissage{} method reduces questions about general \etale{}
morphisms
to \etale{} morphisms of two basic types. The first type is finite \etale{}
coverings. The second is \emph{\etale{} neighborhoods} or equivalently
pushouts of \etale{} morphisms and open immersions --- the \etale{} analogue of
open
coverings consisting of two open subsets. For the reader's convenience, we
summarize the main results of~\cite{rydh_etale-devissage}.

\begin{definition}
Let $X$ be an algebraic stack and let $Z\inj |X|$ be a closed subset.
An \etale{} morphism $\map{p}{X'}{X}$ is an \emph{\etale{} neighborhood of $Z$}
if $p|_{Z_\red}$ is an isomorphism.
\end{definition}

\begin{theorem}[{\cite[Thm.~A]{rydh_etale-devissage}}]
\label{T:etnbhd-descent:QCoh}
Let $X$ be an algebraic stack and let $U\subseteq X$ be an open substack. Let
$\map{f}{X'}{X}$ be an \etale{} neighborhood of $X\setminus U$ and let
$U'=f^{-1}(U)$. The natural functor
$$\map{(|_U,f^*)}{\QCoh(X)}{\QCoh(U)\times_{\QCoh(U')}
\QCoh(X')}$$
is an equivalence of categories.
\end{theorem}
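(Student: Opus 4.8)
The plan is to realize the functor $(|_U,f^*)$ as a composite of two equivalences: ordinary flat descent along the cover consisting of $j\colon U\inj X$ and $f\colon X'\to X$, followed by a collapse of the resulting descent category onto the fiber product $\QCoh(U)\times_{\QCoh(U')}\QCoh(X')$. First I would observe that $j$ and $f$ are jointly surjective and flat: $j$ is an open immersion onto $U$, while $f$ is \etale{} (hence flat) and surjects onto a neighborhood of $Z:=X\setminus U$, because $f|_{Z_{\red}}$ is an isomorphism. Thus $U\sqcup X'\to X$ is faithfully flat and locally of finite presentation, so by fppf descent for quasi-coherent sheaves on algebraic stacks, $\QCoh(X)$ is equivalent to the category of descent data for this cover: a sheaf $\sF_U$ on $U$, a sheaf $\sF'$ on $X'$, together with gluing isomorphisms over the four fiber products $U\times_X U$, $U\times_X X'$, $X'\times_X U$ and $X'\times_X X'$, subject to the cocycle conditions over the triple products.

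The content of the theorem is that this descent category collapses to the stated fiber product. Two simplifications are immediate: since $j$ is a monomorphism, $U\times_X U=U$ and the corresponding gluing isomorphism is forced to be the identity; and since $U\times_X X'=f^{-1}(U)=U'=X'\times_X U$, the two mixed gluing isomorphisms amount to a single datum $\sigma\colon \sF_U|_{U'}\iso \sF'|_{U'}$. The real point is the datum over $W:=X'\times_X X'$, and here I would exploit the geometry of an \etale{} neighborhood. Because $f$ is \etale{}, its diagonal $\Delta\colon X'\to W$ is \etale{}, so its image is an open substack isomorphic to $X'$; because $f|_{Z_{\red}}$ is an isomorphism, $f$ is radicial over $Z$, so $\Delta$ is surjective over $Z$ and the complement of $\Delta(X')$ in $W$ maps entirely into $U$. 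Consequently $W$ is covered by the two open substacks $\Delta(X')\iso X'$ and $W|_U=U'\times_U U'$. On $\Delta(X')$ the gluing isomorphism must restrict to the identity by the normalization condition, while on $U'\times_U U'$ it is forced to be the transport isomorphism $\mathrm{pr}_2^*\sigma\circ(\mathrm{pr}_1^*\sigma)^{-1}$ assembled from $\sigma$; these agree on the overlap $\Delta(U')$, where the transport formula reads $\sigma\circ\sigma^{-1}=\mathrm{id}$. Hence the gluing isomorphism over $W$ exists and is uniquely determined by $\sigma$.

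It then remains to check that the cocycle conditions are automatic, so that no further data survive. The triple product $X'\times_X X'\times_X X'$ is covered in the same way by its full diagonal $\iso X'$ and by the part $U'\times_U U'\times_U U'$ lying over $U$; on the former the cocycle holds trivially, and on the latter it holds because the isomorphism was built as a transport of $\sigma$, while the mixed triple products involving $U$ collapse using $U\times_X U=U$. Thus every descent datum is functorially equivalent to a triple $(\sF_U,\sF',\sigma)$, the descent category is equivalent to $\QCoh(U)\times_{\QCoh(U')}\QCoh(X')$, and composing with the descent equivalence recovers precisely the functor $(|_U,f^*)$. I expect the main obstacle to be the analysis of $W=X'\times_X X'$: one must verify cleanly, in the possibly non-reduced and stacky setting, that the off-diagonal part of $W$ lives over $U$. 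This rests on $f$ being radicial over $Z$ — a fiberwise condition insensitive to nilpotents, which is genuinely weaker than being an isomorphism over $Z$ but exactly what $f|_{Z_{\red}}\iso Z_{\red}$ provides — together with the fact that the \etale{} diagonal has open image; when $f$ fails to be representable one reduces the diagonal analysis to the representable case by a further descent along a presentation.
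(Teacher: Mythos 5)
You should first be aware that this paper contains no proof of this statement: it is imported verbatim as \cite[Thm.~A]{rydh_etale-devissage} (Section~\ref{S:etale-devissage} is just a summary of that companion paper), so there is nothing internal to compare against and your proposal must be judged on its own. Your strategy --- flat descent along the cover $U\amalg X'\to X$, identification of the double products as $U\amalg U'\amalg U'\amalg (X'\times_X X')$, and the topological observation that $W=X'\times_X X'$ is covered by the open substacks $W|_U=U'\times_U U'$ and the (open, since $\Delta_f$ is \etale{}) image $D$ of the diagonal because $f$ is radicial over $Z$ --- is sound, and for \emph{representable} $f$ (where $\Delta_f$ is an open immersion, so $D\iso X'$) your argument is essentially complete. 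Full faithfulness also survives in general, since equality of morphisms over $D$ can be tested after pullback along the faithfully flat $\map{\Delta}{X'}{D}$.

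The genuine gap is essential surjectivity when $f$ is not representable, a case the theorem must cover (the paper stresses after Theorem~\pref{T:etnbhd:devissage} that these \etale{} neighborhoods ``are not necessarily representable nor separated'', and its applications use exactly that). Then $\map{\Delta}{X'}{D}$ is an \etale{} cover but not a monomorphism, so ``the gluing isomorphism restricts to the identity on $\Delta(X')$'' has no meaning: there is no $2$-isomorphism $\mathrm{pr}_1|_D\iso\mathrm{pr}_2|_D$ in general, and constructing $\tau|_D$ means descending $\id{\sF'}$ along $\Delta$, whose ``kernel'' $X'\times_D X'$ is the \emph{relative inertia} $I_{X'/X}$. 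The descent condition there says precisely that $I_{X'/X}$ acts trivially on $\sF'$, which is \emph{false} for arbitrary $\sF'\in\QCoh(X')$: take $X$ to be the pushout (Theorem~\pref{T:etnbhd-pushout}) of $\mathbb{A}^1\times B(\Z/2)\hookleftarrow \mathbb{G}_m\times B(\Z/2)\to\mathbb{G}_m$, $f$ the resulting \etale{} neighborhood, and $\sF'$ the line bundle on $X'=\mathbb{A}^1\times B(\Z/2)$ given by the sign character; moreover in this example $|D|=|W|$, so your two-chart picture degenerates entirely and all content sits in this inertia condition. What saves the theorem is that for $\sF'$ occurring in an object $(\sF_U,\sF',\sigma)$ of the fiber product, $\sigma$ identifies $\sF'|_{U'}$ with a pullback from $U$, on which $I_{U'/U}=I_{X'/X}|_{U'}$ acts trivially, while over $Z$ the relative inertia reduces to the identity section by the \etale{}-neighborhood hypothesis --- so the condition holds on the open cover $e(X')\cup I_{U'/U}$ of $|I_{X'/X}|$. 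This relative-inertia argument is the missing idea; your proposed fix, ``descent along a presentation'', cannot replace it: representability of $f$ is fppf-local on the target, so base change along a presentation of $X$ does not make $f$ representable, and a presentation of $X'$ is smooth rather than \etale{}, so composing with it destroys the \etale{}-neighborhood structure.
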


\begin{theorem}[{\cite[Thm.~B]{rydh_etale-devissage}}]
\label{T:etnbhd-is-pushout}
Let $X$ be an algebraic stack and let $\map{j}{U}{X}$ be an open immersion.
Let $\map{p}{X'}{X}$ be an \etale{} neighborhood of $X\setminus U$
and let $\map{j'}{U'}{X'}$ be the pull-back of $j$. Then $X$ is the pushout
in the category of algebraic stacks of $p|_U$ and $j'$.
\end{theorem}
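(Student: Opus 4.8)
The plan is to verify directly that $X$ satisfies the universal property of the push-out, i.e.\ that for every algebraic stack $T$ the natural functor
$$\catHom(X,T)\longrightarrow \catHom(U,T)\times_{\catHom(U',T)}\catHom(X',T)$$
is an equivalence of groupoids. First I would observe that $V:=U\sqcup X'\to X$ is surjective (since $j$ covers $U$ and $p$ covers $X\setminus U$, being an isomorphism over $Z_\red$ with $Z=X\setminus U$) and \etale{} (since $j$ is an open immersion and $p$ is \etale{}), hence an fppf covering. As $T$ is a stack for the fppf topology, $\catHom(-,T)$ sends the colimit presentation $X=\varinjlim\bigl(\cdots\rightrightarrows V\times_X V\rightrightarrows V\bigr)$ to the limit of the associated cosimplicial groupoid; concretely, a morphism $X\to T$ is the same datum as a morphism $V\to T$ together with descent data on $V\times_X V$ satisfying the cocycle condition on $V\times_X V\times_X V$.

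Next I would decompose
$$V\times_X V=(U\times_X U)\sqcup(U\times_X X')\sqcup(X'\times_X U)\sqcup(X'\times_X X'),$$
and identify the first three pieces with $U$, $U'$ and $U'$, using that $j$ is a monomorphism and $U'=p^{-1}(U)$. Writing $a=\phi|_U$ and $b=\phi\circ p$ for the restrictions of a morphism $\phi\colon X\to T$, the descent datum on the two copies of $U'$ is exactly a $2$-isomorphism $\sigma\colon p_U^*a\Rightarrow j'^*b$, which is precisely the gluing data entering the push-out of $p_U$ and $j'$. Thus the only \emph{extra} datum beyond $(a,b,\sigma)$ is the component $\alpha$ of the descent datum living on $W:=X'\times_X X'$, and the entire theorem reduces to showing that $\alpha$ is uniquely determined by $\sigma$.

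The key geometric input, and the main obstacle, is to control $W$. Here I would use crucially that $p$ is an \etale{} neighborhood of $Z$: since $p$ restricts to an isomorphism over $Z_\red$, the base change $W\times_X Z_\red$ collapses onto $Z_\red$, so over $Z$ the stack $W$ reduces to the diagonal. Combined with the fact that the diagonal $\Delta_p\colon X'\to W$ is an open immersion (as $p$ is \etale{}, hence unramified), this yields an open cover $W=\Delta_p(X')\cup\bigl(W\times_X U\bigr)$ with $W\times_X U\iso U'\times_U U'$. On $\Delta_p(X')$ the normalized descent datum $\alpha$ is forced to be the identity; on $U'\times_U U'$ the two projections agree after composing with $p_U$, so $\sigma$ prescribes a canonical isomorphism there; and the two prescriptions agree on the overlap $\Delta(U')$. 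Since $2$-isomorphisms form a sheaf, they glue to a unique $\alpha$ on all of $W$.

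Finally I would verify that the $\alpha$ so produced satisfies the cocycle condition on $V\times_X V\times_X V$ — routine once one restricts to the part lying over $U$ and invokes the sheaf property once more — and run the same gluing argument one categorical level down to see that a $2$-morphism $\phi\Rightarrow\psi$ is determined by, and can be reconstructed from, its restrictions to $U$ and $X'$. Together these establish essential surjectivity, fullness and faithfulness of the comparison functor, showing that $X$ is the push-out. I expect the control of $X'\times_X X'$ — pinning down that its only non-diagonal part lives over $U$, where $\sigma$ governs everything — to be the one genuinely delicate point, the rest being descent bookkeeping.
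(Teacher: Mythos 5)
Your proposal cannot be compared against an in-paper argument: this paper only quotes the statement from \cite[Thm.~B]{rydh_etale-devissage} without proof, so I am judging your argument on its own terms. Its architecture is the natural one --- descent along the \etale{} surjection $U\sqcup X'\to X$, the four-piece decomposition of the double fiber product, and reduction to showing that the descent datum on $W=X'\times_X X'$ is forced by $\sigma$ --- and you correctly isolated $W$ as the crux. The faithfulness/fullness step at the level of $2$-morphisms is also fine as sketched.

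There is, however, a genuine gap at exactly the point you flagged as delicate: you claim $\map{\Delta_p}{X'}{W}$ is an open immersion ``as $p$ is \etale{}, hence unramified''. Unramifiedness only gives that $\Delta_p$ is \etale{}; to be an open immersion it must also be a monomorphism, which is equivalent to $p$ being \emph{representable}. In this paper \etale{} morphisms, and in particular \etale{} neighborhoods, are explicitly \emph{not} assumed representable (see Section~\ref{S:notation} and the remark following Theorem~\ref{T:etnbhd:devissage}); a concrete non-representable \etale{} neighborhood of $\{0\}\subseteq\A{1}$ is $\A{1}\sqcup\bigl((\A{1}\setminus\{0\})\times\BSG{2}\bigr)\to\A{1}$. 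For such $p$ the image $W_\Delta$ of $\Delta_p$ is still open (\etale{} morphisms are open) and your set-theoretic cover $|W|=|W_\Delta|\cup|U'\times_U U'|$ still holds, but $W_\Delta\not\iso X'$ and the two projections $\pi_1,\pi_2$ do \emph{not} coincide on $W_\Delta$, so ``take $\alpha=\mathrm{id}$ on $\Delta_p(X')$'' is not even well posed. The repair is to \emph{descend} $\mathrm{id}_b$ along the \etale{} surjection $\map{\Delta_p}{X'}{W_\Delta}$; the obstruction lives on $X'\times_{W_\Delta}X'=I_{X'/X}$ and says precisely that $b$ must annihilate the relative inertia of $p$. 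This is true, but it requires one more argument of the same kind, one level down: $I_{X'/X}$ is covered by its open identity section together with the open substack $I_{U'/U}$ (again because $p$ is an \etale{} neighborhood of $X\setminus U$), on the identity section the condition is trivial, and over $U'$ it follows from $\sigma$ because $a\circ p_U$ kills $I_{U'/U}$ by construction; the same care is needed in your cocycle check on the triple product. (Your uniqueness step survives non-representability, since there one only needs injectivity, not effectivity, of pullback along $\Delta_p$.) In short: your proof is essentially complete when $p$ is representable --- modulo also justifying that a surjective \etale{} morphism of algebraic stacks is an effective epimorphism of fppf stacks, which you use as a black box --- but it misses the inertia descent needed for the general case the theorem actually covers.
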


\begin{theorem}[{\cite[Thm.~C]{rydh_etale-devissage}}]\label{T:etnbhd-pushout}
Let $X'$ be a quasi-compact and quasi-separated algebraic stack, let
$\map{j'}{U'}{X'}$ be a quasi-compact open immersion and let $\map{p_U}{U'}{U}$
be a finitely presented \etale{} morphism. Then, the pushout $X$ of $j'$ and
$p_U$ exists in the category of quasi-compact and quasi-separated algebraic
stacks. The resulting co-cartesian diagram
$$\xymatrix{U'\ar[r]^{j'}\ar[d]_{p_U} & X'\ar[d]^p\\
U\ar[r]_{j} & X\ar@{}[ul]|\square}$$
is also cartesian, $j$ is a quasi-compact open immersion and $p$ is an
\etale{} and finitely presented neighborhood of $X\setminus U$.
%
\end{theorem}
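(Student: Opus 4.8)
The plan is to build $X$ by hand as the quotient of an \'etale groupoid and then check that it solves the universal problem. Since the conclusion requires $\map{p}{X'}{X}$ to be \'etale and $\map{j}{U}{X}$ to be an open immersion, the morphism $V:=X'\sqcup U\to X$ should be an \'etale surjection, so I would try to realize $X$ as the quotient $[V/R]$ of the groupoid $R=V\times_X V$. The key observation is that, although $X$ is not yet available, every component of $R$ is forced in advance by the desired conclusions: $U\times_X U=U$ (because $j$ is a monomorphism), $X'\times_X U=U\times_X X'=U'$ (this is the cartesian assertion, with structure maps $j'$ and $p_U$), while $X'\times_X X'$ must be \'etale over $X'$, must agree with $U'\times_U U'$ over $U'$ and with the diagonal over the closed complement. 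Hence the fourth component is forced to be the Zariski gluing $X'\cup_{U'}(U'\times_U U')$ of the two open immersions $\map{j'}{U'}{X'}$ and $\map{\Delta_{p_U}}{U'}{U'\times_U U'}$ (the latter is open because $p_U$ is \'etale, hence unramified), which exists as a quasi-compact and quasi-separated algebraic stack.

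After equipping $R$ with its identity, inverse and composition and verifying the groupoid axioms, I would set $X:=[V/R]$, with $p$ and $j$ the two restrictions of the tautological morphism $V\to X$. The source and target $s,t\colon R\to V$ are \'etale by construction, and $R\to V\times V$ is quasi-compact and quasi-separated once one knows that $U$ is quasi-compact and quasi-separated. Passing if necessary to a scheme presentation of $V$, the standard results on quotients of flat, finitely presented groupoids then produce a quasi-compact and quasi-separated algebraic stack $X$.

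The asserted properties can then be read off. The square is cartesian because $X'\times_X U$ is, by design, the component $U'$ of $R$. The morphism $j$ is a monomorphism (as $U\times_X U=U$) and \'etale (as $U\hookrightarrow V\to X$), hence an open immersion, and it is quasi-compact since everything in sight is. The morphism $p$ is \'etale, being a composite of \'etale morphisms, and finitely presented because it is \'etale, quasi-compact and quasi-separated; moreover the explicit shape of $X'\times_X X'$ shows that over $X\setminus U$ the map $p$ is a monomorphism, so, being in addition \'etale and surjective there, it is an isomorphism on $(X\setminus U)_{\red}$, i.e.\ $p$ is a neighborhood. Finally, the universal property of the quotient stack identifies morphisms $X\to T$ with $R$-equivariant morphisms $V\to T$, that is, with pairs $(X'\to T,\,U\to T)$ together with a compatibility $2$-isomorphism over $U'$; this is exactly the data of a morphism out of the push-out, so the square is also co-cartesian.

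I expect the main obstacle to be the honest construction of the groupoid structure on $R$: the composition law forces one to describe the triple fibre products $V\times_X V\times_X V$ by analogous Zariski gluings and to check associativity and the cocycle conditions coherently, all while keeping $R$, $V$ and the quotient inside the category of quasi-compact and quasi-separated algebraic stacks. A more subtle point is that the gluing defining $X'\times_X X'$ must be the open gluing along the diagonal and \emph{not} its schematic closure: it is precisely this that keeps $p$ \'etale and forces $X$ to be non-separated in general (already for $X'=\A1$, $U'=U=\mathbb{G}_m$ and $p_U$ the squaring map). Thus verifying the neighborhood property --- that $p$ is genuinely an isomorphism, not merely unramified and surjective, over the reduced closed complement --- is where the specific form of the construction is essential.
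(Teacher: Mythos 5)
First, a caveat: the present paper does not prove this statement at all --- it is imported verbatim from \cite[Thm.~C]{rydh_etale-devissage} --- so your proposal can only be compared with the proof given in that reference. That proof is essentially the one you propose: $X$ is realized as the quotient of the \etale{} groupoid $R\rightrightarrows V=X'\amalg U$ whose arrow space is forced component by component, the only interesting component being $X'\times_X X'=X'\cup_{U'}(U'\times_U U')$ glued along the open immersions $j'$ and $\Delta_{p_U}$, and the co-cartesian property amounts to descent of morphisms along the \etale{} surjection $V\to X$; your verifications that the square is cartesian, that $j$ is a quasi-compact open immersion (\etale{} monomorphism), and that $p$ is a neighborhood (a surjective \etale{} monomorphism over $(X\setminus U)_\red$, hence an isomorphism there) are the ones carried out in that paper. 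The one point your sketch glosses over, and which you should make explicit, is the formation of $[V/R]$: since $p_U$ is not assumed representable, $V$ and $R$ are genuinely stacks and $s,t\colon R\rightrightarrows V$ need not be representable, so the standard theorems on quotients of flat, finitely presented groupoids (which concern groupoids in algebraic \emph{spaces}) do not apply as stated. Your remedy --- pass to a smooth presentation $W\to V$ by a scheme --- does work, but precisely because $(s,t)\colon R\to V\times V$ is faithful, being glued from diagonals, base changes of diagonals and monomorphisms; this is what makes $R_W:=W\times_{V,s}R\times_{t,V}W$ an algebraic space, yielding a smooth groupoid in algebraic spaces with quasi-compact and quasi-separated $(s,t)$, so that $X:=[W/R_W]$ is a quasi-compact and quasi-separated algebraic stack (with, as you correctly observe, possibly non-separated diagonal even when $X'$ and $U$ are separated) and $V\to X$ is \etale{} and surjective.
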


\begin{theorem}[{\cite[Thm.~D]{rydh_etale-devissage}}]\label{T:etnbhd:devissage}
Let $X$ be a quasi-compact and quasi-separated algebraic stack and let $\catE$
be the $2$-category of finitely presented \etale{} morphisms $Y\to X$. Let
$\catD\subseteq \catE$ be a full subcategory such that
{\renewcommand{\theenumi}{{\upshape{(D\arabic{enumi})}}}
\begin{enumerate}
\item \label{TI:etdev:first} if $Y\in \catD$ and $(Y'\to Y)\in \catE$, then
  $Y'\in \catD$,
\item \label{TI:etdev:second}
  if $Y'\in \catD$ and $Y'\to Y$ is finite, surjective and \etale{}, then
  $Y\in \catD$, and
\item \label{TI:etdev:third}
  if $\map{j}{U}{Y}$ and $\map{f}{Y'}{Y}$ are morphisms in $\catE$ such that
  $j$ is an open immersion, $f$ is an \etale{} neighborhood of $Y\setminus
  U$ and $U,Y'\in \catD$, then $Y\in \catD$.
\end{enumerate}}
Then, if $(Y'\to Y)\in \catE$ is \emph{representable} and surjective and $Y'\in
\catD$, it follows that $Y\in \catD$. In particular, if there exists a
\emph{representable} and surjective morphism $Y\to X$ in $\catE$ with
$Y\in \catD$,
then $\catD=\catE$.
\end{theorem}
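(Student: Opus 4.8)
The statement to prove has two parts, of which the second (``in particular'') is immediate from the first by taking $U=X$ and $Y'\to Y$ the given cover. So the plan is to prove the first assertion: given a representable, surjective morphism $\map{p}{Y'}{Y}$ in $\catE$ with $Y'\in\catD$, deduce $Y\in\catD$. Since $X$ is quasi-compact and $p$ is representable, \etale{} and of finite presentation, the geometric fibres of $p$ are finite of bounded cardinality, and the function $y\mapsto\#p^{-1}(y)$ is constructible and lower semicontinuous on $|Y|$; I would induct on the maximal fibre degree $d$ of $p$. Throughout, property (D1) lets me freely pull $\catD$-membership back along morphisms in $\catE$, which I use to recognise clopen pieces of $Y'$ as lying in $\catD$ and to replace $Y$ by \etale{} neighborhoods.

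For the base case $d\le 1$ the morphism $p$ is a representable \etale{} monomorphism, hence an open immersion; being surjective it is an isomorphism, so $Y\cong Y'\in\catD$. For the inductive step I would extract the ``diagonal section'': forming $W=Y'\times_Y Y'$ with second projection $\map{q}{W}{Y'}$ (again representable, surjective, \etale{}, finitely presented, and in $\catD$ by (D1)), the diagonal $\map{\Delta}{Y'}{W}$ is an open immersion because $p$ is unramified, and it is a section of $q$. The locus $\Delta(Y')\subseteq W$ is therefore open and exhibits $q$ as having degree $1$ along a distinguished section, the complement carrying degree $\le d-1$.

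When $p$ is separated the diagonal $\Delta$ is moreover a closed immersion, so $\Delta(Y')$ is clopen and $W=\Delta(Y')\sqcup R$ with $q|_R\colon R\to Y'$ of degree $\le d-1$; combining the inductive hypothesis over $Y'$ with Zariski's main theorem (which presents a representable separated quasi-finite morphism as an open part of a finite one, so that the genuinely finite \etale{} pieces are removed by finite \etale{} descent, i.e.\ (D2)) descends membership down to $Y$. In the general, non-separated case $\Delta(Y')$ is only open, and this is exactly where \etale{} neighborhoods enter: over the open substack $U\subseteq Y$ on which $p$ is finite one peels off a finite \etale{} cover by (D2), while the inclusion of that open locus into $Y$ is realised as a pushout of an open immersion and a finitely presented \etale{} neighborhood of $Y\setminus U$. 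Here I would invoke Theorem~\ref{T:etnbhd-is-pushout} to present $Y$ as such a pushout and Theorem~\ref{T:etnbhd-pushout} to control its existence, so that hypothesis (D3) applies and yields $Y\in\catD$ from the two already-established pieces in $\catD$.

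The main obstacle is precisely the non-separated gluing together with the bookkeeping of the degree stratification. One must arrange that the constructible loci $\{\deg\ge k\}$ are cut out cleanly, that over each the cover splits into a finite \etale{} part feeding (D2) and a part supported on a smaller open feeding (D3) via Theorem~\ref{T:etnbhd-is-pushout}, and that at every stage the intermediate stacks remain objects of $\catE$ over $X$ so that (D1)--(D3) are applicable and the maximal fibre degree strictly decreases. Verifying that a representable quasi-finite morphism admits the requisite ZMT-type factorisation in the stack setting, and that the two basic moves genuinely suffice to reassemble $Y$ from $Y'$ across the strata, is the technical heart of the argument.
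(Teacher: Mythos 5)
First, a framing remark: this paper does not prove Theorem~\ref{T:etnbhd:devissage} at all --- it is imported verbatim from \cite{rydh_etale-devissage}, so your proposal has to be measured against the proof given there (which proceeds by dévissage on the degree, stratifying the \emph{base} and building the required étale neighborhoods out of the cover). Your outline has the right flavor (induction on the geometric fibre degree, base case ``étale $+$ universally injective $+$ surjective $=$ isomorphism'', the three hypotheses as the only descent mechanisms), but the inductive step as written does not work. The engine of your induction is $q\colon W=Y'\times_Y Y'\to Y'$ with its open diagonal section. This is a covering \emph{of $Y'$}, and $Y'\in\catD$ is already known; the only tools that move membership \emph{down} a covering are (D2) and (D3), and they must be fed data over $Y$, not over $Y'$. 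The classical ``split off the diagonal'' trick is valid when descent along the cover in question is already available for the property at hand; here descent along $p$ is exactly what is being proved, so the step is circular. Concretely, in your separated case you apply ``the inductive hypothesis over $Y'$'' to $q|_R$ where $R=W\setminus\Delta(Y')$, but (i) $q|_R$ is not surjective, so the inductive hypothesis does not even apply, and (ii) if it did, its conclusion would be $Y'\in\catD$, which you already have --- it says nothing about $Y$. (Also, Zariski's Main Theorem is a red herring: what makes a separated, étale, finitely presented cover finite over the maximal-degree locus is lower semicontinuity of the degree plus ``separated $+$ étale $+$ locally constant fibre degree $\Rightarrow$ finite''; the ZMT hull is integral, not étale, and feeds nothing into (D2).) The correct shape of the argument is to stratify $Y$: exhibit a quasi-compact open $U\subseteq Y$ such that $p^{-1}(U)$ contains a clopen part which is finite étale and \emph{surjective} over $U$ (then $U\in\catD$ by (D1)$+$(D2)), and then produce a finitely presented \etale{} neighborhood of $Y\setminus U$ \emph{lying in $\catD$}, to which (D3) applies.

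That last step is the second, and decisive, gap: you never construct the étale neighborhood, and your supporting claims about degrees are false in precisely the situation the theorem is designed for. For the non-separated cover (line with doubled origin) $\to\mathbb{A}^1$ the fibre degree is $2$ at the origin and $1$ elsewhere, so $\{\deg\geq 2\}$ is not open: lower semicontinuity of $y\mapsto\#p^{-1}(y)$ fails for non-separated $p$ (it holds for separated $p$, and is one of the things you lose). More seriously, Theorem~\ref{T:etnbhd-is-pushout} only says that \emph{if} one has an étale neighborhood $Y''\to Y$ of $Y\setminus U$, then $Y$ is the pushout; it does not supply one, and (D3) additionally needs $Y''\in\catD$, so $Y''$ must be manufactured from $Y'$ via (D1), (D2), the pushouts of Theorem~\ref{T:etnbhd-pushout} and the induction. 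Such a neighborhood need not exist inside $Y'$: for $Y'=\mathbb{A}^1_{\mathbb{C}}\sqcup(\mathbb{A}^1_{\mathbb{R}}\setminus\{0\})\to Y=\mathbb{A}^1_{\mathbb{R}}$, every point of $Y'$ over the origin has residue field $\mathbb{C}$, so no open substack of $Y'$ is an étale neighborhood of $\{0\}$ (here one must instead notice that the clopen piece $\mathbb{A}^1_{\mathbb{C}}$ is finite étale surjective and use (D2)). Producing the neighborhood in general, making a numerical invariant genuinely drop so the induction closes, and keeping all the opens quasi-compact so that they are objects of $\catE$, is the content of the theorem; you correctly flag this as ``the technical heart'', but that means the proof is a plan whose hard step is missing. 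A small further slip: the ``in particular'' clause does not follow ``by taking $U=X$''; it follows by base change --- for $Z\in\catE$ the morphism $Y\times_X Z\to Z$ is representable, étale, surjective and finitely presented, $Y\times_X Z\in\catD$ by (D1), and then the main statement gives $Z\in\catD$.
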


Note that the morphisms in $\catE$ are not necessarily representable nor
separated. In Theorem~\pref{T:etnbhd-pushout}, even if $X'$ and $U$ have
separated diagonals, the pushout $X$ need not unless $p_U$ is
representable. We are thus naturally led to include algebraic stacks with
non-separated diagonals.
\end{section}


\begin{section}{Approximation of modules and algebras}\label{S:completeness}
In this section, we prove Theorem~\tref{T:COMPLETENESS}, that is, that every
stack of approximation type is \emph{pseudo-noetherian}. It is known
that noetherian stacks are pseudo-noetherian~\cite[Prop.~15.4]{laumon} but
the non-noetherian case requires completely different methods.
First we prove that stacks of strict approximation type are pseudo-noetherian
and then we deduce the theorem for stacks of approximation type by \etale{}
\devissage{}.

\begin{xpar}\label{X:categories}
Let $X$ be a quasi-compact and quasi-separated algebraic stack. Let
$\catC$ be one of the following categories.
\begin{enumerate}
\item The category of quasi-coherent $\sO_X$-modules.
\item The category of quasi-coherent $\sO_X$-algebras.
\item The category of integral quasi-coherent $\sO_X$-algebras.
\end{enumerate}
Here integral has the meaning as in ``integral closure'' not as in
``integral domain''.
If $U\subseteq X$ is an open substack we denote the corresponding category
of $\sO_U$-modules by $\catC_U$. Consider the following statements.

\vspace{5 mm}
\noindent\emph{Completeness}
{\renewcommand{\theenumi}{{\upshape{(C\arabic{enumi})}}}
\begin{enumerate}
\item Every object in $\catC$ is the direct limit of its
subobjects of finite type.\label{I:C1}
\item Every object in $\catC$ is a filtered direct limit of finitely
presented objects in $\catC$.\label{I:C2}
\end{enumerate}}

\noindent\emph{Presentation} ---
Let $\sF$ be an object in $\catC$ of finite type.
{\renewcommand{\theenumi}{{\upshape{(P\arabic{enumi})}}}
\begin{enumerate}
\item There exists a finitely presented object $\sP$ and a surjection
$\sP\surj \sF$.\label{I:P1}
\item There is a filtered direct system of finitely presented
objects in $\catC$ with surjective bonding maps and limit $\sF$.\label{I:P2}
\end{enumerate}}

\noindent\emph{Extension} ---
Let $U\subseteq X$ be a quasi-compact open substack.
{\renewcommand{\theenumi}{{\upshape{(E\arabic{enumi})}}}
\begin{enumerate}
\item If $\sF_U\in\catC_U$ is of finite type (resp.\ finite presentation), then
there exists an object $\sF\in \catC$ of finite type (resp.\ finite
presentation) such that $\sF|_U=\sF_U$.\label{I:E1}
\item If $\sG\in\catC$ is arbitrary and $\sF_U\in \catC_U$ is of finite type
(resp.\ finite presentation), together with a homomorphism
$\map{u}{\sF_U}{\sG|_U}$, then there exists an object $\sF\in\catC$
of finite type (resp.\ finite presentation) and a homomorphism
$\map{v}{\sF}{\sG}$ extending $\sF_U$ and $u$. To be precise, there exists an
isomorphism $\map{\theta}{\sF|_U}{\sF_U}$ such that
$v|_U=u\circ\theta$.\label{I:E2}
\end{enumerate}}

Note that \ref{I:C1} follows from \ref{I:C2}, that \ref{I:P1} follows from
\ref{I:P2} and that \ref{I:E1} is a special case of \ref{I:E2} (take $\sG=0$).
Also, given $\sF_U$, $\sG$ and $u$ as in \ref{I:E2}, there is a \emph{universal}
extension $\map{v}{\sF}{\sG}$ of $u$ if we drop the condition that $\sF$ is of
finite type. Indeed, if $\map{j}{U}{X}$ is the inclusion morphism, then the
universal solution is $\sF=\sG\times_{j_*j^*\sG} j_*\sF_U$ together with the
projection onto the first factor. If $\catC$ is the category of integral
$\sO_X$-algebras, then the universal solution is the integral closure of
$\sO_X$ in
$\sF$ (as a subring of $\sF$). If $u$ is injective then so is $v$.
\end{xpar}

\begin{definition}
Let $X$ be a quasi-compact and quasi-separated stack. We say that $X$ has the
\emph{completeness property} if the six properties
\ref{I:C1}, \ref{I:C2}, \ref{I:P1}, \ref{I:P2}, \ref{I:E1} and~\ref{I:E2} hold
for $X$ and the categories of quasi-coherent $\sO_X$-modules, $\sO_X$-algebras
and integral $\sO_X$-algebras.
\end{definition}

In the introduction, the completeness property only entailed~\ref{I:C2} for the
category of quasi-coherent $\sO_X$-modules but, as we will see in
Lemma~\pref{L:enough-to-show-P1+C1-or-C2*}, the two definitions are equivalent.
Note that if $X$ has the completeness property, then so has $U\subseteq X$
for any quasi-compact open substack. We also introduce the following auxiliary
condition.
{\renewcommand{\theenumi}{{\upshape{(C2*)}}}
\begin{enumerate}
\item For every object $\sF\in \catC$, there is a filtered direct system
of finitely presented objects $\sF_\lambda\in \catC$ and a surjection
$\varinjlim_{\lambda} \sF_\lambda\surj \sF$.\label{I:C2*}
\end{enumerate}}

\begin{lemma}\label{L:enough-to-show-P1+C1-or-C2*}
Let $X$ be a quasi-compact and quasi-separated stack. Let $\catC$ be one of the
three categories in~\pref{X:categories}. Then the following conditions are
equivalent.
\begin{enumerate}
\item \ref{I:C2} holds for $X$ and $\catC$.
\item \ref{I:C2*} holds for $X$ and $\catC$.
\item \ref{I:C1} and \ref{I:P1} hold for $X$ and $\catC$.
\item $X$ has all six properties for $\catC$.
\end{enumerate}
Moreover, if $X$ has property \ref{I:C2} for the category of quasi-coherent
modules, then $X$ has the completeness property.
\end{lemma}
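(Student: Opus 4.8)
The plan is to prove Lemma~\ref{L:enough-to-show-P1+C1-or-C2*} by establishing a cycle of implications among (i)--(iv) together with a handful of forward implications among the individual properties \ref{I:C1}--\ref{I:E2}, culminating in the two listed ``moreover'' claims. The cleanest route is the chain (i) $\Rightarrow$ (ii) $\Rightarrow$ (iii) $\Rightarrow$ (iv) $\Rightarrow$ (i), since (iv) $\Rightarrow$ (i) is trivial (it asserts all six properties, including \ref{I:C1} and \ref{I:P1}) and the implications among \ref{I:C1}, \ref{I:P1}, \ref{I:P2}, \ref{I:E1}, \ref{I:E2} that are already noted in~\pref{X:categories} (namely \ref{I:C2}$\Rightarrow$\ref{I:C1}, \ref{I:P2}$\Rightarrow$\ref{I:P1}, and \ref{I:E2}$\Rightarrow$\ref{I:E1}) handle several directions for free.

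First I would prove (i) $\Rightarrow$ (ii). Given $\sF$ of arbitrary type, \ref{I:C1} writes $\sF = \varinjlim_\mu \sF_\mu$ as the limit of its finite-type subobjects. For each $\sF_\mu$, property \ref{I:P1} supplies a finitely presented $\sP_\mu$ with a surjection $\sP_\mu \surj \sF_\mu \inj \sF$. The subtlety is to organize these into a \emph{filtered} direct system whose limit surjects onto $\sF$; I would index by finite subsets of the $\mu$'s (or by the $\sF_\mu$ together with chosen presentations), forming finite direct sums (or, in the algebra case, finitely generated subalgebras generated by the images), so that the bounding maps are the evident inclusions. Taking the limit and using that $\varinjlim$ is exact and commutes with the surjections yields the surjection $\varinjlim \sF_\lambda \surj \sF$ demanded by \ref{I:C2*}. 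Next, (ii) $\Rightarrow$ (iii): given \ref{I:C2*}, I would upgrade the surjection $\varinjlim \sF_\lambda \surj \sF$ to an actual presentation of $\sF$ as a filtered limit of finitely presented objects. The standard device is to take kernels: the kernel $\sK$ of $\varinjlim \sF_\lambda \surj \sF$ is itself a quasi-coherent object to which \ref{I:C2*} applies, giving $\varinjlim \sK_\nu \surj \sK$; one then forms the cokernels $\sF_\lambda / \mathrm{im}(\sK_\nu)$ over the combined (filtered) index set and checks that their limit is $\sF$, each term being finitely presented as a cokernel of a map between finitely presented objects.

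The remaining implication, (iii) $\Rightarrow$ (iv), is where the real content lies, and I expect the \emph{extension} property \ref{I:E2} to be the main obstacle. Having \ref{I:C2} immediately gives \ref{I:C1}, \ref{I:P1}, \ref{I:P2} by the elementary observations in~\pref{X:categories}, so the work is to deduce \ref{I:E2} (whence \ref{I:E1}). Here I would exploit the \emph{universal} extension $\sH_{\mathrm{univ}} = \sF \times_{j_*j^*\sF} j_* \sG$ recorded at the end of~\pref{X:categories}, where $\map{j}{U}{X}$ is the inclusion. This universal object need not be of finite type, so the strategy is to apply \ref{I:C2} to $\sH_{\mathrm{univ}}$, writing it as $\varinjlim_\lambda \sH_\lambda$ with $\sH_\lambda$ finitely presented and $\map{v_\lambda}{\sH_\lambda}{\sF}$ the induced maps. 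Restricting to $U$, the finite-type (resp.\ finitely presented) object $\sG = \sH_{\mathrm{univ}}|_U = \varinjlim \sH_\lambda|_U$ must already be ``reached'' at some finite stage: since $U\subseteq X$ is quasi-compact and $\sG$ is of finite type, for $\lambda$ large enough the composite $\sH_\lambda|_U \to \sG$ is surjective, and when $\sG$ is finitely presented one arranges an isomorphism $\theta$ after passing to a further index (using that maps from finitely presented objects factor through finite stages of a filtered limit, the standard limit yoga). This $\sH_\lambda$ with the map $v_\lambda$ and the isomorphism $\theta$ furnishes the required extension, proving \ref{I:E2}. The integral-algebra case needs the parenthetical remark that the universal solution there is the integral closure of $\sO_X$ inside $\sH_{\mathrm{univ}}$, so one must check that finite-type integral subalgebras approximate it compatibly; this is the step requiring the most care.

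Finally, for the ``moreover'' clause I would argue that property \ref{I:C2} for quasi-coherent \emph{modules} propagates to the algebra and integral-algebra categories. The key point is that a quasi-coherent $\sO_X$-algebra $\sA$, approximated as a module by $\varinjlim \sA_\lambda$, can have the $\sA_\lambda$ replaced by the finitely generated subalgebras they generate (which are still finitely presented as algebras once one quotients by a finitely generated ideal of relations, again reached at a finite stage by the limit argument); for integral algebras one intersects with the integral closure as above. Thus \ref{I:C2} for modules yields \ref{I:C2}, hence by the equivalence (iii) $\Leftrightarrow$ (iv) all six properties, for all three categories, which is exactly the completeness property. The main risk throughout is bookkeeping with filtered index sets and the verification that ``finite-type (resp.\ finitely presented)'' is preserved under the cokernel and subalgebra constructions; I would lean on the exactness of filtered colimits and the universal property of $\sH_{\mathrm{univ}}$ to keep these verifications routine rather than delicate.
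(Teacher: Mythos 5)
Your reduction of the equivalence to a cycle is fine in outline, and your steps (i)$\Rightarrow$(ii) and (iv)$\Rightarrow$(i) are essentially correct, but the deduction of the extension property~\ref{I:E2} inside (iii)$\Rightarrow$(iv) has a genuine gap, and this is precisely where the lemma has content. Property~\ref{I:E2} demands an \emph{isomorphism} $\map{\theta}{\sH|_U}{\sG}$ with $v|_U=u\circ\theta$. Writing the universal extension $\sH_{\mathrm{univ}}=\sF\times_{j_*j^*\sF}j_*\sG$ as a filtered limit $\varinjlim_\lambda \sH_\lambda$ of finitely presented objects and restricting to $U$, you only obtain, for large $\lambda$, a \emph{surjection} $\sH_\lambda|_U\surj\sG$; nothing forces these maps to be injective. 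So in the finite type case your candidate does not satisfy~\ref{I:E2} as stated, and in the finitely presented case your claim that ``one arranges an isomorphism $\theta$ after passing to a further index'' by standard limit yoga is false: compactness of finitely presented objects only yields that $\sG$ is a \emph{retract} of some $\sH_\lambda|_U$. Indeed, a filtered system with all terms $\sG\oplus\sN$, $\sN\neq 0$, and transition maps $(g,n)\mapsto (g,0)$ has colimit $\sG$, yet no term maps isomorphically onto $\sG$; since~\ref{I:C2} only provides \emph{some} filtered presentation of $\sH_{\mathrm{univ}}$, your argument must work for such systems, and it does not.

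The repair is to work with \emph{subobjects} of $\sH_{\mathrm{univ}}$ rather than with terms of an arbitrary filtered presentation, because restriction along the open immersion $j$ is exact and preserves monomorphisms, so injectivity over $U$ comes for free. This is what the paper does: by~\ref{I:C1}, $\sH_{\mathrm{univ}}$ is the union of its finite type subobjects, and since $\sG=\sH_{\mathrm{univ}}|_U$ is of finite type, some finite type subobject $\sH'\subseteq\sH_{\mathrm{univ}}$ satisfies $\sH'|_U=\sG$ exactly (in your setup: replace $\sH_\lambda$ by its image in $\sH_{\mathrm{univ}}$). That settles the finite type case, but the finitely presented case needs a further step that your sketch omits entirely: write $\sH'=\sP/\sK$ with $\sP$ finitely presented (using~\ref{I:P1}), note that $\sK|_U$ is of finite type \emph{because $\sG$ is finitely presented}, apply~\ref{I:C1} once more to find a finite type $\sK'\subseteq\sK$ with $\sK'|_U=\sK|_U$, and take $\sP/\sK'\to\sF$ as the extension. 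Two smaller points: your step (ii)$\Rightarrow$(iii) silently uses that maps from finitely presented sheaves into filtered colimits factor through a finite stage, which is true on a quasi-compact and quasi-separated stack but itself requires a devissage --- the paper's route (\ref{I:C2*}$\Rightarrow$\ref{I:C1}$+$\ref{I:P1} by passing to an affine presentation, then \ref{I:C1}$+$\ref{I:P1}$\Rightarrow$\ref{I:C2} via pairs $(J,\sR_J)$ of finite coproducts and finite type subkernels) avoids it; and \ref{I:P2} is \emph{not} among the implications recorded in~\pref{X:categories}, so it also requires the kernel argument ($\sP\surj\sF$ with kernel $\sK$, and $\sF=\varinjlim_\lambda\sP/\sK_\lambda$ over the finite type subobjects $\sK_\lambda\subseteq\sK$).
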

\begin{proof}
Clearly \ref{I:C2}$\implies$\ref{I:C2*}$\implies$\ref{I:C1}$+$\ref{I:P1} (to
see the second implication, pass to a presentation of $X$ by an affine scheme).
As we noted above, \ref{I:E1} is a special case of \ref{I:E2}. We will show
three other implications from which the first part of the lemma follows.

\ref{I:C1}$+$\ref{I:P1}$\implies$\ref{I:P2}: Let $\sF$ be of finite type
and let $\sP\surj \sF$ be a surjection with $\sP$ finitely presented. Let
$\sK\subseteq \sP$ be the kernel. Then $\sK$ is the limit of its submodules
(or subideals) $\sK_\lambda$ of finite type and it follows that
$\sF=\varinjlim_\lambda
\sP/\sK_\lambda$ is a limit of finitely presented objects.

\ref{I:C1}$+$\ref{I:P1}$\implies$\ref{I:C2}: Let $\sF$ be a quasi-coherent
sheaf. Then $\sF=\varinjlim_{\lambda\in L} \sF_\lambda$ where $\sF_\lambda$ is
of finite type. Let $\sP_\lambda$ be a finitely presented object with a
surjection onto $\sF_\lambda$. For a finite subset $J\subseteq L$ we let
$\sP_J$ be the coproduct of $\{\sP_\lambda\}_{\lambda\in J}$ in $\catC$ and let
$\sK_J$ be the kernel of the induced homomorphism $\sP_J\to \sF$. Consider the
set of pairs $\alpha=(J,\sR_J)$ where $J\subseteq L$ is a finite subset and
$\sR_J\subseteq \sK_J$ is a submodule (or subideal) of finite type. Then
$\sF=\varinjlim_\alpha \sP_J/\sR_J$ is a filtered direct limit of finitely
presented objects (cf.\ proof of \cite[Cor.~6.9.12]{egaI_NE}).

\ref{I:C1}+\ref{I:P1}$\implies$\ref{I:E2}: Let $\sF_U$ be a quasi-coherent sheaf
on $U$ of finite type (resp.\ of finite presentation) and let
$\map{u}{\sF_U}{\sG|_U}$ be a homomorphism as in \ref{I:E2}. Let
$\map{v}{\sF}{\sG}$ be the universal extension. Then as $\sF_U=\sF|_U$ is of
finite type, it follows from \ref{I:C1} that there exists a subsheaf
$\sF'\subseteq \sF$ of finite type which restricts to $\sF_U$. If $\sF_U$ is
finitely presented, write $\sF'=\sP/\sK$ with $\sP$ of finite presentation.
Then $\sK|_U$ is of finite type and hence by \ref{I:C1} there exists a
submodule (or subideal) $\sK'\subseteq \sK$ of finite type which restricts
to $\sK|_U$. The homomorphism $\sP/\sK'\surj \sF'\inj \sF\to \sG$ is the
requested extension of $u$.

To prove the last statement, assume that $X$ has property \ref{I:C2} for the
category of quasi-coherent sheaves of modules. Let $\sA$ be a sheaf of algebras
on $X$. Considering $\sA$ as an $\sO_X$-module, we can then write
$\sA=\varinjlim_\lambda \sF_\lambda$ as a filtered direct limit of finitely
presented modules. If we then let $\sA_\lambda$ be the symmetric algebra of
$\sF_\lambda$, we have a surjection $\varinjlim_\lambda \sA_\lambda\surj \sA$
as in~\ref{I:C2*}.
%
This settles the completeness property for the category of algebras.

If $\sA$ is an integral algebra, then it is a direct limit of its integral
subalgebras since any subalgebra of an integral algebra is integral. This
settles \ref{I:C1} for the category of integral algebras. If $\sA$ is of finite
type then we can, using \ref{I:P2} for the category of algebras, write $\sA$ as
a filtered direct limit of finitely presented algebras $\sB_\lambda$ with
surjective bonding maps. Then $\sB_\lambda$ is integral for sufficiently large
$\lambda$. Indeed, this is easily verified after passing to an affine
presentation. This shows \ref{I:P1} for the category of integral algebras.
\end{proof}

\begin{remark}\label{R:C2-with-open}
Let $X$ be a stack with the completeness property and let $\sF$ be a sheaf in
one of the categories referred to above. If $U$ is a quasi-compact open
substack such
that $\sF|_U$ is of finite type (resp.\ of finite presentation), then $\sF$ is
the direct limit of its finite type subsheaves (resp.\ a filtered direct limit
of finitely presented sheaves) $\sF_\lambda$ such that $\sF_\lambda|_U\to
\sF|_U$ is an isomorphism. Indeed, this follows by a similar argument as in
the proof that \ref{I:C1}+\ref{I:P1} implies \ref{I:C2} above.
\end{remark}

\begin{remark}[Generators]
Recall that a subset $\catG\subseteq \catC$ is \emph{generating} if a morphism
$\map{f}{\sF}{\sG}$ in $\catC$ is zero if and only if $f\circ p=0$ for every
morphism $\map{p}{\sP}{\sF}$ with $\sP\in \catG$. We introduce the following
conditions for the categories in~\pref{X:categories}.
{\renewcommand{\theenumi}{{\upshape{(G\arabic{enumi})}}}
\begin{enumerate}
\item The objects of finite type generate $\catC$.\label{I:G1}
\item The objects of finite presentation generate $\catC$.\label{I:G2}
\end{enumerate}}
It is straight-forward to deduce that \ref{I:C1}$\iff$\ref{I:G1} and that
\ref{I:C2}$\implies$\ref{I:G2}$\implies$\ref{I:G1}$+$\ref{I:P1} so \ref{I:G2}
for the category of quasi-coherent modules is equivalent to the completeness
property. Moreover, the compact objects in the categories of~\pref{X:categories}
are exactly the finitely presented objects. Thus, condition~\ref{I:G2}, or
equivalently~\ref{I:C2}, holds
for $\catC$ if and only if $\catC$ is compactly generated.
\end{remark}

\begin{proposition}\label{P:completeness-property-stable-under-qaffine}
Let $X$ be a quasi-compact and quasi-separated stack with the completeness
property and let $\map{f}{X'}{X}$ be quasi-affine. Then $X'$ has the
completeness property.
\end{proposition}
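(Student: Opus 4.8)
The plan is to factor the quasi-affine morphism $f$ as a quasi-compact open immersion followed by an affine morphism and to treat the two cases separately. Writing $\sA=f_*\sO_{X'}$ and $W=\underline{\Spec}_X(\sA)$, the canonical factorization $X'\inj W\to X$ realizes $X'$ as a quasi-compact open substack of the stack $W$, which is affine over $X$. Since the completeness property is inherited by quasi-compact open substacks (as noted right after its definition), it suffices to prove that $W$ has the completeness property. Moreover, by Lemma~\ref{L:enough-to-show-P1+C1-or-C2*} it is enough to verify \ref{I:C1} and \ref{I:P1} for quasi-coherent $\sO_W$-modules, as these imply \ref{I:C2} for modules and hence the full completeness property for $W$.

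For the affine case I would use that, since $f$ is affine, $f_*$ gives an equivalence of $\QCoh(W)$ with the category of quasi-coherent $\sA$-modules on $X$, compatible with the notions of finite type and finite presentation. To check \ref{I:C1}, given a quasi-coherent $\sA$-module $\sF$, I would apply \ref{I:C1} for $X$ to write $\sF=\varinjlim_\mu \sG_\mu$ as the filtered limit of its finite-type $\sO_X$-submodules, and then pass to the $\sA$-submodules $\sA\cdot\sG_\mu\subseteq\sF$; these are of finite type as $\sA$-modules, form a directed system, and exhaust $\sF$. To check \ref{I:P1}, given an $\sA$-module $\sF$ of finite type, I would choose a finite-type $\sO_X$-module $\sG$ with a surjection $\sA\otimes_{\sO_X}\sG\surj\sF$, use \ref{I:P1} for $X$ to dominate $\sG$ by a finitely presented $\sO_X$-module $\sP$, and observe that $\sA\otimes_{\sO_X}\sP$ is finitely presented as an $\sA$-module (tensor a local presentation of $\sP$ with $\sA$) and surjects onto $\sF$.

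The main obstacle is bookkeeping rather than a genuine difficulty: one must keep the two finiteness notions --- finite type (resp.\ presentation) as an $\sO_X$-module versus as an $\sA$-module --- carefully apart, since $\sA$ itself is typically far from finitely generated over $\sO_X$. The one point deserving attention is the claim that $f_*$ matches the finiteness conditions on $W$ with the $\sA$-module finiteness conditions on $X$, and that $\sA\otimes_{\sO_X}(-)$ sends finitely presented $\sO_X$-modules to finitely presented $\sA$-modules; both are standard consequences of the right-exactness of the tensor product and the exactness of $f_*$ for an affine morphism. With these in hand, \ref{I:C1} and \ref{I:P1} for $W$ follow from the corresponding properties of $X$, which completes the reduction.
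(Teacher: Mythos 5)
Your proof is correct, but it takes a genuinely different route from the paper's. The paper does not factor $f$ at all: given a quasi-coherent $\sO_{X'}$-module $\sF$, it writes $f_*\sF=\varinjlim_\lambda \sG_\lambda$ as a filtered direct limit of finitely presented $\sO_X$-modules (completeness of $X$), pulls back, and uses that the adjunction counit $f^*f_*\sF\to\sF$ is surjective precisely because $f$ is quasi-affine; since $f^*$ preserves finite presentation, this exhibits a surjection $\varinjlim_\lambda f^*\sG_\lambda\surj\sF$, i.e., condition~\ref{I:C2*}, and Lemma~\pref{L:enough-to-show-P1+C1-or-C2*} concludes. You instead localize the quasi-affineness in the canonical factorization $X'\inj W=\Spec_X(f_*\sO_{X'})\to X$, quote the stability of the completeness property under quasi-compact open substacks, and verify \ref{I:C1} and \ref{I:P1} for the affine piece by hand through the equivalence of $\QCoh(W)$ with quasi-coherent $\sA$-modules. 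Both arguments are valid; the paper's is shorter and illustrates what \ref{I:C2*} was introduced for (counit surjectivity being exactly the module-theoretic content of quasi-affineness), while yours needs the counit only in its trivial affine guise, at the price of the factorization and the open-restriction remark. One step of yours deserves to be made explicit: the existence, for a finite type $\sA$-module $\sF$, of a finite type $\sO_X$-module $\sG$ with a surjection $\sA\otimes_{\sO_X}\sG\surj\sF$. This is true but not automatic: by \ref{I:C1} on $X$ write $\sF=\varinjlim_\mu\sG_\mu$ as the filtered union of its finite type $\sO_X$-submodules, so $\sF=\varinjlim_\mu \sA\cdot\sG_\mu$ as $\sA$-modules, and then use that a finite type $\sA$-module which is a filtered union of $\sA$-submodules equals one of them --- checked on an affine presentation of $W$ by flatness. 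Finally, note that your affine case could be compressed to exactly the paper's mechanism: with $\sG_\lambda$ the finitely presented $\sO_X$-modules approximating $\sF$ as an $\sO_X$-module, the multiplication map $\varinjlim_\lambda \sA\otimes_{\sO_X}\sG_\lambda=\sA\otimes_{\sO_X}\sF\surj\sF$ already verifies \ref{I:C2*}, so \ref{I:C1} and \ref{I:P1} need not be handled separately.
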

\begin{proof}
Let $\sF$ be a quasi-coherent $\sO_{X'}$-module. Since $X$ has the completeness
property, we can write $f_*\sF=\varinjlim_\lambda \sG_\lambda$ as a filtered
direct limit of finitely presented $\sO_X$-modules. As $f$ is quasi-affine, the
counit homomorphism $f^*f_*\sF\to \sF$ is surjective. We thus obtain a
surjection $\varinjlim_\lambda f^*\sG_\lambda\surj \sF$ so
condition~\ref{I:C2*} holds for $X'$ and the stack $X'$ has the completeness
property by Lemma~\pref{L:enough-to-show-P1+C1-or-C2*}.
\end{proof}

For an affine scheme properties \ref{I:C1} and \ref{I:P1} are straight-forward
and hence any quasi-affine scheme has the completeness property. More
generally, the completeness property has been shown for quasi-compact and
quasi-separated schemes by Grothendieck~\cite[\S6.9]{egaI_NE}, for
\emph{noetherian} algebraic spaces by Knutson~\cite[Thm.~III.1.1,
Cor.~III.1.2]{knutson_alg_spaces} and for
\emph{noetherian} algebraic stacks by Laumon and
Moret-Bailly~\cite[Prop.~15.4]{laumon}.

\begin{definition}\label{D:pseudo-noetherian}
An algebraic stack $X$ is \emph{pseudo-noetherian} if it is quasi-compact,
quasi-separated and $X'$ has the completeness property for any finitely
presented morphism $X'\to X$ of algebraic stacks.
\end{definition}

In particular, any noetherian stack is pseudo-noetherian.

\begin{proposition}\label{P:pseudo-noetherian-stable-under-strict-app-type}
Let $S$ be a pseudo-noetherian stack and let $X\to S$ be of strict approximation
type.
\begin{enumerate}
\item There is a factorization of $X\to S$ into an affine morphism $X\to X_0$
followed by a finitely presented morphism $X_0\to S$.
\item $X$ is pseudo-noetherian.
\end{enumerate}
In particular, stacks of strict approximation type (e.g., quasi-affine schemes)
are pseudo-noetherian.
\end{proposition}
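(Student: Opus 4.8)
The plan is to reduce everything to a single \emph{swapping lemma}: if $S$ is pseudo-noetherian and $X\to Y\to S$ is a finitely presented morphism $X\to Y$ followed by an affine morphism $Y\to S$, then $X\to S$ can be refactored as an affine morphism $X\to Y'$ followed by a finitely presented morphism $Y'\to S$. Granting this, part~(i) follows by induction on the length of a presentation of $X\to S$ as a composition of affine and finitely presented morphisms (Definition~\pref{D:approx-type}): at each step one either absorbs a new affine morphism into the leading affine part, or uses the swapping lemma to move a finitely presented morphism past the affine part and merge it with the trailing finitely presented part. Throughout I will use the elementary observation that a finitely presented morphism $W\to S$ over a pseudo-noetherian $S$ has pseudo-noetherian source: for finitely presented $W'\to W$ the composite $W'\to S$ is finitely presented, so $W'$ has the completeness property; this guarantees that the intermediate ``trailing'' stacks are pseudo-noetherian, so that the swapping lemma applies at each inductive step.

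To prove the swapping lemma, write $Y=\Spec_S(\sA)$ for a quasi-coherent sheaf of $\sO_S$-algebras $\sA$, since $Y\to S$ is affine. As $S$ is pseudo-noetherian it has the completeness property (apply Definition~\pref{D:pseudo-noetherian} to the identity), so by property~\ref{I:C2} for the category of algebras (Lemma~\pref{L:enough-to-show-P1+C1-or-C2*}) we may write $\sA=\varinjlim_\lambda \sA_\lambda$ as a filtered direct limit of finitely presented $\sO_S$-algebras. Setting $Y_\lambda=\Spec_S(\sA_\lambda)$ exhibits $Y=\varprojlim_\lambda Y_\lambda$ as an inverse limit of finitely presented affine $S$-stacks with affine transition maps and affine projections $Y\to Y_\lambda$. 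The finitely presented morphism $X\to Y$ then descends: by the standard limit results for stacks (cf.\ \cite[\S8]{egaIV} and Appendix~\ref{A:std-limits}) there is an index $\lambda$ and a finitely presented morphism $X_\lambda\to Y_\lambda$ with $X\cong Y\times_{Y_\lambda} X_\lambda$. Then $X\to X_\lambda$ is a base change of the affine projection $Y\to Y_\lambda$, hence affine, while $X_\lambda\to Y_\lambda\to S$ is finitely presented. Taking $Y'=X_\lambda$ gives the desired factorization.

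For part~(ii) I would record one further stability fact: any affine morphism $\map{f}{X}{Y}$ over a pseudo-noetherian $Y$ has pseudo-noetherian source. Indeed, given a finitely presented $X'\to X$, the swapping lemma applied to $X'\to X\to Y$ produces an affine morphism $X'\to V$ with $V\to Y$ finitely presented; then $V$ has the completeness property by the elementary observation above, and since $X'\to V$ is quasi-affine, $X'$ has the completeness property by Proposition~\pref{P:completeness-property-stable-under-qaffine}. Now, given the factorization $X\to X_0\to S$ obtained in part~(i) with $S$ pseudo-noetherian, the elementary observation shows that $X_0$ is pseudo-noetherian (as $X_0\to S$ is finitely presented), and this last fact then shows that $X$ is pseudo-noetherian (as $X\to X_0$ is affine). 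The final assertion of the proposition is the special case $S=\Spec(\Z)$.

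I expect the main obstacle to be the descent step in the swapping lemma, namely that a finitely presented morphism into an inverse limit of stacks with affine transition maps descends to a finite level; this is precisely where the limit formalism for algebraic stacks developed in the appendices is required, the scheme-theoretic case being \cite[\S8]{egaIV}. Everything else is formal once the swapping lemma is in hand.
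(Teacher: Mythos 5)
Your proposal is correct and takes essentially the same route as the paper: your ``swapping lemma'' and its proof (writing the affine part $Y=\Spec_S(\sA)$ as an inverse limit of finitely presented affine $S$-stacks via property~\ref{I:C2} for algebras, then descending the finitely presented morphism $X\to Y$ to some $X_\lambda\to Y_\lambda$ by the limit results of Appendix~\ref{A:std-limits}) is exactly the paper's argument for (i), and (ii) rests in both cases on Proposition~\pref{P:completeness-property-stable-under-qaffine} applied to an affine/quasi-affine morphism onto a finitely presented stack over the pseudo-noetherian base. The only differences are organizational: you spell out the induction that the paper compresses into ``it is enough to prove (i) when there is a factorization $X\to Y\to S$,'' and you route (ii) through $X_0$ via a stability fact for affine morphisms rather than applying (i) directly to $X'\to X\to S$.
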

\begin{proof}
It is enough to prove (i) when there is a factorization $X\to Y\to S$
such that $X\to Y$ is finitely presented and $Y\to S$ is affine.
Since $S$ has the completeness
property, we can write $Y=\varprojlim_\lambda Y_\lambda$ where $Y_\lambda\to
S$ are finitely presented and affine morphisms. For sufficiently large
$\lambda$, there is a morphism $X_\lambda\to Y_\lambda$ of finite
presentation between algebraic stacks such that $X=X_\lambda\times_{Y_\lambda}
Y$. This follows from Proposition~\pref{P:std-limits}. The requested
factorization is obtained by letting $X_0=X_\lambda$ since $X\to X_\lambda$ is
affine and $X_\lambda\to Y_\lambda\to S$ is of finite presentation.

(ii) Let $X'\to X$ be of finite presentation. We have to show that $X'$ has the
completeness property. As $X'\to X\to S$ is of strict approximation type we
have a factorization $X'\to X'_0\to S$ consisting of an affine morphism
followed by a finitely presented morphism. It follows from
Proposition~\pref{P:completeness-property-stable-under-qaffine} that $X'$ has
the completeness property since $X'_0$ has the completeness property by
definition.

The last statement follows from the fact that $\Spec \Z$ is pseudo-noetherian.
\end{proof}

The proof of the following result is inspired by a similar argument due to
P.\ Gross.

\begin{lemma}\label{L:completeness:fin-flat}
Let $X$ be a quasi-compact and quasi-separated stack and let $\map{p}{X'}{X}$
be finite and faithfully flat of finite presentation. If $X'$ has
the completeness property, then so has $X$. Thus, $X$ is
pseudo-noetherian if and only if $X'$ is pseudo-noetherian.
\end{lemma}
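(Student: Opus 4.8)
The plan is to verify condition~\ref{I:C2*} for $X$ and the category of quasi-coherent $\sO_X$-modules; by the ``moreover'' clause of Lemma~\pref{L:enough-to-show-P1+C1-or-C2*} this already yields the full completeness property for $X$. Throughout I write $\sB=p_*\sO_{X'}$, a finite locally free $\sO_X$-algebra with $X'=\underline{\Spec}_X(\sB)$, so that $\QCoh(X')$ is identified with the category of quasi-coherent $\sB$-modules and $p_*$ with the forgetful functor. Since $p$ is finite and of finite presentation, $\sB$ is a finitely presented $\sO_X$-module; consequently $p_*$ is exact, commutes with filtered direct limits, and sends finitely presented $\sO_{X'}$-modules to finitely presented $\sO_X$-modules. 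These formal properties will let me transport the completeness of $X'$ down to $X$ once I have the right construction.

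The crucial step is to exhibit an arbitrary $\sF\in\QCoh(X)$ as a \emph{quotient} of a pushforward from $X'$. The naive candidate, the trace $p_*p^*\sF\to\sF$, is useless (it is multiplication by the fibre rank), so instead I would use the right adjoint of $p_*$. Let $p^{!}\sF$ be the quasi-coherent $\sO_{X'}$-module corresponding to the $\sB$-module $\sHom_{\sO_X}(\sB,\sF)$, with $(b\cdot\phi)(x)=\phi(bx)$; this is quasi-coherent because $\sHom_{\sO_X}(\sB,\sF)=\sB^\vee\otimes_{\sO_X}\sF$. The map I care about is the counit $\map{\mathrm{ev}}{\sHom_{\sO_X}(\sB,\sF)}{\sF}$, $\phi\mapsto\phi(1)$, of the adjunction $p_*\dashv p^{!}$. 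I claim it is surjective: since $p$ is faithfully flat the unit section $1$ is nonzero in every fibre of $\sB$, hence unimodular, so $\sO_X\to\sB$ is locally a split injection; evaluation at $1$ is then locally the complementary projection, and surjectivity is a local statement.

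Granting this, the proof finishes quickly. Applying the completeness of $X'$ to $p^{!}\sF$, I write $p^{!}\sF=\varinjlim_\lambda\sG'_\lambda$ as a filtered direct limit of finitely presented $\sO_{X'}$-modules; applying $p_*$ gives $\sHom_{\sO_X}(\sB,\sF)=\varinjlim_\lambda p_*\sG'_\lambda$ with each $p_*\sG'_\lambda$ finitely presented on $X$, and composing with $\mathrm{ev}$ produces the surjection $\varinjlim_\lambda p_*\sG'_\lambda\surj\sF$ required by~\ref{I:C2*}. For the final equivalence, if $X'$ is pseudo-noetherian and $\map{g}{Y}{X}$ is finitely presented, then $Y\times_X X'\to Y$ is again finite, faithfully flat and of finite presentation while $Y\times_X X'\to X'$ is finitely presented, so $Y\times_X X'$ has the completeness property and the first part of the lemma gives it for $Y$; hence $X$ is pseudo-noetherian, the converse being immediate since $X'\to X$ is finitely presented. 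The main obstacle I anticipate is the second paragraph: identifying the correct adjoint $p^{!}$, matching its $\sB$-module structure with the $\sO_{X'}$-structure so that completeness of $X'$ genuinely applies, and proving surjectivity of $\mathrm{ev}$ via unimodularity of $1$; the rest is formal limit manipulation.
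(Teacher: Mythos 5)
Your proof is correct and is essentially the paper's own argument: both reduce to verifying~\ref{I:C2*} via Lemma~\pref{L:enough-to-show-P1+C1-or-C2*}, use the right adjoint $p^{!}$ of $p_*$ (your $\sB$-module description $\sHom_{\sO_X}(p_*\sO_{X'},\sF)$ agrees with the paper's formula under the identification of $\QCoh(X')$ with quasi-coherent $p_*\sO_{X'}$-modules), and conclude by writing $p^{!}\sF$ as a filtered limit of finitely presented $\sO_{X'}$-modules, pushing forward, and composing with the surjective counit. Your additional details --- the unimodularity argument for surjectivity of the counit, and the base-change argument for the pseudo-noetherian equivalence --- merely flesh out steps the paper asserts without proof.
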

\begin{proof}
Assume that $X'$ has the completeness property. By
Lemma~\pref{L:enough-to-show-P1+C1-or-C2*} it is enough to show
that~\ref{I:C2*} holds for $X$. Let $\sF$ be a quasi-coherent $\sO_X$-module.
Recall that $\map{p_*}{\QCoh(X')}{\QCoh(X)}$ has a right adjoint
$\map{p^{!}}{\QCoh(X)}{\QCoh(X')}$ defined by
$$p^{!}\sF=p^{-1}\sHom_{\sO_X}(p_*\sO_{X'},\sF)\otimes_{p^{-1}p_*\sO_{X'}}
\sO_{X'}.$$
Moreover, the counit homomorphism $p_*p^{!}\sF\to \sF$ is surjective since
$p$ is faithfully flat. Write $p^{!}\sF$ as a filtered direct limit of finitely
presented $\sO_{X'}$-modules $\sG_\lambda$. Then $\varinjlim_{\lambda}
p_*\sG_\lambda=p_*p^{!}\sF\to \sF$ is surjective and \ref{I:C2*} holds for~$X$.
\end{proof}

For our immediate purposes we only need
Lemma~\pref{L:completeness:fin-flat} for finite \etale{} coverings in
which case $p^{!}=p^{-1}$.

We now come to the step that involves \etale{} neighborhoods.
The method is inspired by Raynaud and
Gruson's proof of property~\ref{I:C1} for quasi-compact and quasi-separated
algebraic spaces~\cite[Prop.~5.7.8]{raynaud-gruson}.

\begin{lemma}\label{L:completeness:et-nbhd}
Let $X$ be a quasi-compact and quasi-separated stack, let $U\subseteq X$ be a
quasi-compact open substack and let $\map{p}{X'}{X}$ be a finitely presented
\etale{} neighborhood of $X\setminus U$. If $X'$ and $U$ have the completeness
property, then so has $X$. In particular, if $X'$ and $U$ are
pseudo-noetherian, then so is $X$.
\end{lemma}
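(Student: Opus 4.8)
The plan is to establish the completeness property for $X$, and for this it suffices, by Lemma~\pref{L:enough-to-show-P1+C1-or-C2*}, to verify \ref{I:C1} and \ref{I:P1} for the single category of quasi-coherent $\sO_X$-modules; the concluding assertion of that lemma then upgrades this to the completeness property for all three categories at once. Write $\map{j}{U}{X}$ for the open immersion, set $U'=p^{-1}(U)$ with open immersion $\map{j'}{U'}{X'}$ and induced \etale{} morphism $\map{p_U}{U'}{U}$. By Theorem~\pref{T:etnbhd-descent:QCoh} the functor $(|_U,p^*)$ identifies $\QCoh(X)$ with the category of triples $(\sF_U,\sF',\psi)$, where $\sF_U\in\QCoh(U)$, $\sF'\in\QCoh(X')$ and $\psi\colon p_U^*\sF_U\iso j'^*\sF'$; since $\{j,p\}$ covers $X$, such a triple is of finite type (resp.\ finite presentation) precisely when $\sF_U$ and $\sF'$ both are. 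Thus a finite type subsheaf of a module $\sF\leftrightarrow(\sF_U,\sF',\psi)$ is the same as a compatible pair $(\sA,\sB)$ of finite type subsheaves $\sA\subseteq\sF_U$, $\sB\subseteq\sF'$ with $\psi(p_U^*\sA)=j'^*\sB$, and a finitely presented surjection onto $\sF$ is the same as compatible finitely presented surjections onto $\sF_U$ and onto $\sF'$ agreeing under $\psi$ over $U'$.

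For \ref{I:C1} I would show that the compatible pairs $(\sA,\sB)$ are cofinal, so that $\sF$ is their filtered union. Given a finite type $\sB_0\subseteq\sF'$ (these exhaust $\sF'$ by \ref{I:C1} on $X'$), the subsheaf $\psi^{-1}(j'^*\sB_0)\subseteq p_U^*\sF_U$ is of finite type and hence, by \ref{I:C1} on $U$, lies in $p_U^*\sA$ for some finite type $\sA\subseteq\sF_U$. To produce a matching $\sB$ I would realise the prescribed finite type subsheaf $\psi(p_U^*\sA)\subseteq j'^*\sF'$ as the restriction of a finite type subsheaf of $\sF'$: by the extension property \ref{I:E1} on $X'$ there is a finite type $\sH$ on $X'$ with $j'^*\sH\iso\psi(p_U^*\sA)$ and a homomorphism $\sH\to\sF'$, and its image $\sB\subseteq\sF'$ satisfies $j'^*\sB=\psi(p_U^*\sA)$ exactly. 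Replacing $\sB$ by $\sB+\sB_0$ keeps the restriction unchanged while ensuring $\sB\supseteq\sB_0$; since one stays within honest subsheaves by passing to images, no lifting is required. Iterating this construction symmetrically between $U$ and $X'$, using the extension property \ref{I:E1} on each, makes the pairs both compatible and exhausting, which gives \ref{I:C1} for $X$.

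For \ref{I:P1} the plan is to build the two surjections so that they are forced to agree over $U'$. Starting from a finitely presented surjection $\sP_U\surj\sF_U$ on $U$ (property \ref{I:P1} on $U$), the extension property \ref{I:E2} on $X'$ extends the pulled-back homomorphism $p_U^*\sP_U\to j'^*\sF'$ to a finitely presented $\sP'\to\sF'$ with $j'^*\sP'\iso p_U^*\sP_U$, compatibly with $\psi$. Gluing the triple yields a finitely presented $\sP$ on $X$ and a homomorphism $\sP\to\sF$ that is surjective over $U$. \textbf{The main obstacle} is then to enforce surjectivity over $X\setminus U$ without disturbing the identification over $U'$: the cokernel $\sC$ is of finite type with $\sC|_U=0$, and it must be covered by a finitely presented sheaf on $X$ that restricts to zero on $U$ and \emph{lifts} to $\sF$ (a naive direct-sum trick fails because of an $\mathrm{Ext}$ obstruction to such a lift). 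Here I would exploit that $p$ is an isomorphism on $(X\setminus U)_\red$, equivalently the pushout description of Theorem~\pref{T:etnbhd-is-pushout}, so that a finitely presented cover of $p^*\sC$ with vanishing restriction to $U'$ -- obtained from \ref{I:P1} on $X'$ after collapsing, via Remark~\pref{R:C2-with-open} on $X'$, a finite type subsheaf of the kernel that restricts to everything over $U'$ -- can be transported to $X$ and merged with $\sP$. This step, rather than any formal manipulation, is where the geometry of the \etale{} neighborhood is essential.

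Finally, the pseudo-noetherian assertion follows formally: if $X'$ and $U$ are pseudo-noetherian and $\map{g}{Y}{X}$ is any finitely presented morphism, then base change produces a finitely presented \etale{} neighborhood $\map{p_Y}{Y\times_X X'}{Y}$ of $Y\setminus(Y\times_X U)$, with $Y\times_X X'\to X'$ and $Y\times_X U\to U$ finitely presented. Hence $Y\times_X X'$ and $Y\times_X U$ have the completeness property, and the first part of the lemma shows that $Y$ does as well. As $Y$ was an arbitrary finitely presented $X$-stack, $X$ is pseudo-noetherian.
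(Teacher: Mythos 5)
Your reduction to \ref{I:C1} and \ref{I:P1} for quasi-coherent modules, your treatment of \ref{I:C1} via compatible pairs of finite type subsheaves glued through Theorem~\pref{T:etnbhd-descent:QCoh}, and your final base-change argument for the pseudo-noetherian statement are all sound and close to the paper's own proof (one slip: where you extend $\psi(p_U^*\sA)$ to a finite type subsheaf of $\sF'$ \emph{equipped with a map to} $\sF'$ you are using \ref{I:E2}, not \ref{I:E1}). The genuine gap is in \ref{I:P1}, at exactly the step you flag as ``the main obstacle'' and then claim to settle by geometry: the ``merge'' is never carried out, and it cannot be carried out as described. Your construction (\ref{I:P1} on $X'$ applied to the cokernel, then collapsing a finite type piece of the kernel via Remark~\pref{R:C2-with-open}) produces a finitely presented sheaf $\sQ$ on $X$ with $\sQ|_U=0$ and a surjection $\sQ\surj\mathrm{coker}(\sP\to\sF)$. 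But to enlarge $\sP$ to a finitely presented sheaf covering $\sF$ you need a homomorphism $\sQ\to\sF$ \emph{lifting} $\sQ\to\mathrm{coker}(\sP\to\sF)$, and the obstruction to such a lift is exactly the $\mathrm{Ext}$ obstruction you yourself invoked to dismiss the naive direct-sum trick. Nothing in the push-out description of the \etale{} neighborhood removes it: the geometric input has already been spent in establishing the equivalence of Theorem~\pref{T:etnbhd-descent:QCoh}, and the lifting problem that remains is a purely module-theoretic one, equally present on $X$ and on $X'$.

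The paper avoids any lift by running the construction in the opposite direction, so that one only ever \emph{quotients} a surjection and never extends one. Start with a finitely presented surjection $\sQ'\surj p^*\sF$ on $X'$, which exists by \ref{I:P1} on $X'$; surjectivity is then never in doubt. Write $\sF|_U=\varinjlim_\lambda \sP_{U,\lambda}$ with surjective bounding maps by \ref{I:P2} on $U$. Since $\sQ'|_{U'}$ is finitely presented, for large $\lambda$ the map $\sQ'|_{U'}\to p^*\sF|_{U'}$ factors through $(p|_U)^*\sP_{U,\lambda}$, and after increasing $\lambda$ the map $\sQ'|_{U'}\to(p|_U)^*\sP_{U,\lambda}$ is itself surjective. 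Its kernel $\sN'_{U'}$ is of finite type and contained in $\sK'|_{U'}$, where $\sK'=\ker(\sQ'\surj p^*\sF)$; by \ref{I:E2} on $X'$ there is a finite type subsheaf $\sN'\subseteq\sK'$ with $\sN'|_{U'}=\sN'_{U'}$. The quotient $\sP'=\sQ'/\sN'$ is finitely presented, still surjects onto $p^*\sF$ because $\sN'\subseteq\sK'$, and restricts over $U'$ to exactly $(p|_U)^*\sP_{U,\lambda}$; hence it glues with $\sP_{U,\lambda}$ through Theorem~\pref{T:etnbhd-descent:QCoh} to the required finitely presented surjection onto $\sF$. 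In short, the roles of $U$ and $X'$ in your argument must be swapped: the global surjection has to come from $X'$, where \ref{I:P1} is available on the nose, and be adjusted over $U'$ by passing to a quotient, rather than come from $U$ and be extended to $X'$.
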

\begin{proof}
Let $U'=p^{-1}(U)$. By Lemma~\pref{L:enough-to-show-P1+C1-or-C2*} it is
enough to show that~\ref{I:C1} and~\ref{I:P1} hold for $X$.

We begin with~\ref{I:C1}. Let $\sF$ be a quasi-coherent sheaf on $X$. As $X'$
has property~\ref{I:C1}, we have that $p^*\sF$ is the direct limit of its
subsheaves of finite type. It is thus enough to show that if $\sG'\subseteq
p^*\sF$ is of finite type, then there exists $\sH\subseteq \sF$ of finite type
such that $\sG'\subseteq p^*\sH$. As $U$ has property~\ref{I:C1}, there is a
subsheaf $\sH_U\subseteq \sF|_U$ of finite type on $U$ such that
$\sG'|_{U'}\subseteq (p|_U)^*\sH_U$.
Let $\overline{\sG'}\subseteq p^*\sF$ be the universal extension of
$(p|_U)^*\sH_U\subseteq p^*\sF|_{U'}$; then $\sG'\subseteq
\overline{\sG'}\subseteq p^*\sF$. As $\sG'$ and
$\overline{\sG'}|_{U'}=(p|_U)^*\sH_U$ are of finite type, it follows from
property~\ref{I:C1} on $X'$ that there exists a subsheaf $\sH'\subseteq
\overline{\sG'}$ of finite type, containing $\sG'$ and restricting to
$(p|_U)^*\sH_U$ over $U'$. By Theorem~\pref{T:etnbhd-descent:QCoh}, there is a
subsheaf $\sH\subseteq \sF$ of finite type with isomorphisms $\sH|_U\iso \sH_U$
and $p^*\sH\iso \sH'$. This settles property~\ref{I:C1}.

We continue with property~\ref{I:P1}. Let $\sF$ be a quasi-coherent sheaf on
$X$ of finite type. As $U$ has property~\ref{I:P2}, we can write $\sF|_U$ as a
direct limit $\varinjlim \sP_{U,\lambda}$ of finitely presented sheaves on $U$
with surjective bonding maps.
As $X'$ has property~\ref{I:P1}, there is a finitely presented sheaf $\sQ'$ on
$X'$ and a surjection $\sQ'\surj p^*\sF$. For sufficiently large $\lambda$
we have a factorization
$$\sQ'|_{U'}\to (p|_U)^*\sP_{U,\lambda}\surj
p^*\sF|_{U'},$$
cf.\ proof of~\cite[Thm.~8.5.2]{egaIV}.
Moreover, after increasing $\lambda$ we may assume that the homomorphism
$\sQ'|_{U'}\to (p|_U)^*\sP_{U,\lambda}$ is surjective.

Let $\sK'=\ker(\sQ'\surj p^*\sF)$ and $\sN'_{U'}=\ker(\sQ'|_{U'}\surj
(p|_U)^*\sP_{U,\lambda})\subseteq \sK'|_{U'}$. As $\sN'_{U'}$ is of finite
type, there exists, by~\ref{I:E2}, a subsheaf $\sN'\subseteq \sK'$ of finite
type such that $\sN'|_{U'}=\sN'_{U'}$. Let $\sP'=\sQ'/\sN'$. This is a finitely
presented sheaf on $X'$ with a surjection onto $p^*\sF$ such that
$\sP'|_{U'}=(p|_U)^*\sP_{U,\lambda}$. By Theorem~\pref{T:etnbhd-descent:QCoh},
there is a finitely presented $\sO_X$-module $\sP$ and a surjection
$\sP\surj \sF$ which restricts to $\sP_{U,\lambda}\surj \sF|_U$ and
$\sP'\surj p^*\sF$ over $U$ and $X'$.
\end{proof}

\begin{proposition}\label{P:pseudo-noeth:et}
Let $X$ be an algebraic stack and let $\map{p}{X'}{X}$
be \etale{}, representable, surjective and of finite presentation. Then
$X$ is pseudo-noetherian if and only if $X'$ is pseudo-noetherian.
\end{proposition}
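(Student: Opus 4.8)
The plan is to prove the easy implication directly and the hard implication by the \etale{} devissage of Theorem~\pref{T:etnbhd:devissage}. The key preliminary observation, which I would state once and reuse, is that pseudo-noetherianness is inherited by finitely presented stacks over a pseudo-noetherian base: if $Y$ is pseudo-noetherian and $Y'\to Y$ is finitely presented, then every finitely presented $W\to Y'$ composes to a finitely presented $W\to Y$, so $W$ has the completeness property, whence $Y'$ is pseudo-noetherian. Applied with $Y=X$, $Y'=X'$ this already gives the implication ``$X$ pseudo-noetherian $\implies X'$ pseudo-noetherian'', since $p$ is finitely presented.

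For the converse, assume $X'$ is pseudo-noetherian. Let $\catE$ be the category of finitely presented \etale{} morphisms $Y\to X$ and let $\catD\subseteq\catE$ be the full subcategory of those $Y\to X$ for which $Y$ is pseudo-noetherian. I would verify the three devissage conditions. Condition~\ref{TI:etdev:first} is exactly the inheritance observation above. The point to keep in mind for~\ref{TI:etdev:second} and~\ref{TI:etdev:third} is that pseudo-noetherianness is a statement about \emph{all} finitely presented morphisms into the stack, so the relevant completeness lemmas must be invoked after base change: to show $Y\in\catD$ I would fix an arbitrary finitely presented $W\to Y$ and pull the given devissage data back along it.

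For~\ref{TI:etdev:second}, given a finite surjective \etale{} morphism $Y'\to Y$ with $Y'$ pseudo-noetherian, the base change $W':=W\times_Y Y'\to W$ is finite, faithfully flat and finitely presented, and $W'\to Y'$ is finitely presented, so $W'$ has the completeness property; Lemma~\pref{L:completeness:fin-flat} then yields the completeness property for $W$, so $Y$ is pseudo-noetherian. For~\ref{TI:etdev:third}, given an open immersion $j\colon U\to Y$ and an \etale{} neighborhood $f\colon Y'\to Y$ of $Y\setminus U$ with $U,Y'$ pseudo-noetherian, the pullbacks $W_U:=W\times_Y U$ and $W':=W\times_Y Y'$ have the property that $W'\to W$ is a finitely presented \etale{} neighborhood of $W\setminus W_U$; since $W_U\to U$ and $W'\to Y'$ are finitely presented, both $W_U$ and $W'$ have the completeness property, and Lemma~\pref{L:completeness:et-nbhd} gives it for $W$. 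With~\ref{TI:etdev:first}--\ref{TI:etdev:third} verified, Theorem~\pref{T:etnbhd:devissage} applied to the representable surjective morphism $p\in\catE$ with $X'\in\catD$ gives $X\in\catD$, i.e.\ $X$ is pseudo-noetherian.

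The main obstacle is precisely the bookkeeping forced by the definition of pseudo-noetherian: the devissage theorem operates on a single stack $Y$, whereas membership in $\catD$ is a condition on \emph{every} finitely presented $W\to Y$. Each of~\ref{TI:etdev:second} and~\ref{TI:etdev:third} therefore requires checking that base changing the finite-flat, respectively \etale{}-neighborhood, configuration along $W\to Y$ preserves its type so that the completeness lemmas apply to $W$ rather than merely to $Y$. These stability-under-base-change facts (a finite faithfully flat morphism stays finite faithfully flat, and an \etale{} neighborhood of $Y\setminus U$ pulls back to an \etale{} neighborhood of $W\setminus W_U$) are routine, but they are exactly what bridges the gap between the single-stack lemmas and the relative notion of pseudo-noetherianness.
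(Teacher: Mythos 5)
Your proof is correct and follows essentially the same route as the paper: the easy direction by the definition of pseudo-noetherian, and the converse by applying Theorem~\pref{T:etnbhd:devissage} to the subcategory of finitely presented \etale{} $Y\to X$ with $Y$ pseudo-noetherian, verifying \ref{TI:etdev:second} and \ref{TI:etdev:third} via Lemmas~\pref{L:completeness:fin-flat} and~\pref{L:completeness:et-nbhd}. The base-change bookkeeping you carry out explicitly (pulling the finite flat cover, resp.\ the \etale{} neighborhood, back along an arbitrary finitely presented $W\to Y$) is exactly what the paper has already packaged into the ``it follows that''/``in particular'' pseudo-noetherian clauses of those two lemmas, so the paper can cite them directly.
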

\begin{proof}
The condition is necessary by definition. To show that it is sufficient, let
$\catD\subseteq \catE=\Stack_{\fp,\metale/X}$ be the full subcategory with
objects \etale{} and finitely presented morphisms $Y\to X$ such that $Y$ is
pseudo-noetherian. By the definition of a pseudo-noetherian stack, the category
$\catD$ satisfies condition~\ref{TI:etdev:first} of
Theorem~\pref{T:etnbhd:devissage}. That $\catD$ satisfies
conditions~\ref{TI:etdev:second} and~\ref{TI:etdev:third} follows from
Lemmas~\pref{L:completeness:fin-flat} and~\pref{L:completeness:et-nbhd}.
Since $X'\in \catD$ we conclude from Theorem~\pref{T:etnbhd:devissage}
that $X\in \catD$, i.e., that $X$ is pseudo-noetherian.
\end{proof}


\begin{corollary}
Every quasi-compact and quasi-separated Deligne--Mumford stack is
pseudo-noetherian.
\end{corollary}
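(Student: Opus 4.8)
The plan is to reduce the assertion to the case of a quasi-compact and quasi-separated scheme by means of an \etale{} presentation, and then to reduce that case to an affine scheme. First I would recall that a quasi-compact and quasi-separated Deligne--Mumford stack $X$ admits an \etale{} presentation: since the diagonal of $X$ is unramified, there is a morphism $\map{p}{U}{X}$ from a scheme $U$ which is representable, \etale{} and surjective, and because $X$ is quasi-compact and quasi-separated we may take $U$ to be quasi-compact and quasi-separated and $p$ of finite presentation (an \etale{} morphism is locally of finite presentation, and a quasi-compact quasi-separated morphism that is locally of finite presentation is of finite presentation). By Proposition~\pref{P:pseudo-noeth:et} it then suffices to prove that $U$ is pseudo-noetherian.

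Next I would reduce to the affine case. Cover the quasi-compact and quasi-separated scheme $U$ by finitely many affine open subschemes $V_1,\dots,V_n$ and set $V=\coprod_{i=1}^n V_i$. The induced morphism $\map{q}{V}{U}$ is an open covering, hence representable, \etale{}, surjective and of finite presentation, and $V$, being a finite disjoint union of affine schemes, is itself affine. In particular $V\to\Spec(\Z)$ is affine, so $V$ is of strict approximation type and therefore pseudo-noetherian by Proposition~\pref{P:pseudo-noetherian-stable-under-strict-app-type}. Applying Proposition~\pref{P:pseudo-noeth:et} once more to $q$ shows that $U$ is pseudo-noetherian, and hence so is $X$.

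The individual steps are routine once the presentation is in place; the only point that requires care is verifying that the \etale{} presentation $p$ can be chosen representable and of finite presentation with $U$ quasi-compact and quasi-separated, which is precisely where the Deligne--Mumford hypothesis (unramified diagonal) and the quasi-compactness and quasi-separatedness of $X$ enter. I expect this bookkeeping, rather than any substantial argument, to be the main obstacle, since the real content --- descent of the completeness property along finite flat morphisms and along \etale{} neighborhoods --- has already been packaged into Proposition~\pref{P:pseudo-noeth:et} via Lemmas~\pref{L:completeness:fin-flat} and~\pref{L:completeness:et-nbhd}.
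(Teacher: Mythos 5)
Your proof is correct and follows essentially the route the paper intends for this corollary, which is stated without proof as an immediate consequence of Proposition~\ref{P:pseudo-noeth:et}: take a representable, \etale{}, surjective, finitely presented cover by an affine (or quasi-compact separated) scheme, which is pseudo-noetherian because it is of strict approximation type (Proposition~\ref{P:pseudo-noetherian-stable-under-strict-app-type}), and apply the \etale{} devissage proposition. Your two-step reduction (first to a quasi-compact quasi-separated scheme, then to a finite disjoint union of affines) is only a cosmetic variation, since one could equally take the composite $V\to X$ as a single cover and invoke Proposition~\ref{P:pseudo-noeth:et} once.
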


Finally, we prove Theorem~\tref{T:COMPLETENESS}, that is, that every stack of
approximation type is pseudo-noetherian. We give the following slightly
stronger version.

\begin{theorem}\label{T:pseudo-noetherian-stable-under-app-type}
Let $X$ be a pseudo-noetherian stack and let $X'\to X$ be of approximation
type. Then $X'$ is pseudo-noetherian. In particular,
stacks of approximation type are pseudo-noetherian.
\end{theorem}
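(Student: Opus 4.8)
The plan is to reduce the statement for an arbitrary morphism of approximation type to the two basic cases already handled: strict approximation type (Proposition~\pref{P:pseudo-noetherian-stable-under-strict-app-type}) and the \etale{} devissage. Recall that $X'\to X$ is of approximation type means there is a surjective, representable, finitely presented \etale{} morphism $\map{p}{X''}{X'}$ such that the composite $X''\to X'\to X$ is of strict approximation type. The key observation is that the notion of pseudo-noetherian is already known to be \etale{}-local in the sense of Proposition~\pref{P:pseudo-noeth:et}: if $\map{p}{X''}{X'}$ is representable, surjective, \etale{} and finitely presented, then $X'$ is pseudo-noetherian as soon as $X''$ is. So it suffices to show that $X''$ is pseudo-noetherian.

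First I would verify that $X''$ is pseudo-noetherian. Since $X''\to X'\to X$ is of strict approximation type and $X$ is pseudo-noetherian by hypothesis, this is exactly the content of Proposition~\pref{P:pseudo-noetherian-stable-under-strict-app-type}(ii): a stack of strict approximation type over a pseudo-noetherian base is itself pseudo-noetherian. One should check that $X''\to X$ being of strict approximation type does fit the hypothesis of that proposition, namely that its source is quasi-compact and quasi-separated; this follows because any morphism of approximation type is quasi-compact and quasi-separated (sorites item~\ref{PI:at-sorites:qcqs}) and $X$ is quasi-compact, so $X'$ and $X''$ inherit quasi-compactness and quasi-separatedness.

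Second, with $X''$ pseudo-noetherian in hand, I would invoke Proposition~\pref{P:pseudo-noeth:et} applied to the representable, surjective, finitely presented \etale{} morphism $\map{p}{X''}{X'}$. This immediately yields that $X'$ is pseudo-noetherian, completing the argument. The final sentence of the theorem, that stacks of approximation type are pseudo-noetherian, is then the special case $X=\Spec(\Z)$, using that $\Spec(\Z)$ is pseudo-noetherian (as already noted, any affine scheme of finite type over $\Z$, indeed any quasi-affine scheme, has the completeness property).

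I do not expect a serious obstacle here: the heavy lifting has been done in Propositions~\pref{P:pseudo-noetherian-stable-under-strict-app-type} and~\pref{P:pseudo-noeth:et}, and the present theorem is essentially a bookkeeping step that glues the strict-approximation-type case to the \etale{}-local descent. The only point requiring mild care is the compatibility of quasi-compactness/quasi-separatedness hypotheses along the tower $X''\to X'\to X$, and the verification that the composite $X''\to X$ really is of strict approximation type, which is immediate from the definition of approximation type via the factorization through $p$.
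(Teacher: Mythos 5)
Your proof is correct and follows exactly the paper's own argument: unwind the definition of approximation type to get the \'etale cover $X''\to X'$ with $X''\to X$ of strict approximation type, apply Proposition~\ref{P:pseudo-noetherian-stable-under-strict-app-type} to conclude $X''$ is pseudo-noetherian, then descend along $p$ via Proposition~\ref{P:pseudo-noeth:et}, and finally specialize to $X=\Spec(\Z)$ for the last statement. Your additional check that $X'$ is quasi-compact and quasi-separated (needed to invoke Proposition~\ref{P:pseudo-noeth:et}) is a sensible piece of bookkeeping that the paper leaves implicit.
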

\begin{proof}
As $X'\to X$ is of approximation type, there is by definition a surjective
representable
and finitely presented \etale{} morphism $X''\to X'$ such that $X''\to X$
is of strict approximation type. By
Proposition~\pref{P:pseudo-noetherian-stable-under-strict-app-type} we have
that $X''$ is pseudo-noetherian and it follows that $X'$ is pseudo-noetherian
by Proposition~\pref{P:pseudo-noeth:et}.
The last statement follows from the fact that $\Spec \Z$ is pseudo-noetherian.
\end{proof}

\end{section}


\begin{section}{Finite coverings of stacks}\label{S:finite-coverings}
In this section, we prove Theorem~\tref{T:FINITE-COVERINGS}, that is, that
every quasi-compact stack $X$ with quasi-finite and separated diagonal
admits a finite surjective morphism of finite presentation from a
\emph{scheme} $Z$ that is flat over a dense quasi-compact open substack
$U\subseteq X$. Furthermore, if $X$ is Deligne--Mumford, then there is such a
$Z$ which is \etale{} over $U$.

\begin{lemma}[Variant of Zariski's Main Theorem]\label{L:qfin+int-closed=open}
Let $\map{f}{X}{Y}$ be a representable quasi-finite and separated morphism
of algebraic stacks such that $\sO_Y\to f_*\sO_X$ is injective and
integrally closed. Then $f$ is an open immersion.
\end{lemma}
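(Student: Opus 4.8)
The plan is to derive the statement from the classical Zariski's Main Theorem --- the factorization of a representable quasi-finite separated morphism into an open immersion followed by a finite morphism --- and to use the hypothesis that $\sO_Y$ is integrally closed in $f_*\sO_X$ to force the finite part of this factorization to be an isomorphism.

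First I would reduce to the case where $Y$ is an affine scheme. Being an open immersion is fppf-local on the target and $f$ is representable, so it suffices to treat the base change of $f$ along a smooth surjective morphism $\map{g}{Y_0}{Y}$ with $Y_0$ an affine scheme; then $X_0:=X\times_Y Y_0$ is a quasi-finite separated algebraic space over $Y_0$. Since $f$ is quasi-compact and quasi-separated, flat base change gives $g^*(f_*\sO_X)\iso (f_0)_*\sO_{X_0}$, so that $\sO_{Y_0}\to (f_0)_*\sO_{X_0}$ is the pullback $g^*$ of $\sO_Y\to f_*\sO_X$, and flatness of $g$ preserves injectivity. The one delicate point in this reduction --- which I expect to be the main obstacle --- is that the \emph{integral-closedness} must descend, i.e., that formation of the integral closure of $\sO_Y$ in $f_*\sO_X$ commutes with the smooth base change $g$. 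This is the classical fact that integral closure commutes with smooth base change (flat with geometrically normal fibers), which I would invoke here rather than reprove.

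So assume $Y=\Spec(A)$ is affine. By Zariski's Main Theorem for separated quasi-finite algebraic spaces, $f$ factors as $X\xrightarrow{j} W\xrightarrow{\pi} Y$ with $j$ an open immersion and $\pi$ finite; as $\pi$ is finite and $Y$ is affine, $W$ is an affine scheme and $X$ is an open subscheme. Replacing $W$ by the schematic image $\overline W\inj W$ of $j$ (a closed subscheme, still finite over $Y$; I again write $\pi$ for $\overline W\to Y$), I may assume $j$ is schematically dominant, so that $\sO_{\overline W}\inj \tilde\jmath_*\sO_X$ is injective, where $\map{\tilde\jmath}{X}{\overline W}$ is the induced open immersion and $f=\pi\circ\tilde\jmath$. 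Applying the exact functor $\pi_*$ yields an injection $\pi_*\sO_{\overline W}\inj \pi_*\tilde\jmath_*\sO_X=f_*\sO_X$ whose image is integral over $\sO_Y$ because $\pi$ is finite.

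Finally I would conclude as follows. By hypothesis $\sO_Y$ is integrally closed in $f_*\sO_X$, so the image of $\pi_*\sO_{\overline W}$ is contained in the image of $\sO_Y$; the reverse inclusion holds because the composite $\sO_Y\to\pi_*\sO_{\overline W}\to f_*\sO_X$ is the (injective) structure map. Hence the two images coincide, and since $\pi_*\sO_{\overline W}\inj f_*\sO_X$ is injective while $\sO_Y\inj f_*\sO_X$ is injective, the structure map $\sO_Y\to\pi_*\sO_{\overline W}$ is an isomorphism. As $\pi$ is affine, this forces $\overline W=\Spec_Y(\pi_*\sO_{\overline W})=Y$, so $\pi$ is an isomorphism and $f=\tilde\jmath$ is an open immersion. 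The substantive inputs are thus the descent of the hypotheses under the smooth presentation (the integral-closure base-change being the crux) and the classical Zariski factorization; the final integral-closure argument is then purely formal.
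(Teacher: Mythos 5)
Your proof is correct and follows essentially the same route as the paper: the paper's two-line proof likewise reduces to an affine base using fppf-locality of open immersions and the fact that integral closure commutes with smooth base change, and then invokes Zariski's Main Theorem from EGA~IV (18.12.13 and 8.12.3). The only difference is that you unwind that citation explicitly --- using the open-immersion-followed-by-finite factorization and a short schematic-image argument to show the integral-closedness hypothesis forces the finite part to be an isomorphism --- whereas the paper appeals to the form of ZMT in which $X\to Y'$ is an open immersion onto the normalization $Y'$ of $Y$ in $X$, which equals $Y$ under the stated hypotheses.
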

\begin{proof}
As the question is fppf-local on $Y$ and the integral closure commutes with
smooth base change~\cite[Prop.~16.2]{laumon}, we can assume that $Y$ is an
affine scheme. The result then follows from Zariski's Main
Theorem~\cite[18.12.13 and~8.12.3]{egaIV}.
\end{proof}

\begin{proof}[Proof of Theorem~\tref{T:FINITE-COVERINGS}]
%
Let $\map{\pi}{X'}{X}$ be a separated and quasi-finite flat
(resp.\ separated and quasi-compact \etale{}) presentation by a scheme $X'$,
as exists
by~\cite[Thm.~7.1]{rydh_etale-devissage}.
%
The separable fiber rank of $\pi$ is constructible and lower
semi-continuous~\cite[Cor.~9.7.9, Prop.~15.5.9]{egaIV}.
%
There is thus a quasi-compact open dense substack $U\subseteq X$ such that the
separable rank is locally constant on $U$. Let $U=U_1\amalg U_2\amalg\dots
\amalg U_n$ be the decomposition into open and closed substacks such that
$\pi$ has constant separable fiber rank $d$ over $U_d$. The theorem follows if
we construct a scheme $Z_d$ and a finite and finitely presented morphism
$\map{q}{Z_d}{X}$ such that $q|_{U_d}$ is flat (resp.\ \etale{}) and surjective
and such that $q^{-1}(U_k)=\emptyset$ for $k\neq d$.

To simplify notation, set $U=U_d$ and let $U'=\pi^{-1}(U)$. Let
$(U'/U)^d=U'\times_U U'\times_U \dots\times_U U'$ and let
$V=\SEC_d(U'/U)\subseteq (U'/U)^d$ be the open subscheme given by the
complement of the union of all diagonals.
The structure morphism
$V\to U$ is quasi-finite, flat, finitely presented and separated with fibers of
separable rank $d!$. It follows that $V\to U$ is
finite~\cite[Prop.~15.5.9]{egaIV}. If $\pi$ is \etale{}, then $V\to U$ is also
\etale{}. Let $\map{p}{W}{X}$ be the normalization of $X$ in $V$. Then $p$ is
surjective and integral, the restriction $\map{p|_U}{W|_U\iso V}{U}$ is flat
and of
finite presentation (resp.\ \etale{}) and $p(W)=\overline{U}$. We
will now show that $W$ is a scheme.

Let $W'=W\times_X X'$ and $V'=V\times_U U'$; then $W'$ is a scheme. We
have $d$ sections $\map{s_i}{V}{V'}$ such that $\bigcup_i |s_i(V)|=|V'|$ as
\emph{sets}. Let $Y_i=\overline{s_i(V)}$ be the scheme-theoretic closure of
the section $s_i$ in
$W'$. Then $|W'|=\bigcup_i |Y_i|$ since $\map{\pi_W}{W'}{W}$ is flat. As
$W$ is integrally closed in $V$ and $s_i(V)\iso V$ is schematically dense in
$Y_i$, we have that $\sO_{W}\to (\pi_W)_*\sO_{Y_i}$ is injective and integrally
closed. Thus, $\map{(\pi_W)|_{Y_i}}{Y_i}{W}$ is an open immersion by
Lemma~\pref{L:qfin+int-closed=open}. In particular, we have that $W=\bigcup_i
Y_i$ is a \emph{scheme}. The morphism $\map{p}{W}{X}$ is integral and
surjective and the restriction $\map{p|_U}{V}{U}$ is finite, flat and
finitely presented (resp.\ \etale{}). The final step is to approximate $p$ with
a finitely presented morphism $\map{p_\lambda}{W_\lambda}{X}$ such that
$W_\lambda$ is a scheme.

Let $\sA=p_*\sO_W$. We now use that $X$ has the completeness property
(Theorem~\tref{T:COMPLETENESS}) and write $\sA$ as a direct limit of finite and
finitely presented algebras $\sA_\lambda$ such that $(\sA_\lambda)|_U=\sA|_U$
and $\sA_\lambda|_{U_k}=0$ for all $k\neq d$, cf.\ Remark~\pref{R:C2-with-open}.
Let $W_\lambda=\Spec_X(\sA_\lambda)$; then $V=U\times_X W_\lambda$.  Since
$Y_i\to W$ is an open immersion, we have that $Y_i\inj W'$ is a finitely
presented closed immersion. By standard limit methods, there exists $\lambda$
and a finitely presented closed immersion
$(Y_i)_\lambda\inj W'_\lambda=W_\lambda\times_X X'$ of
schemes which pull-backs to $Y_i\inj W'$. After increasing $\lambda$ we can
further assume, by Proposition~\pref{P:std-limits:properties}, that the
composition $(Y_i)_\lambda\inj W'_\lambda\to W_\lambda$ is an open immersion and
that $\coprod_i (Y_i)_\lambda\to W_\lambda$ is an open covering. In
particular, we have that $W_\lambda$ is a scheme.
\end{proof}

\end{section}


\begin{section}{Properties stable under approximation}
\label{S:approximation-properties}
Let $X=\varprojlim_\lambda X_\lambda$ be an inverse limit of
\emph{finitely presented}
stacks over $S$. In this section, we prove
Theorem~\tref{T:APPROXIMATION-PROPERTIES} for the system $(X_\lambda\to S)$,
that is, we prove that if $X\to S$ has a
certain property~$P$, then so has $X_\lambda\to S$ for sufficiently large
$\lambda$. This result is more elementary than the previous theorems and
essentially independent of these. In fact, we only use the previous results
when $P$ is either ``separated'' or ``proper with finite diagonal''. Most of
the properties are deduced by passing to the diagonal via the following lemma.

\begin{lemma}\label{L:diag-inv-system}
Let $S$ be a quasi-compact algebraic stack. Let $X=\varprojlim_{\lambda\in
L} X_\lambda$ be a limit of quasi-compact and quasi-separated morphisms
(resp.\ quasi-separated morphisms of finite type) with affine bonding maps.
\begin{enumerate}
\item The morphisms $\map{g_\lambda}{X\times_{X_\lambda} X}{X\times_S X}$ are
representable and of finite type (resp.\ of finite presentation). For every
$\mu\geq \lambda$ the
morphism $\map{g_{\mu\lambda}}{X\times_{X_\mu} X}{X\times_{X_\lambda} X}$ is a
closed immersion.
\item The inverse system $\{g_\lambda\}_{\lambda\in L}$ has limit
$\map{\Delta_{X/S}}{X}{X\times_S X}$.
\end{enumerate}
\end{lemma}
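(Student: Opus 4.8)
The plan is to exhibit each of the morphisms in the statement as a base change of a \emph{relative diagonal}, so that representability, finite presentation and the closed-immersion property all come for free from standard stability under base change, and then to read off (ii) from the fact that fibre products commute with cofiltered limits. The only genuine inputs are: the diagonal of a finitely presented morphism of stacks is representable and of finite presentation, the diagonal of an affine (hence separated) morphism is a closed immersion, and the commutation of fibre products with inverse limits.

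For (i), the starting observation is that $g_\lambda$ fits into a cartesian square over $\Delta_{X_\lambda/S}$. Using the projection $X\times_{X_\lambda} X\to X_\lambda$ and the morphism $X\times_S X\to X_\lambda\times_S X_\lambda$ induced by $X\to X_\lambda$ on both factors, one checks the identity
$$X\times_{X_\lambda} X \;=\; (X\times_S X)\times_{X_\lambda\times_S X_\lambda} X_\lambda,$$
whose bottom row is $\Delta_{X_\lambda/S}$ and whose top row is $g_\lambda$. Since $X_\lambda\to S$ is finitely presented, $\Delta_{X_\lambda/S}$ is representable, quasi-compact and quasi-separated (because $X_\lambda\to S$ is quasi-separated) and locally of finite presentation (by the usual cancellation applied to $\mathrm{pr}_1\circ\Delta_{X_\lambda/S}=\mathrm{id}_X$, the first projection being a base change of $X_\lambda\to S$). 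Hence $\Delta_{X_\lambda/S}$ is representable and of finite presentation, and these properties pass to its base change $g_\lambda$. For the transition maps I would argue identically, now with the diagonal of the bounding map: for $\mu\geq\lambda$ one has the cartesian square
$$X\times_{X_\mu} X \;=\; (X\times_{X_\lambda} X)\times_{X_\mu\times_{X_\lambda} X_\mu} X_\mu,$$
exhibiting $g_{\mu\lambda}$ as the base change of $\Delta_{X_\mu/X_\lambda}$. As the bounding maps $X_\mu\to X_\lambda$ are affine, hence separated, this diagonal is a closed immersion, and therefore so is $g_{\mu\lambda}$.

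For (ii), I would first note that the relative diagonals $\Delta_{X/X_\lambda}\colon X\to X\times_{X_\lambda} X$ satisfy $g_\lambda\circ\Delta_{X/X_\lambda}=\Delta_{X/S}$ and $g_{\mu\lambda}\circ\Delta_{X/X_\mu}=\Delta_{X/X_\lambda}$, so they form a compatible cone and induce a morphism $X\to \varprojlim_\lambda(X\times_{X_\lambda} X)$ of $X\times_S X$-stacks. It then remains to check this morphism is an isomorphism, which follows from the commutation of fibre products with cofiltered limits: since $X=\varprojlim_\lambda X_\lambda$,
$$\varprojlim_\lambda\,(X\times_{X_\lambda} X)\;=\;X\times_{(\varprojlim_\lambda X_\lambda)} X\;=\;X\times_X X\;=\;X,$$
the last identification being the diagonal, and under it the structural morphism to $X\times_S X$ is exactly $\Delta_{X/S}$. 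I expect no serious obstacle here: the content is essentially formal, and the only point requiring genuine care is the precise verification of the two cartesian identities above, with the limit-commutation in (ii) invoked from the standard limit formalism of the appendix; everything else is the routine stability of ``representable'', ``finitely presented'' and ``closed immersion'' under base change.
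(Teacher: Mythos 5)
Your argument for (i) coincides with the paper's: both identify $g_\lambda$ with the pull-back of $\Delta_{X_\lambda/S}$ along $X\times_S X\to X_\lambda\times_S X_\lambda$, and $g_{\mu\lambda}$ with the pull-back of the diagonal of the affine bounding map $X_\mu\to X_\lambda$, which is a closed immersion because affine morphisms are separated. For (ii) you take a genuinely different route. The paper argues by hand with universal properties: writing $L$ for the limit of the system $\{g_\lambda\}$, the compatible diagonals $\Delta_{X/X_\lambda}$ (monomorphisms, since the projections $X\to X_\lambda$ are affine) induce a monomorphism $X\to L$; the two projections $L\rightrightarrows X$ coincide because their compositions with every $X\to X_\lambda$ agree and $X=\varprojlim_\lambda X_\lambda$; hence $L\to X\times_{X_\lambda}X$ factors through $\Delta_{X/X_\lambda}$, giving $L=X$. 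You instead quote commutation of fibre products with cofiltered limits to get $\varprojlim_\lambda(X\times_{X_\lambda}X)=X\times_X X=X$. That identity is true and your conclusion is correct, but with two caveats. First, the commutation statement is \emph{not} in the appendix you cite, which contains only the functorial characterization of local finite presentation and descent of objects and properties to finite level; what the commutation really requires is that an inverse limit of algebraic stacks along affine bounding maps is a genuine $2$-categorical (pseudo-)limit, i.e., that $\catHom(T,\varprojlim_\lambda X_\lambda)\to\varprojlim_\lambda\catHom(T,X_\lambda)$ is an equivalence for every test stack $T$, after which your identity is Fubini for pseudo-limits of categories. That universal property is exactly what the paper's monomorphism-plus-projections argument exploits, so your route repackages rather than avoids this verification; once you make it explicit, your proof is complete and arguably cleaner and more reusable. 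Second, to form $\varprojlim_\lambda(X\times_{X_\lambda}X)$ in the category of algebraic stacks at all, the bounding maps $g_{\mu\lambda}$ must be affine, so your (ii) silently depends on (i); the paper's proof has the same dependence, but it should be stated rather than treating the limit as automatically existing.
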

\begin{proof}
The morphism $X\times_{X_\lambda} X\to X\times_S X$ is a pull-back of the
diagonal $\Delta_{X_\lambda/S}$ and hence representable and of finite
type (resp.\ finite presentation).
The bonding map $g_{\mu\lambda}$ is a pull-back of the diagonal
of the affine morphism $X_\mu\to X_\lambda$.

Let $L$ be the limit stack of the inverse system $\{g_\lambda\}$. By the
universal property of the inverse limit $L$, the diagonals $X\inj
X\times_{X_\lambda} X$ factor through $L$ and the resulting map $X\to L$ is a
monomorphism. Similarly, by the universal property of $X$, the two projections
$\map{\pi_1,\pi_2}{L}{X\times_{X_\lambda} X\to X}$ coincide. It follows that
$L\to X\times_{X_\lambda} X$ factors through $\Delta_{X/X_\lambda}$ and hence
that $L=X$.
\end{proof}

\begin{proposition}[{\cite[Prop.~C.6]{thomason-trobaugh}}]
\label{P:rel-affine-limit}
Let $S$ be a quasi-compact algebraic stack and let $X=\varprojlim_{\lambda\in
L} X_\lambda$ be a limit of finitely presented $S$-stacks
with affine bonding maps. If $X\to S$ is
affine (resp.\ quasi-affine), then there is an index $\alpha$ such that
$X_\lambda\to S$ is affine (resp.\ quasi-affine) for every $\lambda\geq
\alpha$.
\end{proposition}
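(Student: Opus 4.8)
The plan is to reduce to the case where $S$ is an affine scheme, treat the quasi-affine case by embedding into affine space, and then obtain the affine case from it by an extra surjectivity argument.

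First I would reduce to $S=\Spec(A)$ affine. Both affineness and quasi-affineness are fppf-local on the target, so choosing an affine scheme $S'$ together with a faithfully flat finitely presented morphism $S'\to S$ and base-changing the whole system — which is harmless since inverse limits commute with base change and since each $X_\lambda\times_S S'\to S'$ is again finitely presented with affine bounding maps — we may assume $S=\Spec(A)$. Because the bounding maps are affine, global sections commute with the limit: writing $\map{f_\lambda}{X_\lambda}{S}$ and $\sA_\lambda=(f_\lambda)_*\sO_{X_\lambda}$ we have $f_*\sO_X=\varinjlim_\lambda \sA_\lambda$, and the canonical affinization morphisms $\map{u_\lambda}{X_\lambda}{Y_\lambda:=\Spec_S\sA_\lambda}$ are compatible with the bounding maps and satisfy $\varprojlim_\lambda u_\lambda=\id{X}$. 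Note that $f_\lambda$ is quasi-affine (resp.\ affine) precisely when $u_\lambda$ is a quasi-compact open immersion (resp.\ an isomorphism), and that each $u_\lambda$ is finitely presented, since $f_\lambda$ is finitely presented and $Y_\lambda\to S$ is affine.

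For the quasi-affine case I would produce an immersion into affine space. Since $X\to S$ is quasi-affine and $X$ is quasi-compact, finitely many sections $b_1,\dots,b_n\in\Gamma(X,\sO_X)$ determine a quasi-compact immersion $\map{\psi}{X}{\A{n}_S}$. As $\Gamma(X,\sO_X)=\varinjlim_\lambda\Gamma(X_\lambda,\sO_{X_\lambda})$, the $b_i$ descend to sections over some $X_{\lambda_0}$, yielding compatible $S$-morphisms $\map{\psi_\lambda}{X_\lambda}{\A{n}_S}$ for $\lambda\geq\lambda_0$ with $\varprojlim_\lambda\psi_\lambda=\psi$. Now $\A{n}_S\to S$ is a \emph{fixed} finitely presented target and the $X_\lambda$ are finitely presented over $S$, so the standard limit results (Propositions~\pref{P:std-limits} and~\pref{P:std-limits:properties}) apply to the $\psi_\lambda$: since $\psi$ is a quasi-compact immersion, $\psi_\lambda$ is a quasi-compact immersion for all large $\lambda$. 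A quasi-compact immersion into $\A{n}_S$ exhibits $X_\lambda$ as quasi-affine over $S$ (in particular, representable), which settles the quasi-affine case.

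For the affine case, $X\to S$ affine is in particular quasi-affine, so by the previous step $u_\lambda$ is a quasi-compact open immersion and $X_\lambda\to S$ is separated for all large $\lambda$; it then remains to show that $u_\lambda$ is surjective, hence an isomorphism, for some such $\lambda$. Here the $Y_\lambda=\Spec_S\sA_\lambda$ are affine schemes forming an inverse system with affine bounding maps and limit $X$, and the closed complement $Z_\lambda=Y_\lambda\setminus u_\lambda(X_\lambda)$ is cut out by a finitely generated ideal. Since $\varprojlim_\lambda u_\lambda=\id{X}$ is surjective, the preimage of each $Z_\lambda$ in $X$ is empty; by the limit behaviour of constructible subsets~\cite[8.3.4]{egaIV} the preimage of $Z_\lambda$ in some $Y_\mu$ with $\mu\geq\lambda$ is then already empty, i.e.\ the bounding map $Y_\mu\to Y_\lambda$ factors through the open substack $u_\lambda(X_\lambda)\iso X_\lambda$.

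The step I expect to be the main obstacle is exactly this last surjectivity descent in the affine case. The difficulty is that the affinization targets $Y_\lambda$ are \emph{not} of finite presentation over $S$, so the clean machinery used in the quasi-affine step — which relied on the fixed finitely presented target $\A{n}_S$ — is unavailable; moreover the two inverse systems $\{X_\lambda\}$ and $\{Y_\lambda\}$ are only pro-isomorphic, so the fact just obtained, that $Y_\mu\to Y_\lambda$ eventually factors through $u_\lambda(X_\lambda)$, does \emph{not} by itself force any single $u_\lambda$ to be an isomorphism. Resolving it requires using the finite presentation of the $X_\lambda\to S$ in an essential way, in order to upgrade this pro-emptiness of the complements to the genuine surjectivity of $u_\nu$ at a finite stage $\nu$, and this is the technical heart of the argument.
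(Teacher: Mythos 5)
Your reduction to $S$ affine and the identification $f_*\sO_X=\varinjlim_\lambda\sA_\lambda$ are fine, but the quasi-affine step fails, and for two separate reasons. First, the immersion $\map{\psi}{X}{\A{n}_S}$ need not exist: immersions are locally of finite type, so a quasi-affine morphism $X\to S$ admits a quasi-compact immersion into some $\A{n}_S$ only when it is of finite type, which the limit $X=\varprojlim_\lambda X_\lambda$ almost never is (take $X=\Spec(A[x_1,x_2,\dots])=\varprojlim_n\A{n}_S$ over $S=\Spec(A)$: it is affine over $S$ but admits no immersion into any $\A{n}_S$). Second, and more fundamentally, the claim that Propositions~\pref{P:std-limits} and~\pref{P:std-limits:properties} apply to the $\psi_\lambda$ is wrong: those results concern a system of \emph{base changes} $f_\lambda=f_\alpha\times_{S_\alpha}S_\lambda$ of one fixed finitely presented morphism along an inverse system of bases, whereas your $\psi_\lambda$ form a merely \emph{compatible family} ($\psi_\mu=\psi_\lambda\circ(X_\mu\to X_\lambda)$) with varying source and fixed target, and no descent statement of this kind holds for such families. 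Concretely, let $S=\Spec(k)$, let $X_\lambda=\Spec(k[x,y])$ for all $\lambda\in\N$ with affine bounding maps given by $x\mapsto x$, $y\mapsto 0$, so that $X=\varprojlim_\lambda X_\lambda=\Spec(k[x])$, and take $n=1$, $b_1=x$. Then $\psi$ is the identity of $\A{1}_k$, a closed immersion, while every $\psi_\lambda$ is the projection $\A{2}_k\to\A{1}_k$, which is not even a monomorphism. (Each $X_\lambda$ here is affine over $S$, so the conclusion of the proposition holds; what fails is your method of detecting it through the particular map $\psi_\lambda$.) Since your affine case rests on the quasi-affine case --- that is where you get that $u_\lambda$ is an open immersion and that $X_\lambda\to S$ is separated --- and since you leave its final step unresolved in any event, neither half of the proposal goes through.

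The idea you are missing, which is the paper's actual mechanism and which would also have finished your own affine-case fragment, is to compare $\{X_\lambda\}$ with a \emph{second} inverse system having the same limit and then to use a cancellation argument. After reducing to $S$ affine and, at first, to the case where the $X_\lambda$ have separated diagonals, the paper writes the affine hull $\overline{X}$ of $X$ as a limit $\varprojlim_\mu\overline{X'}_\mu$ of finitely presented affine $S$-schemes and pulls back the quasi-compact open $X\subseteq\overline{X}$ (Remark~\pref{R:std-limits:qc-open}) to get $X=\varprojlim_\mu X'_\mu$ with $X'_\mu$ finitely presented and affine (resp.\ quasi-affine) over $S$. Applying Proposition~\pref{P:functorial-char-of-loc-fin-pres} twice to these two pro-systems produces a factorization $X_\alpha\to X'_\beta\to X_{\alpha_0}$ of an affine bounding map. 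As $X'_\beta\to S$ is separated and $X_{\alpha_0}$ has separated diagonal, the morphism $X'_\beta\to X_{\alpha_0}$ is separated, and cancellation (a composition being affine with the second map separated forces the first map to be affine) gives that $X_\alpha\to X'_\beta$ is affine, whence $X_\alpha\to S$, and every $X_\lambda\to S$ with $\lambda\geq\alpha$, is affine (resp.\ quasi-affine); the separated-diagonal hypothesis is afterwards removed by applying this special case to the diagonal system of Lemma~\pref{L:diag-inv-system}. Your own factorization $X_\mu\to Y_\mu\to X_\lambda$ of the bounding map through the $S$-affine scheme $Y_\mu$, obtained via~\cite[Cor.~8.3.4]{egaIV}, is exactly such a configuration: granting separatedness of $X_\lambda\to S$, the same cancellation shows that $X_\mu\to Y_\mu$ is affine and hence that $X_\mu\to S$ is affine --- no surjectivity of any $u_\nu$ is needed at all. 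So the obstacle you describe as the ``technical heart'' is not the upgrade of pro-emptiness to surjectivity; it is the absence of this cancellation step and of the separated-diagonal reduction, which is precisely what the paper supplies.
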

\begin{proof}
We will first show the proposition under the assumption that the morphisms
$X_\lambda\to S$ have affine double diagonals (e.g., separated diagonals).

The question is local on $S$ in the fppf topology, so we can assume that $S$ is
an affine scheme. Let $\overline{X}=\Spec(\Gamma(\sO_X))$ be the affine hull of
$X$; recall that $X\to \overline{X}$ is a quasi-compact open immersion
($X=\overline{X}$ if $X$ is affine). Since $S$ is affine, we can write
$\overline{X}$ as an inverse limit $\varprojlim_{\mu\in M}
\overline{X}'_\mu$ of finitely presented affine $S$-schemes. By
Remark~\pref{R:std-limits:qc-open}, there is an index $\mu_0$ and an open
quasi-compact subscheme $X'_{\mu_0}\subseteq\overline{X}'_{\mu_0}$ with inverse
image $X$ in $\overline{X}$. We let
$X'_\mu=X'_{\mu_0}\times_{\overline{X}'_{\mu_0}} \overline{X}'_{\mu}$ for all
$\mu\geq \mu_0$. Then $X=\varprojlim_{\mu\in M} X'_\mu$ becomes a limit of
finitely presented affine (resp.\ quasi-affine) $S$-schemes.

Let $\alpha_0\in L$ be an index. By the functorial characterization of finitely
presented morphisms, Proposition~\pref{P:functorial-char-of-loc-fin-pres},
there are indices $\alpha\in L$ and $\beta\in M$ and morphisms
$$X_\alpha \to X'_\beta \to X_{\alpha_0}$$
and after increasing $\alpha$, we can assume that the composition coincides
with the bonding map of the system $(X_\lambda)$ and hence is affine. As
$X'_\beta$ has affine diagonal and $X_{\alpha_0}$ has affine
double diagonal, it follows that $X'_\beta\to X_{\alpha_0}$ has affine diagonal
and that $X_\alpha\to X'_\beta$ is affine. Thus, $X_\lambda$ is affine (resp.\
quasi-affine) for every $\lambda\geq\alpha$.

For general $X_\lambda$, we at least know that the triple diagonal is affine
(it is an isomorphism). Repeating the argument above we conclude that
$X_\lambda$ has affine diagonal for every $\lambda\geq\alpha$ and the
proposition follows from the special case.
\end{proof}

\begin{corollary}
\label{C:scheme-limit}
Let $S$ be a quasi-compact \emph{scheme} and let $X=\varprojlim_{\lambda\in
L} X_\lambda$ be a limit of finitely presented $S$-stacks
with affine bonding maps. If $X$ is a scheme, then there is an index $\alpha$
such that $X_\lambda$ is a scheme for every $\lambda\geq\alpha$.
\end{corollary}
\begin{proof}
The question is Zariski-local on $S$ so we can assume that $S$ is
affine. Choose an open affine covering $X=\bigcup_{i=1}^n U_i$. By
Remark~\pref{R:std-limits:qc-open}, there is an index $\lambda$ and open
subsets $U_{i,\lambda}\subseteq X_\lambda$ such that $U_i=U_{i,\lambda}
\times_{X_\lambda} X$ for all $i$. After increasing $\lambda$ we have that
$U_{i,\lambda}$ is affine by Proposition~\pref{P:rel-affine-limit}. Finally,
after further increasing $\lambda$ we can assume that
$X_\lambda=\bigcup_{i=1}^n U_{i,\lambda}$ and then $X_\lambda$ is a scheme.
\end{proof}

\begin{proposition}\label{P:properties-for-closed-limits}
Let $S$ be a quasi-compact algebraic stack and let $X=\varprojlim_\lambda
X_\lambda$ be an inverse limit of finitely presented $S$-stacks such that
the bonding maps $X_\mu\to X_\lambda$ are closed immersions for every
$\mu\geq \lambda$. If
$X\to S$ has one of the following properties:
\begin{enumerate}
\item a monomorphism,\label{TI:cl:first-fiber}
\item universally injective (i.e., ``radiciel''),
\item representable,
\item unramified,
\item quasi-finite,\label{TI:cl:last-fiber}
\item finite,\label{TI:cl:finite}
\item a closed immersion,\label{TI:cl:closedimm}
\item an immersion;\label{TI:cl:imm}
\end{enumerate}
then there exists $\alpha$ such that $X_\lambda\to S$ has the corresponding
property
for all $\lambda\geq \alpha$.

If, in addition, the $X_\lambda$'s and $X$ are $S$-group spaces such that
$X_\mu\inj X_\lambda$ is a subgroup for every $\mu\geq\lambda$, then the same
conclusion holds for the properties:
\begin{enumerate}\setcounter{enumi}{8}
\item abelian fibers,\label{TI:cl:first-fiber-2}
\item quasi-finite with linearly reductive fibers.\label{TI:cl:last-fiber-2}
\end{enumerate}
\end{proposition}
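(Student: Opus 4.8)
The plan is to exploit the closed-immersion hypothesis to place everything inside one ambient stack, and then to split the list into three groups treated by parallel arguments. Fix a base index $\lambda_0$. Since $X_\mu\inj X_\lambda$ is a closed immersion for all $\mu\geq\lambda\geq\lambda_0$, the corresponding quasi-coherent ideals in $\sO_{X_{\lambda_0}}$ form a filtered increasing system; passing to their colimit (which always exists in $\QCoh$ and uses no completeness) shows that each $X\inj X_\lambda$ is again a closed immersion. Thus $\{X_\lambda\}_{\lambda\geq\lambda_0}$ is a filtered decreasing family of finitely presented closed substacks of $X_{\lambda_0}$ with scheme-theoretic intersection $X$; in particular $X\to S$ is of finite type and $\bigcap_\lambda|X_\lambda|=|X|$ inside $|X_{\lambda_0}|$. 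The three groups are the diagonal/fibre properties \ref{TI:cl:first-fiber}--\ref{TI:cl:last-fiber}, the global properties \ref{TI:cl:finite}--\ref{TI:cl:imm}, and the group-scheme properties \ref{TI:cl:first-fiber-2}--\ref{TI:cl:last-fiber-2}.

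The engine for the first group is a spreading-out argument in the constructible topology. For each such property $P$ I would attach to $X_{\lambda_0}$ a finitely presented auxiliary stack $T$, writing $T_\lambda=T\times_{X_{\lambda_0}}X_\lambda$ and $T_X=T\times_{X_{\lambda_0}}X$, together with a constructible ``bad locus'' $B\subseteq|T|$ such that $X_\lambda\to S$ has $P$ exactly when $|T_\lambda|\cap B=\emptyset$. Concretely: for unramified and quasi-finite take $T=X_{\lambda_0}$, with $B$ the ramification locus, resp.\ the locus of positive fibre dimension, both closed by~\cite[\S9]{egaIV}; for universally injective and for representable pass to the diagonal, taking $T=X_{\lambda_0}\times_S X_{\lambda_0}$ with $B$ the complement of the image of $\Delta_{X_{\lambda_0}/S}$ (constructible by Chevalley), resp.\ $T=I_{X_{\lambda_0}/S}$ the relative inertia with $B$ the locus of nontrivial automorphisms, using that $f$ is representable iff its relative inertia is trivial; and monomorphism is the conjunction of unramified and universally injective. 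In each case $T$ base changes compatibly along the closed immersions $X_\mu\inj X_\lambda$ (for the inertia because a closed immersion is fully faithful on automorphism groups), so the $|T_\lambda|$ form a filtered decreasing family of closed, hence pro-constructible, subsets of $|T|$ with intersection $|T_X|$. Since $P$ holds on $X$ we have $|T_X|\cap B=\emptyset$; as $B$ is constructible, hence clopen in the quasi-compact constructible topology on $|T|$, the finite-intersection property forces $|T_\lambda|\cap B=\emptyset$ for some $\lambda$, which is $P$ for $X_\lambda$. The constructibility and closedness of each $B$, together with Chevalley's theorem, I would reduce to the scheme case via a smooth presentation of $S$ and a further presentation of the source, after which they are the statements of~\cite[\S9]{egaIV}.

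For the global group \ref{TI:cl:finite}--\ref{TI:cl:imm} the pointwise-on-the-source framing is unavailable, but these are precisely the situations covered by the classical limit results for closed-immersion transition maps. I would first invoke Proposition~\pref{P:rel-affine-limit} to arrange that $X_\lambda\to S$ is affine for large $\lambda$ (needed for finite), and then descend the assertions to schemes via a smooth presentation of $S$, where ``immersion'', ``closed immersion'' and ``finite'' in an inverse limit with closed-immersion transitions are~\cite[8.6.3, 8.10.5]{egaIV}; the scheme-level limit and constructibility inputs are those collected in Propositions~\pref{P:std-limits} and~\pref{P:std-limits:properties}. Finiteness is then affineness together with module-finiteness of $f_*\sO_X$, which spreads out once affineness is secured.

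Finally, the group-scheme cases feed back into the engine. For abelian fibres \ref{TI:cl:first-fiber-2}, the two composites $X_{\lambda_0}\times_S X_{\lambda_0}\to X_{\lambda_0}$ given by the two orders of multiplication have a closed equalizer, and $X_\lambda$ has abelian fibres exactly when $|X_\lambda\times_S X_\lambda|$ misses the quasi-compact open complement of this equalizer; this is the engine with $T=X_{\lambda_0}\times_S X_{\lambda_0}$. For \ref{TI:cl:last-fiber-2} I would first obtain quasi-finiteness from \ref{TI:cl:last-fiber} and then use that linear reductivity of the fibres of a quasi-finite group scheme is a constructible condition on the base. I expect this last point to be the main obstacle: unlike the others it is not formal limit theory but genuinely requires the structure theory of linearly reductive finite group schemes, and one must check that the subgroup transitions $X_\mu\inj X_\lambda$ are compatible with that structure. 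A secondary hurdle, present throughout the first and third groups, is verifying in the stacky setting that the various bad loci remain constructible after the presentations used to descend to~\cite[\S9]{egaIV}.
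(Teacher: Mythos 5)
Your ``engine'' for properties \ref{TI:cl:first-fiber}--\ref{TI:cl:last-fiber} rests on the equivalence ``$X_\lambda\to S$ has $P$ if and only if $|T_\lambda|\cap B=\emptyset$'', where $B$ is a bad locus attached once and for all to the ambient stack $X_{\lambda_0}$. This equivalence is false, and it fails in exactly the direction you need to start the argument. Take $S=\Spec(k)$, $X_{\lambda_0}=\Spec(k[x]/(x^2))$ and $X_\lambda=X=\Spec(k)$ for all $\lambda\geq 1$: then $X\to S$ is unramified (an isomorphism), yet $|X|$ is precisely the ramification locus of $X_{\lambda_0}\to S$, so $|T_X|\cap B\neq\emptyset$ and your step ``since $P$ holds on $X$ we have $|T_X|\cap B=\emptyset$'' already breaks. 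The same happens for quasi-finite with $X$ the zero section of $X_{\lambda_0}=\A{1}_S$. The point is that unramifiedness and fibre dimension of $X_\lambda\to S$ at a point are not determined by where that point sits inside $X_{\lambda_0}$: a closed substack can be unramified or quasi-finite over $S$ at points where the ambient stack is not. Since monomorphism is derived from unramified in your scheme, properties \ref{TI:cl:first-fiber}, (iv), (v) are all affected. (Your set-theoretic loci for ``representable'' and ``abelian fibres'' are also unreliable: infinitesimal group schemes in characteristic $p$, such as the inertia $\mu_p$ of $\mathrm{B}\mu_p$ or the commutator locus of a Frobenius kernel of $\mathrm{SL}_2$, are topologically invisible, so ``misses $B$'' does not imply the property at finite level; one would at least have to take $B$ to be the support of the ideal of the identity section, not the set-theoretic complement of its image.)

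The paper's proof avoids this by measuring the property of the fibres of $X_\lambda$ themselves, on the base: let $U_\lambda\subseteq S$ be the set of points $s$ such that $(X_\lambda)_s\to\Spec(k(s))$ has $P$. This set is constructible by Propositions~\pref{P:constructible-conds} and~\pref{P:constructible-conds-2}, and it is increasing in $\lambda$ because $P$ is stable under precomposition with the closed immersions $(X_\mu)_s\inj(X_\lambda)_s$. The ingredient your proposal lacks is fibrewise stabilization: for each fixed $s$, the $(X_\lambda)_s$ form a filtered descending family of closed substacks of the noetherian stack $(X_{\lambda_0})_s$, hence $(X_\lambda)_s=X_s$ for all sufficiently large $\lambda$ (depending on $s$), so $\bigcup_\lambda U_\lambda=S$; quasi-compactness of the constructible topology then yields $U_\lambda=S$ for a single $\lambda$. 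This transfers $P$ from $X$ to $X_\lambda$ pointwise on $S$ without comparing against any locus in $X_{\lambda_0}$. For \ref{TI:cl:finite}--\ref{TI:cl:imm} your citations are also off target: \cite[Thm.~8.10.5]{egaIV} concerns base change of a fixed finitely presented morphism along a limit of \emph{bases}, not a limit of \emph{sources} with closed-immersion bonding maps over a fixed base. After reducing to $S$ affine and $X_\lambda\to S$ affine via Proposition~\pref{P:rel-affine-limit}, the paper argues directly: finitely many algebra generators of $B_\lambda$ map in $B$ to elements lifting to $A$ (resp.\ satisfying monic equations over $A$), and these relations already hold in some $B_\mu$ since $B_\lambda\to B_\mu$ is surjective; the case of an immersion is then handled by factoring through an open $U\subseteq S$ for large $\lambda$ using~\cite[Cor.~8.3.4]{egaIV} and the closed-immersion case.
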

\begin{proof}
As the properties are local in the fppf topology and $S$ is quasi-compact, we
can assume that $S$ is an affine scheme.
Note that $X\inj X_\lambda$ is a closed immersion for every $\lambda$, so
$X\to S$ is of finite type. It follows from Lemma~\pref{L:fiberwise-conds} that
properties \ref{TI:cl:first-fiber}--\ref{TI:cl:last-fiber} can be checked on
fibers. Let $P$ be one of these five properties or one of the properties
\ref{TI:cl:first-fiber-2}--\ref{TI:cl:last-fiber-2} for group spaces. We let
$U_\lambda\subseteq |S|$ be the set of points $s\in |S|$ such that
$(X_\lambda)_s\to \Spec \kappa(s)$ has property~$P$. Then $U_\lambda\subseteq |S|$ is
constructible by Propositions~\pref{P:constructible-conds}
and~\pref{P:constructible-conds-2}.

As a closed immersion has property~$P$, it follows that $U_\lambda\subseteq
U_\mu$ if $\lambda\leq\mu$. If $s\in |S|$ is any point, then as $X_s\to
\Spec \kappa(s)$ is of finite type, we have that $X_s=(X_\lambda)_s$ for
sufficiently large $\lambda$. It thus follows that $|S|=\bigcup U_\lambda$. As
the constructible topology is quasi-compact, it follows that $U_\lambda=|S|$ for
sufficiently large $\lambda$. This completes the demonstration of properties
\ref{TI:cl:first-fiber}--\ref{TI:cl:last-fiber} and
\ref{TI:cl:first-fiber-2}--\ref{TI:cl:last-fiber-2}.

Now assume that $X\to S$ is a closed immersion (resp.\ finite). By
Proposition~\pref{P:rel-affine-limit} we can assume that the maps $X_\lambda\to
S$ are affine.
Let $S=\Spec A$, $X_\lambda=\Spec B_\lambda$ and $X=\Spec B$. Choose an
index $\lambda$ and generators $b_1,b_2,\dots,b_n\in B_\lambda$. The image of
$b_i$ in $B$ lifts to $A$ (resp.\ satisfies a monic equation with coefficients
in $A$). If $a_i\in A$ is a lifting, then the images of $a_i$ and $b_i$
coincides in $B_\mu$ (resp.\ the image of $b_i$ in $B_\mu$ satisfies the monic
equation) for some $\mu\geq\lambda$. As $B_\lambda\to B_\mu$ is surjective it
follows that $A\to B_\mu$ is surjective (resp.\ finite).  This settles
properties~\ref{TI:cl:closedimm} and~\ref{TI:cl:finite}.

If $X\to S$ is an immersion, then let $U\subseteq S$ be an open subscheme
containing the image of $X$ such that $X\to U$ is a closed immersion. As $U$
is ind-constructible, it follows that $X_\lambda\to S$ factors through $U$
for sufficiently large~$\lambda$~\cite[Cor.~8.3.4]{egaIV}.
Property~\ref{TI:cl:imm} thus follows from property~\ref{TI:cl:closedimm}.
\end{proof}

\begin{corollary}\label{C:properties-for-diagonal}
Let $S$ be a quasi-compact algebraic stack and let $X=\varprojlim_\lambda
X_\lambda$ be an inverse limit of algebraic $S$-stacks of finite type with
affine bonding maps. If the
diagonal of $X\to S$ has one of the properties:
\begin{enumerate}
\item a monomorphism,\label{TI:pd-first}
\item unramified,
\item quasi-finite,
\item finite,
\item a closed immersion,\label{TI:pd-closedimm}
\item an immersion,\label{TI:pd-imm}
\item affine,
\item quasi-affine,\label{TI:pd-last}
\item separated,\label{TI:pd-separated}
\item locally separated;\label{TI:pd-loc-separated}
\end{enumerate}
then there exists $\alpha$ such that the diagonal of $X_\lambda\to S$ has the
corresponding property for all $\lambda\geq \alpha$. In particular, if $X/S$
has one of
the properties: representable, representable and separated, representable and
locally separated, relatively Deligne--Mumford, etc.; then so has $X_\lambda/S$.

If the inertia of $X\to S$ has one of the properties:
\begin{enumerate}\setcounter{enumi}{10}
\item finite,\label{TI:pd-inertia-first}
\item abelian fibers,
\item quasi-finite with linearly reductive fibers;\label{TI:pd-inertia-last}
\end{enumerate}
then there exists $\alpha$ such that the inertia of $X_\lambda\to S$ has the
corresponding property for all $\lambda\geq \alpha$.
\end{corollary}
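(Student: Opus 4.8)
The plan is to reduce every assertion to Propositions~\ref{P:rel-affine-limit} and~\ref{P:properties-for-closed-limits} by realizing the diagonal, resp.\ the inertia, of $X\to S$ as an inverse limit of a \emph{closed-immersion} system over a \emph{fixed} quasi-compact base, and then transporting the property down to the individual $\Delta_{X_\lambda/S}$, resp.\ $I_{X_\lambda/S}\to X_\lambda$. For the diagonal, Lemma~\ref{L:diag-inv-system} presents $\Delta_{X/S}\colon X\to T$, with $T=X\times_S X$, as the limit of the system $g_\lambda\colon X\times_{X_\lambda}X\to T$ of finitely presented $T$-stacks whose bounding maps are closed immersions. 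Here $T$ is quasi-compact because $X$ is quasi-compact and $S$ is quasi-separated. I would apply Proposition~\ref{P:rel-affine-limit} (for affine and quasi-affine) and Proposition~\ref{P:properties-for-closed-limits} (for monomorphism, unramified, quasi-finite, finite, closed immersion and immersion) to this system over $T$: if $\Delta_{X/S}$ has the property $P$ in question, there is an index $\alpha$ with $g_\alpha$ having $P$. For the separated, resp.\ locally separated, diagonal I would go one step further, applying Lemma~\ref{L:diag-inv-system} to the system $\{g_\lambda\}$ itself so that the second diagonal $\Delta^{(2)}_{X/S}$ is again the limit of a closed-immersion system; the closed immersion (resp.\ immersion) case of Proposition~\ref{P:properties-for-closed-limits} then shows that $g_\alpha$ is separated (resp.\ locally separated).

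The crux is transporting $P$ from $g_\alpha$ to the genuine diagonals. The point is that the $\Delta_{X_\lambda/S}$ do not form a pulled-back system, since their targets $X_\lambda\times_S X_\lambda$ vary, so one cannot feed them into the limit machinery directly; the system $\{g_\lambda\}$ is the substitute. Fixing $\alpha$, the morphism $g_\alpha$ is the base change of $\Delta_{X_\alpha/S}$ along $X\times_S X\to X_\alpha\times_S X_\alpha$ and is the limit of the pulled-back system $X_\mu\times_{X_\alpha}X_\mu\to X_\mu\times_S X_\mu$ over $X\times_S X=\varprojlim_\mu (X_\mu\times_S X_\mu)$. By the standard limit results (Proposition~\ref{P:std-limits:properties}) there is $\beta\geq\alpha$ with $X_\mu\times_{X_\alpha}X_\mu\to X_\mu\times_S X_\mu$ having $P$ for all $\mu\geq\beta$. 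Finally I would use the factorization $\Delta_{X_\mu/S}=(X_\mu\times_{X_\alpha}X_\mu\to X_\mu\times_S X_\mu)\circ\Delta_{X_\mu/X_\alpha}$, in which $\Delta_{X_\mu/X_\alpha}$ is a closed immersion because $X_\mu\to X_\alpha$ is affine. As each listed property is stable under composition and holds for closed immersions, $\Delta_{X_\mu/S}$ has $P$ for all $\mu\geq\beta$, giving the diagonal statement.

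For the inertia I would argue in parallel. Since fiber products commute with limits, $I_{X/S}=\varprojlim_\mu I_{X_\mu/S}$, and base-changing to the fixed base $X$ exhibits $I_{X/S}\to X$ as the limit of the system $\tilde G_\lambda:=I_{X_\lambda/S}\times_{X_\lambda}X\to X$ of finitely presented group algebraic spaces over the quasi-compact base $X$, with closed-immersion subgroup bounding maps (again because $X_\mu\to X_\lambda$ is affine, exactly as in Lemma~\ref{L:diag-inv-system}). Applying Proposition~\ref{P:properties-for-closed-limits} (the property finite, and, for the group-scheme fibers, abelian fibers and quasi-finite with linearly reductive fibers) yields $\alpha$ with $\tilde G_\alpha\to X$ having $P$. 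Spreading out as before (Proposition~\ref{P:std-limits:properties}) gives $\beta\geq\alpha$ with $I_{X_\alpha/S}\times_{X_\alpha}X_\mu\to X_\mu$ having $P$ for $\mu\geq\beta$, and then the closed immersion of groups $I_{X_\mu/S}\inj I_{X_\alpha/S}\times_{X_\alpha}X_\mu$ lets me conclude, each of these three properties being inherited by closed subgroup schemes.

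I expect the main obstacle to be precisely this descent from the auxiliary closed-immersion systems $\{g_\lambda\}$ and $\{\tilde G_\lambda\}$, living over the fixed bases $T$ and $X$, down to the genuine diagonals and inertia over the varying bases $X_\lambda\times_S X_\lambda$ and $X_\lambda$: one must couple Proposition~\ref{P:properties-for-closed-limits} with a further spreading-out and then exploit that the comparison morphisms $\Delta_{X_\mu/X_\alpha}$, resp.\ $I_{X_\mu/S}\inj I_{X_\alpha/S}\times_{X_\alpha}X_\mu$, are closed immersions. This forces a property-by-property check of both stability under composition with closed immersions and inheritance by closed subgroup schemes; the only nonformal instance is the fact that a closed subgroup scheme of a quasi-finite linearly reductive group scheme is again linearly reductive.
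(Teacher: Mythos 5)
Your proposal is correct and follows essentially the same route as the paper: Lemma~\ref{L:diag-inv-system} plus Propositions~\ref{P:rel-affine-limit} and~\ref{P:properties-for-closed-limits} applied to the closed-immersion system $\{g_\lambda\}$ over the fixed base $X\times_S X$, then spreading out via Proposition~\ref{P:std-limits:properties} and concluding through the closed immersions $\Delta_{X_\mu/X_\lambda}$, with the inertia case handled by pulling the same system back along $\Delta_{X/S}$ and using the closed-subgroup inclusions $I_{X_\mu/S}\inj I_{X_\lambda/S}\times_{X_\lambda}X_\mu$. Your parenthetical caveats (iterating the diagonal for the separated and locally separated cases, and the nonformal fact that closed subgroups of finite linearly reductive group schemes are linearly reductive) are exactly the points the paper compresses into ``we reason as above'' and ``a fortiori''.
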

\begin{proof}
Let $P$ be one of the properties~\ref{TI:pd-first}--\ref{TI:pd-last}. By
Lemma~\pref{L:diag-inv-system}, the diagonal $X\to X\times_S X$ is the inverse
limit of the finitely presented morphisms $X\times_{X_\lambda} X\to X\times_S
X$ and the bonding maps $X\times_{X_\mu} X\to X\times_{X_\lambda} X$ are
closed immersions. It thus follows from Propositions~\pref{P:rel-affine-limit}
and \pref{P:properties-for-closed-limits} that if the diagonal of $X/S$ has
property~$P$, then so has $X\times_{X_\lambda} X\to X\times_S X$ for
sufficiently large $\lambda$. As $X\times_S X$ is the inverse limit of
$X_\mu\times_S X_\mu$, it follows by standard limit results,
Proposition~\pref{P:std-limits:properties}, that
$X_\mu\times_{X_\lambda} X_\mu\to
X_\mu\times_S X_\mu$ has property~$P$ for sufficiently large
$\mu\geq\lambda$. As the diagonal $X_\mu\to X_\mu\times_{X_\lambda} X_\mu$ is a
closed immersion, it follows that the diagonal of $X_\mu/S$ has
property~$P$. For properties~\ref{TI:pd-separated}
and~\ref{TI:pd-loc-separated} we reason as above, using that we have proven the
Corollary for properties~\ref{TI:pd-closedimm} and~\ref{TI:pd-imm}.

Let $P$ be one of the
properties~\ref{TI:pd-inertia-first}--\ref{TI:pd-inertia-last}. The pull-back
of the inverse system $X\times_{X_\lambda} X\to X\times_S X$ along the diagonal
$\Delta_{X/S}$ gives the inverse system $I_{X_\lambda/S}\times_{X_\lambda} X\to
X$ with inverse limit the inertia $I_{X/S}\to X$. As the bonding maps in the
first system are closed immersions, the bonding maps in the
second system are closed subgroups.
It thus follows from Proposition~\pref{P:properties-for-closed-limits} that if
the inertia $I_{X/S}\to X$ has property $P$ then so has
$I_{X_\lambda/S}\times_{X_\lambda} X\to X$ for all sufficiently large
$\lambda$. By standard limit results,
Proposition~\pref{P:std-limits:properties}, it follows that
$I_{X_\lambda/S}\times_{X_\lambda} X_\mu\to X_\mu$ has property $P$ for all
sufficiently large $\mu\geq\lambda$ and, a fortiori, so has $I_{X_\mu/S}\inj
I_{X_\lambda/S}\times_{X_\lambda} X_\mu\to X_\mu$.
\end{proof}

\begin{corollary}\label{C:rel-proper-limit}
Let $S$ be a quasi-compact algebraic stack and let $X=\varprojlim_\lambda
X_\lambda$ be an inverse limit of finitely presented $S$-stacks such that
$X_\mu\to X_\lambda$ is a \emph{closed immersion} for every $\mu\geq
\lambda$. If $X\to S$ is proper with finite diagonal, then so is
$X_\lambda\to S$ for all sufficiently large $\lambda$.
\end{corollary}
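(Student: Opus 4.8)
The plan is to prove the two halves of the conclusion — that $X_\lambda\to S$ has finite diagonal and that it is proper — separately, since they require very different tools. The finite-diagonal half is immediate and uses nothing about the bounding maps: as $\Delta_{X/S}$ is finite, the \emph{finite} case of Corollary~\pref{C:properties-for-diagonal} gives an index beyond which every $\Delta_{X_\lambda/S}$ is finite. Restricting to that cofinal part of the system, I may assume each $f_\lambda\colon X_\lambda\to S$ has finite, hence separated, diagonal; since each $f_\lambda$ is moreover finitely presented, it remains only to show that $f_\lambda$ is universally closed for $\lambda$ large.

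This is the crux, and it is the one place where the closed-immersion hypothesis and Theorems~\tref{T:COMPLETENESS} and~\tref{T:FINITE-COVERINGS} enter: properness, unlike the properties of Proposition~\pref{P:properties-for-closed-limits}, cannot be tested fibrewise, so the constructibility argument is not available. I would instead reduce to a scheme by means of a finite cover. Universal closedness is fppf-local on the target, so I first take $S=\Spec(A)$ affine; then the absolute diagonal of $X$ is finite, in particular quasi-finite and separated, and $X$ is quasi-compact, so Theorem~\tref{T:FINITE-COVERINGS} (whose proof invokes Theorem~\tref{T:COMPLETENESS}) supplies a scheme $Z$ together with a finite, finitely presented, surjective morphism $g\colon Z\to X$.

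Next I would spread $g$ out over the system. The transition maps $X_\mu\to X_\lambda$ are closed immersions, hence affine, so $X=\varprojlim_\lambda X_\lambda$ is a limit with affine bounding maps, and by the standard limit results (Proposition~\pref{P:std-limits}) the finitely presented morphism $g$ descends to a finitely presented $g_\lambda\colon Z_\lambda\to X_\lambda$ with $Z=Z_\lambda\times_{X_\lambda}X$; after increasing $\lambda$ (Proposition~\pref{P:std-limits:properties}) I may take $g_\lambda$ finite and surjective and $Z_\lambda$ a scheme. Putting $Z_\mu=Z_\lambda\times_{X_\lambda}X_\mu$, the bounding maps $Z_\mu\to Z_\lambda$ are base changes of closed immersions, hence closed immersions, each $Z_\mu\to S$ is finitely presented, and $Z=\varprojlim_\mu Z_\mu$. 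Since $g$ is finite and therefore proper, $Z\to S$ is proper, so the classical scheme case of the corollary yields that $Z_\mu\to S$ is proper for all large $\mu$.

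Finally I would transport properness back down. The morphism $g_\mu\colon Z_\mu\to X_\mu$ is surjective and $Z_\mu\to S$ is universally closed, and universal closedness descends along surjections: for any base change $S'\to S$ and any closed $C\subseteq X_\mu\times_S S'$ one has $f_\mu'(C)=(f_\mu\circ g_\mu)'\bigl((g_\mu')^{-1}(C)\bigr)$, using that $g_\mu'$ is surjective, and this set is closed because $f_\mu\circ g_\mu$ is universally closed. Hence $f_\mu$ is universally closed, and being separated and of finite type it is proper; combined with the finite diagonal from the first paragraph, $X_\mu\to S$ is proper with finite diagonal for all large $\mu$. I expect the reduction through the finite cover — extracting $g$ from Theorem~\tref{T:FINITE-COVERINGS}, spreading it out while keeping $Z_\lambda$ a scheme and $g_\lambda$ finite and surjective, and then descending properness along the surjection $g_\mu$ — to be the principal obstacle.
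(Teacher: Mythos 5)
Your overall strategy---reduce to a finite scheme cover, pass properness through the limit at the level of the cover, then descend along the finite surjection---is the right one, and both your first step (finite diagonals via Corollary~\pref{C:properties-for-diagonal}) and your last step (universal closedness descends along the surjective universally closed morphism $g_\mu$; separatedness and finite type are already in hand) are correct. The gap is in the middle: you apply Theorem~\tref{T:FINITE-COVERINGS} to the \emph{limit} $X$ and then try to spread the scheme cover $Z\to X$ out over the system. Proposition~\pref{P:std-limits} does give a finitely presented $g_\lambda\colon Z_\lambda\to X_\lambda$ with $Z=Z_\lambda\times_{X_\lambda}X$, and Proposition~\pref{P:std-limits:properties} lets you arrange $g_\lambda$ finite and surjective, but neither result says anything about $Z_\lambda$ being a \emph{scheme}: ``the source is a scheme'' is not among the listed properties of morphisms, and nothing proved in the paper up to this point shows that an inverse system of finitely presented stacks with closed-immersion bounding maps whose limit is a scheme must eventually consist of schemes. (The closest statement, part \ref{TI:B-last} of Theorem~\tref{T:APPROXIMATION}, only asserts the existence of \emph{some} approximation by schemes, not that a given one eventually consists of schemes---and it is proved later.) The most you can extract here is that $Z_\mu$ is an algebraic space for large $\mu$ (apply Corollary~\pref{C:properties-for-diagonal} to the system $\{Z_\mu\}$ to get representability over the affine $S$), but then your appeal to ``the classical scheme case of the corollary'' collapses: for algebraic spaces this statement is not classical; it is part of what this paper (and, independently, Conrad--Lieblich--Olsson) establishes, so you would be assuming an instance of the very result being proved.

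The paper sidesteps this entirely by applying Theorem~\tref{T:FINITE-COVERINGS} not to $X$ but to a fixed member $X_\lambda$ of the system, which is legitimate once $\Delta_{X_\lambda/S}$ is finite: this produces a scheme $Z_\lambda$ with a finite, finitely presented, surjective map $Z_\lambda\to X_\lambda$ at a \emph{finite} stage, and then $Z_\mu=Z_\lambda\times_{X_\lambda}X_\mu$ and $Z=Z_\lambda\times_{X_\lambda}X$ are automatically schemes, being closed subschemes of $Z_\lambda$; no spreading out is needed. Your argument is repaired by making this one switch. Note also that the paper does not quote an external scheme case at the end: since $S$ is affine and $Z_\lambda\to S$ is separated and finitely presented, Nagata's compactification gives an open immersion $Z_\lambda\subseteq\overline{Z_\lambda}$ with $\overline{Z_\lambda}\to S$ proper; then $Z\to\overline{Z_\lambda}$ is a closed immersion (it is an immersion, and it is proper because $Z\to S$ is proper and $\overline{Z_\lambda}\to S$ is separated), so Proposition~\pref{P:properties-for-closed-limits} makes $Z_\mu\to\overline{Z_\lambda}$ a closed immersion for large $\mu$, whence $Z_\mu\to S$ is proper. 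Citing Thomason--Trobaugh or Conrad for the scheme case instead of running this compactification argument would be defensible, but only after the schemeness of the $Z_\mu$ has been secured, which your route does not do.
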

\begin{proof}
The question is fppf-local on $S$ so we can assume that $S$ is affine. By
Corollary~\pref{C:properties-for-diagonal} we can assume that $X_\lambda\to S$
has finite diagonal. Then there exists a scheme $Z_\lambda$ and a finite and
finitely presented surjective morphism $Z_\lambda \to X_\lambda$ by
Theorem~\tref{T:FINITE-COVERINGS}. We let
$Z_\mu=Z_\lambda\times_{X_\lambda} X_\mu$ for all $\mu>\lambda$ and
$Z=Z_\lambda\times_{X_\lambda} X$. It is then enough to show that $Z_\mu\to S$
is proper for sufficiently large $\mu\geq\lambda$.

Since $Z_\lambda\to S$ is separated and of finite presentation (and $S$ is
affine), there is by Nagata's compactification
theorem~\cite{lutkebohmert_compactification} a proper morphism
$\overline{Z_\lambda}\to S$ and an open immersion $Z_\lambda\subseteq
\overline{Z_\lambda}$. But $Z\to
\overline{Z_\lambda}$ is then a closed immersion so it follows from
Proposition~\pref{P:properties-for-closed-limits} that $Z_\mu\to
\overline{Z_\lambda}$ is a closed immersion for sufficiently large $\mu$.
This shows that $Z_\mu\to S$ is proper.
\end{proof}

\begin{corollary}\label{C:rel-separated-limit}
Let $S$ be a quasi-compact algebraic stack and let $X=\varprojlim_\lambda
X_\lambda$ be an inverse limit of algebraic $S$-stacks of finite type
with affine bonding maps. If $X\to S$ is
separated, then there exists $\alpha$ such that $X_\lambda\to S$ is separated
for every $\lambda\geq\alpha$.
\end{corollary}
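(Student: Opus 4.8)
The plan is to reduce the statement to Corollary~\pref{C:rel-proper-limit} by passing to diagonals, in the same spirit as the proof of Corollary~\pref{C:properties-for-diagonal}. Since separatedness is fppf-local on the target, I would first assume that $S$ is affine. Recall that $X\to S$ is separated precisely when the diagonal $\Delta_{X/S}$ is proper. The key observation is that this forces $\Delta_{X/S}$ to be \emph{proper with finite diagonal}: the diagonal of the morphism $\Delta_{X/S}$ is the identity section $\map{e}{X}{I_{X/S}}$ of the inertia $I_{X/S}=X\times_{X\times_S X} X$, and $e$ is a monomorphism since it admits the retraction $I_{X/S}\to X$. A proper morphism is separated, so $e$ is proper; and a proper monomorphism is a closed immersion, hence finite. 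Thus $\Delta_{X/S}$ is proper with finite diagonal.

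Next I would invoke Lemma~\pref{L:diag-inv-system}: the diagonal $\Delta_{X/S}$ is the inverse limit of the finitely presented morphisms $\map{g_\lambda}{X\times_{X_\lambda} X}{X\times_S X}$ over the quasi-compact base $X\times_S X$, with bounding maps $g_{\mu\lambda}$ that are closed immersions. Applying Corollary~\pref{C:rel-proper-limit} to this system (with $X\times_S X$ in the role of the base) then yields an index $\lambda$ such that $g_\lambda$ is proper with finite diagonal.

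It remains to descend this from the base $X\times_S X$ to a finite stage, exactly as in Corollary~\pref{C:properties-for-diagonal}. Since $X=\varprojlim_\mu X_\mu$ with affine bounding maps, one has $X\times_S X=\varprojlim_\mu X_\mu\times_S X_\mu$, and $g_\lambda$ is the pull-back of the finitely presented morphism $\Delta_{X_\lambda/S}$ along $X\times_S X\to X_\lambda\times_S X_\lambda$. By the standard limit results, Proposition~\pref{P:std-limits:properties}, the pull-back $X_\mu\times_{X_\lambda} X_\mu\to X_\mu\times_S X_\mu$ is proper (with finite diagonal) for all sufficiently large $\mu\geq\lambda$. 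As $X_\mu\to X_\lambda$ is affine, hence separated, the diagonal $\map{\Delta_{X_\mu/X_\lambda}}{X_\mu}{X_\mu\times_{X_\lambda} X_\mu}$ is a closed immersion. Therefore the composite $\Delta_{X_\mu/S}\colon X_\mu\to X_\mu\times_{X_\lambda} X_\mu\to X_\mu\times_S X_\mu$ is a composition of proper morphisms and hence proper; that is, $X_\mu\to S$ is separated for all sufficiently large $\mu$.

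The main obstacle is the first paragraph: recognizing that ``$X\to S$ separated'' is equivalent, at the level of the diagonal morphism, to ``$\Delta_{X/S}$ is proper with finite diagonal''. This is exactly what permits the reduction to Corollary~\pref{C:rel-proper-limit}, whose proof requires the finite-diagonal hypothesis in order to apply Theorem~\tref{T:FINITE-COVERINGS}. Once this is in hand, the remaining two-step limit descent is routine and parallels the argument already carried out in Corollary~\pref{C:properties-for-diagonal}.
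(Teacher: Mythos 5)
Your proof follows exactly the paper's route: the paper's own proof of this corollary is the one-line instruction to ``reason as in the proof of Corollary~\pref{C:properties-for-diagonal} using Corollary~\pref{C:rel-proper-limit}'', and your second and third paragraphs carry this out faithfully --- realizing $\Delta_{X/S}$ as the limit of the $g_\lambda$ via Lemma~\pref{L:diag-inv-system}, applying Corollary~\pref{C:rel-proper-limit} over the quasi-compact base $X\times_S X$, and then descending to a finite stage via Proposition~\pref{P:std-limits:properties} together with the closed immersion $\Delta_{X_\mu/X_\lambda}$.

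One justification needs repair, although the fact it supports is true and standard. You claim that the identity section $\map{e}{X}{I_{X/S}}$ is a monomorphism ``since it admits the retraction $I_{X/S}\to X$''. In the $2$-category of stacks, admitting a retraction does \emph{not} imply being a monomorphism: for a nontrivial group $G$, the morphism $\Spec(k)\to \mathrm{B}G$ admits the retraction $\mathrm{B}G\to\Spec(k)$, yet it is a $G$-torsor and far from a monomorphism. The correct argument is either that the double diagonal of any morphism of algebraic stacks is a monomorphism (this is precisely the statement that diagonals are representable, which the paper uses throughout, e.g.\ in Lemma~\pref{L:diag-inv-system}), or that $e$ is a section of the \emph{representable} morphism $I_{X/S}\to X$ (representability is noted in the paper's conventions), and a section of a representable morphism is a pullback of its diagonal, hence a monomorphism. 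With this one-line fix, your derivation that $\Delta_{X/S}$ is proper with finite diagonal --- the key point enabling Corollary~\pref{C:rel-proper-limit} --- and hence the whole proof is correct.
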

\begin{proof}
Reason as in the proof of Corollary~\pref{C:properties-for-diagonal} using
Corollary~\pref{C:rel-proper-limit}.
\end{proof}

We have now proved Theorem~\tref{T:APPROXIMATION-PROPERTIES}
under the additional assumption that every morphism $X_\lambda\to S$ is of
finite presentation and that $S$ is a scheme in~\ref{TI:C-scheme}.
Indeed, this is
Propositions~\pref{P:rel-affine-limit}, \pref{P:properties-for-closed-limits}
and Corollaries~\pref{C:scheme-limit}, \pref{C:properties-for-diagonal},
\pref{C:rel-proper-limit}
and~\pref{C:rel-separated-limit}. In the next section, we will deduce
Theorem~\tref{T:APPROXIMATION-PROPERTIES}
from Theorem~\tref{T:APPROXIMATION} and the following result.

\begin{lemma}\label{L:limit-props-over-another-base}
Let $X\to S$ be a morphism of stacks. Let
$$\{\map{u_\lambda}{X_\lambda}{S}\}_{\lambda\in L}\quad\text{ and }\quad
\{\map{v_\mu}{X'_\mu}{S}\}_{\mu\in M}$$
be two inverse systems with limit $X\to S$ and bonding maps that are affine
(resp.\ closed immersions). Assume that $u_\lambda$ is quasi-compact and
quasi-separated for all $\lambda$ and that $v_\mu$ is of finite presentation
for all $\mu$. Further, assume that there exists $\alpha\in
L$ and a factorization of $u_\alpha$ into an affine morphism (resp.\ closed
immersion) $X_\alpha\to X_0$ followed by a morphism $\map{u_0}{X_0}{S}$ of
finite presentation.

Let $P$ be a property of morphisms of stacks that is stable under composition
with affine morphisms (resp.\ closed immersions). If $v_\mu$ has property $P$
for sufficiently large $\mu$, then so has $u_\lambda$ for sufficiently large
$\lambda$.
\end{lemma}
\begin{proof}
Since $u_0$ is of finite presentation, there is for sufficiently
large $\mu$ a factorization $X\to X'_\mu\to X_0\to S$ by
Proposition~\pref{P:functorial-char-of-loc-fin-pres}. As
$X\to X_\lambda\to X_0$ is affine and $X'_\mu\to X_0$ is of finite
presentation, we can, by
Proposition~\pref{P:rel-affine-limit}, assume that $X'_\mu\to X_0$
is affine after further increasing~$\mu$.

Note that $X\to X_0$ is the limit of the system $\{X_\lambda\to
X_0\}_\lambda$ and that $X'_\mu\to X_0$ is of finite presentation. Thus, we
can apply Proposition~\pref{P:functorial-char-of-loc-fin-pres} and
obtain, for sufficiently large $\lambda$, a factorization
$$X\to X_\lambda\to X'_\mu\to X_0.$$
Since $X_\lambda\to X_0$ is affine (resp.\ a closed immersion), so is
$X_\lambda\to X'_\mu$ and it follows that $X_\lambda\to S$ has property $P$.
\end{proof}


\end{section}


\begin{section}{Approximation of schemes and stacks}\label{S:approximation}
Recall that any stack of approximation type is pseudo-noetherian
(Theorem~\tref{T:COMPLETENESS}) and that any stack that is affine over a
noetherian stack is pseudo-noetherian
(Proposition~\ref{P:pseudo-noetherian-stable-under-strict-app-type}).
Conversely, it is
possible that every pseudo-noetherian stack is affine over a noetherian
stack. In this section, we prove Theorem~\tref{T:APPROXIMATION} and, as a
consequence, that stacks of approximation type are indeed affine over
noetherian stacks.


\begin{definition}
Let $S$ be a pseudo-noetherian stack and let $X\to S$ be a morphism of stacks.
An \emph{approximation} of $X$ over $S$ is a finitely presented
$S$-stack $X_0$ together with an affine $S$-morphism $X\to
X_0$. We say that $X/S$ can be approximated if there exists an approximation
of $X$ over $S$.
\end{definition}

\begin{remark}
In~\cite[8.13.4]{egaIV}, Grothendieck uses the term \emph{essentially affine}
for morphisms of schemes $X\to S$ that can be approximated.
\end{remark}

Let $S$ be pseudo-noetherian. Then
Proposition~\pref{P:pseudo-noetherian-stable-under-strict-app-type} states
that $X\to S$ has an approximation if and only if $X\to S$ is of strict
approximation type. Moreover, if $X\to S$ has an approximation $X\to X_0\to S$
then $X$ and $X_0$ are pseudo-noetherian.

The following two propositions are analogues of properties
\ref{I:C1}--\ref{I:C2} and \ref{I:P1}--\ref{I:P2} under the assumption that
$X/S$ can be approximated.

\begin{proposition}[Completeness]\label{P:completeness:approx}
Let $X/S$ be an algebraic stack that can be approximated. Then
\begin{enumerate}
\item $X=\varprojlim_\lambda X_\lambda$ such that $X_\lambda\to S$ is
of finite type and $X\to X_\lambda$ is schematically dominant.
\item $X=\varprojlim_\lambda X_\lambda$ such that $X_\lambda\to S$ is
of finite presentation.
\end{enumerate}
\end{proposition}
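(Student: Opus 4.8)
The plan is to reduce both statements to the completeness property of $S$ (and of the approximation stacks), which we are entitled to use since $X/S$ can be approximated means there is a factorization $X\to X_0\to S$ with $X\to X_0$ affine and $X_0\to S$ finitely presented. Since $X_0\to S$ is finitely presented and $S$ is pseudo-noetherian, Proposition~\pref{P:pseudo-noetherian-stable-under-strict-app-type} tells us $X_0$ is itself pseudo-noetherian, so it has the completeness property for quasi-coherent $\sO_{X_0}$-algebras. The affine morphism $X\to X_0$ corresponds to a quasi-coherent sheaf of $\sO_{X_0}$-algebras $\sA=g_*\sO_X$, where $\map{g}{X}{X_0}$, and $X=\Spec_{X_0}(\sA)$. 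The whole problem is then to write $\sA$ as a suitable filtered direct limit of finitely generated (resp.\ finitely presented) $\sO_{X_0}$-subalgebras and take relative spectra.

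For part~(ii), I would invoke property~\ref{I:P2} (or \ref{I:C2}) for the category of $\sO_{X_0}$-algebras: write $\sA=\varinjlim_\lambda \sA_\lambda$ as a filtered direct limit of \emph{finitely presented} $\sO_{X_0}$-algebras. Setting $X_\lambda=\Spec_{X_0}(\sA_\lambda)$ gives finitely presented morphisms $X_\lambda\to X_0$, hence finitely presented morphisms $X_\lambda\to S$ (composition of finitely presented morphisms), and $X=\Spec_{X_0}(\varinjlim\sA_\lambda)=\varprojlim_\lambda \Spec_{X_0}(\sA_\lambda)=\varprojlim_\lambda X_\lambda$ since $\Spec_{X_0}(-)$ turns filtered colimits of algebras into inverse limits of affine $X_0$-schemes. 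The bounding maps $X_\mu\to X_\lambda$ are affine, as required. This handles~(ii).

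For part~(i), I would instead use property~\ref{I:C1}: write $\sA$ as the direct limit of its $\sO_{X_0}$-\emph{subalgebras} $\sA_\lambda$ of finite type. The point of using subalgebras is that the bounding maps $\sA_\lambda\inj\sA_\mu$ and $\sA_\lambda\inj\sA$ are then \emph{injective}, which is exactly what is needed to make the induced closed immersions $X\to X_\lambda=\Spec_{X_0}(\sA_\lambda)$ schematically dominant. Concretely, $X\to X_\lambda$ is schematically dominant precisely because $\sO_{X_\lambda}=\sA_\lambda\inj\sA=g_*\sO_X$ is injective, i.e.\ the map $\sA_\lambda\to (\text{push-forward of }\sO_X)$ has trivial kernel. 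Each $\sA_\lambda$ is a finite-type $\sO_{X_0}$-algebra, so $X_\lambda\to X_0$ is of finite type and hence $X_\lambda\to S$ is of finite type, and again $X=\varprojlim_\lambda X_\lambda$.

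The main obstacle I anticipate is the bookkeeping needed to verify schematic dominance in~(i) over a stack rather than an affine scheme, and to confirm that $\Spec_{X_0}(-)$ genuinely commutes with filtered colimits of quasi-coherent algebras in the stacky setting so that $X=\varprojlim X_\lambda$ holds; both of these are routine once one passes to a smooth (affine) presentation of $X_0$ and uses that the completeness property is compatible with such base change, but they require care because $X_0$ is only pseudo-noetherian, not noetherian. The essential input—that finitely presented algebras over the pseudo-noetherian stack $X_0$ are plentiful and that every quasi-coherent algebra is a filtered limit of them—is precisely what Theorem~\tref{T:COMPLETENESS} and Lemma~\pref{L:enough-to-show-P1+C1-or-C2*} provide, so no genuinely new difficulty arises beyond transcribing properties \ref{I:C1}--\ref{I:P2} through the relative-spectrum construction.
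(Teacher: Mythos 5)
Your proposal is correct and follows essentially the same route as the paper, whose (very terse) proof likewise takes an approximation $X\to X_0\to S$ and applies the completeness properties \ref{I:C1}--\ref{I:C2} to the quasi-coherent $\sO_{X_0}$-algebra $f_*\sO_X$ attached to the affine morphism $\map{f}{X}{X_0}$. Your expanded justification---that $X_0$ has the completeness property because $S$ is pseudo-noetherian, that finite-type subalgebras yield schematic dominance in (i), and that finitely presented algebras yield (ii) via $\Spec_{X_0}(-)$---is exactly the bookkeeping the paper leaves implicit.
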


\begin{proposition}[Presentation]\label{P:presentation:approx}
Let $X/S$ be an algebraic stack of \emph{finite type} that can be
approximated. Then
\begin{enumerate}
\item There exists a finitely presented $S$-stack $X_0$ together with a
closed immersion $X\inj X_0$ over $S$.
\item $X=\varprojlim_\lambda X_\lambda$ such that $X_\lambda\to S$ is
of finite presentation and the bonding map $X_\mu\to X_\lambda$ is a closed
immersion
for every $\mu\geq \lambda$.
\end{enumerate}
\end{proposition}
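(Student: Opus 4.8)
The plan is to realize $X$ as a relative spectrum over a finitely presented $S$-stack and then approximate the corresponding quasi-coherent algebra. Since $X/S$ can be approximated, the definition provides a finitely presented $S$-stack $Y$ together with an affine $S$-morphism $\map{f}{X}{Y}$. Because $X\to S$ is of finite type while $Y\to S$ is finitely presented and hence quasi-separated, the cancellation property for morphisms of finite type shows that $f$ itself is of finite type; as $f$ is moreover affine, the quasi-coherent $\sO_Y$-algebra $\sA:=f_*\sO_X$ is of finite type and $X=\Spec_Y(\sA)$. Thus both parts are reduced to approximating the finite type algebra $\sA$ by finitely presented ones on $Y$.

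Next I would bring in completeness. Since $S$ is pseudo-noetherian and $Y\to S$ is finitely presented, $Y$ has the completeness property, so by Lemma~\pref{L:enough-to-show-P1+C1-or-C2*} the properties~\ref{I:P1} and~\ref{I:P2} hold for the category of quasi-coherent $\sO_Y$-algebras. For (i), property~\ref{I:P1} yields a finitely presented $\sO_Y$-algebra $\sB$ together with a surjection $\sB\surj\sA$; setting $X_0:=\Spec_Y(\sB)$, the morphism $X_0\to Y$ is affine and finitely presented, hence $X_0\to S$ is finitely presented, and the surjection $\sB\surj\sA$ produces a closed immersion $X=\Spec_Y(\sA)\inj\Spec_Y(\sB)=X_0$ over $Y$, and therefore over $S$. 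For (ii), property~\ref{I:P2} writes $\sA=\varinjlim_\lambda\sB_\lambda$ as a filtered direct limit of finitely presented $\sO_Y$-algebras with surjective bounding maps; putting $X_\lambda:=\Spec_Y(\sB_\lambda)$ makes each $X_\lambda\to S$ finitely presented as before, taking relative spectra turns the colimit of algebras into the inverse limit $X=\varprojlim_\lambda X_\lambda$, and the surjectivity of $\sB_\lambda\surj\sB_\mu$ for $\mu\geq\lambda$ translates into each bounding map $X_\mu\inj X_\lambda$ being a closed immersion.

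The only genuinely substantial input is the completeness property for algebras on the finitely presented $S$-stack $Y$, and this is exactly what the hypothesis that $S$ is pseudo-noetherian supplies. The remaining point to watch is the bookkeeping step that $\sA$ is of finite type, where the crucial ingredient is the quasi-separatedness of $Y\to S$; once that is in hand the argument is a formal transfer of properties~\ref{I:P1}--\ref{I:P2} through the relative-spectrum functor.
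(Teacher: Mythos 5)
Your proof is correct and is essentially the paper's own argument: the paper likewise takes an approximation $X\to X_0\to S$ and applies properties~\ref{I:P1}--\ref{I:P2} for the category of quasi-coherent algebras to the affine morphism $\map{f}{X}{X_0}$, i.e., to $f_*\sO_X$, translating surjections of algebras into closed immersions of relative spectra. The details you supply --- the cancellation argument showing $f$ is of finite type (so that $f_*\sO_X$ is a finite type algebra) and the passage through $\Spec_Y(-)$ --- are exactly the steps the paper leaves implicit.
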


\begin{proof}[Proofs]
Let $X\to X_0\to S$ be an approximation and apply the completeness properties 
\ref{I:C1}--\ref{I:C2} and \ref{I:P1}--\ref{I:P2} on the affine morphism
$\map{f}{X}{X_0}$ (i.e., on the sheaf of $\sO_{X_0}$-algebras $f_*\sO_X$).
\end{proof}

\begin{remark}\label{R:approximation-of-pairs}
If $X\to S$ has an approximation and $U\subseteq X$ is a quasi-compact open
substack, then by standard limit methods, cf.\
Remark~\pref{R:std-limits:qc-open}, there exists an approximation $X_0\to S$ of
$X$ and a quasi-compact open substack $U_0\subseteq X_0$ such that
$U=U_0\times_{X_0} X$. We say that $(U_0\subseteq X_0)\to S$ is an
approximation of $(U\subseteq X)\to S$.
\end{remark}

If $X\to S$ is affine, then $X$ has a trivial approximation, namely $S$
itself. At first, this hardly appears to be an ``approximation'' but the
crucial point is that we assume that $S$ is pseudo-noetherian. Then the
statement that $X\to S$ can be approximated implies that $X$ is the inverse
limit of finitely presented and affine $S$-schemes.

\begin{proposition}\label{P:approx-for-aff/q-aff}
Let $S$ be a pseudo-noetherian stack. If $X$ is a stack that is affine
(resp.\ quasi-affine) over~$S$, then $X$ can be approximated by a stack that
is affine (resp.\ quasi-affine) over~$S$.
\end{proposition}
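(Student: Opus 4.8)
The plan is to handle the affine and quasi-affine cases separately, reducing both to the completeness property of $S$ applied to the sheaf of $\sO_S$-algebras attached to $X$.

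First I would dispose of the affine case. Let $\map{f}{X}{S}$ be the (affine) structure morphism and set $\sA=f_*\sO_X$, a quasi-coherent sheaf of $\sO_S$-algebras. Since $S$ is pseudo-noetherian and the identity $S\to S$ is finitely presented, $S$ itself has the completeness property (Definition~\pref{D:pseudo-noetherian}); in particular \ref{I:C2} holds for the category of quasi-coherent $\sO_S$-algebras. I can therefore write $\sA=\varinjlim_\lambda \sA_\lambda$ as a filtered direct limit of finitely presented $\sO_S$-algebras. Putting $X_\lambda=\Spec_S(\sA_\lambda)$ gives finitely presented affine $S$-stacks with $X=\varprojlim_\lambda X_\lambda$, since relative $\Spec$ turns the filtered colimit of algebras into the inverse limit. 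Each projection $X\to X_\lambda$ corresponds to $\sA_\lambda\to\sA$ and is affine (both stacks are affine over $S$, and affine morphisms satisfy cancellation because $X_\lambda\to S$ is separated). Hence any $X_0=X_\lambda$ is an affine approximation.

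For the quasi-affine case I would first pass to the affine hull $\overline{X}=\Spec_S(f_*\sO_X)$, for which the canonical $S$-morphism $\map{j}{X}{\overline{X}}$ is a quasi-compact open immersion; this is the defining structure of a quasi-affine morphism and may be checked fppf-locally on $S$, where $f_*\sO_X$ commutes with the (flat) base change and the assertion reduces to the scheme-theoretic case. Applying the affine case to $\overline{X}\to S$ yields $\overline{X}=\varprojlim_\lambda\overline{X}_\lambda$ with $\overline{X}_\lambda\to S$ affine and finitely presented. Because $X\inj\overline{X}$ is a quasi-compact open immersion, standard limit methods (Remark~\pref{R:std-limits:qc-open}) furnish, for sufficiently large $\lambda$, a quasi-compact open substack $X_\lambda\subseteq\overline{X}_\lambda$ with inverse image $X$ in $\overline{X}$, so that $X=X_\lambda\times_{\overline{X}_\lambda}\overline{X}$. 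Then $X_\lambda\to S$ factors as the open immersion $X_\lambda\inj\overline{X}_\lambda$ followed by the affine morphism $\overline{X}_\lambda\to S$, hence is quasi-affine and finitely presented, and I would take $X_0=X_\lambda$. The projection $X\to X_\lambda$ is affine, being the base change of the affine morphism $\overline{X}\to\overline{X}_\lambda$ along $X_\lambda\inj\overline{X}_\lambda$.

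The step I expect to be the main obstacle is the quasi-affine reduction: one must be careful that the affine hull and the statement ``$j$ is a quasi-compact open immersion'' are used \emph{globally} over the stack $S$, rather than only fppf-locally (where the construction would not obviously glue), and that the open immersion genuinely descends to a finite level of the inverse system. Both points are secured by the fppf-locality of these properties together with Remark~\pref{R:std-limits:qc-open}, while the affine case needs nothing beyond the completeness property of $S$.
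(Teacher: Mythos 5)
Your proof is correct and takes essentially the same route as the paper: pass to the affine hull $\overline{X}=\Spec_S(f_*\sO_X)$, approximate it by finitely presented affine $S$-stacks via the completeness property, and descend the quasi-compact open immersion $X\subseteq\overline{X}$ to a finite level of the inverse system using Remark~\pref{R:std-limits:qc-open}. The only cosmetic difference is that the paper dispatches the affine case with the trivial approximation $X_0=S$ and packages the quasi-affine step as Remark~\pref{R:approximation-of-pairs}, whereas you unwind that remark into its two ingredients explicitly.
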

\begin{proof}
The affine part is trivial, cf.\ the preceding discussion.
If $\map{f}{X}{S}$ is quasi-affine, let $\overline{X}=\Spec_S(f_*(\sO_X))$ so
that
$X\to \overline{X}$ is a quasi-compact open immersion and $\overline{X}\to S$
is affine. By Remark~\pref{R:approximation-of-pairs}, there is an
approximation
$(X\subseteq \overline{X})\to (X_0\subseteq \overline{X}_0)\to S$. The morphism
$X_0\to S$ is a quasi-affine approximation of $X\to S$.
\end{proof}

The following proposition is an analogue of \ref{I:E2}.

\begin{proposition}[Extension]\label{P:corner-stone}
Let $X\to S$ be a morphism of pseudo-noetherian stacks. Let
$U\subseteq X$ be a quasi-compact open substack and let $U=\varprojlim_\lambda
U_\lambda$ be an inverse limit of finitely presented $S$-stacks. If $X\to S$
can be approximated, then there exists an index $\alpha$ such that for any
$\lambda\geq\alpha$, the approximation $U\to U_\lambda\to S$ extends to an
approximation $(U\subseteq X)\to (U_\lambda\subseteq X_\lambda)\to S$.
\end{proposition}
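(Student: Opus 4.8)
The plan is to move the whole problem onto the chosen approximation $X_0$, to realize the given inverse system $\{U_\lambda\}$ as the relative spectrum over the corresponding open $U_0\subseteq X_0$ of a direct system of finitely presented algebras, and then to extend that system across all of $X_0$ by means of the algebra extension property~\ref{I:E2}. Concretely, I would first fix a $P$-approximation $X\to X_0\to S$, which exists by hypothesis, and invoke Remark~\pref{R:approximation-of-pairs} to obtain a quasi-compact open substack $U_0\subseteq X_0$ with $U=U_0\times_{X_0}X$. Write $\map{f}{X}{X_0}$ for the affine $S$-morphism. Since $X_0\to S$ is finitely presented and $S$ is pseudo-noetherian, $X_0$ is itself pseudo-noetherian, hence has the completeness property and in particular property~\ref{I:E2} for the category of quasi-coherent $\sO_{X_0}$-algebras. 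Put $\sA=f_*\sO_X$, so that $X=\Spec_{X_0}(\sA)$ and, restricting to $U_0$, $U=\Spec_{U_0}(\sA|_{U_0})$ with $\sA|_{U_0}=(f|_U)_*\sO_U$.

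Next I would reconcile the two approximations of $U$. The restriction $g=f|_U\colon U\to U_0$ is affine, while $U=\varprojlim_\lambda U_\lambda$ with every $U_\lambda\to S$ finitely presented. By the functorial characterization of finitely presented morphisms, Proposition~\pref{P:functorial-char-of-loc-fin-pres}, for all sufficiently large $\lambda$ the morphism $g$ factors compatibly with the bounding maps as the projection $U\to U_\lambda$ followed by a morphism $\map{h_\lambda}{U_\lambda}{U_0}$; thus $U=\varprojlim_\lambda U_\lambda$ is also a limit over $U_0$. Applying Proposition~\pref{P:rel-affine-limit} over the base $U_0$ and increasing the index, I may assume each $h_\lambda$ is affine, and it is automatically of finite presentation. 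Hence $\sB_\lambda:=(h_\lambda)_*\sO_{U_\lambda}$ is a finitely presented $\sO_{U_0}$-algebra, $U_\lambda=\Spec_{U_0}(\sB_\lambda)$, and the limit presentation yields $\sA|_{U_0}=\varinjlim_\lambda\sB_\lambda$ with colimit structure homomorphism $\map{u_\lambda}{\sB_\lambda}{\sA|_{U_0}}$ corresponding to the projection $U\to U_\lambda$.

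Finally, for each $\lambda$ in this cofinal range I would apply property~\ref{I:E2} on $X_0$ to the data $(\sA,\sB_\lambda,u_\lambda)$, obtaining a finitely presented $\sO_{X_0}$-algebra $\sA_\lambda$, a homomorphism $\map{v_\lambda}{\sA_\lambda}{\sA}$, and an isomorphism $\sA_\lambda|_{U_0}\iso\sB_\lambda$ under which $v_\lambda|_{U_0}=u_\lambda$. Setting $X_\lambda=\Spec_{X_0}(\sA_\lambda)$ makes $X_\lambda\to X_0$ affine and of finite presentation, so $X_\lambda\to S$ is finitely presented; as $X_\lambda\to X_0$ is affine, $X_0\to S$ has $P$, and $P$ is stable under composition with affine morphisms, $X_\lambda\to S$ has $P$ as well. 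The homomorphism $v_\lambda$ induces an affine $S$-morphism $X\to X_\lambda$, and $X_\lambda\times_{X_0}U_0=\Spec_{U_0}(\sB_\lambda)=U_\lambda$, so $U_\lambda\subseteq X_\lambda$ is the open substack with $U_\lambda\times_{X_\lambda}X=U$ over which $X\to X_\lambda$ restricts to the given projection $U\to U_\lambda$. This is precisely the desired $P$-approximation $(U\subseteq X)\to(U_\lambda\subseteq X_\lambda)\to S$, and I take $\alpha$ to be any index past which the factorization and affineness above hold.

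I expect the main obstacle to be the comparison step: one must verify that the given system $\{U_\lambda\}$ is cofinally represented by algebras $\sB_\lambda$ that are affine and finitely presented over $U_0$ with colimit exactly $\sA|_{U_0}$, so that property~\ref{I:E2} can be applied with the correct compatibility of the restriction over $U_0$. Once this bookkeeping is in place, everything else is either formal or a direct appeal to the completeness property of $X_0$ and to the standard limit results cited above.
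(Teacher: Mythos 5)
Your proposal is correct and follows essentially the same route as the paper's proof: fix a $P$-approximation via Remark~\pref{R:approximation-of-pairs}, lift $U\to U_0$ to $U_\lambda\to U_0$ by Proposition~\pref{P:functorial-char-of-loc-fin-pres}, make that lift affine by Proposition~\pref{P:rel-affine-limit}, and extend across $X_0$ using property~\ref{I:E2}. The only difference is cosmetic: you carry out the \ref{I:E2} step explicitly in terms of quasi-coherent algebras and relative spectra, where the paper states it diagrammatically.
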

\begin{proof}
By Remark~\pref{R:approximation-of-pairs}, there is an approximation
$(U_0\subseteq X_0)\to S$ of $(U\subseteq X)\to S$. As $U_0\to S$ is finitely
presented, the morphism $U\to U_0$ lifts to $U_\lambda\to U_0$ for sufficiently
large $\lambda$ by Proposition~\pref{P:functorial-char-of-loc-fin-pres}. This
gives us the cartesian diagram
$$\xymatrix{%
U\ar[r]\ar[d] & U_\lambda\ar[r] & U_0\ar[d]\\
X\ar[rr] && X_0\ar@{}[ull]|\square.\\
}$$
As $U\to U_0$ is affine, we have that $U_\lambda\to U_0$ is affine for
sufficiently large $\lambda$ by Proposition~\pref{P:rel-affine-limit}. By
assumption $X_0$ has the completeness property and we can thus,
using~\ref{I:E2}, extend the diagram above to a cartesian diagram
$$\xymatrix{%
U\ar[r]\ar[d] & U_\lambda\ar[r]\ar[d] & U_0\ar[d]\\
X\ar[r] & X_\lambda\ar[r]\ar@{}[ul]|\square & X_0\ar@{}[ul]|\square\\
}$$
where $X_\lambda\to X_0$ is affine and finitely presented. The pair
$(U_\lambda\subseteq X_\lambda)\to S$ is an approximation of $(U\subseteq
X)\to S$.
\end{proof}

We will now proceed with the \etale{} \devissage{} that is used to show that
morphisms of approximation type can be approximated.

\begin{lemma}\label{L:approximation:etale-nbhd}
Let $X\to S$ be a morphism of pseudo-noetherian stacks. Let
$U\subseteq X$ be a quasi-compact open substack and let $\map{p}{X'}{X}$ be
a finitely presented \etale{} neighborhood of $Z=X\setminus U$.
If $U\to S$ and $X'\to S$ can be approximated, then so can $X\to S$.
\end{lemma}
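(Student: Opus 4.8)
The plan is to realize both $X$ and its approximation as push-outs and to transport the affine structure across the \etale{} devissage. Write $U'=p^{-1}(U)$, let $p_U=p|_{U'}\colon U'\to U$, and let $j'\colon U'\to X'$ be the quasi-compact open immersion obtained by pulling back $j\colon U\to X$. Then $p_U$ is finitely presented and \etale{}, and by Theorem~\pref{T:etnbhd-is-pushout} the stack $X$ is the push-out of $p_U$ and $j'$. The strategy is to build a compatible push-out one level down, over a finitely presented $S$-stack.

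First I would approximate $U\to S$, writing $U=\varprojlim_\lambda U_\lambda$ with $U_\lambda\to S$ finitely presented by Proposition~\pref{P:completeness:approx}. By standard limit methods the finitely presented \etale{} morphism $p_U$ descends: for $\lambda$ large there is a finitely presented \etale{} morphism $p_{U,\lambda}\colon U'_\lambda\to U_\lambda$ pulling back to $p_U$, so that $U'_\lambda\to S$ is finitely presented and $U'=\varprojlim_\lambda U'_\lambda$. Applying the Extension Proposition~\pref{P:corner-stone} to $X'\to S$ (which can be approximated, i.e.\ $P$-approximated for $P$ trivial) with the open $U'\subseteq X'$ and the system $\{U'_\lambda\}$, I obtain for large $\lambda$ an approximation $(U'\subseteq X')\to(U'_\lambda\subseteq X'_\lambda)\to S$ extending $U'\to U'_\lambda\to S$; here $X'_\lambda\to S$ is finitely presented, $X'\to X'_\lambda$ is affine, $j'_\lambda\colon U'_\lambda\to X'_\lambda$ is a quasi-compact open immersion, and the square $U'=U'_\lambda\times_{X'_\lambda}X'$ is cartesian.

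Now the hypotheses of Theorem~\pref{T:etnbhd-pushout} are met by $j'_\lambda$ and $p_{U,\lambda}$, so their push-out $X_\lambda$ exists as a quasi-compact and quasi-separated stack, fitting in a cartesian and co-cartesian square with an open immersion $j_\lambda\colon U_\lambda\to X_\lambda$ and a finitely presented \etale{} neighborhood $p_\lambda\colon X'_\lambda\to X_\lambda$ of $X_\lambda\setminus U_\lambda$. Since $p_\lambda$ is an isomorphism over $X_\lambda\setminus U_\lambda$, the map $X'_\lambda\amalg U_\lambda\to X_\lambda$ is a finitely presented \etale{} surjection, and as $X'_\lambda\amalg U_\lambda\to S$ is finitely presented, descent of finite presentation along this cover shows that $X_\lambda\to S$ is finitely presented. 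The universal property of $X$ as a push-out (Theorem~\pref{T:etnbhd-is-pushout}) then yields a canonical $S$-morphism $X\to X_\lambda$ out of the compatible maps $U\to U_\lambda\xrightarrow{j_\lambda}X_\lambda$ and $X'\to X'_\lambda\xrightarrow{p_\lambda}X_\lambda$, which agree over $U'$.

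It remains to see that $X\to X_\lambda$ is affine, and this is where the main work lies. The affine morphisms $X'\to X'_\lambda$ and $U\to U_\lambda$ correspond to quasi-coherent $\sO_{X'_\lambda}$- and $\sO_{U_\lambda}$-algebras which, by the two cartesian squares above, both restrict over $U'_\lambda$ to the algebra of $U'\to U'_\lambda$. By the descent equivalence Theorem~\pref{T:etnbhd-descent:QCoh} for $X_\lambda$ (applied to the open $U_\lambda$ and the neighborhood $p_\lambda$), which respects algebra structures, these glue to a quasi-coherent $\sO_{X_\lambda}$-algebra $\sA$; set $\widetilde X=\Spec_{X_\lambda}(\sA)$, which is affine over $X_\lambda$ and satisfies $\widetilde X\times_{X_\lambda}X'_\lambda\iso X'$ and $\widetilde X\times_{X_\lambda}U_\lambda\iso U$, compatibly over $U'_\lambda$. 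Thus $\widetilde X$ is glued from $X'$ and $U$ along $U'$ over $X_\lambda$, so by Theorem~\pref{T:etnbhd-is-pushout} it is the push-out $X$; since the induced map $X\to\widetilde X$ pulls back to the identity on $X'$ and on $U$ along the \etale{} cover $X'_\lambda\amalg U_\lambda\to X_\lambda$, it is an isomorphism, and hence $X=\widetilde X\to X_\lambda$ is affine. The delicate point to check carefully is precisely this identification of $\Spec_{X_\lambda}(\sA)$ with the push-out $X$---equivalently, that forming the relative spectrum is compatible with the devissage gluing---together with keeping all the descent indices uniform so that a single $\lambda$ works.
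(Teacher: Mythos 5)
Your proposal is correct and follows the same skeleton as the paper's proof: approximate $U\to S$ as an inverse limit $\varprojlim_\lambda U_\lambda$, descend $p|_{U'}$ to a finitely presented \etale{} morphism $U'_\lambda\to U_\lambda$, extend $U'\to U'_\lambda\to S$ to an approximation of the pair $(U'\subseteq X')$ via Proposition~\pref{P:corner-stone}, and form the push-out $X_\lambda$ of $j'_\lambda$ and $p_{U,\lambda}$ using Theorem~\pref{T:etnbhd-pushout}. The two arguments diverge only at the key final step, affineness of $X\to X_\lambda$. The paper invokes~\cite[Prop.~3.2]{rydh_etale-devissage} to obtain a $2$-cartesian square with rows $X'\amalg U\to X'_\lambda\amalg U_\lambda$ and $X\to X_\lambda$, whence affineness descends along the finitely presented \etale{} surjection $X'_\lambda\amalg U_\lambda\to X_\lambda$. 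You instead glue the algebras of $X'\to X'_\lambda$ and $U\to U_\lambda$ through the descent equivalence of Theorem~\pref{T:etnbhd-descent:QCoh}, set $\widetilde{X}=\Spec_{X_\lambda}(\sA)$, and identify $\widetilde{X}$ with $X$ by uniqueness of push-outs, since Theorem~\pref{T:etnbhd-is-pushout} applies to $\widetilde{X}$ as well (base change along $\widetilde{X}\to X_\lambda$ preserves ``quasi-compact open immersion'' and ``\etale{} neighborhood of the complement'', and the cartesian squares from the construction give $U'=U\times_{\widetilde{X}}X'$). What your route buys is self-containedness: it only uses results quoted in this paper (Theorems~\pref{T:etnbhd-descent:QCoh} and~\pref{T:etnbhd-is-pushout}) rather than an external compatibility statement; the price is two routine verifications that you should make explicit, namely that the module equivalence of Theorem~\pref{T:etnbhd-descent:QCoh} respects algebra structures (it does, as the functors involved are pull-backs and hence monoidal), and the base-change checks needed to apply Theorem~\pref{T:etnbhd-is-pushout} to $\widetilde{X}$. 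With those remarks added, your proof is complete and valid.
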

\begin{proof}
Let $U'=p^{-1}(U)$. Write $U$ as an inverse limit $\varprojlim_\lambda
U_\lambda$. For sufficiently large $\lambda$, there exists an \etale{} morphism
$U'_\lambda\to U_\lambda$ of finite presentation such that
$U'_\lambda\times_{U_\lambda} U=U'$. By Proposition~\pref{P:corner-stone} we
can, for sufficiently large $\lambda$, extend the approximation $U'\to
U'_\lambda\to S$ to an approximation $(U'\subseteq X')\to (U'_\lambda\subseteq
X'_\lambda)\to S$.

By Theorem~\pref{T:etnbhd-is-pushout} we have that $X$ is the pushout of
$U'\subseteq X'$ and $\map{p|_U}{U'}{U}$. Let $X_\lambda$ be the pushout of
$U'_\lambda\subseteq X'_\lambda$ and $U'_\lambda\to U_\lambda$. This pushout
exists by Theorem~\pref{T:etnbhd-pushout} and the morphism
$X'_\lambda\amalg U_\lambda\to X_\lambda$ is \etale{} and surjective.
We have furthermore a $2$-cartesian diagram
$$\xymatrix{X'\amalg U\ar[r]\ar[d] & X'_\lambda\amalg U_\lambda\ar[d]\\
X\ar[r] & X_\lambda\ar@{}[ul]|\square}$$
by~\cite[Prop.~3.2]{rydh_etale-devissage}. It follows that
$X_\lambda\to S$ is of finite presentation and that $X\to X_\lambda$ is affine,
so $X\to X_\lambda\to S$ is an approximation.
\end{proof}

\begin{lemma}\label{L:approximation:etale}
Let $X\to S$ be a morphism of pseudo-noetherian stacks.
Let $\map{p}{X'}{X}$ be a representable \etale{} and surjective morphism
of finite presentation. If $X'\to S$ can be approximated, then so can $X\to S$.
\end{lemma}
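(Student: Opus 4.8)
The plan is to run the \etale{} devissage of Theorem~\pref{T:etnbhd:devissage}. Set $\catE=\Stack_{\fp,\metale/X}$, the category of finitely presented \etale{} morphisms $Y\to X$, and let $\catD\subseteq\catE$ be the full subcategory of those $Y$ for which $Y\to S$ can be approximated. Every object of $\catE$ is quasi-compact and pseudo-noetherian, being finitely presented over the pseudo-noetherian stack $X$, and by Proposition~\pref{P:pseudo-noetherian-stable-under-strict-app-type} the condition ``$Y\to S$ can be approximated'' is, over the pseudo-noetherian base $S$, equivalent to ``$Y\to S$ is of strict approximation type''. Since $X\in\catE$ via $\id{X}$, since $p$ is representable, surjective and lies in $\catE$, and since $X'\in\catD$ by hypothesis, it will suffice to verify conditions~\ref{TI:etdev:first}--\ref{TI:etdev:third} of the devissage theorem; applied to the cover $p\colon X'\to X$ it then yields $X\in\catD$, which is exactly the assertion.

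First I would check~\ref{TI:etdev:first}: if $Y\in\catD$ and $(Y'\to Y)\in\catE$, then $Y'\to Y$ is finitely presented while $Y\to S$ is of strict approximation type, so the composite $Y'\to S$ is again of strict approximation type and hence approximable by Proposition~\pref{P:pseudo-noetherian-stable-under-strict-app-type}; thus $Y'\in\catD$. Next, for~\ref{TI:etdev:second} suppose $Y'\in\catD$ and $Y'\to Y$ is finite, surjective and \etale{}. As $Y$ is quasi-compact and $Y'\to Y$ is finite and faithfully flat of finite presentation, Proposition~\pref{P:strict-approximation:finite-flat-descent}~(ii) descends strict approximation type from $Y'\to S$ to $Y\to S$, and Proposition~\pref{P:pseudo-noetherian-stable-under-strict-app-type} then gives $Y\in\catD$.

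Finally, condition~\ref{TI:etdev:third} is precisely the content of Lemma~\pref{L:approximation:etale-nbhd}: given a quasi-compact open $U\subseteq Y$ and a finitely presented \etale{} neighborhood $Y'\to Y$ of $Y\setminus U$ with $U,Y'\in\catD$, that lemma produces an approximation of $Y\to S$, so $Y\in\catD$. With~\ref{TI:etdev:first}--\ref{TI:etdev:third} established, Theorem~\pref{T:etnbhd:devissage} applied to the representable, surjective $p\colon X'\to X$ with $X'\in\catD$ yields $X\in\catD$, that is, $X\to S$ can be approximated.

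The genuinely hard work does not lie in this lemma itself but in the two inputs it orchestrates: the \etale{}-neighborhood construction of Lemma~\pref{L:approximation:etale-nbhd}, which realizes $X_\lambda$ as a push-out via Theorems~\pref{T:etnbhd-is-pushout} and~\pref{T:etnbhd-pushout} and supplies~\ref{TI:etdev:third}, and the finite-flat descent of strict approximation type in Proposition~\pref{P:strict-approximation:finite-flat-descent}, resting on the Weil restriction and the Hilbert stack, which supplies~\ref{TI:etdev:second}. Within the present argument the only delicate point is the bookkeeping that licenses these results: one must confirm that each intermediate $Y\in\catE$ is quasi-compact and pseudo-noetherian so that Propositions~\pref{P:strict-approximation:finite-flat-descent} and~\pref{P:pseudo-noetherian-stable-under-strict-app-type} apply, and that the equivalence between approximability and strict approximation type over the pseudo-noetherian base $S$ is invoked consistently at each step.
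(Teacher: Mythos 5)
Your proof is correct and is essentially identical to the paper's: both run the \etale{} devissage of Theorem~\pref{T:etnbhd:devissage} on the subcategory of $\Stack_{\fp,\metale/X}$ consisting of those $Y$ with $Y\to S$ of strict approximation type (equivalently, approximable, by Proposition~\pref{P:pseudo-noetherian-stable-under-strict-app-type}), verifying (D1) from the definition of strict approximation type, (D2) from Proposition~\pref{P:strict-approximation:finite-flat-descent}, and (D3) from Lemma~\pref{L:approximation:etale-nbhd}. The only difference is that you spell out the (D1), (D2) verifications and the quasi-compactness bookkeeping that the paper leaves implicit.
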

\begin{proof}
Let $\catD\subseteq \catE=\Stack_{\fp,\metale/X}$ be the full subcategory with
objects \etale{} and finitely presented morphisms $Y\to X$ such that $Y\to X\to
S$ is of strict approximation type. As $S$ is pseudo-noetherian, we have
that $(Y\to X)\in \catD$ if and only if $Y\to X\to S$ can be approximated
(Proposition~\ref{P:pseudo-noetherian-stable-under-strict-app-type}).

The category $\catD$ satisfies condition~\ref{TI:etdev:first} of
Theorem~\pref{T:etnbhd:devissage} by definition. That $\catD$ satisfies
conditions~\ref{TI:etdev:second} and~\ref{TI:etdev:third} follows
from Proposition~\pref{P:strict-approximation:finite-flat-descent} and
Lemma~\pref{L:approximation:etale-nbhd}. Since $X'\in \catD$, we thus
conclude from Theorem~\pref{T:etnbhd:devissage} that $X\in \catD$, i.e., that
$X\to S$ can be approximated.
\end{proof}

\begin{theorem}\label{T:approximation:app-type}
Let $S$ be a pseudo-noetherian stack and let $\map{f}{X}{S}$ be a morphism
of algebraic stacks. The following are equivalent:
\begin{enumerate}
\item $X\to S$ is of approximation type;
\item $X\to S$ is of strict approximation type; and
\item $X\to S$ has an approximation.
\end{enumerate}
\end{theorem}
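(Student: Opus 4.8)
The plan is to prove the cycle of implications (i)$\Rightarrow$(iii)$\Rightarrow$(ii)$\Rightarrow$(i), so that the three conditions are equivalent. Since $S$ is assumed pseudo-noetherian throughout, I may freely invoke the completeness results and in particular Proposition~\pref{P:pseudo-noetherian-stable-under-strict-app-type}.

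First I would dispatch the formal implications. For (iii)$\Rightarrow$(ii): an approximation is by definition a factorization $X\to X_0\to S$ in which $X\to X_0$ is affine and $X_0\to S$ is finitely presented; since affine morphisms and finitely presented morphisms are of strict approximation type, and this class is closed under composition, the composite $X\to S$ is of strict approximation type. For (ii)$\Rightarrow$(i): taking $p=\id{X}$, which is trivially a surjective, representable, finitely presented \etale{} morphism, the requirement in Definition~\pref{D:approx-type} is met, so $X\to S$ is of approximation type. The remaining implication (ii)$\Rightarrow$(iii) is precisely Proposition~\pref{P:pseudo-noetherian-stable-under-strict-app-type}(i): a morphism of strict approximation type into a pseudo-noetherian stack admits a factorization into an affine morphism followed by a finitely presented one, i.e., an approximation. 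Combining these, (ii) and (iii) are equivalent and both imply (i).

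The substantive step is (i)$\Rightarrow$(iii). Assume $X\to S$ is of approximation type. By Definition~\pref{D:approx-type} there is a surjective, representable, finitely presented \etale{} morphism $\map{p}{X'}{X}$ such that $X'\to X\to S$ is of strict approximation type. Applying the already-established equivalence (ii)$\Leftrightarrow$(iii) to the morphism $X'\to S$ (legitimate since $S$ is pseudo-noetherian), I conclude that $X'\to S$ can be approximated. Now Lemma~\pref{L:approximation:etale} applies verbatim: it states that if $\map{p}{X'}{X}$ is representable, \etale{}, surjective and of finite presentation and $X'\to S$ can be approximated, then so can $X\to S$. This yields (iii) and closes the cycle.

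The genuine difficulty is concentrated entirely in Lemma~\pref{L:approximation:etale}, whose proof is the \etale{} devissage: one verifies that the full subcategory of $\Stack_{\fp,\metale/X}$ consisting of objects approximable over $S$ satisfies the three conditions \ref{TI:etdev:first}--\ref{TI:etdev:third} of Theorem~\pref{T:etnbhd:devissage}, using finite-flat descent of approximability (Proposition~\pref{P:strict-approximation:finite-flat-descent}) for \ref{TI:etdev:second} and the push-out construction of Lemma~\pref{L:approximation:etale-nbhd} for \ref{TI:etdev:third}. At the level of the present theorem, however, once that lemma is in hand the argument is pure bookkeeping of the three implications; the only place where the pseudo-noetherian hypothesis on $S$ is essential is the passage (ii)$\Leftrightarrow$(iii), which would fail for a general base.
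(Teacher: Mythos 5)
Your proof is correct and is essentially the paper's own argument: the formal implications (iii)$\implies$(ii)$\implies$(i), Proposition~\pref{P:pseudo-noetherian-stable-under-strict-app-type} for (ii)$\implies$(iii), and Lemma~\pref{L:approximation:etale} for (i)$\implies$(iii). The only hypothesis you gloss over is that Lemma~\pref{L:approximation:etale} is stated for $X$ (not just $S$) pseudo-noetherian, so it does not quite apply ``verbatim''; this is immediately repaired by noting that $X\to S$ of approximation type over a pseudo-noetherian base makes $X$ pseudo-noetherian (Theorem~\pref{T:pseudo-noetherian-stable-under-app-type}), a fact the paper recalls at the start of Section~\ref{S:approximation}.
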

\begin{proof}
By definition we have that (iii)$\implies$(ii)$\implies$(i).
That (ii)$\implies$(iii) is
Proposition~\pref{P:pseudo-noetherian-stable-under-strict-app-type} and
that (i)$\implies$(ii) is Lemma~\pref{L:approximation:etale}.
\end{proof}

\begin{proof}[Proof of Theorem~\tref{T:APPROXIMATION}]
By Theorem~\pref{T:approximation:app-type} there exists an approximation $X\to
X_0\to S$. By Propositions~\pref{P:completeness:approx}
and~\pref{P:presentation:approx}, we can thus write $X$ as an inverse limit of
finitely presented morphisms $X_\lambda\to S$. The bonding maps $X_\mu\to
X_\lambda$ are affine (resp.\ closed immersions) for general $X\to S$
(resp.\ for $X\to S$ of finite type).
If $X\to S$ is integral, then, since $S$ is pseudo-noetherian, we can arrange so
that $X_\mu\to X_\lambda$ and $X_\lambda\to S$ are finite by the completeness
property~\ref{I:C2} for the
category of integral algebras.

Let $P$ be one of the properties in \ref{I:prop-affine} or \ref{I:prop-closed}.
If $X\to S$
has $P$ then $X_\lambda\to S$ has property~$P$ for all sufficiently large
$\lambda$ by Theorem~\tref{T:APPROXIMATION-PROPERTIES} (finitely presented
case).
The statement on properties of $X\to \Spec \Z$ is an immediate consequence
of the general form of Theorem~\tref{T:APPROXIMATION-PROPERTIES}, proven below.
\end{proof}

We will now reduce the general case of
Theorem~\tref{T:APPROXIMATION-PROPERTIES} to the finitely presented case,
proved in the previous section. This is done by a somewhat subtle bootstrapping
process via Lemma~\pref{L:limit-props-over-another-base} and the
following three lemmas. The first lemma is
Theorem~\tref{T:APPROXIMATION-PROPERTIES} under the additional assumption that
$X_\lambda\to S$ is of approximation type \emph{fppf-locally over $S$}. The
second lemma is the analogue of Proposition~\pref{P:std-limits:properties} but
for morphisms of finite type. It is revisited in
Theorem~\pref{T:std-limits:properties:finite-type}. The third lemma is
Theorem~\tref{T:APPROXIMATION-PROPERTIES} for diagonal and inertia properties.

\begin{lemma}\label{L:TheoremC-for-approx-type}
Let $S$ be a quasi-compact algebraic stack and let $\{X_\lambda\to S\}$ be an
inverse system of quasi-compact and quasi-separated morphisms
(resp.\ quasi-separated morphisms of finite type) of algebraic stacks with
limit $X\to S$ and bonding maps that are affine (resp.\ closed immersions).
\begin{enumerate}
\item 
Assume that $X_\lambda\to S$ is of approximation type fppf-locally over $S$
and let $P$ be one of the properties in~\ref{I:prop-affine}
(resp.\ \ref{I:prop-closed}). Then, if $X\to S$ has property $P$,
so has $X_\lambda\to S$ for all sufficiently large $\lambda$.
\item
Assume that $S$ is Zariski-locally quasi-separated. Then, if $X$ is a scheme,
so is $X_\lambda$ for all sufficiently large $\lambda$.\label{LI:TCat-scheme}
\end{enumerate}
\end{lemma}
\begin{proof}
The first claim is fppf-local on $S$ so we can assume that $X\to S$ and
$X_\lambda\to S$ are of approximation type. The second claim is Zariski-local
on $S$ so we can assume that $S$ is quasi-separated and then replace $S$
with $\Spec \Z$. As morphisms of schemes clearly are of approximation
type, the second claim reduces to the first claim with
$P$ as the property ``strongly representable''.

By
Theorem~\tref{T:APPROXIMATION}, there are approximations
$X=\varprojlim_\mu X'_\mu$ and $X_\alpha\to X_0\to X$ where $X'_\mu\to S$ and
$X_0\to X$ are of finite presentation and the bonding maps, as well as
$X_\alpha\to X_0$, are affine (resp.\ closed immersions). By the finitely
presented case of Theorem~\tref{T:APPROXIMATION-PROPERTIES}, it follows that
$X'_\mu\to S$ has property $P$ for sufficiently large $\mu$. We conclude
that $X_\lambda\to S$ has property $P$ for sufficiently large $\lambda$ by
Lemma~\pref{L:limit-props-over-another-base}.
\end{proof}

\begin{lemma}\label{L:descent-for-finite-type-if-of-approx-type}
Let $S_0$ be a quasi-compact algebraic stack, let $\map{f_0}{X_0}{S_0}$ be a
quasi-separated morphism of finite type and let $S=\varprojlim_\lambda
S_\lambda$ be an inverse system of stacks that are affine over $S_0$. For every
$\lambda$, let $\map{f_\lambda}{X_\lambda}{S_\lambda}$ (resp.\ $\map{f}{X}{S}$)
be the base change of $f_0$ along $S_\lambda\to S_0$ (resp.\ $S\to S_0$). Let
$P$ be one of the properties of \ref{I:prop-affine} or \ref{I:prop-closed}.
Assume that $f_0$ is of
approximation type, fppf-locally over $S_0$. Then $f$ has $P$ if and
only if $f_\lambda$ has $P$ for all sufficiently large $\lambda$.
\end{lemma}
\begin{proof}
The question is fppf-local on $S_0$ so we can assume that $S_0$ is affine and
that $f_0$ is of approximation type.
By Theorem~\tref{T:APPROXIMATION}, we can write $X_0=\varprojlim_\mu X_0^\mu$
as an inverse limit of finitely presented morphisms $X_0^\mu\to S_0$ with
bonding maps that are closed immersions. Then $X=\varprojlim_\mu X^\mu$ where
$X^\mu=X_0^\mu\times_{S_0} S$. By the finitely presented case of
Theorem~\tref{T:APPROXIMATION-PROPERTIES}, it follows that $X^\mu\to S$ has
property $P$ for sufficiently large $\mu$. We then apply
Proposition~\pref{P:std-limits:properties} to $X_0^\mu\to S_0$ and
deduce that $X_\lambda^\mu\to S_\lambda$ has property $P$ for sufficiently
large $\lambda$ where $X_\lambda^\mu=X_0^\mu\times_{S_0}
S_\lambda$. As property~$P$ is stable under composition with closed
immersions, it follows that $\map{f_\lambda}{X_\lambda}{S_\lambda}$ has
property~$P$.
\end{proof}

\begin{lemma}\label{L:TheoremC-for-diagonal}
Let $S$ be a quasi-compact algebraic stack and let
$\{X_\lambda\to S\}_{\lambda\in L}$ be
an inverse system of quasi-compact and quasi-separated morphisms with affine
bonding maps $X_\mu\to X_\lambda$ and limit $X\to S$. Let $P$ be one of the
properties of \ref{I:prop-affine} or \ref{I:prop-closed}. If $\Delta_{X/S}$ has
property $P$, then $\Delta_{X_\lambda/S}$ has property $P$ for sufficiently
large $\lambda$. If the inertia $I_{X/S}$ is finite (resp.\ abelian,
  resp.\ tame), then so is the inertia $I_{X_\lambda/S}$ for sufficiently large
$\lambda$.
\end{lemma}
\begin{proof}
The proof is almost identical to the proof of
Corollary~\pref{C:properties-for-diagonal} replacing
Propositions~\pref{P:properties-for-closed-limits}
and~\pref{P:std-limits:properties} with the two lemmas above.
Recall that, by Lemma~\pref{L:diag-inv-system}, the diagonal
$\map{\Delta_{X/S}}{X}{X\times_S X}$ is the inverse limit of the morphisms
$X\times_{X_\lambda} X\to X\times_S X$ and that the bonding maps
$X\times_{X_\mu} X\to X\times_{X_\lambda} X$ are closed immersions. Note that
the morphisms $X\times_{X_\lambda} X\to X\times_S X$ are quasi-separated and of
finite type but not necessarily of finite presentation.
However, $X\times_{X_\lambda} X\to
X\times_S X$ is representable and thus of approximation type, fppf-locally on
the target. By Lemma~\pref{L:TheoremC-for-approx-type}, we deduce
that $X\times_{X_\lambda} X\to X\times_S X$ has property $P$ for sufficiently
large $\lambda$.

By Lemma~\pref{L:descent-for-finite-type-if-of-approx-type} we then deduce that
$X_\mu\times_{X_\lambda} X_\mu\to X_\mu\times_S X_\mu$ has property $P$ for all
sufficiently large $\mu$. As the bonding maps are affine, the diagonal
$\Delta_{X_\mu/X_\lambda}$ is a closed immersion. As property $P$ is stable
under composition with closed immersions, it follows that $\Delta_{X_\mu}$ has
property $P$ for all sufficiently large $\mu$.

The proof of the last statement about inertia is similar, cf.\ the proof
of Corollary~\pref{C:properties-for-diagonal}.
\end{proof}

\begin{proof}[Proof of Theorem~\tref{T:APPROXIMATION-PROPERTIES}]
We note that part~\ref{TI:C-scheme} is part~\ref{LI:TCat-scheme} of
Lemma~\pref{L:TheoremC-for-approx-type}. The other parts are fppf-local on $S$
so we can assume that $S$ is affine.

Each property of~\ref{I:prop-affine}, except for affine and quasi-affine,
corresponds to a property in~\ref{I:prop-affine} or~\ref{I:prop-closed} for the
diagonal or a property for the
inertia. Theorem~\tref{T:APPROXIMATION-PROPERTIES} for these properties
is thus an immediate consequence of Lemma~\pref{L:TheoremC-for-diagonal}.

Let $P$ be the property quasi-finite. We may then assume that $X_\alpha$ has
quasi-finite diagonal; hence there exists a quasi-finite flat presentation
$U_\alpha\to X_\alpha$~\cite[Thm.~7.1]{rydh_etale-devissage}. It is enough to
prove that $U_\lambda=U_\alpha\times_{X_\alpha} X_\lambda\to S$ is quasi-finite
for sufficiently large $\lambda$. We can thus replace $X_\lambda$ with
$U_\lambda$ and assume that $X_\lambda$ is an algebraic space.

Each of the remaining properties of~\ref{I:prop-affine} (affine, quasi-affine)
and each property of~\ref{I:prop-closed} except ``quasi-finite'' implies
that the diagonal is quasi-finite and locally separated.

Thus, for all the remaining properties, we can assume that $X_\lambda$ has
quasi-finite and locally separated diagonal. Then $X_\lambda\to S$ is of
approximation type (Corollary~\ref{C:qcqf=>global-type}). We may now conclude
the proof of
Theorem~\tref{T:APPROXIMATION-PROPERTIES} with
Lemma~\pref{L:TheoremC-for-approx-type}.
\end{proof}

%

\end{section}


\begin{section}{Applications}\label{S:applications}
The first application is a generalization of Chevalley's affineness theorem to
non-noetherian schemes and algebraic spaces. Also, we replace finite morphisms
by integral morphisms. Partial generalizations of this type for schemes have
been given by M.\ Raynaud~\cite[Prop.~3.1]{raynaud_sem-samuel} and B.\
Conrad~\cite[Cor.~A.2]{conrad_nagata}.

\begin{theorem}[Chevalley]\label{T:Chevalley-affineness}
Let $X$ be an affine scheme, let $Y$ be an algebraic space and let
$\map{f}{X}{Y}$ be an integral and surjective morphism. Then $Y$ is affine.
\end{theorem}
\begin{proof}
As $X$ is quasi-compact and $f$ is surjective it follows that $Y$ is
quasi-compact. As $f$ is universally closed and surjective and $X$ is
separated, it also follows that $Y$ is separated.

By Theorem~\tref{T:APPROXIMATION}, the morphism $\map{f}{X}{Y}$ has an
approximation $X\to X_0\to Y$ where $X_0\to Y$ is finite and finitely presented
and $X_0$ is affine. Replacing $X$ with $X_0$, we can thus assume that $f$ is
finitely presented.

By Theorem~\tref{T:APPROXIMATION}, we can write $Y$ as an inverse limit of
noetherian algebraic spaces $(Y_\lambda)_\lambda$ such that $Y\to Y_\lambda$ is
affine for every $\lambda$. Since $f$ is finitely presented, there is, by
standard limit methods, for sufficiently large $\lambda$, a finite surjective
morphism $\map{f_\lambda}{X_\lambda}{Y_\lambda}$ that pull-backs to
$\map{f}{X}{Y}$, cf.\ Appendix~\ref{A:std-limits}. After increasing $\lambda$
further, we can also assume that $X_\lambda$ is affine by
Theorem~\tref{T:APPROXIMATION-PROPERTIES}.
By Chevalley's theorem for finite morphisms between noetherian algebraic
spaces~\cite[Thm.~III.4.1]{knutson_alg_spaces}, it now follows that $Y_\lambda$
is affine and hence that $Y$ is affine.
\end{proof}

\begin{corollary}
Let $X$ be an algebraic space. If $X_\red$ is a scheme (resp.\ a quasi-affine
scheme, resp.\ an affine scheme),
then so is $X$.
\end{corollary}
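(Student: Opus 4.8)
The plan is to deduce both statements directly from Chevalley's theorem (Theorem~\pref{T:Chevalley-affineness}), applied to the canonical closed immersion $\map{\iota}{X_\red}{X}$. This morphism is a closed immersion, hence integral and universally closed, and it is surjective since $X$ and $X_\red$ share the same underlying topological space. For the affine case I would take $S=\Spec(\Z)$ and feed into Theorem~\pref{T:Chevalley-affineness} the affine morphism $X_\red\to\Spec(\Z)$ (affine precisely because $X_\red$ is an affine scheme), the structure morphism $X\to\Spec(\Z)$ in the role of $Y\to S$, and $\iota$ as the integral surjective $S$-morphism. The conclusion is that $X\to\Spec(\Z)$ is affine, i.e.\ that $X$ is an affine scheme. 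Note that no quasi-compactness or separatedness hypothesis on $X$ need be verified in advance: as recorded inside the proof of Theorem~\pref{T:Chevalley-affineness}, these properties of the target are derived from the quasi-compactness and separatedness of the source together with the surjectivity and universal closedness of $\iota$.

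For the scheme case I would argue Zariski-locally. Since $\iota$ is a homeomorphism, the open subspaces of $X$ are in inclusion-preserving bijection with the open subschemes of $X_\red$, and for an open subspace $U\subseteq X$ the reduction $U_\red$ is canonically identified with the corresponding open subscheme $\iota^{-1}(U)\subseteq X_\red$. Assuming $X_\red$ is a scheme, I would cover it by affine open subschemes; each such affine open equals $U_\red$ for a unique open subspace $U\subseteq X$. Applying the affine case already established to each such $U$ shows that $U$ is an affine scheme, so $X$ is covered by affine open subspaces and is therefore a scheme.

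There is no serious obstacle once Chevalley's theorem is available; the corollary is essentially a formal consequence. The only points requiring a moment's care are the elementary verifications that the closed immersion $\iota$ is integral and surjective, and, for the scheme case, the compatibility of reduction with open immersions, namely $(U)_\red=\iota^{-1}(U)$ for every open subspace $U\subseteq X$, together with the standard fact that an algebraic space admitting an open cover by affine schemes is a scheme. All of these are routine.
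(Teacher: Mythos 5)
Your proposal is correct and is essentially the paper's own proof: the paper likewise reduces the scheme case to the affine case by taking an open covering (using that $X_\red\inj X$ is a homeomorphism), and then applies Chevalley's theorem (Theorem~\pref{T:Chevalley-affineness}) to the finite (integral) surjective morphism $X_\red\inj X$ to conclude that $X$ is affine. The only difference is cosmetic ordering---you prove the affine case first and then globalize, while the paper localizes first---and your added remarks on why no quasi-compactness or separatedness of $X$ need be assumed are accurate.
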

\begin{proof}
If $X_\red$ is an affine scheme, then it follows by Chevalley's
theorem that $X$ is an affine scheme since $X_\red\inj X$ is finite
and surjective.
If $X_\red$ is a scheme, then there is an open covering $X=\bigcup U_i$
such that the $(U_i)_\red$ are affine and we conclude that the $U_i$ are
affine and that $X$ is a scheme.
If $X_\red$ is quasi-affine, then there exists an approximation
$X_\red\inj X_0\inj X$ where $X_0$ is quasi-affine and $X_0\inj X$ is
a finitely presented closed immersion (Theorem~\tref{T:APPROXIMATION}).
We have seen that $X$ is a scheme so~\cite[Prop.~4.5.13]{egaII} applies and
shows that $X$ is quasi-affine.
\end{proof}

\begin{theorem}\label{T:uni-closed=int+proper}
Let $X$ and $S$ be quasi-compact stacks with quasi-finite and separated
diagonals. Let $\map{f}{X}{S}$ be a universally closed, separated and
quasi-compact morphism.
Then $f$ factors through an integral surjective morphism $X\to X'$ followed by
a proper morphism $X'\to S$ with finite diagonal.
\end{theorem}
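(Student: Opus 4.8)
The plan is to reduce to a finitely presented situation by Theorem~\tref{T:APPROXIMATION} and then read off the two factors directly. First observe that, since $X$ and $S$ are quasi-compact with quasi-finite and separated diagonals, they are of global type by Corollary~\pref{C:qcqf=>global-type}, hence pseudo-noetherian by Theorem~\tref{T:COMPLETENESS}, and $f$ is a morphism of approximation type. Moreover $f$ is separated by hypothesis and has \emph{finite} diagonal: the relative diagonal $\Delta_{X/S}$ factors the absolute quasi-finite diagonal of $X$ through the base change of the quasi-finite separated $\Delta_S$, and a graph/cancellation argument shows $\Delta_{X/S}$ is quasi-finite; being also proper (as $f$ is separated) it is finite.

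Next I would apply Theorem~\tref{T:APPROXIMATION} to write $X=\varprojlim_\lambda X_\lambda$ with $X_\lambda\to S$ finitely presented and $X\to X_\lambda$ affine. Using part~(b) of that theorem together with Corollaries~\pref{C:rel-separated-limit} and~\pref{C:properties-for-diagonal}, I may fix a single index $\lambda$ for which $X_\lambda\to S$ is separated and has quasi-finite, hence finite, diagonal.

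The key observation is then that $X\to X_\lambda$ is \emph{integral}. Indeed $X_\lambda\to S$ is separated and $f$ factors as $(X_\lambda\to S)\circ(X\to X_\lambda)$ with $f$ universally closed, so by the cancellation property for universally closed morphisms (factor $X\to X_\lambda$ through its graph, a closed immersion since $X_\lambda/S$ is separated, followed by a base change of $f$) the morphism $X\to X_\lambda$ is universally closed; being also affine, it is integral, since an affine universally closed morphism is integral (cf.\ Theorem~\pref{T:char-of-integral-morphisms}). Let $\map{g}{X}{X'}$ be the scheme-theoretic image of $X\to X_\lambda$, a closed substack $X'\inj X_\lambda$. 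As $g$ is universally closed its set-theoretic image is closed, so $g$ is surjective onto $X'$, and $g$ is again affine and universally closed, hence integral. This produces the first factor $X\to X'$, integral and surjective.

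Finally, $X'\to S$ is proper with finite diagonal. It is separated and of finite type, being the composite of the closed immersion $X'\inj X_\lambda$ with $X_\lambda\to S$, and it has finite diagonal because $X_\lambda\to S$ does. It remains to check that $X'\to S$ is universally closed: since $g$ is surjective and $(X'\to S)\circ g=f$ is universally closed, universal closedness descends along the surjection $g$ (for $T\to S$, a closed set in $X'\times_S T$ has the same image in $T$ as its closed preimage in $X\times_S T$, which is closed because $f$ is universally closed). Hence $X'\to S$ is proper. I expect the main technical point to be the verification that $X\to X_\lambda$ is integral --- i.e.\ correctly invoking the cancellation law for universal closedness and the identification of integral morphisms with affine universally closed ones --- together with arranging, at a single index, that the approximation is separated with finite diagonal.
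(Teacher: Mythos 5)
Your proof is correct and takes essentially the same route as the paper's: the paper also invokes Theorem~\tref{T:APPROXIMATION} to produce an approximation $X\to X_0\to S$ with $X_0\to S$ finitely presented with finite diagonal, deduces by cancellation that the affine morphism $X\to X_0$ is universally closed and hence integral, and takes $X'$ to be the schematic image of $X$ in $X_0$, so that $X\to X'$ is integral surjective and $X'\to S$ is universally closed, separated, of finite type with finite diagonal, hence proper. One small repair: for ``affine and universally closed implies integral'' you should cite \cite[Prop.~18.12.8]{egaIV} (as the paper does) rather than Theorem~\pref{T:char-of-integral-morphisms}, since that theorem appears later in the paper and is itself deduced from the present statement, so your ``cf.'' reference would be circular --- though your argument stands because you state the needed elementary fact directly.
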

\begin{proof}
By Theorem~\tref{T:APPROXIMATION} there is an approximation $X\to X_0\to S$
where $X\to X_0$ is affine and $X_0\to S$ is of finite presentation with finite
diagonal. It follows that $X\to X_0$ is universally closed and thus
integral~\cite[Prop.~18.12.8]{egaIV}. Let $X'\inj X_0$ be the schematic image
of $X\to X_0$. Then $X\to X'$ is surjective and $X'\to S$ is universally
closed, hence proper.
\end{proof}

As an amusing corollary, we see that the finiteness assumption in Chevalley's
theorem on the fiber dimension can be removed.

\begin{corollary}[Chevalley]\label{C:Chevalleys-theorem-fiberdim}
Let $S$ be an algebraic stack and let $\map{f}{X}{S}$ be a universally closed,
separated and quasi-compact morphism with finite diagonal. Then, the fibers of
$f$ are finite-dimensional and the function $s\mapsto
\dim(f^{-1}(s))$ is upper semi-continuous.
\end{corollary}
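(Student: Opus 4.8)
The plan is to use the factorization of Theorem~\tref{T:uni-closed=int+proper} to trade the universally closed morphism $f$ for a proper one, for which fibre dimension is classically upper semicontinuous, while observing that the discarded ``integral'' part does not change fibre dimensions. First I would settle finite-dimensionality of the fibres together with the invariance of fibre dimension under base field extension, both of which come out of a fibrewise application of the main theorem. For $s\in |S|$ the fibre $X_s=f^{-1}(s)$ is a quasi-compact algebraic stack with finite diagonal over $k(s)$, and $X_s\to \Spec(k(s))$ is universally closed and separated, so Theorem~\tref{T:uni-closed=int+proper} factors it as an integral surjection $X_s\to X'_s$ followed by a proper morphism $X'_s\to \Spec(k(s))$ with finite diagonal. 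Since $X'_s$ is of finite type over a field it is finite-dimensional, and an integral surjective morphism preserves dimension (checking on presentations, this is the fact that an integral extension of rings with surjective spectrum preserves Krull dimension, by incomparability and going-up). Hence $\dim X_s=\dim X'_s<\infty$. Applying the same factorization after a field extension $K/k(s)$ gives $\dim(X_s\times_{k(s)}K)=\dim(X'_s\times_{k(s)}K)=\dim X'_s=\dim X_s$, using that base field extension preserves the dimension of the finite type stack $X'_s$ and that integral surjections are stable under base change.

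Next I would reduce upper semicontinuity of $\phi\colon s\mapsto \dim X_s$ to the case where $S$ is an affine scheme. The statement is local on $|S|$, so I may assume $S$ quasi-compact; choosing a smooth presentation $\map{p}{S_0}{S}$ by a scheme and pulling $f$ back along $p$, the fibre of $X\times_S S_0\to S_0$ over $s_0$ is $X_{p(s_0)}\times_{k(p(s_0))}k(s_0)$, whose dimension equals $\dim X_{p(s_0)}$ by the base-field invariance just established. Thus the fibre-dimension function of $X\times_S S_0\to S_0$ is $\phi\circ |p|$, and since $|p|$ is open and surjective, $\phi$ is upper semicontinuous on $|S|$ if and only if $\phi\circ |p|$ is upper semicontinuous on $|S_0|$. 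Covering $S_0$ by affines and noting that the hypotheses on $f$ are stable under base change, I am reduced to $S$ affine.

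With $S$ an affine scheme, $X$ is quasi-compact with finite, hence quasi-finite and separated, diagonal, so Theorem~\tref{T:uni-closed=int+proper} applies globally and factors $f$ as $X\to X'\to S$ with $X\to X'$ integral surjective and $X'\to S$ proper with finite diagonal. By Theorem~\tref{T:FINITE-COVERINGS} there is a scheme $Z$ and a finite surjective morphism $Z\to X'$, so the composite $\map{g}{Z}{S}$ is a proper morphism of schemes. For every $s$ the finite surjection $Z_s\to X'_s$ and the integral surjection $X_s\to X'_s$ give $\dim Z_s=\dim X'_s=\dim X_s$. By the classical upper semicontinuity of fibre dimension for morphisms of finite type~\cite[13.1.3]{egaIV}, the function $z\mapsto \dim_z Z_{g(z)}$ is upper semicontinuous on $|Z|$; as $g$ is proper, hence closed, the induced function $s\mapsto \dim Z_s$ is upper semicontinuous on $|S|$. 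Therefore $\phi$ is upper semicontinuous, which completes the proof.

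The main obstacle is the smooth-local reduction in the second paragraph: to invoke the global factorization one must pass from the stack $S$ to a scheme, because Theorem~\tref{T:uni-closed=int+proper} requires the base to have quasi-finite separated diagonal, and this introduces transcendental residue-field extensions under which one must know fibre dimension is unchanged. The device that resolves this is the fibrewise application of Theorem~\tref{T:uni-closed=int+proper} in the first step, which replaces the possibly non-finite-type fibre $X_s$ by the finite type stack $X'_s$, for which invariance of dimension under base field extension is standard.
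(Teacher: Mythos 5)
Your proposal is correct and takes essentially the same route as the paper, whose one-line proof invokes exactly the same three ingredients: the factorization of Theorem~\pref{T:uni-closed=int+proper}, the finite scheme covering of Theorem~\tref{T:FINITE-COVERINGS}, and Chevalley's theorem for proper, strongly representable morphisms (EGA~IV, Cor.~13.1.5). The only differences are that you fill in the details the paper leaves implicit --- the smooth-local reduction to an affine base, invariance of fibre dimension under field extension via the fibrewise factorization, and dimension preservation along integral surjections --- and that you rederive the cited Cor.~13.1.5 from EGA~IV~13.1.3 plus closedness rather than quoting it directly.
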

\begin{proof}
Using Theorem~\pref{T:uni-closed=int+proper} and
Theorem~\tref{T:FINITE-COVERINGS} we can assume that $f$ is
proper and strongly representable. This case follows
from~\cite[Cor.~13.1.5]{egaIV}.
\end{proof}

The following theorem settles a question of
Grothendieck~\cite[Rem.\ 18.12.9]{egaIV}. This was also the original motivation
for this paper.

\begin{theorem}\label{T:char-of-integral-morphisms}
Let $\map{f}{X}{S}$ be a morphism of algebraic stacks. Then $f$ is integral
if and only if $f$ is universally closed, separated and has affine fibers.
\end{theorem}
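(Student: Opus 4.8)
The forward implication is routine: an integral morphism is affine---hence separated and with affine fibres---and it is universally closed. For the converse I would start from the characterization that a morphism is integral if and only if it is affine and universally closed, \cite[Prop.~18.12.8]{egaIV}; thus it suffices to display $f$ as a composition of integral morphisms. Since integrality is \etale{}-local on the target I may assume $S$ is an affine scheme, and then $f$ is quasi-compact (its fibres being affine) and representable (a morphism of algebraic spaces). In particular $X$ and $S$ are quasi-compact with quasi-finite and separated diagonals, so the hypotheses of Theorem~\tref{T:uni-closed=int+proper} are met.

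Applying that theorem, I would factor $f$ as $X\xrightarrow{g} X'\xrightarrow{h} S$ with $g$ integral and surjective and $h$ proper with finite diagonal. Because $f$ is representable I can arrange $X'$ to be an algebraic space: the intermediate stack $X'$ is a closed substack of an approximation $X_0$ of $X\to S$, and by Theorem~\tref{T:APPROXIMATION} the approximation may be chosen representable over $S$, forcing $X_0$ and hence $X'$ to be algebraic spaces. As $g$ is already integral, everything reduces to proving that $h$ is finite.

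The heart of the argument is a fibrewise application of Chevalley's theorem. For a point $s\in S$ the base change $g_s\colon X_s\to X'_s$ is again integral and surjective, and $X_s$ is affine; so Theorem~\tref{T:Chevalley-affineness}, applied over $\Spec(k(s))$, shows that $X'_s$ is affine. On the other hand $X'_s$ is a fibre of the proper morphism $h$, hence proper over $k(s)$. A morphism to the spectrum of a field that is simultaneously proper and affine is finite, so each fibre of $h$ is finite. Therefore $h$ is quasi-finite, and being proper and representable it is finite; consequently $f=h\circ g$ is a composition of integral morphisms and is integral.

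The only genuinely substantive steps are the factorization of Theorem~\tref{T:uni-closed=int+proper} and the fibrewise use of Chevalley's theorem, whose combined effect is to upgrade ``affine fibres of $f$'' to ``finite fibres of $h$''. I expect the main point needing care to be the verification that $X'$ is an algebraic space, since the implication ``proper $+$ quasi-finite $\Rightarrow$ finite'' is applied to $h$; this is exactly where representability of $f$ is used. The quasi-compactness reduction and the elementary fact that a proper affine morphism to a field is finite are routine.
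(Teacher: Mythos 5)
Your proof is correct and takes essentially the same route as the paper's: reduce to $S$ affine, note $f$ is quasi-compact, factor through Theorem~\pref{T:uni-closed=int+proper}, apply Chevalley's theorem~\pref{T:Chevalley-affineness} fibrewise to see the fibres of $X'\to S$ are affine hence finite, and conclude by ``proper $+$ quasi-finite $\Rightarrow$ finite''. Your explicit verification that $X'$ is an algebraic space (via the representability clause of Theorem~\tref{T:APPROXIMATION}) makes precise a point the paper leaves implicit; the only slight imprecision is that quasi-compactness of $f$ needs the closedness of $f$ together with quasi-compactness of the fibres---affine fibres alone do not suffice.
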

\begin{proof}
Taking a presentation, we can assume that $S$ is affine. The
necessity follows from~\cite[Cor.~6.1.10]{egaII} so assume that $f$ is
universally closed, separated and has affine fibers.
Note that $f$ is representable and quasi-compact, since the fibers of $f$ are
quasi-compact and $f$ is closed. Thus, by
Theorem~\pref{T:uni-closed=int+proper}, there is a factorization of $f$ into an
integral surjective morphism $X\to X'$ followed by a proper
and representable morphism
$X'\to S$. Chevalley's theorem~\pref{T:Chevalley-affineness} then shows that
the fibers of $X'\to S$ are affine and hence finite. As a quasi-finite and
proper morphism is finite~\cite[Cor.~A.2.1]{laumon}, the theorem follows.
\end{proof}


The following variant of Zariski's Main Theorem
generalizes~\cite[Cor.~18.12.13]{egaIV},~\cite[Thm.\
II.6.15]{knutson_alg_spaces} and~\cite[Thm.~16.5]{laumon}.


\begin{theorem}[Zariski's Main Theorem]\label{T:ZMT}
Let $S$ be an algebraic stack and let $\map{f}{X}{S}$ be a representable,
quasi-finite and separated morphism. Then
\begin{enumerate}
\item There is a factorization $X\to X'\to S$ of $f$ where $X\to X'$
is an open immersion and $X'\to S$ is integral.\label{TI:ZMT-1}
\item If $S$ is pseudo-noetherian (or at least has the completeness property),
then there exists a factorization as above with $X'\to S$ finite. If, in
addition, $f$ is of finite presentation, we can choose $X'\to S$ to be of finite
presentation.\label{TI:ZMT-2}
\end{enumerate}
\end{theorem}
\noindent Note that \ref{TI:ZMT-2} is satisfied if $S$ is noetherian or of
approximation type, e.g., quasi-compact with quasi-finite and locally separated
diagonal.
\begin{proof}[Proof of Theorem~\pref{T:ZMT}]
\ref{TI:ZMT-1} (cf.\ \cite[Thm.~16.5]{laumon}) The integral closure of
$\sO_S\to f_*\sO_X$ is a quasi-coherent $\sO_S$-subalgebra $\sA\subseteq
f_*\sO_X$. Let $X'$ be the spectrum of $\sA$. This gives a factorization $X\to
X'\to S$ where the first morphism is quasi-finite, representable, separated,
schematically dominant and integrally closed and the second morphism is
integral. It follows that $X\to X'$ is an open immersion by
Lemma~\pref{L:qfin+int-closed=open}.

\ref{TI:ZMT-2} Write $X'=\varprojlim_\lambda X'_\lambda$ as a limit of finite
and finitely presented morphisms $X'_\lambda\to S$. For sufficiently large
$\lambda$, there exists, by Remark~\pref{R:std-limits:qc-open}, an open substack
$X_\lambda\subseteq X'_\lambda$ such that $X=X_\lambda\times_{X'_\lambda}
X'$. As $f$ is of finite type so is $X\to X_\lambda$ and thus we have
that $X\to X_\lambda$ is finite. If $f$ is of finite presentation, then so is
$X\to X_\lambda$.  Since $X'_\lambda$ has the completeness property, it has the
extension property~\ref{I:E2} for the category of integral algebras. This gives
the existence of a cartesian diagram
$$\xymatrix{X\ar@{=}[r]\ar[d] & X \ar[r]\ar[d] & X_\lambda\ar[d]\\
X'\ar[r] & X''\ar[r]\ar@{}[ul]|\square & X'_\lambda\ar@{}[ul]|\square}$$
where $X''\to X'_\lambda$ is finite (resp.\ finite and of finite
presentation). The requested factorization is $X\to X''\to S$.
\end{proof}

\begin{theorem}[Serre's criterion]\label{T:serre}
Let $S$ be a quasi-compact algebraic stack with quasi-finite and separated
diagonal and let
$\map{f}{X}{S}$ be a representable quasi-compact and quasi-separated
morphism.
Then $f$ is affine if and only if $\map{f_*}{\QCoh(X)}{\QCoh(S)}$
is exact (and if and only if $f_*$ is faithfully exact).
\end{theorem}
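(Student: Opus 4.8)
The plan is to treat the two implications separately, the substance lying in the converse. \emph{Affine implies faithfully exact} is routine: the assertion is smooth-local on $S$, so one may assume $S$ is affine, whereupon $f$ is the relative spectrum of a quasi-coherent $\sO_S$-algebra and $\map{f_*}{\QCoh(X)}{\QCoh(S)}$ is the forgetful functor on modules, which is manifestly exact and faithful. Since faithfully exact trivially implies exact, it remains to show that \emph{exactness of $f_*$ implies that $f$ is affine}; the three conditions are then equivalent.

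For the converse, I would first observe that exactness of $f_*$ is equivalent to the vanishing of $R^q f_*\sF$ for all $q\geq 1$ and all quasi-coherent $\sF$, and that this condition is fppf-local on $S$ by flat base change. Choosing a smooth surjection $\Spec(A)\to S$ from an affine scheme and using that affineness is likewise fppf-local on the target, I reduce to the case $S=\Spec(A)$; since $f$ is representable, $X$ is then a quasi-compact and quasi-separated algebraic space over $\Spec(A)$ with $H^q(X,\sF)=0$ for all $q>0$ and all quasi-coherent $\sF$, and the goal becomes to prove that $X$ is affine. By Corollary~\pref{C:qcqf=>global-type} the space $X$ is of global type, hence of approximation type, so Theorem~\tref{T:APPROXIMATION}\,(c) lets me write $X=\varprojlim_\lambda X_\lambda$ as an inverse limit of \emph{noetherian} algebraic spaces with affine bounding maps and affine projections $X\to X_\lambda$.

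It then suffices to prove that some $X_\lambda$ is affine, for then $X$ is affine over an affine scheme. Here I would invoke Serre's criterion for noetherian algebraic spaces~\cite{knutson_alg_spaces}: each $X_\lambda$ is affine once $H^q(X_\lambda,\sG)=0$ for all $q>0$ and all coherent $\sG$. The hypothesis supplies this vanishing on the limit $X$, and the standard fact that cohomology commutes with inverse limits along affine bounding maps~\cite[\S8]{egaIV} transports each individual vanishing down to a finite level. The main obstacle is precisely that this must be achieved \emph{uniformly}, at a single index, for \emph{all} coherent sheaves at once; I expect to resolve it by passing to the affine hull $\map{j}{X}{\Spec(\Gamma(X,\sO_X))}$ and showing, using the cohomological vanishing together with the noetherian criterion at finite level, that $j$ is an isomorphism, whence $X$ is affine. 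Finally, if one prefers not to pass to an affine presentation of $S$, the quasi-finite \emph{separated} diagonal hypothesis is exactly what produces, via Theorem~\tref{T:FINITE-COVERINGS}, a finite surjective cover of $S$ by a scheme that is flat over a dense open subset: flat base change propagates exactness over that open, Chevalley's theorem~\pref{T:Chevalley-affineness} descends affineness along the finite cover, and the generic rather than global flatness is handled by noetherian induction on $S$.
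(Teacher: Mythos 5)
Your ``affine $\Rightarrow$ faithfully exact'' direction matches the paper, but both of your routes to the converse have genuine gaps. The first is the fppf-localization step. Flat base change along $\map{g}{S'}{S}$ gives $g^*R^qf_*\sF\iso R^qf'_*g'^*\sF$, i.e.\ vanishing of $R^qf'_*$ only on sheaves \emph{pulled back} from $X$; exactness of $f'_*$ must instead be tested against arbitrary quasi-coherent sheaves on $X'=X\times_S S'$, which are not pullbacks. What actually transfers exactness to a cover is \emph{affineness} of $g$, not flatness: then $g'$ is affine, Leray collapses to $g_*(R^qf'_*\sG)=R^qf_*(g'_*\sG)=0$, and $g_*$ reflects exactness. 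But your chosen cover, a smooth presentation $\Spec(A)\to S$, is not an affine morphism in this generality: the diagonal of $S$ is only quasi-finite and separated, hence quasi-affine by Zariski's Main Theorem, not affine. This is precisely why the paper base changes along the \emph{finite} surjective cover $\map{g}{S'}{S}$ with $S'$ a scheme supplied by Theorem~\tref{T:FINITE-COVERINGS}: finite morphisms are affine, so $f'_*$ is exact with no flatness hypothesis at all. For the same reason the generic flatness in Theorem~\tref{T:FINITE-COVERINGS} is a red herring here, and the ``noetherian induction on $S$'' in your fallback cannot even be set up, since $S$ is not noetherian.

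The second gap is the approximation step, and it is fatal rather than merely technical. Writing $X=\varprojlim_\lambda X_\lambda$ with noetherian $X_\lambda$, the limit formula for cohomology reads $H^q(X,\sF)=\varinjlim_{\mu\geq\lambda}H^q(X_\mu,\sF_\mu)$: vanishing on $X$ only says that a class on $X_\lambda$ dies after pullback to some higher level, never that it dies on $X_\lambda$ itself, so there is no way to verify the noetherian criterion for any single $X_\lambda$ (indeed, proving that some $X_\lambda$ is affine is, by Theorem~\tref{T:APPROXIMATION-PROPERTIES}, exactly equivalent to the statement you are trying to prove). Your affine-hull suggestion is a restatement of the goal, not an argument. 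The paper's proof never needs cohomological vanishing at finite level: after the finite cover $S'\to S$ it takes a \emph{second} finite surjective cover $\map{h}{X''}{X\times_S S'}$ by a scheme (again Theorem~\tref{T:FINITE-COVERINGS} --- note that $X\times_S S'$ is only an algebraic space, so your fallback also cannot apply scheme-theoretic Serre to it directly), applies the classical Serre criterion to $X''\to S'$, and then descends affineness along the integral surjective morphism $X''\to X$ by Chevalley's theorem~\pref{T:Chevalley-affineness}. The approximation machinery enters only inside the proofs of Theorem~\tref{T:FINITE-COVERINGS} and Theorem~\pref{T:Chevalley-affineness}, not in the proof of this theorem itself.
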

\begin{proof}
If $f$ is affine, then $f_*$ is faithfully exact. Indeed, this can be checked
on a presentation $\map{g}{S'}{S}$ since $g^*$ is faithfully exact.
Conversely, assume that $f_*$ is exact. Let $\map{g}{S'}{S}$ be a finite
surjective morphism with $S'$ a scheme as in Theorem~\tref{T:FINITE-COVERINGS}
and let $\map{f'}{X'}{S'}$ be the pull-back of $f$. Then since $g_*$ is
faithfully exact it follows that $f'_*$ is exact.


Let $\map{h}{X''}{X'}$ be a finite surjective morphism with $X'$ a scheme, cf.\
Theorem~\tref{T:FINITE-COVERINGS}. Then $f'_*h_*$ is exact and it
follows from Serre's criterion for schemes that $\map{f'\circ h}{X''}{S'}$ is
affine~\cite[Cor.~5.2.2]{egaII},~\cite[1.7.18]{egaIV}. In particular, we have that $g\circ f'\circ h$ is affine. It then
follows from Chevalley's theorem~\pref{T:Chevalley-affineness} that $f$ is
affine.
\end{proof}

Using~\cite[Prop.~3.10 (vii)]{alper_good-mod-spaces}, one may strengthen
Theorem~\pref{T:serre}, only requiring that $S$ is a, not necessarily
quasi-compact, algebraic stack with quasi-affine diagonal.

The following forms of Chow's lemma generalize~\cite[Cor.~16.6.1]{laumon}
and~\cite[Cor.~5.7.13]{raynaud-gruson}. Note that the hypothesis that $\stX/S$
is separated is missing in the statement of~\cite[Cor.~16.6.1]{laumon}.

\begin{theorem}[Chow's lemma]
Let $S$ be a quasi-compact and quasi-separated algebraic space, let $X$ be a
quasi-compact stack with quasi-finite and separated diagonal and let
$\map{f}{X}{S}$ be a morphism of finite presentation. Then there exists a
commutative diagram
$$\xymatrix{%
X\ar[d]^f & X'\ar[l]^p\ar[d]^g\\
S & P\ar[l]^\pi}$$
of finitely presented morphisms such that:
\begin{enumerate}
\item $X'$ is a scheme;
\item $\pi$ is projective;
\item $p$ is proper, strongly representable and surjective; and
\item $g$ is strongly representable and \etale{} (but not necessarily separated).
\end{enumerate}
If $f$ is separated, then $g$ can be chosen to be
an open immersion (so $\pi\circ g$ is quasi-projective).
\end{theorem}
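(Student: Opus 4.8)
**The plan is to reduce to the noetherian case via Theorem~\tref{T:APPROXIMATION} and then invoke the classical Chow's lemma for stacks.** The heart of the matter is that the classical form of Chow's lemma (e.g.\ \cite[Cor.~16.6.1]{laumon}) is proven only under noetherian hypotheses, so the work consists of (a) approximating the situation by a noetherian one, (b) applying the noetherian result there, and (c) descending the conclusion back along the approximation. Since $X$ has quasi-finite and separated diagonal, Corollary~\pref{C:qcqf=>global-type} shows $X$ is of global type and hence of approximation type; similarly $S$, being a quasi-compact and quasi-separated algebraic space, is of approximation type, so both are pseudo-noetherian by Theorem~\tref{T:COMPLETENESS}.

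\textbf{First I would set up the approximation.} Because $S$ is quasi-compact and quasi-separated, Theorem~\tref{T:APPROXIMATION} lets me write $S=\varprojlim_\lambda S_\lambda$ as an inverse limit of noetherian algebraic spaces with affine bounding maps. The morphism $f\colon X\to S$ is of finite presentation, so by the standard limit methods of Appendix~\ref{A:std-limits} there is an index $\lambda_0$ and a finitely presented morphism $f_{\lambda_0}\colon X_{\lambda_0}\to S_{\lambda_0}$ pulling back to $f$. The delicate point is to preserve the hypothesis on the diagonal: I need $X_{\lambda_0}$ to have quasi-finite and separated diagonal so that the noetherian Chow's lemma applies. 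By Theorem~\tref{T:APPROXIMATION-PROPERTIES}, properties~(iv) (separated diagonal) and~(ix) (quasi-finite diagonal) descend to some finite level, so after increasing $\lambda_0$ I may assume $X_{\lambda_0}\to S_{\lambda_0}$ is a finite-type morphism with $X_{\lambda_0}$ noetherian and possessing quasi-finite separated diagonal. If $f$ is separated, then by Corollary~\pref{C:rel-separated-limit} I may further arrange that $f_{\lambda_0}$ is separated.

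\textbf{Next I would apply the noetherian Chow's lemma} of \cite[Cor.~16.6.1]{laumon} to $f_{\lambda_0}\colon X_{\lambda_0}\to S_{\lambda_0}$. This produces a diagram with a projective morphism $\pi_{\lambda_0}\colon P_{\lambda_0}\to S_{\lambda_0}$, a proper surjective $p_{\lambda_0}\colon X'_{\lambda_0}\to X_{\lambda_0}$, and a representable \etale{} morphism $g_{\lambda_0}\colon X'_{\lambda_0}\to P_{\lambda_0}$ (an open immersion if $f_{\lambda_0}$ is separated). All objects here are finitely presented over $S_{\lambda_0}$ since everything is of finite type over a noetherian base. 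The final step is to pull this entire diagram back along $S\to S_{\lambda_0}$: the morphism $\pi=\pi_{\lambda_0}\times_{S_{\lambda_0}}S$ remains projective, $p$ remains proper and surjective, and $g$ remains representable \etale{} (resp.\ an open immersion), because all of these properties are stable under base change. Setting $X'=X'_{\lambda_0}\times_{S_{\lambda_0}}S$ and $P=P_{\lambda_0}\times_{S_{\lambda_0}}S$ yields exactly the required commutative diagram over $S$, with $X'=X'_{\lambda_0}\times_{X_{\lambda_0}}X$ sitting over $X$ as desired.

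\textbf{The main obstacle I anticipate} is the careful bookkeeping of the diagonal conditions through the approximation, since the noetherian Chow's lemma genuinely requires quasi-finite and separated diagonal on $X_{\lambda_0}$ (as the paper itself notes, the separatedness of $\stX/S$ is a real hypothesis that was inadvertently omitted in \cite[Cor.~16.6.1]{laumon}). One must verify that these diagonal properties, which descend along the inverse system by Theorem~\tref{T:APPROXIMATION-PROPERTIES}, are present at a single finite index before invoking the classical result; the filtered quasi-compactness of the index set guarantees a common such index exists. The remaining routine limit arguments --- producing $X_{\lambda_0}$, and ensuring projectivity/properness/\etale{}ness survive to the limit --- are standard applications of the limit formalism in Appendix~\ref{A:std-limits} and present no conceptual difficulty.
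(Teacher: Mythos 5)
Your opening reduction is essentially the same as the paper's first step: write $S=\varprojlim_\lambda S_\lambda$ with $S_\lambda$ noetherian, descend $f$ to a finitely presented $f_{\lambda_0}\colon X_{\lambda_0}\to S_{\lambda_0}$ while keeping the diagonal quasi-finite and separated (and keeping $f$ separated when that is assumed), and pull the final diagram back along $S\to S_{\lambda_0}$. One small repair there: the tool for descending properties along a system with \emph{varying} base is Proposition~\pref{P:std-limits:properties} applied to $\Delta_{f_\lambda}$, not Theorem~\tref{T:APPROXIMATION-PROPERTIES} or Corollary~\pref{C:rel-separated-limit}, which concern inverse systems of $X$'s over a \emph{fixed} quasi-compact base (also, ``separated diagonal'' is property (vi), not (iv), in that list). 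This part is repairable and matches the paper.

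The genuine gap is the noetherian input. You assert that \cite[Cor.~16.6.1]{laumon} produces, for $f_{\lambda_0}$, a projective $\pi_{\lambda_0}$, a proper surjective $p_{\lambda_0}$, and a representable \etale{} $g_{\lambda_0}$ which is an open immersion when $f_{\lambda_0}$ is separated. It does not. That corollary concerns noetherian \emph{Deligne--Mumford} stacks, whereas $X_{\lambda_0}$ merely has quasi-finite and separated diagonal and need not be Deligne--Mumford (e.g.\ $\mathrm{B}\mu_p$ in characteristic $p$). More seriously, \loccit{} asserts only the quasi-projective (open-immersion) form and, as the paper remarks just before the theorem, its statement omits the necessary hypothesis that $\stX/S$ be separated; there is no \etale{}, non-separated form in \loccit{}, nor could the quasi-projective form yield one: if $X'\to X$ is proper and surjective with $X'/S$ separated, then the pullback of $\Delta_{X/S}$ along the proper surjection $X'\times_S X'\to X\times_S X$ is the proper morphism $X'\times_X X'\to X'\times_S X'$, and universal closedness descends along proper surjections, forcing $X/S$ to be separated. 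So for the general case you are in effect citing the noetherian instance of the very theorem being proven. The paper fills this hole differently: over the noetherian base it invokes its own Theorem~\tref{T:FINITE-COVERINGS} to find a finite, finitely presented, surjective morphism $Z\to X$ with $Z$ a \emph{scheme} (this is exactly where the quasi-finite separated diagonal is used), observes that a Chow diagram for $Z$ yields one for $X$ because composing $p$ with the finite surjection $Z\to X$ remains proper and surjective (and $Z\to S$ is separated whenever $f$ is), and then applies Raynaud--Gruson \cite[Cor.~5.7.13]{raynaud-gruson}, which is Chow's lemma for schemes and algebraic spaces and contains both the \etale{} and the open-immersion forms.
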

\begin{proof}
By Theorem~\tref{T:APPROXIMATION}, we can assume that $S$ is noetherian.
Replacing
$X$ with a finite cover as in Theorem~\tref{T:FINITE-COVERINGS}, we can assume
that $X$ is a scheme. The result then follows
from~\cite[Cor.~5.7.13]{raynaud-gruson}.
\end{proof}

We have the following variant of the previous result which is more in the
spirit of the usual Chow's lemma for schemes. In this statement we can also
drop the finite presentation hypothesis.

\begin{theorem}[Chow's lemma]
Let $S$ be a quasi-compact and quasi-separated algebraic space, let $X$ be a
quasi-compact stack with quasi-finite and separated diagonal and a \emph{finite
number of irreducible components}. Let $\map{f}{X}{S}$ be a morphism of finite
type. Then there exists a commutative diagram
$$\xymatrix{%
X\ar[d]^f & X'\ar[l]^p\ar[d]^g\\
S & P\ar[l]^\pi}$$
such that:
\begin{enumerate}
\item $X'$ is a scheme;
\item $\pi$ is projective;
\item $p$ is proper, strongly representable and surjective and there exists
a dense open substack $U\subseteq X$ such that $p|_U$ is finite, flat and
finitely presented;
\item $g$ is \etale{} and strongly representable.
\end{enumerate}
Moreover, if $f$ is separated, then $g$ can be chosen as an open
immersion and if $X$ is Deligne--Mumford, then $p|_U$ can be taken to be
\etale{}.
\end{theorem}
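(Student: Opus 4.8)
The plan is to reduce to the case where $X$ is a \emph{scheme} by means of Theorem~\tref{T:FINITE-COVERINGS}, and then to invoke the Raynaud--Gruson form of Chow's lemma directly over the (possibly non-noetherian) base $S$; this last point is exactly what lets us drop the finite presentation hypothesis. First I would apply Theorem~\tref{T:FINITE-COVERINGS} to $X$, producing a scheme $Z$ and a finite, finitely presented, surjective morphism $\map{q}{Z}{X}$ which is flat and of finite presentation (resp.\ \etale{}, if $X$ is Deligne--Mumford) over a dense quasi-compact open $U_X\subseteq X$. Since $q$ is finite and surjective, $Z$ is again quasi-compact with finitely many irreducible components, the composite $Z\to S$ is of finite type, and it is separated whenever $f$ is separated (as $q$ is finite, hence separated). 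I claim it suffices to prove the theorem for $Z\to S$ in place of $X\to S$: given a Chow diagram $\map{p_Z}{Z'}{Z}$, $\map{g}{Z'}{P}$, $\map{\pi}{P}{S}$ for $Z$, one sets $X'=Z'$ and $p=q\circ p_Z$, keeping $g$ and $\pi$. Properness and surjectivity of $p$ follow from those of $q$ and $p_Z$, and $g$, $\pi$ are unchanged.

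Next I would apply Chow's lemma for schemes to $Z\to S$. Because $S$ is a quasi-compact and quasi-separated algebraic space, $Z$ is a scheme with finitely many irreducible components, and $Z\to S$ is of finite type, the non-noetherian Chow lemma of Raynaud and Gruson~\cite[Cor.~5.7.13]{raynaud-gruson} applies directly: no noetherian reduction of the base is needed, and this is precisely why the finite presentation hypothesis on $f$ is unnecessary here (we never descend through a limit, which is the step that would require finite presentation). It yields a projective morphism $\map{\pi}{P}{S}$, a proper surjective morphism $\map{p_Z}{Z'}{Z}$ which is an \emph{isomorphism} over a dense quasi-compact open $U_Z\subseteq Z$, and an \etale{} morphism $\map{g}{Z'}{P}$, which can be taken to be an open immersion when $Z\to S$ (equivalently $f$) is separated.

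It then remains to combine the two dense open loci. I would set $U=U_X\setminus q(Z\setminus U_Z)\subseteq X$. Since $Z\setminus U_Z$ is closed and nowhere dense and $q$ is finite, the image $q(Z\setminus U_Z)$ is closed; using that $q$ is finite and that $X$ and $Z$ have finitely many irreducible components, one checks that this image contains no generic point of $X$, so $U$ is a dense quasi-compact open. By construction $q^{-1}(U)\subseteq U_Z$, hence $p_Z$ restricts to an isomorphism over $q^{-1}(U)$, and $p|_{p^{-1}(U)}$ is thereby identified with $q|_{q^{-1}(U)}$, which is finite, flat and of finite presentation (resp.\ \etale{}, if $X$ is Deligne--Mumford) since $U\subseteq U_X$. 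This gives all the asserted properties of $p$ over $U$; the separated case follows because $g=g_Z$ is then an open immersion.

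The main obstacle I anticipate is twofold. The routine-looking but genuinely load-bearing point is the dense-open bookkeeping in the last paragraph: one must know that the image under the finite map $q$ of the nowhere-dense set $Z\setminus U_Z$ is again nowhere dense, which is exactly where the hypothesis that $X$ has finitely many irreducible components enters (it guarantees that ``avoiding all generic points'' coincides with ``nowhere dense''). The more conceptual point is to confirm that \cite[Cor.~5.7.13]{raynaud-gruson} is available in the required generality --- a finite type morphism from a scheme to a quasi-compact, quasi-separated algebraic space, with the modification $p_Z$ an isomorphism over a dense open and the comparison morphism $g$ only \etale{} (rather than an immersion) in the non-separated case. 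Granting this, the argument is a clean composition of Theorem~\tref{T:FINITE-COVERINGS} with the non-noetherian Chow lemma.
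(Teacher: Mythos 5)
Your proof is correct and follows essentially the same route as the paper's own (very terse) proof: the paper likewise replaces $X$ by the finite generically flat (resp.\ generically \etale{}) cover of Theorem~\tref{T:FINITE-COVERINGS} and then applies~\cite[Cor.~5.7.13]{raynaud-gruson} directly over $S$ with no noetherian reduction --- which is exactly why the finite presentation hypothesis can be dropped --- leaving your dense-open bookkeeping implicit. The one justification to repair (a point the paper also glosses over) is your claim that $Z$ has finitely many irreducible components \emph{since $q$ is finite and surjective}: this inference fails in the non-noetherian setting (e.g.\ $\Spec(R\times R/I)\to \Spec(R)$ with $R=k[x_1,x_2,\dots]$ and $I=(x_ix_j)_{i\neq j}$ is finite, surjective, and even flat over a dense open, yet the source has infinitely many components), so one should first replace $Z$ by the schematic closure of $q^{-1}(U_X)$ --- still finite and surjective over $X$, and still finite, flat and finitely presented (resp.\ \etale{}) over $U_X$ --- whose irreducible components all meet $q^{-1}(U_X)$ and hence are finite in number, because flatness of $q|_{U_X}$ forces the generic points of $q^{-1}(U_X)$ to lie in the finite fibers over the finitely many generic points of $X$.
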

\begin{proof}
Replacing $X$ with a finite generically \etale{} (resp.\ generically flat)
cover as in Theorem~\tref{T:FINITE-COVERINGS}, we can assume that $X$ is
a scheme. The result then follows
from~\cite[Cor.~5.7.13]{raynaud-gruson}.
\end{proof}



The following theorem partly generalizes
Proposition~\pref{P:std-limits:properties} from finitely presented morphisms to
quasi-separated morphisms of finite type.

\begin{theorem}\label{T:std-limits:properties:finite-type}
Let $S_0$ be a quasi-compact algebraic stack and let $S=\varprojlim_\lambda
S_\lambda$ be the limit of an inverse system of stacks that are affine over
$S_0$. Let
$\alpha$ be an index and let $\map{f_\alpha}{X_\alpha}{S_\alpha}$ be a
quasi-separated morphism of finite type. For every $\lambda>\alpha$,
let $\map{f_\lambda}{X_\lambda}{S_\lambda}$ (resp.\ $\map{f}{X}{S}$) be the
base change of $f_\alpha$ along $S_\lambda\to S_\alpha$ (resp.\ $S\to
S_\alpha$). Let $P$ be one of the properties of~\ref{I:prop-affine}
or~\ref{I:prop-closed}.
Then $f$ (resp.\ $\Delta_f$) has property $P$ if and only if $f_\lambda$
(resp.\ $\Delta_{f_\lambda}$) has property $P$ for all sufficiently large
$\lambda$'s.
\end{theorem}
\begin{proof}
The question is fppf-local on $S_0$ so we can assume that $S_0$ is affine. Note
that,
by assumption, the diagonal $\Delta_{f_\lambda}$ is of finite presentation for
every $\lambda$. If the property is a property of the diagonal (e.g., the property ``separated'' corresponds to the property ``proper'' for the diagonal)
we deduce the theorem from Proposition~\pref{P:std-limits:properties} applied to
$\Delta_{f_\lambda}$.

If $P$ is quasi-finite, then we may find a quasi-finite flat presentation
$\map{p}{U}{X}$~\cite[Thm.~7.1]{rydh_etale-devissage} with $U$ affine. By
standard limit methods there is, for sufficiently large $\lambda$, an affine
scheme $U_\lambda$ and a quasi-finite flat presentation
$\map{p_\lambda}{U_\lambda}{X_\lambda}$ that is pulled back to $p$. It is
enough to show that $U_\lambda\to S_\lambda$ is quasi-finite for sufficiently
large $\lambda$, so we can replace $\alpha$ with $\lambda$, $X_\alpha$ with
$U_\lambda$ and assume that $X_\alpha$ is affine.

If $P$ is any of the other remaining properties, then the diagonal is
quasi-finite and locally separated. We can thus apply
Proposition~\pref{P:std-limits:properties} to deduce that, for sufficiently
large $\lambda$, the diagonal $\Delta_{f_\lambda}$ is quasi-finite and locally
separated. After increasing $\alpha$ we can thus assume that
$X_\alpha$ is of global type and hence can be approximated.

The theorem now follows from
Lemma~\pref{L:descent-for-finite-type-if-of-approx-type}.
\end{proof}

Theorem~\pref{T:std-limits:properties:finite-type} does not hold for properties
that are not stable under closed
immersions such as: isomorphism, open immersion, \etale{}, finite presentation,
surjective, flat, smooth, universally subtrusive, universally open.
An easy counter-example
is furnished by taking $S_0=\Spec(k[x_1,x_2,\dots])$ and $S=X_0$ as the
inclusion of the origin.

Recall that a morphism $\map{f}{X}{Y}$ is of \emph{constructible finite type}
if $f$ is of finite type and quasi-separated, and for any morphism $Y'\to Y$,
the base change $\map{f'}{X\times_Y Y'}{Y'}$ maps ind-constructible subsets
onto ind-constructible subsets~\cite[App.~D]{rydh_embeddings-of-unramified}.
The following result was surmised in~\loccit.

\begin{proposition}
Let $Y$ be a pseudo-noetherian stack and let $\map{f}{X}{Y}$ be a morphism of
approximation type (e.g., let $X$ and $Y$ be stacks of global type). Then $f$
is of constructible finite type if and only if $f$ can be factored as a
nil-immersion $X\inj X'$ followed by a finitely presented morphism $X'\to Y$.
\end{proposition}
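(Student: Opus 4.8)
The plan is to treat the two implications separately, with the forward (``only if'') direction resting on the approximation theorem together with a constructibility argument.

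The ``if'' direction is a formal consequence of the sorites for morphisms of constructible finite type. If $f$ factors as a nil-immersion $\map{j}{X}{X'}$ followed by a finitely presented morphism $\map{p}{X'}{Y}$, then $f$ is of finite type and quasi-separated, these properties being stable under composition. Moreover $p$ is of constructible finite type since finitely presented morphisms take ind-constructible subsets onto ind-constructible subsets universally (Chevalley's theorem for finitely presented morphisms of stacks, established in the appendices), and $j$ is a universal homeomorphism, so it too preserves ind-constructible subsets after any base change. As the class of morphisms of constructible finite type is stable under composition, $f$ is of constructible finite type.

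For the ``only if'' direction, suppose $f$ is of constructible finite type; in particular $f$ is of finite type and quasi-separated, and by hypothesis of approximation type, so Theorem~\tref{T:APPROXIMATION} lets me write $X=\varprojlim_\mu X_\mu$ as an inverse limit of finitely presented $Y$-stacks in which $\map{i}{X}{X_\mu}$ is a closed immersion and each bounding map $X_{\mu'}\to X_\mu$ is a closed immersion. I would like to show that $X\inj X_\mu$ is a nil-immersion --- i.e.\ topologically surjective --- for some $\mu$; the factorization $X\inj X_\mu\to Y$ is then the one sought. Fix a base index $\lambda$. Since both $X_\mu\to Y$ and $X_\lambda\to Y$ are finitely presented, the bounding map $X_\mu\to X_\lambda$ is a finitely presented closed immersion, so its image $|X_\mu|\subseteq |X_\lambda|$ is a closed constructible subset; consequently $|X|=\bigcap_{\mu\geq\lambda}|X_\mu|$ is pro-constructible in $|X_\lambda|$. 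The heart of the argument is to prove that $|X|$ is moreover ind-constructible in $|X_\lambda|$, and this is where the hypothesis on $f$ enters. I would base-change $f$ along the finitely presented morphism $X_\lambda\to Y$ to obtain $\map{f'}{X\times_Y X_\lambda}{X_\lambda}$ and consider the graph $\map{\Gamma_i}{X}{X\times_Y X_\lambda}$ of $i$. A direct check identifies $\Gamma_i$ with the base change of the diagonal $\Delta_{X_\lambda/Y}$ along $i\times\mathrm{id}$; as $X_\lambda\to Y$ is finitely presented, $\Delta_{X_\lambda/Y}$ and hence $\Gamma_i$ are finitely presented, so $\Gamma_i(X)$ is a constructible, in particular ind-constructible, subset by Chevalley's theorem. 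Since $f$ is of constructible finite type, $f'$ carries $\Gamma_i(X)$ onto an ind-constructible subset of $|X_\lambda|$, namely $f'(\Gamma_i(X))=i(X)=|X|$. Thus $|X|$ is both pro- and ind-constructible, hence constructible. Finally, in the constructible topology on $|X_\lambda|$, which is quasi-compact, the set $|X_\lambda|\setminus|X|$ is closed, hence quasi-compact, and it is the increasing union of the opens $|X_\lambda|\setminus|X_\mu|$; by quasi-compactness and directedness a single index $\mu_0$ suffices, so $|X|=|X_{\mu_0}|$ and $X\inj X_{\mu_0}$ is the desired nil-immersion.

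The step I expect to require the most care is establishing ind-constructibility of $|X|$: naively one would want the graph $\Gamma_i$ to be an immersion, which would force $X_\lambda/Y$ to be locally separated, a property not available here. The device of realizing $\Gamma_i$ as a base change of $\Delta_{X_\lambda/Y}$ sidesteps this, reducing the claim to the constructibility of the image of a finitely presented morphism; the remaining point is simply to have the constructible-topology formalism and Chevalley's theorem for finitely presented morphisms available for algebraic stacks, which is supplied by the appendices extending \cite[\S8--9]{egaIV} to stacks.
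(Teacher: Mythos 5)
Your proof is correct and follows essentially the same route as the paper's: use Theorem~\tref{T:APPROXIMATION} to write $X=\varprojlim_\mu X_\mu$ as an inverse limit of finitely presented $Y$-stacks with closed-immersion bounding maps, show $|X|$ is constructible in $|X_\lambda|$, and conclude by quasi-compactness of the constructible topology that $|X|=|X_{\mu_0}|$ for some $\mu_0$, so that $X\inj X_{\mu_0}$ is a nil-immersion. The only divergence is expository: where the paper deduces constructibility of $|X|$ by invoking the cancellation sorite (so that $X\inj X_\lambda$ is itself of constructible finite type) and then cites~\cite[Cor.~8.3.5]{egaIV} for the limit step, you prove both points inline --- ind-constructibility via the graph trick $|X|=f'(\Gamma_i(|X|))$, with $\Gamma_i$ a finitely presented base change of $\Delta_{X_\lambda/Y}$, and the passage to a single index $\mu_0$ by hand --- which makes the argument more self-contained but is the same method, not a different one.
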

\begin{proof}
As nil-immersions and finitely presented morphisms are of construct\-ible
finite type the condition is sufficient. To see that it is necessary write
$X=\varprojlim_\lambda X_\lambda$ as an inverse limit of finitely presented
morphisms $X_\lambda\to Y$ with closed immersions as bonding maps. By
assumption $X\inj X_\lambda$ is of constructible finite type so
$|X|\subseteq |X_\lambda|$ is constructible. It follows that $X\inj X_\lambda$
is bijective for sufficiently large $\lambda$ by~\cite[Cor.~8.3.5]{egaIV}.
\end{proof}

\end{section}


\appendix
\begin{section}{Constructible properties}\label{A:constructible}
In this appendix, we extend standard results on constructible properties for
schemes to algebraic stacks. We also show that if $G\to S$ is a group scheme,
then the locus of points $s\in S$ with an abelian (resp.\ a finite and linearly
reductive) fiber $G_s$, is open.

\begin{lemma}\label{L:fiberwise-conds}
Let $Y$ be an algebraic space. Let $\map{f}{X}{Y}$ be a morphism between
algebraic stacks that is locally of finite type. For a point $y\in |Y|$,
let $\map{f_y}{X_y}{\Spec \kappa(y)}$ denote the fiber. Then
\begin{enumerate}
\item $f$ is representable if and only if the fiber $f_y$ is representable
for every $y\in |Y|$;
\item $f$ is a monomorphism if and only if the fiber $f_y$ is a monomorphism
for
every $y\in |Y|$ (i.e., $X_y$ is either empty or $\kappa(y)$-isomorphic to
$\Spec \kappa(y)$ for every $y\in |Y|$); and
\item $f$ is unramified if and only if the fiber $f_y$ is unramified for
every $y\in |Y|$.
\end{enumerate}
\end{lemma}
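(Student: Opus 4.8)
The plan is to reduce each of the three equivalences to a statement about the diagonal $\map{\Delta_f}{X}{X\times_Y X}$, which is always representable and, since $f$ is locally of finite type, locally of finite type; one can then invoke the classical fibre-wise criteria for morphisms of schemes and algebraic spaces. In all three parts the ``only if'' direction is immediate, because representability, being a monomorphism, and being unramified are stable under arbitrary base change, so $f_y$ inherits the property from $f$. I therefore concentrate on the converse and assume throughout that $f_y$ has the relevant property for every $y\in Y$.

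For (i) I would use that $f$ is representable if and only if $\Delta_f$ is a monomorphism. A morphism locally of finite type is a monomorphism exactly when it is unramified and universally injective \cite[\S17]{egaIV}, and both of these conditions can be tested on the fibres of $\Delta_f$ over its target $X\times_Y X$ using only the sheaf of differentials and radiciellity, so no flatness is involved. The fibre of $\Delta_f$ over a field-valued point $(x_1,x_2)$ of $X\times_Y X$ is empty unless $x_1\cong x_2$, in which case it is a torsor under the automorphism group $\mathrm{Aut}(x_1)$; hence $\Delta_f$ is a monomorphism if and only if every such automorphism group is trivial. Since the automorphism group of a point $x\in X$ relative to $Y$ is computed inside the fibre $X_y$, and every field-valued point of $X$ lies in some fibre, this holds if and only if every $f_y$ is representable. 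For (ii), if every $f_y$ is a monomorphism then in particular every $f_y$ is representable, so by (i) the morphism $f$ is representable and $X$ is an algebraic space. Then $f$ is a morphism of algebraic spaces locally of finite type, and I conclude by the classical fibre-wise criterion: such a morphism is a monomorphism if and only if it is unramified and universally injective \cite[\S17]{egaIV}, each of which may be checked on fibres after reducing algebraic spaces to schemes by an \'etale presentation.

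Part (iii) is where the genuine work lies, since here $f$ need not be representable (e.g.\ $\mathrm{B}G\to\Spec(k)$ with $G$ finite \'etale has \'etale fibres but is not representable), so (i) and (ii) cannot be used to replace $X$ by an algebraic space. By definition $f$ is unramified if and only if it is locally of finite type and $\Delta_f$ is \'etale, so I must upgrade ``$\Delta_{f_y}$ \'etale for every $y$'' to ``$\Delta_f$ \'etale''. First, $\Delta_f$ is unramified: the finite-type sheaf $\Omega_{\Delta_f}$ restricts on each fibre over $Y$ to $\Omega_{\Delta_{f_y}}=0$ (as $f_y$ unramified forces $\Delta_{f_y}$ \'etale), and so it vanishes by Nakayama. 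The remaining point is the flatness of $\Delta_f$, and I expect this to be the main obstacle: because $f$ itself is typically far from flat, the naive fibre-wise flatness criterion cannot be applied to $X\xrightarrow{\Delta_f}X\times_Y X\to Y$. I would resolve this by passing to a smooth presentation $\map{u}{U}{X}$ with $U$ a scheme. Then \'etaleness of $\Delta_f$ may be checked after the smooth surjective base change $U\times_Y U\to X\times_Y X$, where it becomes \'etaleness of the morphism of algebraic spaces $\map{r}{U\times_X U}{U\times_Y U}$. The crucial gain is that the first projection $U\times_X U\to U$ is the base change of the smooth morphism $u$, hence flat, so now the fibre-wise criterion for flatness \cite[11.3.10]{egaIV} does apply: $r$ is flat provided its fibres over the points of $U$ are flat, and these fibres are base changes of the \'etale diagonals $\Delta_{f_y}$. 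Thus $r$ is flat and unramified, hence \'etale, so $\Delta_f$ is \'etale and $f$ is unramified, completing the argument.
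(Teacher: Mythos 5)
Your argument is correct, but it takes a genuinely different route from the paper's. The paper makes part (iii) the input rather than the output: it is quoted from \cite[Prop.~B.3]{rydh_embeddings-of-unramified}, and the rest is deduced from it in the order (iii) $\Rightarrow$ (ii) for representable $f$ $\Rightarrow$ (i) $\Rightarrow$ (ii) in general. Concretely: if every $f_y$ is a monomorphism then every $f_y$ is unramified, so by (iii) $f$ is unramified and $\Delta_f$ is an open immersion, which is bijective because $f$ is universally injective, hence an isomorphism; this settles (ii) for representable $f$, and (i) follows by applying that special case to $\Delta_{X/Y}$, whose fibres $\Delta_{f_y}$ are monomorphisms. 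You run the logic the other way around: you deduce (i) and (ii) directly from the scheme-level facts that, for morphisms locally of finite type, monomorphism $=$ unramified $+$ radiciel and that both of the latter are fibrewise conditions; and you then prove (iii) from scratch, which amounts to reproving the cited \cite[Prop.~B.3]{rydh_embeddings-of-unramified}. Your proof of (iii) --- unramifiedness of $\Delta_f$ by Nakayama, then flatness of $\Delta_f$ after base change along $U\times_Y U\to X\times_Y X$ for a smooth presentation $\map{u}{U}{X}$, where the source $U\times_X U$ becomes smooth, hence flat, over $U$ so that the crit\`ere de platitude par fibres \cite[11.3.10]{egaIV} applies --- is the standard argument and is exactly where the content of the lemma lies. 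The paper's route buys brevity (by citation) and the pleasant trick of extracting representability from the monomorphism case; yours buys self-containedness, and your (i)--(ii) need only the classical EGA facts rather than the stacky (iii). Two points you should make explicit to be fully rigorous: to conclude that $r$ is \etale{} from ``flat and unramified'' you need $r$ to be locally of finite \emph{presentation}, which holds because the diagonal of a morphism locally of finite type is automatically locally of finite presentation (and $r$ is a base change of $\Delta_f$); and your fibrewise criteria are invoked for morphisms whose target is the stack $X\times_Y X$, or for algebraic spaces, so each use requires the routine reduction to schemes via smooth or \etale{} presentations, using that unramified, radiciel, flat and monomorphism are all fppf-local on the target.
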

\begin{proof}
(iii) is~\cite[Prop.~B.3]{rydh_embeddings-of-unramified}.
The necessity of (i) and (ii) is clear. We begin with showing the sufficiency
of (ii) under the additional assumption that $f$ is representable. If $f_y$ is
a monomorphism for every $y\in |Y|$, then $f$ is unramified by (iii) and, in
particular, $\Delta_f$ is an open immersion. As $f$ is universally injective, we
have that $\Delta_f$ is bijective and hence an isomorphism, i.e., $f$ is a
monomorphism.

Now assume that $f_y$ is representable for every $y\in |Y|$. Then
$(\Delta_{X/Y})_y$ is a monomorphism for every $y\in |Y|$ and hence
$\Delta_{X/Y}$ is a monomorphism by the special case of (ii). This shows that
$f$ is representable. The general case of (ii) now follows from (i) and the
special case of (ii).
\end{proof}

The following lemma generalizes~\cite[Prop.~9.2.6.1]{egaIV} to
non-representable morphisms.

\begin{lemma}\label{L:dim-is-constructible}
Let $S$ be an algebraic space and let $X$ be an algebraic stack of finite
presentation over $S$. The function $|S|\to \Z$ defined by $s\mapsto \dim X_s$
is constructible.
\end{lemma}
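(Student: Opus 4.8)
The plan is to reduce the stacky statement to the representable case \cite[9.2.6.1]{egaIV} (more precisely, to the constructibility of fiber dimensions and to Chevalley's theorem on constructible images) by passing to a smooth atlas and using the dimension formula for stacks. Since constructibility of a function on $|S|$ is \etale{}-local on the source, and since $\dim X_s$ is invariant under extension of the residue field $k(s)$, I would first pull back along a surjective \etale{} presentation of $S$ by an affine scheme to reduce to the case where $S$ is an affine scheme; in particular $X$ is then quasi-compact. (If one prefers to stay strictly within the noetherian world, one may further write $S=\varprojlim_\lambda S_\lambda$ and descend the statement from a finitely presented model over a noetherian base, but this is not essential.) Next I would choose a smooth, surjective, finitely presented presentation $\map{\pi}{P}{X}$ with $P$ a quasi-compact algebraic space. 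As $\pi$ is smooth of finite presentation, its relative dimension $e\colon|P|\to\Z$, $p\mapsto$ (relative dimension of $\pi$ at $p$), is locally constant, hence constructible, and takes finitely many values on the quasi-compact $P$. The composite $\map{f}{P}{S}$ is of finite presentation.

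The key input is the dimension formula for algebraic stacks \cite[Ch.~11]{laumon}: for a geometric point $x\in X_s$ and any $p\in P_s$ over $x$ one has $\dim_x X_s=\dim_p P_s-e(p)$, where $P_s=P\times_S\Spec k(s)$, since the relative dimension of $\pi_s\colon P_s\to X_s$ at $p$ agrees with $e(p)$. Taking the supremum over all points yields the clean identity
$$\dim X_s=\max_{p\in|P_s|}\bigl(\dim_p(P_{f(p)})-e(p)\bigr).$$
I would therefore define the function $\delta\colon|P|\to\Z$ by $\delta(p)=\dim_p(P_{f(p)})-e(p)$. The first term is upper semi-continuous on $|P|$ by \cite[13.1.3]{egaIV} (extended to algebraic spaces by \etale{} descent from schemes), in particular constructible and bounded, and $e$ is constructible; hence $\delta$ is a constructible, bounded function on $|P|$, and $\dim X_s=\max_{p\in P_s}\delta(p)$.

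It then remains to show that for a finitely presented morphism $\map{f}{P}{S}$ of algebraic spaces and a constructible bounded function $\delta$ on $|P|$, the function $s\mapsto\max_{p\in P_s}\delta(p)$ is constructible. For each integer $c$ the set $P_{\geq c}=\{\,p\in|P|:\delta(p)\geq c\,\}$ is constructible, and
$$\{\,s\in|S|:\dim X_s\geq c\,\}=f\bigl(P_{\geq c}\bigr),$$
which is constructible by Chevalley's theorem on images of constructible sets under finitely presented morphisms \cite[\S1.8]{egaIV} (again reduced to the scheme case via \etale{} atlases of $P$ and $S$). Since $\delta$, and hence $\dim X_s$, takes only finitely many values, the function $s\mapsto\dim X_s$ is a finite superposition of indicator functions of constructible sets and is therefore constructible. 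The main obstacle I anticipate is purely conceptual rather than computational: correctly invoking the stack dimension formula so that $\dim X_s$ is genuinely recovered from the fiberwise atlas dimensions minus the locally constant relative dimension, and verifying that the two representable constructibility statements used (semicontinuity of fiber dimension and constructibility of images) are available for \emph{algebraic spaces}; both follow by \etale{} descent from the corresponding results for schemes, but this descent step should be stated carefully.
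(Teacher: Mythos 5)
Your proof follows essentially the same route as the paper's: pass to a smooth presentation of $X$, observe that the relative dimension of the presentation is locally constant, express the dimension of the stacky fiber at a point of the atlas as the difference $\dim_p P_s - e(p)$, and conclude via Chevalley's theorem on constructible images together with the finiteness of the set of values. The one justification that needs repair is the constructibility of $p\mapsto\dim_p P_{f(p)}$: upper semi-continuity (your citation \cite[13.1.3]{egaIV}) does \emph{not} imply constructibility outside the noetherian setting --- a closed subset of $|P|$ is constructible only when its complement is a retrocompact open --- and since eliminating noetherian hypotheses is precisely the point of this lemma, the inference ``upper semi-continuous, in particular constructible'' is a genuine misstep as written. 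The fact you need is nevertheless true and is exactly \cite[Prop.~9.9.1]{egaIV} (constructibility of the fiber-dimension function for morphisms locally of finite presentation), which is the reference the paper itself uses; with that substitution, and with the \'etale-descent remarks you already make for transporting \cite[Prop.~9.9.1]{egaIV} and Chevalley's theorem to algebraic spaces, your argument goes through verbatim.
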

\begin{proof}
Let $\map{f}{X}{S}$ denote the structure morphism. Let $\map{x}{\Spec k}{X}$
be a point and let $s=f\circ x$. The dimension $\dim_x X_s$ only depends on the
image of the point $x$ in the topological space $|X|$. This gives a function
$\map{\dim_{X/S}}{|X|}{\Z}$. Let $\map{p}{U}{X}$ be a smooth presentation. Then
$\map{\dim_{U/S}}{|U|}{\N}$ is constructible~\cite[Prop.~9.9.1]{egaIV} (and
upper semi-continuous) and $\map{\dim_{U/X}}{|U|}{\N}$ is locally constant.
Thus $\dim_{X/S}\circ |p|=\dim_{U/S}-\dim_{U/X}$ is a constructible (and upper
semi-continuous) function. The set $f\bigl(\dim_{X/S}^{-1}(d)\bigr)=
(f\circ p)\bigl((\dim_{X/S}\circ |p|)^{-1}(d)\bigr)$ is thus constructible by Chevalley's
theorem~\cite[Thm.~7.1.4]{egaI_NE}. Since
$$\dim X_s=\sup_{x\in |X_s|} \dim_x
X_s=\sup_d \Bigl\{d\;:\; s\in f\bigl(\dim_{X/S}^{-1}(d)\bigr)\Bigr\}$$
it follows that
$s\mapsto \dim X_s$
is constructible.
\end{proof}

\begin{proposition}\label{P:constructible-conds}
Let $S$ be an algebraic space and let $\map{f}{X}{Y}$ be a morphism between
algebraic stacks of finite presentation over $S$. Let $P$ be one of the
following properties of a morphism:
\begin{enumerate}
\item monomorphism,\label{TI:cc:mono}
\item universally injective (i.e., ``radiciel''),\label{TI:cc:ui}
\item surjective,\label{TI:cc:surj}
\item isomorphism,\label{TI:cc:iso}
\item representable,\label{TI:cc:repr}
\item unramified,\label{TI:cc:unram}
\item flat,\label{TI:cc:flat}
\item \etale{},\label{TI:cc:etale}
\item quasi-finite,\label{TI:cc:qf}
\item has quasi-finite diagonal.\label{TI:cc:qf-diag}
\end{enumerate}
Then, the set of points $s\in |S|$ such that $\map{f_s}{X_s}{Y_s}$ has $P$ is
constructible.
\end{proposition}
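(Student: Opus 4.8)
The plan is to reduce, in stages, from a morphism of algebraic stacks over an arbitrary algebraic space $S$ to a morphism of schemes over an affine base, where the assertion becomes the classical constructibility results of \cite[\S9,\S11]{egaIV}. First I would reduce $S$ to an affine scheme: constructibility of a subset of $|S|$ may be tested after pulling back along an \'etale surjection $S'\to S$ with $S'$ a scheme (constructibility descends along such covers), and the formation of the fibres $X_s\to Y_s$ commutes with base change on $S$, so we may assume $S=\Spec(A)$ is affine. Note also that, since $X\to S$ and $Y\to S$ are of finite presentation, the morphism $f$ itself is of finite presentation.

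Next I would eliminate the non-representable and diagonal-type properties by passing to the diagonal $\map{\Delta_f}{X}{X\times_Y X}$, which is representable and, because $X$ and $Y$ are of finite presentation over $S$, is itself a finitely presented morphism with $X\times_Y X$ of finite presentation over $S$. On fibres one has $(\Delta_f)_s=\Delta_{f_s}$, and the standard dictionary gives: $f_s$ is representable iff $\Delta_{f_s}$ is a monomorphism; $f_s$ is a monomorphism iff $\Delta_{f_s}$ is an isomorphism; $f_s$ is universally injective iff $\Delta_{f_s}$ is surjective; $f_s$ is unramified iff $\Delta_{f_s}$ is \'etale (using that $f_s$ is of finite type, cf.\ \pref{L:fiberwise-conds}); and $f_s$ has quasi-finite diagonal iff $\Delta_{f_s}$ is quasi-finite. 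Moreover $f_s$ is an isomorphism iff it is flat, a monomorphism and surjective, and $f_s$ is \'etale iff it is flat and unramified. As a finite intersection of constructible sets is constructible, it therefore suffices to prove constructibility of the loci for a representable, finitely presented morphism $g$ and the properties flat, surjective, universally injective, unramified, \'etale, quasi-finite, monomorphism and isomorphism.

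For this representable core I would reduce to morphisms of algebraic spaces and then to schemes. Choosing a smooth presentation $V\to Y$ of finite presentation, each of these properties is fppf-local on the target, so $g_s$ has $P$ iff $(X\times_Y V)_s\to V_s$ has $P$, the point being that $V_s\to Y_s$ is smooth and surjective; since $g$ is representable, $X\times_Y V$ is an algebraic space of finite presentation over $S$. Choosing further \'etale presentations of target and source, the properties flat, surjective, universally injective, unramified, \'etale and quasi-finite are local on source and target for the \'etale topology and hence descend to the resulting morphism of schemes, where their constructibility is \cite[\S9,\S11]{egaIV} (for quasi-finiteness I would also invoke constructibility of the fibre dimension, \pref{L:dim-is-constructible}). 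The remaining two, monomorphism and isomorphism, are not local on the source, so I would again route them through the diagonal: for a morphism of algebraic spaces the diagonal is a monomorphism, whence $g_s$ is a monomorphism iff $\Delta_{g_s}$ is an isomorphism, and a flat monomorphism of finite presentation is an open immersion, so $\Delta_{g_s}$ is an isomorphism iff it is flat and surjective, conditions already shown to be constructible.

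The main obstacle is precisely this source-non-locality of ``monomorphism'' and ``isomorphism'': they cannot be read off a presentation, and the crux is to route every such property through a diagonal, exploiting that the diagonal is automatically a monomorphism and that flatness together with surjectivity (and finite presentation) upgrades it to an isomorphism. A secondary, bookkeeping difficulty is keeping all intermediate stacks and spaces of finite presentation over $S$ throughout the reductions, since this is what licenses the appeal to the scheme-theoretic results and to \pref{L:dim-is-constructible}; this is ensured by taking every presentation to be of finite presentation.
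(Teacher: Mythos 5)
Your overall architecture matches the paper's proof (reduce to affine $S$, convert stacky properties into properties of the representable diagonal, then pass to presentations and invoke the scheme case from \cite[\S9,\ \S11]{egaIV}), but there is a genuine gap at property \pref{TI:cc:qf}, quasi-finiteness. Your dictionary converts property \pref{TI:cc:qf-diag} (quasi-finite \emph{diagonal}) into quasi-finiteness of the representable morphism $\Delta_f$, but it never converts property \pref{TI:cc:qf} itself, and the blanket claim that it ``suffices to prove constructibility for a representable $g$'' is not justified for this property --- indeed no such reduction exists. By the paper's conventions, quasi-finiteness of a stacky morphism requires the fibers of $f$ \emph{and} of $\Delta_f$ to be zero-dimensional, and this is neither a condition on the diagonal alone nor smooth-local on the source: $B\mathbb{G}_m\to\Spec(k)$ has quasi-finite presentation-composite $\Spec(k)\to\Spec(k)$ yet is not quasi-finite, and conversely $[\mathbb{G}_m/\mathbb{G}_m]\iso\Spec(k)\to\Spec(k)$ is quasi-finite (an isomorphism, with quasi-finite diagonal) while the composite $\mathbb{G}_m\to\Spec(k)$ from its presentation is not. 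So presentations neither detect nor reflect the condition, even in conjunction with the diagonal condition; one must control the dimension of the stacky fibers, corrected by the relative dimension of the presentation. This is exactly what the paper's Lemma~\pref{L:dim-is-constructible} supplies (constructibility of $s\mapsto\dim X_s$ via $\dim_{X/S}\circ|p|=\dim_{U/S}-\dim_{U/X}$ for a smooth presentation $p$): the paper then treats quasi-finiteness fiberwise over $Y$ --- the locus in $|Y|$ where the fibers of $f$ and of $\Delta_f$ are zero-dimensional --- and pushes the complement down to $|S|$ by Chevalley's theorem on images of constructible sets. You cite this lemma only inside the representable core, where it is not actually the issue; its application to the non-representable $f$ itself, together with the Chevalley push-forward from $Y$ to $S$, is the missing idea.

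A secondary slip: universal injectivity is \emph{not} \etale{}-local on the source (compare $\id{X}$ with a nontrivial \etale{} cover $U\to X$), so your claim that it descends along presentations of source and target to a morphism of schemes is false as stated. This one is harmless, since universal injectivity is anyway captured by surjectivity of the diagonal --- which is how the paper handles it, and how you should too, deleting it from your list of source-and-target-local properties. Apart from these two points your argument coincides with the paper's: the same diagonal dictionary, the same treatment of flatness and surjectivity via presentations, and the same resolution of ``monomorphism'' and ``isomorphism'' through the diagonal (your ``flat $+$ surjective monomorphism of finite presentation'' is equivalent to the paper's ``surjective \etale{} monomorphism'').
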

\begin{proof}
The question is fppf-local on $S$ so we can assume that $S$ is affine. We may
also replace $Y$ with a presentation and assume that $Y$ is affine.
When $f$ is strongly representable, the proposition holds
by~\cite[Props.~9.6.1,~11.2.8 and~17.7.11]{egaIV}.

If $f$ is representable, then let $X'\to X$ be an \etale{} presentation with
$X'$ a scheme. The corresponding result for $X'\to X\to Y$ implies the result
for
$f$ and all properties with the exception of
\ref{TI:cc:mono} monomorphism, \ref{TI:cc:ui} universally injective
and~\ref{TI:cc:iso} isomorphism.
The locus where $f$ is a monomorphism (resp.\ universally injective) coincides
with the locus where $\Delta_f$ is an isomorphism (resp.\ surjective) and this
locus
is constructible (since $\Delta_f$ is strongly representable). This settles
properties~\ref{TI:cc:mono} and~\ref{TI:cc:ui}. Finally, $f$ is an isomorphism
if and only if $f$ is a surjective \etale{} monomorphism, so
property~\ref{TI:cc:iso} is constructible.

For general $f$, we can now deduce that the proposition holds for the
properties: \ref{TI:cc:mono} monomorphism, \ref{TI:cc:ui} universally
injective, \ref{TI:cc:repr} representable, \ref{TI:cc:unram} unramified and
\ref{TI:cc:qf-diag} quasi-finite diagonal; by considering the corresponding
properties for the diagonal: \ref{TI:cc:iso} isomorphism, \ref{TI:cc:surj}
surjective, \ref{TI:cc:mono} monomorphism, \ref{TI:cc:etale} \etale{}, and
\ref{TI:cc:qf} quasi-finite.
Properties~\ref{TI:cc:surj} surjective and~\ref{TI:cc:flat} flat, follow by
taking a presentation $X'\to X$. Property~\ref{TI:cc:etale} \etale{}, is
the conjunction of properties~\ref{TI:cc:unram} unramified and~\ref{TI:cc:flat}
flat. As before, property~\ref{TI:cc:iso} isomorphism, is the conjunction of
properties~\ref{TI:cc:mono}, \ref{TI:cc:surj} and~\ref{TI:cc:etale}.

Property~\ref{TI:cc:qf} quasi-finite can be checked on fibers and we can 
thus, by Chevalley's Theorem~\cite[Thm.~7.1.4]{egaI_NE}, assume that $S=Y$.
The set in question is then the set where the fibers of $f$ and $\Delta_f$
both have dimension zero. This set is constructible by
Lemma~\pref{L:dim-is-constructible}.
\end{proof}

\begin{proposition}\label{P:constructible-conds-2}
Let $S$ be an algebraic space and let $G$ be an $S$-group space of finite
presentation (i.e., a group object in the category of algebraic spaces). The
set of points $s\in |S|$ such that $G_s$ is abelian (resp.\ finite and linearly
reductive) is constructible.
\end{proposition}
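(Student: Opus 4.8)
The plan is to treat the two properties separately after a common reduction. Since constructibility is local on $S$, and the formation of both loci is insensitive to the map $S\to S_0$ in an inverse system (a fibre $G_s$ is abelian, resp.\ finite and linearly reductive, if and only if the same holds for its base change to any field extension of $k(s)$, in particular for the fibre of a descended group space), I would first reduce to the case where $S$ is affine and then, using the elementary affine approximation together with the standard limit results of Appendix~\ref{A:std-limits}, descend $G$ to a finitely presented group space $G_0\to S_0$ over a noetherian affine $S_0$. The locus in question on $S$ is then the preimage of the corresponding locus on $S_0$ under the map $|S|\to|S_0|$, which is continuous for the constructible topology, so it suffices to prove the statement when $S$ is noetherian. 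For the abelian property the reduction to $S$ affine already suffices, as explained next.

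For the abelian locus I would argue via the scheme-theoretic equaliser. Let $Z=G\times_S G$ and let $m,m\circ\sigma\colon Z\to G$ be multiplication and multiplication composed with the swap. The fibre $G_s$ is abelian precisely when $m_s=(m\circ\sigma)_s$ as morphisms over $k(s)$; since the formation of the equaliser $E=\mathrm{Eq}(m,m\circ\sigma)=(m,m\circ\sigma)^{-1}(\Delta_{G/S})$ commutes with base change, this is exactly the condition that $E_s=Z_s$ \emph{scheme-theoretically}. The essential point — which is why a purely topological argument fails, e.g.\ for infinitesimal group schemes in positive characteristic — is that one must test a scheme-theoretic, not merely set-theoretic, equality. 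Here I would use that a finitely presented group space is locally separated (its unit section is an immersion), so that $\Delta_{G/S}$, and hence $E\hookrightarrow Z$, is a finitely presented immersion; write $E$ as a closed subspace of an open $V\subseteq Z$ with finite-type ideal $\mathcal I_E$. A Nakayama argument shows that $E_s=Z_s$ holds if and only if $Z_s\subseteq V$ and $\mathcal I_E|_{V_s}=0$, i.e.\ $s\notin\operatorname{im}_S(Z\setminus V)$ and $s\notin\operatorname{im}_S(\operatorname{Supp}\mathcal I_E)$. Now $Z\setminus V$ is closed in $Z$ and $\operatorname{Supp}\mathcal I_E$ is closed in $V$, hence both are constructible in $Z$, so their images in $S$ are constructible by Chevalley's theorem for finitely presented morphisms of algebraic spaces, and the abelian locus is the intersection of the two complements.

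For the finite-and-linearly-reductive locus I would first dispose of finiteness: a group scheme of finite type over a field is finite as soon as it is zero-dimensional, so $\{s:G_s\text{ finite}\}=\{s:\dim G_s=0\}$, which is constructible by Lemma~\ref{L:dim-is-constructible}. It remains to show that the linearly reductive locus is constructible; here I would pass to the noetherian case as above and argue by noetherian induction, reducing to showing that over each integral closed subspace with generic point $\eta$ the property is constant on a dense open. By generic flatness (and because a flat finitely presented group space with finite fibres over a reduced base is finite and flat) I may shrink to a dense open $U$ over which $G$ is a finite flat group scheme, and then invoke the structure theory of linearly reductive finite flat group schemes of Abramovich--Olsson--Vistoli, by which the locus of linearly reductive fibres of such a group scheme is open. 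Since $U$ is integral, this locus is either empty or all of $U$ according to whether $G_\eta$ is linearly reductive, which gives the required constancy and completes the induction.

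I expect the linearly reductive case to be the main obstacle: unlike abelianness, it cannot be read off from the vanishing of a single morphism or ideal, and its constructibility rests on the non-trivial structural input that linear reductivity of a finite flat group scheme is an open condition (equivalently, that such a group is an extension of a tame étale group by a group of multiplicative type). Verifying that the zero-dimensional-fibre flat locus is genuinely finite flat, and matching the hypotheses of the cited openness statement to the present generality of group \emph{spaces} over an arbitrary base, are the points that would require the most care.
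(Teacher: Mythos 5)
Your treatment of the finite-and-linearly-reductive locus is essentially the paper's own proof: the paper likewise reduces, via the criterion of~\cite[Prop.~9.2.3]{egaIV}, to an integral noetherian base, shrinks by generic flatness and generic finiteness to a finite flat group scheme, and then quotes the Abramovich--Olsson--Vistoli openness result. Two small inaccuracies there: ``a flat finitely presented group space with finite fibres over a reduced base is finite and flat'' is false (delete the non-identity point over the origin from $(\mathbb{Z}/2)_{\mathbb{A}^1}$); what you need, and what the paper uses, is that finiteness of the \emph{generic fibre} spreads out to a dense open. Also ``either empty or all of $U$'' should be ``either empty or a dense open of $U$'', which is what the criterion requires.

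The abelian case, however, contains a genuine error. You are right that commutativity of $G_s$ is the scheme-theoretic equality $E_s=Z_s$, where $E=(m,m\circ\sigma)^{-1}(\Delta_{G/S})\subseteq Z=G\times_SG$ (a legitimate concern: the paper's own proof works with the set-theoretic locus where conjugation agrees with the projection, which cannot detect infinitesimal non-commutativity). But your constructibility criterion for this equality is wrong. With $E$ closed in an open $V\subseteq Z$ with ideal $\mathcal{I}_E$, the condition $E_s=V_s$ says that the \emph{image} of $\mathcal{I}_E\otimes\mathcal{O}_{V_s}\to\mathcal{O}_{V_s}$ vanishes, i.e.\ $\mathcal{I}_E\cdot\mathcal{O}_{V_s}=0$; what Nakayama controls is the \emph{tensor}: one has $V_s\cap\operatorname{Supp}\mathcal{I}_E=\emptyset$ if and only if $\mathcal{I}_E\otimes\mathcal{O}_{V_s}=0$. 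The tensor can be nonzero while its image vanishes, so $S\setminus\operatorname{im}_S(\operatorname{Supp}\mathcal{I}_E)$ is the locus where $E$ contains an open \emph{neighbourhood} of the fibre, which is in general strictly smaller than the abelian locus. Concretely, over $S=\Spec(k[t])$ let $G\subseteq\GL_2\times S$ be the subgroup scheme of matrices $\left(\begin{smallmatrix}1+t\alpha & \beta\\ 0 & 1\end{smallmatrix}\right)$: for $t\neq 0$ the fibre is the non-abelian $ax+b$ group, for $t=0$ it is $\mathbb{G}_a^2$, so the abelian locus is $\{t=0\}$; but here $\mathcal{I}_E=\bigl(t(\alpha\beta'-\alpha'\beta)\bigr)$ is a free $\mathcal{O}_Z$-module of rank one, so $\operatorname{Supp}\mathcal{I}_E=Z$ and your formula returns the empty set.

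The repair is to quote Proposition~\ref{P:constructible-conds} instead of arguing with ideals: $E\to Z$ is a finitely presented monomorphism (a pull-back of $\Delta_{G/S}$), both $E$ and $Z$ are of finite presentation over $S$, equalisers commute with base change, and $G_s$ is abelian if and only if $E_s\to Z_s$ is an isomorphism; so the abelian locus is constructible by property (iv) (isomorphism) of that proposition. This also removes your unsupported claim that a finitely presented group algebraic space is locally separated: over a general base the unit section (equivalently $\Delta_{G/S}$) is a priori only a finitely presented monomorphism---sections are automatically immersions for schemes, but not for algebraic spaces---so the ideal $\mathcal{I}_E$ you work with need not exist, whereas the monomorphism $E\to Z$ always does.
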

\begin{proof}
The question is fppf-local on $S$ so we can assume that $S$ is a scheme. Let
$G$ act on itself
by conjugation and let $\map{\rho}{G\times_S G}{G}$ be the corresponding
morphism, pointwise given by $(g,h)\mapsto ghg^{-1}$. As the diagonal
$\map{\Delta}{G}{G\times_S G}$ is of finite presentation, the subset
$Z\subseteq |G\times_S G|$ where $\rho=\pi_2$ is constructible.
As the structure morphism
$\map{p}{G\times_S G}{S}$ is of finite presentation, it follows that the subset
$W=S\setminus p(G\times_S G\setminus Z)$, of points $s\in |S|$ such that $G_s$ is
abelian, is constructible.

For a group scheme $H\to \Spec k$, we let $E(H,k)$ be the property that
$H\to\Spec k$ is finite and linearly reductive, or, equivalently, that
$H\to\Spec k$ is \emph{locally
well-split}~\cite[Prop.~2.10]{abramovich_olsson_vistoli_tame_stacks}. This
property is stable under field extensions $k'/k$. Let $E$ be the set of points
$s\in |S|$ such that $E(G_s,\kappa(s))$ holds. We have to show that $E$ is
constructible. By~\cite[Prop.~9.2.3]{egaIV}, it is enough to show that if $S$
is an integral noetherian scheme, then there is an open dense subset
$U\subseteq S$ that is contained in either $E$ or~$S\setminus E$.

To show this we can replace $S$ with an open dense subset such that $G\to S$
becomes flat~\cite[Thm.~11.1.1]{egaIV}. Moreover, as the property of having
finite fibers is constructible, we can assume that $G\to S$ is
quasi-finite. After replacing $S$ with an open dense subset, we can further
assume that
$G\to S$ is finite. Then $E$ is open
by~\cite[Lem.~2.13]{abramovich_olsson_vistoli_tame_stacks} and thus either $E$
is dense or empty.
\end{proof}

\end{section}

\begin{section}{Standard limit results}\label{A:std-limits}
In this appendix, we generalize the standard limit methods for schemes
in~\cite[\S8]{egaIV} to algebraic stacks.
This has been done in~\cite[Props.~4.15, 4.18]{laumon} (also
see~\cite[Prop.~2.2]{olsson_Hom-stacks}) for algebraic stacks over inverse
systems of \emph{affine} schemes. In this appendix, we allow inverse system of
algebraic stacks.
As elsewhere, we do not insist that the diagonal of an algebraic stack is
separated. All inverse systems are assumed to be filtered and to have affine
bonding maps so their inverse limits exist in the category of algebraic
stacks.

We begin with the functorial characterization of morphisms that
are locally of finite presentation, cf.\ \cite[Prop.~8.14.2]{egaIV}.

\begin{proposition}\label{P:functorial-char-of-loc-fin-pres}
Let $\map{f}{Y}{S}$ be a morphism of algebraic stacks. The following are
equivalent.
\begin{enumerate}
\item $f$ is locally of finite presentation.
\item For every inverse system $\{\map{g_\lambda}{X_\lambda}{S}\}$ of
quasi-compact and quasi-separated stacks $X_\lambda$ with limit
$\map{g}{X}{S}$ the functor
$$\varinjlim_{\lambda} \catHom_S(X_\lambda,Y)\to \catHom_S(X,Y)$$
is an equivalence of categories.\label{PI:fclfp-2}
\item As~\ref{PI:fclfp-2} but with $X_\lambda$ affine for every $\lambda$.
\end{enumerate}
\end{proposition}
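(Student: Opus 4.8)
The plan is to establish the cycle (i)$\Rightarrow$(ii)$\Rightarrow$(iii)$\Rightarrow$(i) and to reduce every assertion about inverse systems of stacks to the already available case of inverse systems of \emph{affine schemes}, namely \cite[Props.~4.15, 4.18]{laumon} and \cite[Prop.~2.2]{olsson_Hom-stacks}, together with the classical scheme statement \cite[Prop.~8.14.2]{egaIV}. The implication (ii)$\Rightarrow$(iii) is immediate, since affine schemes are quasi-compact and quasi-separated, so only (i)$\Rightarrow$(ii) and (iii)$\Rightarrow$(i) require work.

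For (i)$\Rightarrow$(ii) the idea is to remove the stacky nature of both the target $Y$ and the system $\{X_\lambda\}$ by smooth descent. First I would fix an index $\alpha$ and choose a smooth surjection $W_\alpha\to X_\alpha$ with $W_\alpha$ an \emph{affine} scheme (take an \etale{} presentation of $X_\alpha$ by an algebraic space and refine it by an affine \etale{} cover). Since every bounding map of the system is affine, pulling $W_\alpha$ and the terms of its \v{C}ech nerve (themselves covered by affines, using quasi-separatedness) back along $X_\lambda\to X_\alpha$ and along $X\to X_\alpha$ yields inverse systems of affine schemes $W^n_\lambda$, again with affine bounding maps, whose limits $W^n$ form the \v{C}ech nerve of an affine smooth cover $W^0\to X$. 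Because $Y$ is a stack, $\catHom_S(X,Y)$ is the descent $2$-category attached to the cosimplicial groupoid $[n]\mapsto\catHom_S(W^n,Y)$, which depends only on the terms $n=0,1,2$; the same holds for each $X_\lambda$. The affine-scheme case supplies equivalences $\varinjlim_\lambda\catHom_S(W^n_\lambda,Y)\xrightarrow{\sim}\catHom_S(W^n,Y)$ for $n=0,1,2$, and since filtered $2$-colimits of groupoids commute with finite $2$-limits these assemble into the required equivalence $\varinjlim_\lambda\catHom_S(X_\lambda,Y)\xrightarrow{\sim}\catHom_S(X,Y)$.

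For (iii)$\Rightarrow$(i) I would use that ``locally of finite presentation'' is smooth-local on source and target. One first reduces to $S$ affine: condition (iii) is inherited by $Y\times_S V\to V$ for any $V\to S$, and $f$ is locally of finite presentation as soon as its base change to a smooth affine cover of $S$ is. Over an affine base the equivalence of (i) and (iii) is the content of the cited affine-scheme results; to see the non-formal direction concretely, one chooses an affine smooth presentation $\map{p}{P}{Y}$, notes that $f$ is locally of finite presentation if and only if $P\to S$ is (by fppf descent), transfers the criterion from $Y$ to $P$ through $p$, and finishes with the usual retract argument: writing the coordinate ring of $P$ as a filtered colimit of finitely presented $S$-algebras gives $P=\varprojlim_\mu P_\mu$, and applying the transferred criterion to the identity of $P$ factors it as $P\to P_\mu\to P$, exhibiting $P$ as a retract of a finitely presented $S$-scheme and hence as locally of finite presentation.

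\textbf{Main obstacle.} The delicate point in (i)$\Rightarrow$(ii) is organizing a smooth cover \emph{by affine schemes} whose \v{C}ech nerve is compatible with the entire inverse system --- this is exactly where the standing hypothesis of affine bounding maps is indispensable --- and then verifying that the filtered colimit genuinely commutes with the finite $2$-limit computing smooth descent into $Y$. In (iii)$\Rightarrow$(i) the corresponding difficulty is the transfer of the functorial criterion across the smooth presentation $p$: a map out of the affine limit $T=\varprojlim_\mu T_\mu$ into $Y$ must be lifted through $p$ and spread out to finite level, which works precisely because $p$ is (formally) smooth and the test objects $T_\mu$ are affine.
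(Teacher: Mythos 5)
Your (ii)$\Rightarrow$(iii) is the same triviality as in the paper, and your treatment of (iii)$\Rightarrow$(i) is acceptable in outline (the paper simply cites \cite[Prop.~4.15]{laumon} for (i)$\Leftrightarrow$(iii); your ``transfer through $p$'' step is only sketched, but it can be carried out because $p$ is representable and locally of finite presentation, so the known representable case applies to $T_\mu\times_Y P\to T_\mu$ for a test system $\{T_\mu\}$). The genuine gap is in (i)$\Rightarrow$(ii), at the sentence claiming that pulling back the \v{C}ech nerve of $W_\alpha\to X_\alpha$ produces \emph{inverse systems of affine schemes} $W^n_\lambda$. That is false for $n\geq 1$: the term $W^1_\lambda=W^0_\lambda\times_{X_\lambda}W^0_\lambda$ is a pull-back of the diagonal of $X_\lambda$, and for a quasi-compact and quasi-separated algebraic stack the diagonal is only assumed quasi-compact, quasi-separated and representable --- in general it is neither affine nor even separated. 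Hence $W^1_\lambda$ and $W^2_\lambda$ are merely quasi-compact and quasi-separated \emph{algebraic spaces}, the cited affine-scheme results do not apply to the systems $\{W^n_\lambda\}_\lambda$ for $n\geq 1$, and your descent argument does not close. The parenthetical ``themselves covered by affines, using quasi-separatedness'' does not repair this: the descent $2$-category is computed from the nerve terms themselves, and exploiting affine covers of those terms means running the entire descent argument again on each of them. (A minor additional slip: a general algebraic stack has no \etale{} presentation; you want a smooth one.)

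That iteration is exactly what the paper's proof supplies and what your write-up omits. After a base change making every $X_\lambda$ affine over $S$, the nerve terms become $(U_\lambda/X_\lambda)^i=(U_0/S)^i\times_S X_\lambda$, so their nature is governed by $\Delta_S$: they are affine schemes if $S$ is a separated algebraic space, separated algebraic spaces if $S$ is an algebraic space, and algebraic spaces if $S$ is an arbitrary stack. The paper accordingly runs a three-step devissage --- first $S$ a separated algebraic space, where the nerve terms are affine and (iii) applies directly; then $S$ an algebraic space, where the nerve terms are handled by the previous case; then general $S$, where they are handled by the second case --- each step being one more round of the single descent you perform. As written, your argument establishes (i)$\Rightarrow$(ii) only for inverse systems of stacks $X_\lambda$ with affine diagonal; covering general quasi-compact and quasi-separated $X_\lambda$ requires this bootstrap, and it is the essential content of the paper's proof.
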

\begin{proof}
Clearly (ii)$\implies$(iii). That (i)$\iff$(iii) is~\cite[Prop.~4.15
(i)]{laumon}. Let us show that (iii)$\implies$(ii) which essentially is the
proof of~\cite[Prop.~4.18 (i)]{laumon}. After making the base change
$X_\lambda\to S$ for some $\lambda$, we can assume that the $X_\lambda$'s are
affine over $S$. Let $U_0\to S$ be a presentation and let
$U_\lambda=U_0\times_S X_\lambda$; then $U_\lambda\to X_\lambda$ is a
presentation. Let
$(U_\lambda/X_\lambda)^i=U_\lambda \times_{X_\lambda}\dots\times_{X_\lambda}
U_\lambda$ denote the $i$\textsuperscript{th} fiber product.
The category $\catHom_S(X_\lambda,Y)$ is equivalent to the
category given by the cosimplicial diagram of categories
$$\xymatrix{\catHom_S(U_\lambda,Y)\ar@<.7ex>[r]\ar@<-.7ex>[r] &
\catHom_S\bigl((U_\lambda/X_\lambda)^2,Y\bigl)\ar@{+->+}[l]\ar@<1.4ex>[r]\ar[r]\ar@<-1.4ex>[r] &
\catHom_S\bigr((U_\lambda/X_\lambda)^3,Y\bigr)\ar@<.7ex>@{+->+}[l]\ar@<-.7ex>@{+->+}[l]}$$
(cf.\ \loccit)\ and this construction commutes with filtered colimits. It is
therefore enough to show the proposition after replacing $X_\lambda$ with
$(U_\lambda/X_\lambda)^i$ for $i=1,2,3$.

Firstly, assume that $S$ is a separated algebraic space, and choose a
presentation
$U_0\to S$ with $U_0$ affine. Then the fiber products $(U_\lambda/X_\lambda)^i$
are affine for $i=1,2,3$ and we are done in this case. Secondly, assume that
$S$ is
an algebraic space. Then $(U_\lambda/X_\lambda)^i$ are separated algebraic
spaces and this case follows from the previous. Thirdly, assume that $S$ is
a general algebraic stack. Then $(U_\lambda/X_\lambda)^i$ are algebraic spaces
and this settles the final case.
\end{proof}

%
\begin{proposition}\label{P:std-limits}
Let $S_0$ be an algebraic stack and let $S=\varprojlim_{\lambda} S_\lambda$ be
an inverse limit of stacks that are affine over $S_0$.
\begin{enumerate}
\item Let $X_0\to S_0$ and $Y_0\to S_0$ be morphisms of stacks and let
$$X_\lambda=X_0\times_{S_0} S_\lambda,\quad
Y_\lambda=Y_0\times_{S_0} S_\lambda,$$
$$X=X_0\times_{S_0} S,\quad Y=Y_0\times_{S_0} S$$
for every $\lambda$. Suppose that $X_0$ is quasi-compact and quasi-separated
and that $Y_0\to S_0$ is locally of finite presentation. Then, the functor
$$\varinjlim_{\lambda} \catHom_{S_\lambda}(X_\lambda,Y_\lambda)
\to \catHom_S(X,Y)$$
is an equivalence of categories.
\item Suppose that $S_0$ is quasi-compact and quasi-separated. Let $X\to S$ be
a morphism of finite presentation. Then, there exists an index $\alpha$, an
algebraic stack $X_\alpha$ of finite presentation over $S_\alpha$ and an
$S$-isomorphism $X_\alpha\times_{S_\alpha} S\to X$.
\end{enumerate}
\end{proposition}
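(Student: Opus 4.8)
The plan for (i) is to reduce it directly to Proposition~\pref{P:functorial-char-of-loc-fin-pres}. Since $X_\lambda\to S_0$ factors through $S_\lambda$, the universal property of the fibre product $Y_\lambda=Y_0\times_{S_0}S_\lambda$ gives a natural equivalence $\catHom_{S_\lambda}(X_\lambda,Y_\lambda)\simeq\catHom_{S_0}(X_\lambda,Y_0)$, and likewise $\catHom_S(X,Y)\simeq\catHom_{S_0}(X,Y_0)$. The stacks $X_\lambda=X_0\times_{S_0}S_\lambda$ are quasi-compact and quasi-separated, being affine over the quasi-compact quasi-separated $X_0$; the bounding maps $X_\mu\to X_\lambda$ are affine; and $X=\varprojlim_\lambda X_\lambda$ as $S_0$-stacks. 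As $Y_0\to S_0$ is locally of finite presentation, Proposition~\pref{P:functorial-char-of-loc-fin-pres} yields that $\varinjlim_\lambda\catHom_{S_0}(X_\lambda,Y_0)\to\catHom_{S_0}(X,Y_0)$ is an equivalence, and composing with the two identifications gives (i).

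For (ii) the idea is to descend a presentation of $X$ after first reducing the base to an inverse system of \emph{affine schemes}. Since $S_0$ is quasi-compact and quasi-separated, I would choose a smooth surjective finitely presented morphism $\map{q_0}{T_0}{S_0}$ with $T_0$ an affine scheme (a finite disjoint union of affine smooth charts). Put $T_\lambda=T_0\times_{S_0}S_\lambda$ and $T=T_0\times_{S_0}S$. Because $S_\lambda\to S_0$ is affine and $T_0$ is affine, each $T_\lambda$ is an affine scheme, $\{T_\lambda\}$ is an inverse system of affine schemes, and its limit is the affine scheme $T$. Pulling back along the fppf cover $T\to S$, the projection $X\times_S T\to T$ is of finite presentation over the affine scheme $T$, so by the affine-base case of the limit theorem~\cite[Prop.~4.18]{laumon} there is an index $\alpha$ and a finitely presented $X_{T,\alpha}\to T_\alpha$ with $X_{T,\alpha}\times_{T_\alpha}T\iso X\times_S T$.

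It then remains to descend the gluing. The cover $q_0$ base-changes to smooth surjective covers $T_\lambda\to S_\lambda$ and $T\to S$, and $X$ is recovered from $X_T:=X\times_S T$ together with a descent datum along $T\to S$: an isomorphism $\varphi$ of the two pullbacks of $X_T$ to $T\times_S T$ satisfying the cocycle identity over $T\times_S T\times_S T$. Both $T\times_S T=\varprojlim_\lambda(T_0\times_{S_0}T_0)\times_{S_0}S_\lambda$ and its triple analogue are quasi-compact quasi-separated inverse limits with affine bounding maps, so after enlarging $\alpha$ I can, by part~(i), descend $\varphi$ to a morphism $\varphi_\alpha$ of the two pullbacks of $X_{T,\alpha}$ over $T_\alpha\times_{S_\alpha}T_\alpha$; that $\varphi_\alpha$ is an isomorphism and that it satisfies the cocycle identity are again detected at a finite level by part~(i) (descend the inverse, and the equality of the two composites over the triple fibre product). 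Effective fppf descent of algebraic stacks along $T_\alpha\to S_\alpha$ then produces an algebraic stack $X_\alpha\to S_\alpha$ with $X_\alpha\times_{S_\alpha}T_\alpha\iso X_{T,\alpha}$; finite presentation of $X_\alpha\to S_\alpha$ may be checked after the smooth cover, and the compatibility of the descended datum with base change along $S\to S_\alpha$ gives $X_\alpha\times_{S_\alpha}S\iso X$.

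The main obstacle is the last step: arranging a single index $\alpha$ at which the isomorphism $\varphi$, its inverse, and the cocycle relation are simultaneously defined and valid, and then recognising the fppf-descended object $X_\alpha$ as an algebraic stack that is finitely presented over $S_\alpha$. The reduction to an affine-scheme base in the second paragraph is precisely what lets the hard object-level descent be quoted from the known affine case, leaving only the (formal, but delicate) descent of morphisms and relations, which is exactly what part~(i) supplies.
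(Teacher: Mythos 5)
Your treatment of (i) is exactly the paper's proof: identify $\catHom_{S_\lambda}(X_\lambda,Y_\lambda)$ with $\catHom_{S_0}(X_\lambda,Y_0)$ and $\catHom_S(X,Y)$ with $\catHom_{S_0}(X,Y_0)$, and apply Proposition~\pref{P:functorial-char-of-loc-fin-pres} to the system $\{X_\lambda\}$, which is affine over the quasi-compact and quasi-separated $X_0$.

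For (ii) you take a genuinely different route. The paper never descends the stack $X$ itself: it chooses an affine presentation $V_0\to S_0$, picks a presentation $U\to X\times_S V$ with groupoid $R=U\times_X U$, descends the \emph{algebraic spaces} $U$ and $R$ to a finite index, descends the groupoid structure maps and identities using part (i) --- all of this happening in the $1$-category $\AlgSp_{/S_\lambda}$ --- arranges $s,t$ smooth, and sets $X_\lambda=[\equalizer{R_\lambda}{U_\lambda}]$. You instead descend $X\times_S T$ over the affine limit $T$ (legitimate: this is the affine-base case~\cite[Prop.~4.18]{laumon} that the appendix takes as its starting point) and then recover $X_\alpha$ by descent along the smooth cover $T_\alpha\to S_\alpha$. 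This architecture can be made to work, but two steps need repair. First, descent data for \emph{stacks} are $2$-categorical: the cocycle condition is not ``the equality of the two composites over the triple fibre product'' but a specified $2$-isomorphism $p_{13}^*\varphi\Rightarrow p_{23}^*\varphi\circ p_{12}^*\varphi$ subject to a coherence condition over the quadruple fibre product, and both the $2$-isomorphism and its coherence must also be pushed to a finite index. Since part (i) is an equivalence of \emph{categories}, it does supply this (objects, morphisms, and equalities of morphisms all descend), but your cocycle step as written is not the correct statement for stacks. Second, ``effective fppf descent of algebraic stacks'' is not formal: effectivity at the level of stacks in groupoids is, but the \emph{algebraicity} of the descended stack $X_\alpha$ is Artin's theorem that a stack admitting a representable fppf presentation by an algebraic space is algebraic~\cite[Thm.~10.1]{laumon}. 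That is a strictly heavier input than anything the paper uses --- the paper only needs the standard fact that the quotient of a smooth groupoid in algebraic spaces is an algebraic stack, and indeed the usual proof of the descent-of-algebraicity theorem is precisely the groupoid construction the paper carries out by hand. So with the $2$-descent data handled honestly and Artin's theorem cited, your argument is a valid alternative; what the paper's devissage buys is that every descent step stays $1$-categorical and the proof remains independent of that deeper theorem.
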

\begin{proof}
Note that $\catHom_{S_\lambda}(X_\lambda,Y_\lambda)=
\catHom_{S_0}(X_\lambda,Y_0)$
and $\catHom_{S}(X,Y)=\catHom_{S_0}(X,Y_0)$. The first statement thus
follows from Proposition~\pref{P:functorial-char-of-loc-fin-pres} with
$S=S_0$ and $Y=Y_0$.

(ii) When $S_0$ and $X$ are schemes, this is~\cite[Thm.~8.8.2 (ii)]{egaIV}. The
extension to the case where $X$ is an algebraic space is not difficult, cf.\
\cite[Prop.~4.18]{laumon}. For the general case, choose a presentation
$V_0\to S_0$ with $V_0$ affine and let $V_\lambda=V_0\times_{S_0} S_\lambda$
and $V=V_0\times_{S_0} S$. Also choose a presentation $U\to X\times_S V$ and
let $R=U\times_X U$. Then $X$ is the quotient of the groupoid
$[\equalizer{R}{U}]$. Consider $R$ as a $V\times_S V$-space.

Applying the case with algebraic spaces, there is an index $\lambda$ and
algebraic spaces $U_{\lambda}$ and $R_{\lambda}$ of finite presentation over
$V_\lambda$ and $V_\lambda\times_{S_\lambda} V_\lambda$ such that their
pull-backs to $V$ and $V\times_S V$ are isomorphic to $U\to V$ and $R\to
V\times_S V$. By composition, we obtain finitely presented morphisms $U_\lambda
\to S_\lambda$ and $R_\lambda\to S_\lambda$ such that their pull-backs are
isomorphic to
$U\to S$ and $R\to S$. Note that the last statement takes place in the
$1$-category
$\AlgSp_{/S}$ and that isomorphic signifies that there are $2$-commutative
diagrams
$$\vcenter{\xymatrix{{U_\lambda\times_{S_\lambda} S}\rrlowertwocell_{\pi_2}{}\ar[r]^-{\iso} & U\ar[r]& S}} \quad 
\vcenter{\xymatrix{{R_\lambda\times_{S_\lambda} S}\rrlowertwocell_{\pi_2}{}\ar[r]^-{\iso} & R\ar[r]& S.}}$$
By (i), we can, for sufficiently large $\lambda$, find morphisms such that we
obtain a groupoid $\equalizer{R_\lambda}{U_\lambda}$ in the category
$\AlgSp_{/S_\lambda}$ which pull-backs to the groupoid $\equalizer{R}{U}$ in
the category $\AlgSp_{/S}$. For sufficiently large $\lambda$, we can also assume
that the morphisms $\map{s,t}{R_\lambda}{U_\lambda}$ are smooth. Indeed, this
can be checked on an \etale{} presentation of the algebraic space $R_\lambda$
so we can apply~\cite[Prop.~17.7.8]{egaIV}.

Let $X_\lambda=[\equalizer{R_\lambda}{U_\lambda}]$ be the quotient stack. Then,
there
is an induced finitely presented morphism $X_\lambda\to S_\lambda$, unique up to
unique
$2$-isomorphism, such that the pull-back $X_\lambda\times_{S_\lambda} S\to S$ is
isomorphic to $X\to S$.
%
\end{proof}

\begin{proposition}\label{P:std-limits:properties}
Let $S_0$ be a quasi-compact algebraic stack and let $S=\varprojlim_\lambda
S_\lambda$ be an inverse limit of stacks that are affine over $S_0$. Let
$\alpha$ be an index and let $\map{f_\alpha}{X_\alpha}{Y_\alpha}$ be a morphism
between
stacks of finite presentation over $S_\alpha$. For every $\lambda>\alpha$,
let $\map{f_\lambda}{X_\lambda}{Y_\lambda}$ (resp.\ $\map{f}{X}{Y}$) be the
base change of $f_\alpha$ along $S_\lambda\to S_\alpha$ (resp.\ $S\to
S_\alpha$).  Let $P$ be one of the following properties of a morphism:
\begin{enumerate}
\item representable,\label{PI:limit:repr}\label{PI:limit:first-1}
\item a monomorphism,\label{PI:limit:mono}
\item an isomorphism,\label{PI:limit:iso}\label{PI:limit:first-mono}
\item an immersion,\label{PI:limit:imm}
\item a closed immersion,\label{PI:limit:closed-imm}
\item an open immersion,\label{PI:limit:last-mono}
\item universally injective (i.e., ``radiciel''),\label{PI:limit:ui}
\item a universal homeomorphism,\label{PI:limit:uh}
\label{PI:limit:last-1}
\item surjective,\label{PI:limit:surjective}
\item flat,\label{PI:limit:flat}
\item universally subtrusive,\label{PI:limit:univ-subt}
\item universally open,\label{PI:limit:univ-open}
\item smooth,\label{PI:limit:smooth}
\item unramified,\label{PI:limit:unramified}\label{PI:limit:first-2}
\item \etale{},\label{PI:limit:etale}
\item locally separated,\label{PI:limit:loc-sep}\label{PI:limit:last-2}
\item separated,\label{PI:limit:sep}
\item proper,\label{PI:limit:proper}
\item affine,\label{PI:limit:aff}
\item quasi-affine,\label{PI:limit:q-aff}
\item finite, or\label{PI:limit:finite}
\item quasi-finite.\label{PI:limit:first-fiber}
\end{enumerate}
Then $f$ (resp.\ $\Delta_f$) has property $P$ if and only if $f_\lambda$
(resp.\ $\Delta_{f_\lambda}$) has property $P$ for all sufficiently large
$\lambda$'s.

If, in addition, $X_\alpha\to Y_\alpha$ is representable and a group object,
then
the same conclusion holds for the properties:
\begin{enumerate}\setcounter{enumi}{22}
\item abelian fibers, and
\item quasi-finite with linearly reductive fibers.\label{PI:limit:last-fiber}
\end{enumerate}
\end{proposition}
\begin{proof}
The condition is clearly sufficient as all properties are stable under base
change. We will prove that it is necessary.
We can assume that $S_0=S_\alpha=Y_\alpha$. As the properties are fppf-local on
the base, we can further assume that $S_0$ is an affine scheme. When $f_\alpha$
is strongly representable, the proposition is~\cite[Thms.~8.10.5, 11.2.6,
Prop.~17.7.8]{egaIV} and~\cite[Thms.~6.4 and~6.6]{rydh_submersion_and_descent}
(for properties~\ref{PI:limit:univ-subt} and~\ref{PI:limit:univ-open}).

\ref{PI:limit:surjective}--\ref{PI:limit:smooth}:
Properties~\ref{PI:limit:surjective} surjective, \ref{PI:limit:flat} flat,
\ref{PI:limit:univ-subt} universally subtrusive, \ref{PI:limit:univ-open}
universally open and \ref{PI:limit:smooth} smooth can be checked after
replacing $X_\alpha$ with a smooth presentation.

\ref{PI:limit:first-1}--\ref{PI:limit:last-1},
\ref{PI:limit:first-2}--\ref{PI:limit:last-2}:
Assume that the proposition has been proven when $f_\alpha$ is representable
(resp.\ strongly representable). Then, for general $f_\alpha$ (resp.\
representable $f_\alpha$), we note that properties: \ref{PI:limit:repr}
representable, \ref{PI:limit:mono} monomorphism, \ref{PI:limit:ui} universally
injective, \ref{PI:limit:unramified} unramified and~\ref{PI:limit:loc-sep}
locally separated; correspond respectively to the properties:
\ref{PI:limit:mono} monomorphism, \ref{PI:limit:iso} isomorphism,
\ref{PI:limit:surjective} surjective, \ref{PI:limit:etale} \etale{}
and~\ref{PI:limit:imm} immersion; of the diagonal which is representable
(resp.\ strongly representable).
A monomorphism is strongly representable
by~\cite[Thm.~6.15]{knutson_alg_spaces} and hence
properties~\ref{PI:limit:first-mono}--\ref{PI:limit:last-mono} follows
from~\ref{PI:limit:mono} and the strongly representable
case. Property~\ref{PI:limit:etale} is the conjunction of~\ref{PI:limit:flat}
and~\ref{PI:limit:unramified}.
Likewise, property~\ref{PI:limit:uh} is the conjunction of
properties~\ref{PI:limit:ui},~\ref{PI:limit:surjective}
and~\ref{PI:limit:univ-open}.

\ref{PI:limit:sep} and~\ref{PI:limit:proper}:
Assume that the proposition has been proven when $f_\alpha$ is representable
(resp.\ strongly representable). Then, for general $f_\alpha$ (resp.\
representable $f_\alpha$) the proposition holds for property
\ref{PI:limit:sep} separated, by considering the diagonal.
Let $f$ be proper. We can then assume that $f_\alpha$ is separated. Writing
$S_\alpha$ as a limit $\varprojlim_{\beta} S_{\alpha\beta}$ of noetherian
affine schemes, we can, by Proposition~\pref{P:std-limits}, assume that
$f_\alpha$ is the pull-back of a finitely presented separated morphism
$X_{\alpha\beta}\to S_{\alpha\beta}$. Then, by Chow's lemma~\cite[Ch.~IV,
Thm.~3.1]{knutson_alg_spaces} and~\cite{olsson_proper-coverings}, there exists
a \emph{scheme} $Z_{\alpha\beta}$ and a proper surjective morphism
$Z_{\alpha\beta}\to X_{\alpha\beta}$. Let
$Z_{\alpha}=Z_{\alpha\beta}\times_{S_{\alpha\beta}} S_{\alpha}$. It is enough
to show that $Z_\lambda=Z_\alpha\times_{S_\alpha} S_\lambda\to S_\lambda$ is
proper for sufficiently large $\lambda$. Thus, property~\ref{PI:limit:proper}
proper, follows from the strongly representable case.

\ref{PI:limit:aff}--\ref{PI:limit:finite}: If $f$ is affine (resp.\
quasi-affine), then $f$ factors as a closed immersion (resp.\ immersion) $X\inj
\A{n}_S \to S$. As $\A{n}_S=\A{n}_{S_0}\times_{S_0} S$, it follows, by
Proposition~\pref{P:std-limits}, that for sufficiently large $\lambda$ there is
a factorization $X_\lambda\to \A{n}_{S_\lambda} \to S_\lambda$ such that $X\inj
\A{n}_S$ is a pull-back of $X_\lambda\to \A{n}_{S_\lambda}$. For sufficiently
large $\lambda$, the latter morphism is a closed immersion (resp.\
immersion). If $f$ is finite, then $f_\lambda$ is affine for sufficiently
large $\lambda$ and we can apply the strongly representable case.

\ref{PI:limit:first-fiber}--\ref{PI:limit:last-fiber}: These properties
can be checked on fibers and are constructible by
Propositions~\pref{P:constructible-conds} and~\pref{P:constructible-conds-2}.
The result for these properties thus follows from~\cite[Prop.~9.3.3]{egaIV}.
%
\end{proof}

\begin{remark}[{(cf.\ \cite[Cor.~8.2.11]{egaIV})}]\label{R:std-limits:qc-open}
Let $X=\varprojlim_{\lambda} X_\lambda$ be a limit of quasi-compact and
quasi-separated algebraic stacks and let $U\subseteq X$ be
an open quasi-compact substack. Then, there exists an index $\lambda$ and an
open
quasi-compact substack $U_\lambda\subseteq X_\lambda$ such that
$U=U_\lambda\times_{X_\lambda} X$. This follows from the previous propositions
since an open quasi-compact immersion is of finite presentation.
\end{remark}

\end{section}


\bibliography{noetherian}
\bibliographystyle{dary}

\end{document}